\newtheorem{theorem}{\sc Theorem}[section]
\newtheorem{proposition}[theorem]{\sc Proposition}
\newtheorem{lemma}[theorem]{\sc Lemma}
\newtheorem{corollary}[theorem]{\sc Corollary}
\theoremstyle{definition}
\newtheorem{definition}[theorem]{\sc Definition}
\newtheorem{example}[theorem]{\sc Example}
\theoremstyle{remark}
\newtheorem{remark}[theorem]{\sc Remark}
\newenvironment{invisible}{{\noindent\sc \underline{\color{blue}Invisible (To be hidden)}:\quad}\color{red}}{\medskip}
\newcommand{\tensor}[1]{\otimes_{\Sscript{#1}}}
\newcommand{\Sf}[1]{\mathsf{#1}}
\newcommand{\fk}[1]{\mathfrak{#1}}
\newcommand{\cat}[1]{\mathcal{#1}}
\newcommand{\rmod}[1]{\Sf{Mod}_{\Sscript{#1}}}
\newcommand{\lmod}[1]{{}_{\Sscript{#1}}\Sf{Mod}}
\newcommand{\bimod}[1]{{}_{\Sscript{#1}}{\Sf{ Mod}}_{\Sscript{#1}}}
\renewcommand{\hom}[3]{\mathrm{Hom}_{\Sscript{#1}}\left(#2,\,#3\right)}
\newcommand{\td}[1]{\widetilde{#1}}
\newcommand{\bara}[1]{\overline{#1}}
\newcommand{\op}[1]{{#1}^{\Sscript{\mathrm{op}}}}
\newcommand{\cop}[1]{{#1}^{\Sscript{\mathrm{cop}}}}
\newcommand{\lr}[1]{\left(\underset{}{} #1 \right)}
\newcommand{\ring}[1]{#1\text{-}\mathsf{Rings}}
\newcommand{\End}[2]{\mathrm{End}_{\Sscript{#1}}(#2)}
\newcommand{\coring}[1]{\mathfrak{#1}}
\newcommand{\lcomod}[1]{ {}_{#1}\mathsf{Comod}}
\newcommand{\rcomod}[1]{ \mathsf{Comod}{}_{#1}}
\newcommand{\can}[1]{\mathsf{can}_{#1}}
\newcommand{\peque}[1]{\scriptscriptstyle{#1}}
\newcommand{\Ae}{A^{\Sscript{e}}}
\newcommand{\LR}[1]{\left\{\underset{}{} #1 \right\}}
\newcommand{\bcirc}{{\boldsymbol{\circ}}}
 \newcommand{\id}[1]{id_{\Sscript{#1}}}
\newcommand{\lfun}[5]{{#1} \colon {#2} \longrightarrow {#3};\quad  \Big({#4} \longmapsto {#5}\Big)}
\newcommand{\Hom}[3]{\mathsf{Hom}_{\Sscript{#1}}\left({#2},{#3}\right)}
\newcommand{\Nat}[1]{\mathsf{Nat}\left({#1}\right)}
\newcommand{\Calg}{\mathrm{CAlg}_{\K }}
\newcommand{\proj}[1]{\mathsf{proj}(#1)}
\renewcommand{\ker}[1]{\mathrm{Ker}\left({#1}\right)}
\newcommand{\what}[1]{\widehat{#1}}
\definecolor{bostonuniversityred}{rgb}{0.8, 0.0, 0.0}
\newcommand{\ie}{i.e.~}
\newcommand{\eg}{e.g.~}
\newcommand{\prim}{{\mathrm{Prim}}}
\newcommand{\indec}[1]{{\mathrm{Q}}({#1})}
\newcommand{\Endk}[1]{\mathrm{End}_{\Sscript{\K}}(#1)}
\newcommand{\rend}[2]{\mathrm{End}(#1_{#2})}
\newcommand{\rcomatrix}[2]{#2^* \tensor{#1} #2}
\newcommand{\infcomatrix}[2]{#2^{\Sscript{\dag}}\tensor{\Sscript{\bara{#1}}} #2}
\newcommand{\coanillos}[1]{#1\text{-}\mathsf{Corings}}
\newcommand{\ldot}{\,.\,}
\newcommand{\Aug}{\mathsf{Alg}_{\K}^{+}}
\newcommand{\Coaug}{\mathsf{Coalg}_{\K}^{+}}
\newcommand{\Coalgk}{\mathsf{Coalg}_{\K}}
\newcommand{\Algk}{\mathsf{Alg}_{\K}}
\newcommand{\veck}{\mathsf{Vec}_{\K}}
\newcommand{\N}{\mathbb{N}}
\newcommand{\K}{\Bbbk}
\newcommand{\C}{\mathbb{C}}
\newcommand{\R}{\mathbb{R}}
\newcommand{\cC}{\mathscr{C}}
\newcommand{\dD}{\mathscr{D}}
\newcommand{\hH}{\mathscr{H}}
\newcommand{\iI}{\mathscr{I}}
\newcommand{\jJ}{\mathscr{J}}
\newcommand{\lL}{\mathscr{L}}
\newcommand{\oO}{\mathscr{O}}
\newcommand{\rR}{\mathscr{R}}
\newcommand{\sS}{\mathscr{S}}
\newcommand{\xX}{\mathscr{X}}
\newcommand{\yY}{\mathscr{Y}}
\newcommand{\cA}{{\mathcal A}}
\newcommand{\cD}{{\mathcal D}}
\newcommand{\cE}{{\mathcal E}}
\newcommand{\cF}{{\mathcal F}}
\newcommand{\cG}{{\mathcal G}}
\newcommand{\cH}{{\mathcal H}}
\newcommand{\cI}{{\mathcal I}}
\newcommand{\cJ}{{\mathcal J}}
\newcommand{\cK}{{\mathcal K}}
\newcommand{\cL}{{\mathcal L}}
\newcommand{\cM}{{\mathcal M}}
\newcommand{\cO}{{\mathcal O}}
\newcommand{\cP}{{\mathcal P}}
\newcommand{\cR}{{\mathcal R}}
\newcommand{\cS}{{\mathcal S}}
\newcommand{\cT}{{\mathcal T}}
\newcommand{\cU}{{\mathcal U}}
\newcommand{\cV}{{\mathcal V}}
\newcommand{\cX}{{\mathcal X}}
\newcommand{\cY}{{\mathcal Y}}
\newcommand{\Sscript}[1]{{#1}}
\newcommand{\script}[1]{\scriptstyle{#1}}
\newcommand{\due}[3]{{}_{{#2 }} {#1}_{{ #3}}\,}
\newcommand{\HAlgd}{\mathsf{CHAlgd}_{\Sscript{\K}}}
\newcommand{\Halgd}[1]{\mathsf{CHAlgd}_{\Sscript{#1}}}
\newcommand{\CHalgd}[1]{\mathsf{CCHAlgd}_{\Sscript{#1}}}
\newcommand{\GHalgd}[1]{\mathsf{GCHAlgd}_{\Sscript{#1}}}
\newcommand{\Ms}{M_{\Sscript{s}}}
\newcommand{\Mt}{M_{\Sscript{t}}}
\newcommand{\As}{A_{\Sscript{s}}}
\newcommand{\At}{A_{\Sscript{t}}}
\newcommand{\Ht}{\cH_{\Sscript{t}}}
\newcommand{\Hs}{\cH_{\Sscript{s}}}
\newcommand{\Hp}{\cH_{\Sscript{p}}}
\newcommand{\Hq}{\cH_{\Sscript{q}}}
\newcommand{\Mp}{M_{\Sscript{p}}}
\newcommand{\Mq}{M_{\Sscript{q}}}
\newcommand{\Der}[3]{\mathrm{Der}_{\Sscript{#1}}(#2, \, #3)}
\newcommand{\Derk}[3]{\mathrm{Der}_{\Sscript{\K}}{}^{\Sscript{#1}}(#2, \, #3)}
\newcommand{\pis}{\pi^{\Sscript{s}}}
\newcommand{\pit}{\pi^{\Sscript{t}}}
\newcommand{\pip}{\pi^{\Sscript{p}}}
\newcommand{\Hst}{{}_{\Sscript{s}}\cH_{\Sscript{t}}}
\newcommand{\psis}{\psi^{\Sscript{s}}}
\newcommand{\psit}{\psi^{\Sscript{t}}}
\newcommand{\sH}{{}_{\Sscript{s}}\cH}
\newcommand{\tH}{{}_{\Sscript{t}}\cH}
\newcommand{\sHt}{{}_{\Sscript{s}}\cH_{\Sscript{t}}}
\newcommand{\tHs}{{}_{\Sscript{t}}\cH_{\Sscript{s}}}
\newcommand{\DerHs}{\mathrm{Der}_{\Sscript{\cH}}{}^{\Sscript{s}}(\cH, \, \cH)}
\newcommand{\Aep}{{A}_{\Sscript{\varepsilon}}}
\newcommand{\Bep}{{B}_{\Sscript{\varepsilon}}}
\newcommand{\Ders}[2]{\mathrm{Der}_{\Sscript{#1}}(#2)}
\newcommand{\LieR}[1]{\mathsf{LieRin}_{\Sscript{#1}}}
\newcommand{\Scr}[1]{\mathscr{#1}}
\newcommand{\fomega}{\boldsymbol{\omegaup}}
\newcommand{\Acom}[1]{\cA^{\Sscript{#1}}}
\newcommand{\Amod}[1]{\cA_{\Sscript{#1}}}
\newcommand{\Circ}[1]{{#1}^{\circ}}
\newcommand{\Sigmad}{\Sigma^{\Sscript{\dag}}}
\newcommand{\VL}{\cV_{\Sscript{A}}(L)}
\newcommand{\Coac}{\mathsf{Coac}}
\newcommand{\Coring}{\mathsf{Coring}}
\newcommand{\vi}{v_{\Sscript{(0)}}}
\newcommand{\vii}{v_{\Sscript{(1)}}}
\newcommand{\uo}{u_{\Sscript{(1)}}}
\newcommand{\uoo}{u_{\Sscript{(2)}}}
\newcommand{\wi}{w_{\Sscript{(0)}}}
\newcommand{\wii}{w_{\Sscript{(1)}}}
\newcommand{\vdos}{v_{\Sscript{(2)}}}
\newcommand{\udos}{u_{\Sscript{(2)}}}
\newcommand{\utres}{u_{\Sscript{(3)}}}
\newcommand{\ao}{a_{\Sscript{(0)}}}
\newcommand{\aoo}{a_{\Sscript{(1)}}}
\newcommand{\Grpd}{{\rm Grpds}}
\newcommand{\Rxe}{R_{x\varepsilon}}
\newcommand{\Rg}{R_{g}}
\newcommand{\Sets}{\mathrm{Sets}}
\newcommand{\Pir}{\pi_{\Sscript{R}}}
\newcommand{\Pis}{\pi_{\Sscript{S}}}
\newcommand{\hHo}{\hH_{\Sscript{0}}}
\newcommand{\hHa}{\hH_{\Sscript{1}}}
\newcommand{\Go}{\cG_{\Sscript{0}}}
\newcommand{\Ga}{\cG_{\Sscript{1}}}
\newcommand{\ida}{id_{\Sscript{A}}}
\newcommand{\taua}{\tau_{\Sscript{A}}}
\newcommand{\taur}{\tau_{\Sscript{R}}}
\newcommand{\taus}{\tau_{\Sscript{S}}}
\newcommand{\tauh}{\tau_{\Sscript{\cH}}}
\newcommand{\ptauh}{\tau'_{\Sscript{\cH}}}
\newcommand{\ptaur}{\tau'_{\Sscript{R}}}
\newcommand{\alphar}{\alpha_{\Sscript{R}}}
\newcommand{\Sigtr}{\Sigma^{\Sscript{\tau}}_{\Sscript{R}}}
\newcommand{\Gamax}{\Gamma(\xX)}
\newcommand{\Gamay}{\Gamma(\yY)}
\newcommand{\rsaft}[1]{{#1}^\bullet}
\newcommand{\ldual}[1]{{}^*#1}
\newcommand{\rdual}[1]{#1{}^{*}}
\newcommand{\rcirc}[1]{#1{}^{\circ}}
\newcommand{\mb}[1]{\boldsymbol{#1}}
\newcommand{\Cinfty}[1]{\mathcal{C^{\infty}}(#1)}
\newcommand{\Rr}{\mathbb{R}}
\newcommand{\flecha}[1]{\overset{\to}{#1}}
\newcommand{\cXR}{\cX\otimes R}
\begin{document}
\allowdisplaybreaks

\title[Towards differentiation and integration between  Hopf algebroids and Lie algebroids.]{Towards differentiation and integration between Hopf algebroids and Lie algebroids.}
\author{Alessandro Ardizzoni}
\address{University of Turin, Department of Mathematics ``Giuseppe Peano'', via Carlo Alberto 10, I-10123 Torino, Italy}  \email{alessandro.ardizzoni@unito.it}
\urladdr{https://sites.google.com/site/aleardizzonihome/home}
\author{Laiachi El Kaoutit}
\address{Universidad de Granada, Departamento de \'{A}lgebra and IEMath-Granada. Facultad de Ciencias. Fuente Nueva s/n. E18071, \indent Granada, Spain }
\email{kaoutit@ugr.es}
\urladdr{http://www.ugr.es/~kaoutit}
\author{Paolo Saracco}
\address{D\'epartement de Math\'ematique, Universit\'e Libre de Bruxelles, Boulevard du Triomphe, B-1050 Brussels, Belgium.}
\email{paolo.saracco@ulb.ac.be}
\urladdr{sites.google.com/site/paolosaracco}
\date{\today}
\subjclass[2010]{Primary: 14L17, 16T15, 16S30, 20L05, 22A22; Secondary: 20F40, 13N15, 22E20.}
\thanks{Research supported by   the Spanish Ministerio de Econom\'ia y Competitividad and the European Union, grant  MTM2016-77033-P. This paper was written while the first and third authors were members of the ``National Group for Algebraic and Geometric Structures and their Applications'' (GNSAGA-INdAM). The third author is also grateful to FNRS for support and his postdoc fellowship through the MIS grant ``Antipode'' (application number 31223212).}

\begin{abstract}
In this paper we set up the  foundations around the notions of formal differentiation and formal integration in the context of commutative Hopf algebroids and Lie-Rinehart algebras. Specifically, we construct a contravariant functor from the category of  commutative Hopf algebroids with a fixed base algebra to that of Lie-Rinehart algebras over the same algebra, \emph{the differentiation functor}, which can be seen as an algebraic counterpart to the differentiation process from  Lie groupoids to Lie algebroids. The other way around,  we provide two interrelated contravariant functors form the category of Lie-Rinehart algebras to that  of commutative Hopf algebroids, \emph{the integration functors}. One of them yields a contravariant adjunction together with the differentiation functor. Under mild conditions, essentially on the base algebra, the other integration functor only induces an adjunction at the level of Galois Hopf algebroids. By employing the differentiation functor, we also analyse the geometric \emph{separability} of a given morphism of Hopf algebroids. Several examples and applications are presented along the exposition.
\vspace{-1.2cm}
\end{abstract}

\keywords{(Co)commutative Hopf algebroids; Affine groupoid schemes; Differentiation and Integration; K\"ahler module; Lie-Rinehart algebras; Lie algebroids; Lie groupoids; Malgrange's groupoids; Finite dual; Tannaka reconstruction.}
\maketitle

\begin{footnotesize}
\tableofcontents
\end{footnotesize}

\pagestyle{headings}

\section{Introduction}
We will describe the motivations behind the ideas of this work and give an algebraic overview on the classical theory of differentiation and integration in the context of both algebraic and differential geometry. Thereafter, we will  briefly discuss the main results of this paper in sufficient details, aiming to make this  summary, as far as possible, self-contained.

\subsection{Motivations and overviews}
The main motivation behind the research of this paper is to provide foundational tools  for the formal development of the \emph{differentiation} and \emph{integration} in the context of Hopf algebroids and Lie-Rinehart algebras, both over the same base algebra. Thus, we  hereby propose to establish, in terms of contravariant adjunctions, a relation between these two latter classes of objects, hoping to leave a paved path for the study of  integration problems  in this context.  Our main results are presented  as  Theorems \ref{thm:TeoremoneA}  and \ref{thm:AA} of this introduction, together with Theorem \ref{thm:AB} as an application.  The exposition includes also two Appendices, where we  offer alternative approaches and/or clarifications  on some topics we have  discussed before in the text.

In the framework of  Lie algebras and Lie groups, that is, in the domain of  differential geometry, the notions of  ``differentiation'' and ``integration'' are involved in the outstanding Lie's third theorem. Classically, differentiation means to assign a finite-dimensional Lie algebra to each Lie group (namely, its tangent vector space at the identity point). Conversely, integration constructs a Lie group out of a given finite-dimensional Lie algebra (in fact, a connected and simply connected Lie group).

For affine group schemes, that is, in the context of algebraic geometry, both  notions   are introduced in a similar way. Specifically, starting with an affine  group scheme $G$, one assigns to it the Lie algebra of all derivations from the  associated Hopf algebra $\cO(G)$ to the base field, taking as a point the counit of the Hopf algebra structure of this ring (the identity point). This assignment is functorial and (by abuse of terminology) can be termed the ``\emph{differentiation functor}''.  Conversely, if a  Lie algebra is given, then the \emph{finite dual} of its universal enveloping algebra acquires  a structure of commutative Hopf algebra and so it leads in a functorial way to an affine group scheme. This procedure might be called the (formal) ``\emph{integration functor}''.  

In a more general ``algebraic way'', these two functors  induce a contravariant adjunction between the category of Lie algebras and that of commutative Hopf algebras. More precisely, if $\K$ denotes a ground base field, $\mathsf{Lie}_{\Sscript{\K}}$ and $\mathsf{CHopf}_{\Sscript{\K}}$ denote, respectively, the categories of Lie $\K$-algebras and of commutative  Hopf $\K$-algebras, then we have a contravariant adjunction
\begin{equation}\label{Eq:hola}
\xymatrix@C=45pt{ \cI: \mathsf{Lie}_{\Sscript{\K}}  \ar@<0.6ex>@{->}|(.4){}[r]  &  \ar@<0.6ex>@{->}|(.4){}[l]    \mathsf{CHopf}_{\Sscript{\K}}  : \cL }
\end{equation}
explicitly given as follows. For every Lie algebra $L$ and Hopf algebra $H$, we have $\cI(L)=U(L)^{\circ}$ (the finite, or Sweedler's, dual Hopf algebra of the universal enveloping algebra\footnote{This is  also the commutative Hopf algebra constructed as the coend of the fiber functor attached to the symmetric monoidal category of finite-dimensional $L$-representations. It is called the algebra of \emph{representative functions} on $U(L)$ in  \cite[\S2]{Hochschild:1959}.}) and $\cL(H)={\rm Der}_{\Sscript{\K}}(H, \K_{\Sscript{\varepsilon}})$ (the vector space of derivations with coefficients in the $H$-module $\K$ via the counit $\varepsilon$ of $H$). 

Thus, the unit and counit of adjunction  \eqref{Eq:hola} provide us  with a more conceptual way of how to  relate Lie algebras with commutative Hopf algebras (playing here the role of ``groups'' associated with them). Specifically, let us denote by $\Theta_{\Sscript{L}}:  L \to \cL(\cI(L))$ the unit at a Lie algebra $L$ and by $\Psi_{\Sscript{H}}: H  \to \cI(\cL(H))$ the counit at a Hopf algebra $H$. Then it is known from the literature that, in characteristic zero,  $\Theta_{\Sscript{L}}$ is injective for any finite-dimensional Lie algebra $L$ (see Remark \ref{rem:Smooth} for  a proof), while, for an  affine algebraic group $G$,  $\Psi_{\Sscript{\cO(G)}}$ is injective if and only if  $G$ is connected (see, e.g., \cite[0.3.1(g)]{Takeuchi:75}).   It is noteworthy to mention that $\Theta_{\Sscript{L}}$ is not an  isomorphism even for  some trivial finite-dimensional Lie algebras. For example, in the case of one dimensional abelian Lie $\mathbb{C}$-algebra $\fk{a}$,  the Hopf algebra $\cI(\fk{a})$ splits as a tensor product of two Hopf algebras (\cite[Example 9.1.7]{Montgomery:1993}) in such a way that it possesses at least two  linearly independent derivations with values in $\mathbb{C}$; whence $\Theta_{\Sscript{\fk{a}}}$ is not  surjective. 
However,  over an algebraically closed field of characteristic zero,  if the given finite-dimensional Lie $\Bbbk$-algebra $L$ coincides with its derived Lie algebra (i.e., $L=[L, L]$, e.g.~when $L$ is semisimple), then $\Theta_{\Sscript{L}}$ is surjective by \cite[Theorem 6.1(3)]{Hochschild:1970}, and so an isomorphism. As a consequence, the restriction $\cI'$ of the functor $\cI: \mathsf{Lie}_{\Sscript{\K}}  \to \op{\left(\mathsf{CHopf}_{\Sscript{\K}}\right)}$ to the full-subcategory of all those finite-dimensional Lie algebras $L$ such that $L = [L,L]$, is fully faithful. In view of \cite[II, \S6, n$\textsuperscript{o}$ 2, Corollary 2.8, page 263]{DemazureGabriel} and \cite[Theorem 3.1]{Hochschild:1970}, any object $L$ of that subcategory is an algebraic Lie algebra, that is, $L=\mathsf{Lie}(G)$  the Lie algebra of a connected and simply connected affine algebraic  group $G$. It turns out that $\cO(G)$ is a finitely generated Hopf algebra, it is an integral domain, it has no proper affine unramified extensions, $\cL(\cO(G)) \cong L$ and, moreover, it can be identified with $\cI(L)$ (see \cite[top of page 57 and Theorem 4.1]{Hochschild:1970}). Therefore, if we corestrict $\cI'$ to its essential image (i.e., the full subcategory of all those finitely generated Hopf algebras which are integral domains and have no proper affine unramified extension and such that $\cL(H) = [\cL(H),\cL(H)]$), it induces an anti-equivalence of categories.
Not less important is the fact that the adjunction \eqref{Eq:hola}, when restricted to a certain class of real Hopf algebras (see \cite[Corollary 3.4.4, page 162]{Abe}), can be seen as a categorical reformulation of Lie groups differentiation and integration.

Now, if we want to extend these constructions to a category wider than that of groups (respectively commutative Hopf algebras), for example that of groupoids (resp.~commutative Hopf algebroids), then several obstructions show up, specially in the construction of the integration functor (or functors). For instance,  it  is well-known (see \cite[\S 3.5]{Mackenzie}) that to each Lie groupoid, one can attach ``in a functorial way'' a Lie algebroid (for the reader's sake, we included some details in Appendix \ref{ssec:LA-LG}), but  there are Lie algebroids which do not integrate to Lie groupoids. However, we point out that there are conditions which guarantee the integrability (see e.g. \cite{Crainic/Fernandes:2003} and  \cite{Fernandes}).

In the same lines as before, if we want to think of a Hopf algebra, instead of a (Lie) group, then the  closest algebraic
prototype of a (Lie) groupoid is a commutative Hopf algebroid\footnote{Note that Morita theory of Lie groupoids behaves in a similar way as for commutative Hopf algebroids, see \cite{Kaoutit/Kowalzig:14} for details.}. However, in contrast with the case of Lie groups\footnote{In this case for every Lie group we have, in contravariant functorial way, the commutative real Hopf algebra of smooth representative functions.}, as far as we know, there is no functorial way to go directly  from the category of Lie groupoids to that of commutative Hopf algebroids.  Nevertheless, there is a well-defined functor from the category of Lie algebroids (overs a fixed connected smooth real manifold $\cM$) to the category of complete topological and commutative Hopf algebroids (with  $\Cinfty{\cM}$ as a base algebra), that is,  formal affine groupoid schemes (see \cite{LaiachiPaolo2} for the precise definition of these algebroids).
It is noteworthy that this functor passes through three constructions: The first one uses the smooth global sections functor from Lie algebroids to  Lie-Rinehart algebras, the second  resorts to the well-known universal enveloping algebroid functor that assigns to any Lie-Rinehart algebra (see \S \ref{ssec:LR} for the definition) its universal (right) cocommutative Hopf algebroid, and the third construction utilizes the notion of \emph{convolution Hopf algebroid} \cite{LaiachiPaolo2}.  In this way, a notable observation due to Kapranov \cite{Kapranov:2007} says  that the module of smooth global sections of a given Lie algebroid (as above), can be recovered as the subspace of continuous derivations (killing the source map)  of the attached convolution algebroid. In other words, formal affine groupoid schemes give rise to an algebraic approach to Lie algebroids integration problem.

Finally, as implicitly suggested above, Lie-Rinehart algebras  present themselves as the algebraic counterpart of Lie algebroids and so they become a natural substitute for Lie algebras (in subsection \ref{ssec:Umemura}, we give new examples of these objects).
Moreover, by the foregoing, it is reasonable to expect that Lie-Rinehart algebras and affine groupoid schemes are closely related, although no adjunction connecting them and extending the one stated in \eqref{Eq:hola} is known in the literature. It is then natural to look for an adjunction between the category of  Lie-Rinehart algebras (or Lie algebroids) and that of commutative Hopf algebroids (or affine groupoid schemes), which could set up the bases of the formal differentiation and integration processes in this context. The main  achievement of this paper is to solve this question in the affirmative.  As we will see, similar difficulties as those mentioned above show up in this setting.

\subsection{Description of main results} We now give a detailed description of our main results.
Let $A$ be a commutative algebra  over a ground field $\Bbbk$ (usually of zero characteristic). Set $\Halgd{A}$ to be the category of commutative Hopf algebroids with base algebra $A$ and consider its full subcategory $\GHalgd{A}$ whose objects are Galois\footnote{The terminology ``Galois'' is  motivated by the fact that it extends Galois theory of commutative  Hopf algebras, which in turn extends the classical Galois theory.} (see \S \ref{ssec:CHAlgds} and \S \ref{ssec:Galois}).  The  category of (right) co-commutative Hopf algebroids with base algebra $A$ is denoted by  $\CHalgd{A}$ (see \S \ref{ssec:cocom}).

The first task in order to establish the notion of differentiation and integration in this context and  in the previous sense  is to construct a contravariant functor from  $\CHalgd{A}$ to  $\Halgd{A}$. There are two interrelated ways to construct such a  functor. The first one uses what is known in the literature  as \emph{Tannaka reconstruction process}, applied to a certain symmetric monoidal category of modules (this was mainly achieved in \cite{LaiachiGomez} and recalled in \S \ref{ssec:circ} for the reader's sake). The second way uses the Special  Adjoint Functor Theorem (SAFT) applied to the category of $A$-rings. The structure maps of the constructed commutative Hopf algebroid (via SAFT) out of a co-commutative one, as well as its universal property, are explicitly given in \S \ref{ssec:rsaft}. The construction of these contravariant functors is of independent interest and it constitutes our first main result, stated below as a combination of Proposition \ref{prop:Fdual} and Theorem \ref{thm:Teoremone}:

{\renewcommand{\thetheorem}{{\bf A}}
\begin{theorem}\label{thm:TeoremoneA}
 Let $A$ be a commutative algebra. Then there are two contravariant  functors
$$
(-)^{\circ}: \CHalgd{A} \longrightarrow \Halgd{A}, \quad \rsaft{(-)}:\CHalgd{A} \longrightarrow \Halgd{A}.
$$
Explicitly, take a (right) cocommutative Hopf algebroid $(A,\cU)$ and consider its convolution algebra $(A,\rdual{\cU})$. There are two commutative Hopf algebroids $(A,\cU^{\circ})$ and $(A,\rsaft{\cU})$, which fit into a commutative diagram of $(A\tensor{}A)$-algebras:
\begin{equation}\label{Eq:ZetaXi}
\begin{gathered}
\xymatrix@R=15pt{ \cU^{\circ}  \ar@{->}^-{{\zeta}}[rr] \ar@{->}_-{\hat{\zeta}}[rd] & & \rdual{\cU} \\ & \rsaft{\cU},  \ar@{->}_-{\xi}[ru] & }
\end{gathered}
\end{equation}
where  $\hat{\zeta}$ is a morphism of commutative Hopf algebroids.  Furthermore, the map $\hat{\zeta}$ is an isomorphism either when $\cU$ is a Hopf algebra (i.e., when $A=\Bbbk$), or when it has a finitely generated and projective underlying (right) $A$-module.
 \end{theorem}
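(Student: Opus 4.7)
My plan would be to treat the two functors separately, then assemble the comparison and tackle the isomorphism criteria at the end.

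For the functor $(-)^\circ$, I would follow the Tannakian strategy reviewed earlier in the paper: given a cocommutative Hopf algebroid $(A,\cU)$, form the symmetric monoidal category of $\cU$-modules which are finitely generated and projective as $A$-modules, and define $\cU^\circ$ as the coend of the canonical fiber functor to $A$-bimodules (equivalently, as the $A$-linear dual picked out by those $\cU$-representations that are finite on each side). The reason $\cU^\circ$ lands in $\Halgd{A}$ is standard: the monoidal structure gives the multiplication and unit, duals give the antipode, and the fiber functor gives the source/target and counit; functoriality and contravariance are automatic from the fact that a morphism $\cU\to \cV$ restricts representations in the opposite direction. In particular this gives the evaluation map $\zeta:\cU^\circ\to \rdual{\cU}$ into the convolution algebra $\Hom{A-}{\cU}{A}$ by just ``forgetting that we are testing only against finite representations''.

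For the functor $\rsaft{(-)}$, I would invoke the Special Adjoint Functor Theorem as announced. Concretely, the forgetful functor from $\Halgd{A}$ to the category of $A^e$-rings (or more precisely to some suitable category of $A$-rings equipped with an augmentation to $A$) is continuous and its domain is cocomplete, well-powered and has a generator, so it admits a left adjoint. Applying that left adjoint to the convolution $A$-ring $\rdual{\cU}$ produces the universal commutative Hopf algebroid $\rsaft{\cU}$ equipped with a canonical map of $A$-rings $\xi:\rsaft{\cU}\to \rdual{\cU}$. The universal property is: every morphism of $A$-rings from a commutative Hopf algebroid $\cH$ to $\rdual{\cU}$ factors uniquely through $\xi$. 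Contravariant functoriality in $\cU$ then follows because a morphism $\cU\to\cV$ in $\CHalgd{A}$ yields an opposite $A$-ring map $\rdual{\cV}\to\rdual{\cU}$, which composed with $\xi_{\cV}$ factors uniquely through $\xi_{\cU}$ by universality.

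With both functors in hand, the commutativity of diagram \eqref{Eq:ZetaXi} is essentially a formal consequence of the universal property of $\rsaft{\cU}$: since $\cU^\circ$ is a commutative Hopf algebroid and $\zeta$ is a morphism of $A$-rings into $\rdual{\cU}$, there exists a unique morphism $\hat\zeta:\cU^\circ\to \rsaft{\cU}$ of commutative Hopf algebroids with $\xi\circ\hat\zeta=\zeta$. The main thing to verify here is that $\zeta$ is not only multiplicative but compatible with the whole Hopf algebroid structure to the extent needed for $\hat\zeta$ to be a morphism of Hopf algebroids; I expect this to follow directly from the definitions of source, target, counit and antipode on $\cU^\circ$ in terms of the fiber functor, which match exactly the structure induced on any commutative sub-Hopf-algebroid of $\rdual{\cU}$.

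The part I expect to be the most delicate is the isomorphism statement for $\hat\zeta$. In the Hopf algebra case $A=\Bbbk$, one reduces to the classical statement that Sweedler's dual $\cU^\circ$ is itself the universal commutative Hopf algebra mapping into $\rdual{\cU}=\cU^*$, so both $\cU^\circ$ and $\rsaft{\cU}$ solve the same universal problem and $\hat\zeta$ is forced to be an isomorphism. When $\cU_A$ is finitely generated projective, I would argue that $\rdual{\cU}$ is itself already a commutative Hopf algebroid (its coalgebra-type structure is dualised to an algebra structure without any finiteness obstruction thanks to the canonical isomorphism $\rdual{\cU}\tensor{A}\rdual{\cU}\cong\rdual{(\cU\tensor{A}\cU)}$), and that all finitely generated projective $A$-modules are representations of $\cU$, so $\zeta$ is surjective onto $\rdual{\cU}$, and in fact an isomorphism; the same holds for $\xi$, because the universal solution with respect to $\rdual{\cU}$ is $\rdual{\cU}$ itself. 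Hence both arrows of \eqref{Eq:ZetaXi} meeting at $\rdual{\cU}$ become isomorphisms, and so does their common factor $\hat\zeta$. The bookkeeping needed to make the finite-dimensional identifications match in the comparison of the two universal constructions, especially the interplay between the $A$-bimodule structure and the coproduct, is where I expect the real work to lie.
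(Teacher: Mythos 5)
Your treatment of $(-)^{\circ}$ via Tannaka reconstruction matches the paper, and your strategy for the two isomorphism criteria (over a field, both objects solve the same universal problem; in the finitely generated projective case, both $\zeta$ and $\xi$ become invertible against $\rdual{\cU}$) is essentially the argument of Remarks \ref{rem:fgp} and \ref{rem:UcUb}. The gap is in your construction of $\rsaft{(-)}$ and, crucially, in the universal property you assign to it. You propose to apply SAFT to the forgetful functor $\Halgd{A}\to\ring{A^{e}}$ and to characterize $\rsaft{\cU}$ by ``every morphism of $A$-rings from a commutative Hopf algebroid $\cH$ to $\rdual{\cU}$ factors uniquely through $\xi$.'' That property describes a cofree commutative Hopf algebroid on the \emph{algebra} $\rdual{\cU}$, which is a different (and much larger) object: a bare $A^{e}$-algebra map $\cH\to\rdual{\cU}$ carries no compatibility between $\Delta_{\cH},\varepsilon_{\cH}$ and the multiplication and unit of $\cU$, so there is no mechanism forcing the factorization to be a coring map, and the resulting object does not see the coalgebra structure of $\cU$ at all. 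The correct universal property involves $A^{e}$-algebra maps $f:\cH\to\rdual{\cU}$ \emph{satisfying the pairing conditions} \eqref{eq:zetaprop}; equivalently, $A$-ring maps $\cU\to\ldual{\cH}$ compatible with $\eta_{\cH}$ and $m_{\cH}$ as in \eqref{eq:simildag}. With your version of the universal property the counit $F(\rdual{\cU})\to\rdual{\cU}$ would not be an isomorphism even when $\cU_{A}$ is finitely generated projective, so the last part of the theorem would fail for your object.

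Relatedly, you assert the Hopf algebroid structure on $\rsaft{\cU}$ exists by abstract nonsense, whereas this is where the real work of the theorem lies. The paper applies SAFT only at the level $\ldual{(-)}:\op{\coanillos{A}}\to\ring{A}$, so $\rsaft{\cU}$ is a priori just an $A$-coring with a map $\xi$ universal among corings mapping to $\rdual{\cU}$ subject to \eqref{eq:zetaprop}; one must then \emph{construct} the multiplication (via the coring $\rsaft{\cU}\odot\rsaft{\cU}$), unit and antipode, and verify commutativity, associativity and the antipode axioms. The verification is not formal: it rests on Lemma \ref{lem:maggico}, which says that $\ker{\xi}$ contains no nonzero coideal, so that each axiom is checked by showing the relevant defect elements span a coideal inside $\ker{\xi}$ and hence vanish. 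None of this is replaced by your SAFT application (whose hypotheses for $\Halgd{A}$ -- completeness, well-poweredness, a cogenerator for the relevant variance -- you also do not verify). The same lemma is what makes $\hat{\zeta}$ a morphism of Hopf algebroids rather than merely of corings, a point you flag but defer. To repair the proof you would need either to adopt the coring-level universal property and carry out the coideal argument, or to establish the Hopf-algebroid-level couniversal property with the conditions \eqref{eq:zetaprop} built in from the start.
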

}

In contrast to the classical situation, in diagram \eqref{Eq:ZetaXi} neither $\zeta$ nor $\xi$ are necessarily injective. Its seems that this injectivity forms part of the structure of the involved Hopf algebroids. For instance, $\zeta$ is injective for any pair $(A, \cU)$ where $A$ is a Dedekind domain, $\xi$ is injective if and only if its kernel is a coideal,  and $\hat{\zeta}$ is an isomorphism  if and only if $(A, \rsaft{\cU})$ is a Galois Hopf algebroid. These and other properties are explored with full details in \S \ref{ssec:FDGeneral}.

Now denote by  $\LieR{A}$ the category of all Lie-Rinehart algebras over $A$.\footnote{When $A= \Cinfty{\cM}$ is the real algebra of smooth functions on a smooth real manifold, then the category of Lie algebroids over $\cM$ can be realised, via the global smooth sections functor, as a subcategory of $\LieR{A}$. If, furthermore, $\cM$ is compact then, by using Serre-Swan theorem, one can shows that the full subcategory of  Lie-Rinehart algebra over $A$ whose underlying modules are finitely generated and projective of constant rank is equivalent to that of Lie algebroids over $\cM$.}  It is well known from the literature that there is a (covariant) functor $\cV_{\Sscript{A}}(-): \LieR{A} \to \CHalgd{A}$ which assigns to any Lie-Rinehart algebra its universal enveloping Hopf algebroid (details are expounded in \S \ref{ssec:LR}).

Our first main goal is to show, by employing  Theorem \ref{thm:TeoremoneA},  that there are  functors:
\begin{equation}\label{Eq:Functors}
\lL:\Halgd{A}^{\text{op}}\longrightarrow \LieR{A}; \qquad     \iI,\, \iI':  \LieR{A}  \longrightarrow \Halgd{A}^{\text{op}},
\end{equation}
which are termed the \emph{differentiation} and \emph{integration functors}, respectively, and to establish two adjunctions involving these functors. In the notation of \S \ref{sec:LI} below, we have that
$$
\lL(\cH) = \Derk{s}{\cH}{\Aep}=\Big\{  \delta: \cH \to A\;\;  \Bbbk\text{-linear map }|\, \, \delta \circ s= 0, \, \delta(uv)= \varepsilon(u)\delta(v)+\delta(u)\varepsilon(v),\, \forall u,v \, \in \, \cH \Big\},
$$
this is referred to as the Lie-Rinehart algebra of a given commutative Hopf algebroid $(A,\cH)$ and its structure maps are explicitly expounded in Lemma \ref{lema:LR} and Proposition \ref{prop:LR}.

Mimicking \cite[II \S 4]{DemazureGabriel}, we give an alternative construction of the differentiation functor (Proposition \ref{prop:Gamma} in Appendix  \ref{ssec:FA}), which can be seen as an algebraic counterpart of the differentiation of Lie groupoids, and we examine the case of an operation of an affine group scheme on an affine scheme, providing several illustrating examples (see Appendix \ref{sec:A1} for more details).  More examples are also expounded in \S \ref{ssec:Umemura}, where we provide with  full details the computation of the Lie-Rinehart algebra of a certain Malgrange's Hopf algebroid, that arise  from differential Galois theory over the affine complex line. Besides, we show that there is a canonical morphism of Lie-Rinehart algebras between the latter and the one given by the global sections of the Lie algebroid of the associated invertible jet groupoid. In  analogy with Lie groupoid theory, when the affine scheme attached to $A$ admits $\K$-points, then we are able to recognise the isotropy Lie algebras  underlying the Lie algebroid $\lL(\cH)$ as the Lie algebras of the affine isotropy group schemes of the affine groupoid scheme attached to $(A,\cH)$. This is achieved in \S \ref{ssec:isotropy}.

The fact that there are two integrations functors $\iI$ and $\iI'$, which in the classical case of commutative Hopf algebras and Lie algebras coincide, is mainly due to the existence of two different and interrelated approaches in constructing the finite dual contravariant functor on non necessarily commutative rings hereby explored. More precisely, the first  functor $\iI$ is the composition of two functors $\iI= (-)^{\circ} \circ \cV_{\Sscript{A}}(-)$ and the second integration functor $\iI'$ decomposes as $\iI'= \rsaft{(-)} \circ  \cV_{\Sscript{A}}(-)$, where $(-)^{\circ}$ and $\rsaft{(-)}$ are the functors stated in Theorem \ref{thm:TeoremoneA}. According to this Theorem, both integrations functors are shown to fit into a commutative diagram:
$$
\xymatrix@R=18pt{ \iI(-)  \ar@{->}^-{\mb{\zeta}_{\Sscript{\cV_A(-)}}}[rr] \ar@{->}_-{\mb{\hat{\zeta}}_{\Sscript{\cV_A(-)}}}[rd] & & (\cV_{\Sscript{A}}(-))^* \\ & \iI'(-) \ar@{->}_-{\mb{\xi}_{\Sscript{\cV_A(-)}}}[ru] & }
$$
where, for every Lie-Rinehart algebra $(A,L)$,  the algebra $(\cV_{\Sscript{A}}(L))^*$ is the convolution algebra of $\cV_{\Sscript{A}}(L)$ endowed with its topological commutative Hopf algebroid structure (see \cite{LaiachiPaolo2} for the precise notion). In the above diagram,  the natural transformation $\mb{\zeta}$ is the one defined in  Eq. \eqref{Eq:zeta}, $\mb{\hat{\zeta}}$ is the lifting of $\mb{\zeta}$ by the universal property \eqref{Eq:zetahat}, and $\mb{\xi}$ is the natural transformation described in Lemma \ref{lem:maggico}, where we also characterize the injectivity of this map.

The second main result of the paper is the following theorem, which is presented here as a combination of Theorems \ref{thm:Ap} and \ref{thm:A} stated below.

{\renewcommand{\thetheorem}{{\bf B}}
\begin{theorem}\label{thm:AA}
Let $A$ be a commutative algebra. Then there is a natural isomorphism
$$
\xymatrix@R=15pt{  \Hom{\Halgd{A}}{\cH}{\iI'(L)}  \ar@{->}^{\cong}[rr] & &  \Hom{\LieR{A}}{L}{\lL(\cH)}, }
$$
for any commutative Hopf algebroid $(A,\cH)$ and Lie-Rinehart algebra $(A,L)$. That is, the integration functor $\iI'$ is left adjoint to the differentiation functor $\lL$.

Assume now that  the map $\mb{\zeta}_R$  of Eq. \eqref{Eq:zeta} is injective for every $A$-ring $R$ (e.g.,~when $A$ is a Dedekind domain\footnote{This the case when $A$ is the coordinate algebra of an irreducible smooth curve over an algebraically closed field.}). Then there is a natural isomorphism
$$
\xymatrix{  \Hom{\GHalgd{A}}{\cH}{\iI(L)}  \ar@{->}^{\cong}[rr] & &  \Hom{\LieR{A}}{L}{\lL(\cH)}, }
$$
for any commutative Galois Hopf algebroid  $(A,\cH)$ and Lie-Rinehart algebra $(A,L)$. That is, the integration functor $\iI$ is left adjoint to the restriction of the differentiation functor $\lL$ to the full subcategory of Galois Hopf algebroids.
\end{theorem}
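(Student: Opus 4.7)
The plan is to construct a natural bijection between $\Hom{\Halgd{A}}{\cH}{\iI'(L)}$ and $\Hom{\LieR{A}}{L}{\lL(\cH)}$ by chaining together three universal properties: the universal property of $\rsaft{(-)}$ coming from the SAFT construction in \S\ref{ssec:rsaft}, the bilinear-pairing duality between $A$-ring maps $\cH \to \rdual{\cV_{\Sscript{A}}(L)}$ and $A$-ring maps $\cV_{\Sscript{A}}(L) \to \rdual{\cH}$, and the universal property of the enveloping Hopf algebroid $\cV_{\Sscript{A}}(L)$ of a Lie--Rinehart algebra $(A,L)$ recalled in \S\ref{ssec:LR}.

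Concretely, starting from a Hopf algebroid morphism $\psi: \cH \to \iI'(L) = \rsaft{\cV_{\Sscript{A}}(L)}$, I would compose with the canonical $A$-ring map $\xi: \rsaft{\cV_{\Sscript{A}}(L)} \to \rdual{\cV_{\Sscript{A}}(L)}$ of diagram \eqref{Eq:ZetaXi}, transpose the resulting pairing $\cH \otimes_A \cV_{\Sscript{A}}(L) \to A$ into an $A$-linear map $\phi_{\psi}: L \to \rdual{\cH}$ via restriction to $L \hookrightarrow \cV_{\Sscript{A}}(L)$, and verify that its image lies inside $\lL(\cH) = \Derk{s}{\cH}{\Aep}$ and respects the anchor and the Lie bracket. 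The derivation property would follow from $\psi$ being a coring morphism (counit compatibility) combined with the primitivity of elements of $L$ in $\cV_{\Sscript{A}}(L)$; the $s$-linearity from $\psi$ being an $A$-bimodule map; the anchor and bracket compatibilities from $\psi$ being an algebra morphism combined with the cocommutative coproduct on $\cV_{\Sscript{A}}(L)$, which simultaneously encodes the $A$-module structure and the Lie bracket of $L$. Conversely, given a Lie--Rinehart morphism $\phi: L \to \lL(\cH)$, the natural embedding $\lL(\cH) \hookrightarrow \rdual{\cH}$ together with the universal property of $\cV_{\Sscript{A}}(L)$ extends $\phi$ to an $A$-ring map $\cV_{\Sscript{A}}(L) \to \rdual{\cH}$; transposition yields an $A$-ring map $\cH \to \rdual{\cV_{\Sscript{A}}(L)}$ which, by the defining universal property of $\rsaft{(-)}$, factors uniquely as $\xi \circ \psi_{\phi}$ for some commutative Hopf algebroid morphism $\psi_{\phi}: \cH \to \iI'(L)$. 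The assignments $\psi \mapsto \phi_{\psi}$ and $\phi \mapsto \psi_{\phi}$ are mutually inverse by construction, and naturality in $\cH$ and $L$ follows from functoriality of the constituents.

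For the Galois case, I would specialize diagram \eqref{Eq:ZetaXi} to $\cU = \cV_{\Sscript{A}}(L)$: the standing hypothesis that $\mb{\zeta}_R$ is injective for every $A$-ring $R$ guarantees in particular that $\hat{\zeta}_{\cV_{\Sscript{A}}(L)}: \iI(L) \to \iI'(L)$ is an injective morphism of commutative Hopf algebroids, identifying $\iI(L)$ with a Hopf subalgebroid of $\iI'(L)$. Since, by Theorem~\ref{thm:TeoremoneA}, $\hat{\zeta}$ becomes an isomorphism exactly when the target is a Galois Hopf algebroid, and since by the Tannakian construction of $(-)^{\circ}$ from \S\ref{ssec:circ} every morphism of commutative Hopf algebroids $\cH \to \iI'(L)$ whose source $\cH$ is Galois must factor uniquely through $\iI(L) \hookrightarrow \iI'(L)$, composition with $\hat{\zeta}_{\cV_{\Sscript{A}}(L)}$ induces a bijection $\Hom{\GHalgd{A}}{\cH}{\iI(L)} \cong \Hom{\Halgd{A}}{\cH}{\iI'(L)}$ for Galois $\cH$. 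Pairing this with the bijection from Part~1 delivers the claimed natural isomorphism on the Galois subcategory.

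The main technical obstacle will be the verification of the anchor and Lie-bracket compatibilities in Part~1, in both directions, which requires carefully tracking how the primitive elements and the cocommutative coproduct of $\cV_{\Sscript{A}}(L)$ recover the Lie--Rinehart structure of $L$, and how this duality interacts with the commutative multiplication and the comultiplication of $\cH$; dually, one must check that the $A$-ring map $\cH \to \rdual{\cV_{\Sscript{A}}(L)}$ built from $\phi$ really satisfies the hypotheses needed to descend through $\xi$ to a Hopf algebroid morphism into $\rsaft{\cV_{\Sscript{A}}(L)}$. For Part~2, the key subtlety is justifying the factorization of a Hopf algebroid morphism $\cH \to \iI'(L)$ through $\iI(L)$ when $\cH$ is Galois, which hinges on identifying $\iI(L)$ as the precise Tannakian/Galois envelope singled out inside $\iI'(L)$ under the injectivity hypothesis on $\mb{\zeta}$.
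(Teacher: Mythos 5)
Your proposal is correct and follows essentially the same route as the paper. Part~1 is precisely the chain of bijections the paper isolates as Lemmas \ref{lema:data} and \ref{lema:dataI}: the universal property of $\rsaft{(-)}$, the transposition between $A$-ring maps $\cH\to\rdual{\cU}$ satisfying \eqref{eq:zetaprop} and $A$-ring maps $\cU\to\ldual{\cH}$ satisfying \eqref{eq:simildag}, and the universal property of $\VL$ (your ``technical obstacle'' of checking the anchor and bracket compatibilities is exactly the content of Lemmas \ref{lema:Htilda} and \ref{lema:dataI}; note only that the paper lands in $\ldual{\cH}$ with the product \eqref{Eq:ast} rather than $\rdual{\cH}$, a harmless normalization via the antipode). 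Part~2 likewise reduces, as in the paper, to showing that composition with $\nabla_L=\mb{\hat{\zeta}}_{\VL}$ is bijective on Hom-sets out of a Galois $\cH$.

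The one place where your justification points in the wrong direction is the factorization step in Part~2. The fact that $\hat{\zeta}$ is an isomorphism exactly when $\rsaft{\VL}$ is Galois (Corollary \ref{coro:circbullet}) is a statement about the \emph{target} and does not yield the factorization; if it applied here one could dispense with the Galois hypothesis on $\cH$ altogether. What actually drives the argument (Proposition \ref{prop:drwho}, resting on Theorem \ref{thm:drwho}) is that injectivity of $\mb{\zeta}_{\VL}$ makes $\hat{\zeta}$ a monomorphism of corings and makes $\Acom{\hat{\zeta}}$ an isomorphism of categories, while the Galois hypothesis on the \emph{source} provides a splitting $\sigma$ of $\can{\cH}$; a coring map $g:\cH\to\rsaft{\VL}$ is then rerouted as $\can{}\circ\rR\big(\Acom{\hat{\zeta}}\big)^{-1}\circ\rR\big(\Acom{g}\big)\circ\sigma$ through $\VL^{\circ}$, and the resulting coring factorization is upgraded to a morphism of Hopf algebroids using that $\nabla_L$ is a monomorphism. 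You correctly flag this as the key subtlety, but the mechanism you invoke would not by itself produce it; the comodule-category detour above is the missing ingredient.
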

}

The unit and the counit of the second adjunction are detailed  in Appendix \ref{ssec:UAdj}.  Given a Lie-Rinehart algebra $(A,L)$, it is of particular interest to consider the following commutative diagram involving both units and  stated in Proposition \ref{prop:Theta} below:
\begin{equation}\label{Eq:LTriangle}
\begin{gathered}
\xymatrix@R=15pt{   L  \ar@{->}^-{\Theta_{L}}[rr]   \ar@{->}_-{\Theta'_{L}}[rrd]  &&   \lL(\VL^{\circ})  \\ &&  \lL(\rsaft{\VL}).  \ar@{->}_-{\lL(\hat{\zeta})}[u] }
\end{gathered}
\end{equation}
As a consequence of Theorem \ref{thm:AA}, the commutative Hopf algebroid $(A,\lL(\rsaft{\VL}))$  (hence its associate presheaf of groupoids) can be  thought of as the universal groupoid of the given Lie-Rinehart algebra $(A,L)$ with a universal  morphism $\Theta'_{L}: L \to  \lL(\rsaft{\VL})$.
In our opinion, the question if either $\Theta_L$ or $\Theta'_L$ is an isomorphism for a specific $(A,L)$ can be regarded as a first step towards the study of the integrability of Lie-Rinehart algebras (i.e., the problem of integrating Lie-Rinehart algebra\footnote{This problem can be rephrased as follows: Given a Lie-Rinehart algebra $(A,L)$ where $L$ is a finitely generated and projective $A$-module, under which conditions  is there a commutative Hopf algebroid $(A,\cH)$ such that $L \cong \lL(\cH)$ as Lie-Rinehart algebras? See Remark \ref{rem:Integrable}, for more discussions.}). Another  question that Theorem \ref{thm:AA}  introduces, is to seek for full subcategories of $\LieR{A}$ and $\Halgd{A}$ for which the previous adjunction restrict to an anti-equivalence of categories.

Let $(A,\cH)$ be a commutative Hopf algebroid, set $\cI={\rm Ker}(\varepsilon)$ for the kernel of its counit and consider its quotient $A$-bimodule $Q(\cH):=\cI/ \cI^2$. Then the K\"ahler module $\Omega_{A}^{s}(\cH)$ of $(A,\cH)$ with respect to the source map is shown to be given, up to a canonical isomorphism,  by:
$$
\Omega_{A}^{s}(\cH) \, \cong \, \Hst \tensor{A} {{{}_{\Sscript{s}}}}Q(\cH), \quad \Big( \psis: \Hs \longrightarrow  \Omega_{A}^{s}(\cH), \; \big[  u \longmapsto u_{1}\tensor{A} \pis(u_{2}) \big] \Big)
$$
where $\psis$ is the morphism that plays the role of the universal derivation and  $\pis: \sH \to {{{}_{\Sscript{s}}}}Q(\cH)$ is the left $A$-modules morphism which sends $u \mapsto
(u - s(\varepsilon(u))) + \cI^{2}$.

The subsequent one is the third aforementioned main result, which deals with the notion of \emph{separable morphism} between commutative Hopf algebroids with same base algebra:
{\renewcommand{\thetheorem}{{\bf C}}
\begin{theorem}\label{thm:AB}
Let $(\id,\, \phi):(A,\cK)\to (A,\cH)$ be a morphism of commutative Hopf algebroids. Assume that $\indec{\cH}$ and $\indec{\cK}$ are finitely generated and projective $A$-modules. The following assertions are equivalent
\begin{enumerate}[label=({\alph*}),ref=\emph{({\alph*})}]
\item $\indec{\phi}$ is split-injective.
\item $\lL(\phi): \lL(\cH) \to \lL(\cK)$ is surjective.
\item $\Derk{s}{\phi}{-}:\Derk{s}{\cH}{-}\to \Derk{s}{\cK}{\phi_{\Sscript{*}}(-)}$ is surjective on each component.
\item $\Derk{s}{\phi}{\cH}:\Derk{s}{\cH}{\cH}\to \Derk{s}{\cK}{\cH}$ is surjective.
\item $\cH\tensor{\cK}\Omega_{A}^s(\cK)\to \Omega_{A}^s(\cH):\,h\tensor{\cK}w\mapsto h\Omega_{A}^s(\phi)(w)$ is split-injective.
\end{enumerate}
\end{theorem}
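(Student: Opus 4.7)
The strategy is to reduce all five conditions to statements about the $A$-linear map $\indec{\phi}\colon \indec{\cK}\to \indec{\cH}$ between finitely generated projective $A$-modules, where they become equivalent to the single condition that $\indec{\phi}$ be split-injective, and then to translate back. I would first record the canonical $A$-linear identification $\lL(\cH)\cong \Hom{A}{\indec{\cH}}{A}$: any $\delta\in\lL(\cH)=\Derk{s}{\cH}{\Aep}$ vanishes on $s(A)$ by assumption and on $\cI^{2}$ by the Leibniz rule, hence factors through $\indec{\cH}=\cI/\cI^{2}$; conversely, every $A$-linear map $\indec{\cH}\to A$ lifts uniquely via $\pis$ to an element of $\lL(\cH)$. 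Under this identification, $\lL(\phi)$ becomes the $A$-linear dual $(\indec{\phi})^{\ast}$. The standard fact that for a morphism $f$ between finitely generated projective $A$-modules, $f$ is split-injective if and only if $f^{\ast}$ is surjective (using that $M^{\ast}$ is projective and that $M\cong M^{\ast\ast}$ canonically) immediately yields \emph{(a)}$\Leftrightarrow$\emph{(b)}.

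For \emph{(a)}$\Leftrightarrow$\emph{(e)}, I would use the explicit form $\Omega_{A}^{s}(\cH)\cong \Hst\tensor{A}{}_{s}\indec{\cH}$ (and similarly for $\cK$) to identify the map of \emph{(e)} with $\mathrm{id}_{\cH}\tensor{A}\indec{\phi}\colon \Hst\tensor{A}\indec{\cK}\to \Hst\tensor{A}\indec{\cH}$. If $\indec{\phi}$ has a retraction $\rho$, then $\mathrm{id}_{\cH}\tensor{A}\rho$ splits the map of \emph{(e)}. Conversely, given a retraction $\tau$ of $\mathrm{id}_{\cH}\tensor{A}\indec{\phi}$, the composition $(\varepsilon\tensor{A} \mathrm{id}_{\indec{\cK}})\circ \tau \circ (1_{\cH}\tensor{A} -)$ is a retraction of $\indec{\phi}$; the retraction $\varepsilon\tensor{A} \mathrm{id}_{\indec{\cK}}\colon \Hst\tensor{A}\indec{\cK}\to \indec{\cK}$ is available precisely because $\varepsilon\circ s=\mathrm{id}_{A}$.

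For \emph{(c)} and \emph{(d)}, I would invoke the universal property of the K\"ahler module together with its explicit form: for any $\cH$-bimodule $M$, one has $\Derk{s}{\cH}{M}\cong \Hom{A}{\indec{\cH}}{M}$ and $\Derk{s}{\cK}{\phi_{\ast}M}\cong \Hom{A}{\indec{\cK}}{M}$, and under these identifications the natural transformation $\Derk{s}{\phi}{-}$ becomes $\Hom{A}{\indec{\phi}}{-}$. Thus \emph{(a)}$\Rightarrow$\emph{(c)} is immediate (split-injectivity of $\indec{\phi}$ transfers to surjectivity of $\Hom{A}{\indec{\phi}}{M}$ for every $M$), \emph{(c)}$\Rightarrow$\emph{(d)} is trivial specialization to $M=\cH$, and for \emph{(d)}$\Rightarrow$\emph{(b)} I would use that, since $\indec{\cH}$ and $\indec{\cK}$ are finitely generated projective, $\Hom{A}{\indec{\cH}}{\cH}\cong \indec{\cH}^{\ast}\tensor{A}\cH$, so that \emph{(d)} is the surjectivity of $(\indec{\phi})^{\ast}\tensor{A}\mathrm{id}_{\cH}$; postcomposing with the surjective map $\mathrm{id}\tensor{A}\varepsilon$ and factoring through $\indec{\cH}^{\ast}\tensor{A}A=\indec{\cH}^{\ast}$ by naturality of the tensor product forces $(\indec{\phi})^{\ast}$ itself to be surjective, i.e., \emph{(b)}.

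The main obstacle I anticipate is the careful bookkeeping of the several $A$-bimodule structures on $\cH$ involved ($s$ versus $t$, and the $\cK$-bimodule structure induced by $\phi$), in particular verifying the natural isomorphism $\cH\tensor{\cK}\Omega_{A}^{s}(\cK)\cong \Hst\tensor{A}\indec{\cK}$ together with the compatibility of this identification with the morphism of Hopf algebroids $\phi$, so that the map of \emph{(e)} indeed corresponds to $\mathrm{id}_{\cH}\tensor{A}\indec{\phi}$.
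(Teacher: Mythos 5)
Your proposal is correct and follows essentially the same route as the paper's proof: both reduce everything to the single $A$-linear map $\indec{\phi}$ between finitely generated projective modules via the identifications $\lL(\cH)\cong\ldual{\big(\indec{\cH}\big)}$, $\Derk{s}{\cH}{M}\cong\Hom{A}{\indec{\cH}}{M}$ (equivalently $M\tensor{A}\lL(\cH)$, which is the paper's natural isomorphism $\varsigma_{\cH}$) and $\Omega_{A}^{s}(\cH)\cong\cH\tensor{A}\indec{\cH}$, and both recover $\indec{\phi}$ (resp. its dual) from its tensorization with $\cH$ by applying the counit, exactly as the paper does with the functor $A\tensor{\cH}-$. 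The only point to watch is that the identification $\Derk{s}{\cH}{M}\cong\Hom{A}{\indec{\cH}}{\Mt}$ for general coefficients $M$ is not the naive ``derivations kill $\cI^{2}$'' statement but requires the antipode twist of Corollary \ref{cor:PQ} combined with Lemma \ref{lema:rep} (\ie Proposition \ref{prop:Omega}), together with its naturality in $\cH$ along the Hopf algebroid morphism $\phi$ --- precisely the bookkeeping you flag at the end, and it does go through.
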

}
The assumption made in this theorem  are, of course, fulfilled whenever the total algebras $\cH$ and $\cK$ are regular\footnote{For instance, regular functions of an algebraic smooth variety.}.
In analogy with the affine algebraic groups \cite[page 196]{Abe}, a morphism of Hopf algebroids is called \emph{separable} if it satisfies one of the equivalent conditions in Theorem \ref{thm:AB}.

Lastly, we would like to mention that the construction of the finite dual for commutative Hopf algebroids, which are at least flat over the base algebra,   is also possible in principle. Thus, the construction  of a  contravariant functor from a certain full subcategory $\Halgd{A}$ to $\CHalgd{A}$ is feasible in theory.   Pushing further the investigation in this direction, one can be tempted to construct, for instance, a certain analogue of the \emph{hyperalgera} (or \emph{hyperalgebroid}) for an affine algebraic $\Bbbk$-groupoid and subsequently establish results similar to \cite[Theorems 4.3.13, 4.3.14]{Abe} for a flat commutative Hopf algebroid. We will not go on this topic here as, in our opinion, this deserves a separate research project.

\subsection{Notation and basic notions}\label{ssec:Notations}
Given a (Hom-set) category  $\cC$, the notation $C \in \cC$ stands for: $C$ is an object of $\cC$.  Given two objects $C,C' \in\cC$, we sometimes denote by $\hom{\cC}{C}{C'}$ the set of all morphisms from  $C$ to $C'$. We work over a  commutative base field $\Bbbk$ (possibly of characteristic zero). All algebras are $\Bbbk$-algebras  and the unadorned tensor product $\tensor{}$ stands for the tensor product over $\Bbbk$, $\tensor{\Bbbk}$. Given an algebra $A$, we denote by $\Ae=A\tensor{}A^{\Sscript{op}}$ its enveloping algebra. Bimodules over algebras are  understood to have a central underlying $\Bbbk$-vector space structure. As usual the notations $\lmod{A}, \rmod{A}$ and $\bimod{A}$ stand for the categories of left $A$-modules, right $A$-modules and $A$-bimodules, respectively.

Given two algebras $R,S$ and two bimodules ${}_{\Sscript{R}}M_{\Sscript{S}}$ and ${}_{\Sscript{R}}N_{\Sscript{S}}$, for simplicity, we denote by $\hom{R-}{M}{N}$, $\hom{-S}{M}{N}$ and $\hom{R-S}{M}{N}$ the $\Bbbk$-vector spaces of all left $R$-module, right $S$-module and $(R,S)$-bimodule morphisms from $M$ to $N$, respectively. The left and right \emph{duals}  of ${}_{\Sscript{R}}M_{\Sscript{S}}$ are denoted  by ${}^{*}M:=\hom{R-}{M}{R}$ and $M^{*}:=\hom{-S}{M}{S}$, respectively. These are $( S, R)$-bimodules and the actions are given as follows. For every $r \in R$, $s \in S$, $f \in {}^*M$ and $g \in M^*$, we have
\begin{equation}\label{Eq:actions}
sfr: M \longrightarrow R, \quad \Big(m \longmapsto f(ms)r  \Big); \qquad sgr: M \longrightarrow S, \quad \Big(m \longmapsto sg(rm)  \Big).
\end{equation}

For two morphisms   $p, q: A \to B$ of algebras, we shall denote by ${}_{\Sscript{p}}B,  B_{\Sscript{q}}$ and ${}_{\Sscript{p}}B_{\Sscript{q}}$, the left $A$-module, the right $A$-module and the $A$-bimodule structure on $B$, respectively. In case that only one algebra morphism is involved, \ie~ when $p=q$, for simplicity, we use the obvious notation: ${}_{\Sscript{A}}B, B_{\Sscript{A}}$ and ${}_{\Sscript{A}}B_{\Sscript{A}}$.

For an algebra $A$, a left (or right) \emph{$A$-linear map} stands for a morphism of left (right) $A$-modules, while an \emph{$A$-bilinear map} refers to a morphism between $A$-bimodules. For such an algebra $A$, an \emph{$A$-ring} is an algebra extension $A \to R$, or equivalently a monoid in the monoidal category  $(\bimod{A},\tensor{A},A)$. Given an $A$-ring $R$,  we will denote by $\Amod{R}$ the full subcategory of right $R$-modules whose underlying right $A$-modules are finitely generated and projective.

The dual notion of $A$-ring is that of $A$-coring. Thus, an \emph{$A$-coring} is a co-monoid in the monoidal category $\left(\bimod{A},\tensor{A},A\right)$ of $A$-bimodules. That is, an $A$-bimodule $\coring{C}$ with two $A$-bilinear maps $\Delta: \coring{C} \to \coring{C} \tensor{A} \coring{C}$ (\emph{the comultiplication}, sending $x$  to $x_{\Sscript{1}}\tensor{A}x_{\Sscript{2}}$ with summation understood) and $\varepsilon: \coring{C} \to A$ (\emph{the counit}) subject to the co-associativity and co-unitarity constraints. A \emph{right $\coring{C}$-comodule} is a pair $(M , \varrho_{\Sscript{M}})$, where $M$ is a right $A$-module and $\varrho_{\Sscript{M}}: M \to M \tensor{A} \coring{C}$ is a right $A$-linear map which is compatible with $\Delta$ and $\varepsilon$ in a natural way (\ie $\left(\varrho_{\Sscript{M}}\tensor{A}\coring{C}\right)\circ \varrho_{\Sscript{M}}=\left(M\tensor{A}\Delta\right)\circ \varrho_{\Sscript{M}}$ and $\left(M\tensor{A}\varepsilon\right)\circ \varrho_{\Sscript{M}} = \id{\Sscript{M}}$). There is an adjunction between right $A$-modules and right $\coring{C}$-comodules given on the one side by the forgetful functor $\oO: \rcomod{\coring{C}} \to \rmod{A}$ and  on the other one by the functor $-\tensor{A}\coring{C}: \rmod{A} \to \rcomod{\coring{C}}$ (see \eg \cite[\S18.10]{BrzezinskiWisbauer}).  For a given $A$-coring $\coring{C}$ we denote by $\cat{A}^{\Sscript{\coring{C}}}$ the full subcategory of right $\coring{C}$-comodules $(M, \varrho_{\Sscript{M}})$ such that $\oO\big(M,\varrho_{\Sscript{M}}\big)$ is a finitely generated and projective right $A$-module.  For a  given $A$-coring $(\fk{C},\Delta,\varepsilon)$ we have an $A$-ring structure on $\ldual{\fk{C}}$ called \emph{the left convolution algebra of $\fk{C}$}. This structure is given by
\begin{equation}\label{eq:convolution}
(f*g)(x)=g\big(x_{\Sscript{1}}f(x_{\Sscript{2}})\big), \qquad 1_{\ldual{\fk{C}}}=\varepsilon \qquad \text{and} \qquad (afb)(x)=f(xa)b
\end{equation}
 for all $f,g\in\ldual{\fk{C}}$, $a,b\in A$ and $x\in \fk{C}$.  Analogously, one can introduce the \emph{right convolution algebra} $\fk{C}^*$ of $\fk{C}$.

\begin{remark}\label{rem:circdot}
Recall that given two $A$-corings $\coring{C}$ and $\coring{D}$ we can consider the new $A$-coring
\begin{equation*}
\coring{C} \odot \coring{D}:=\frac{\coring{C} \otimes \coring{D}}{\mathsf{Span}_\K\left\{acb\otimes d-c\otimes adb\mid a,b\in A, c\in \coring{C},d\in \coring{D}\right\}}
\end{equation*}
which is a coring with respect to the following structures
\begin{gather*}
a(c\odot d)b = c\odot adb, \qquad \Delta(c\odot d)=\left(c_1\odot d_1\right)\tensor{A}\left(c_2\odot d_2\right), \qquad \varepsilon(c\odot d)=\varepsilon_{\coring{C}}(c)\varepsilon_{\coring{D}}(d),
\end{gather*}
where the notation is the obvious one. We point out that $\coring{C}\odot \coring{D}$ has been obtained by applying \cite[Theorem 3.10]{Takeuchi} to $\coring{C}$ and $\coring{D}$ endowed with the $T|S$ and the $S|R$-coring structures respectively whose underlying multi-module structures are given by
$
(t\otimes s)c(t'\otimes s')=tsct's' $ and $(s\otimes r)d(s'\otimes r')=s'rdsr',
$
where $R=S=T=A$ and $r,r'\in R$, $s,s' \in S$, $t,t'\in T$, $c\in \coring{C}$ and $d\in \coring{D}$.
\end{remark}

\begin{remark}\label{rem:cop}
Notice that given an $A$-coring $\coring{C}$, we may consider the $A$-coring $\cop{\coring{C}}$ with structures given by
\begin{equation}\label{eq:Ccop}
\Delta(\cop{c})=\cop{(c_2)}\tensor{A}\cop{(c_1)},\qquad \varepsilon(\cop{c})=\varepsilon(c) \qquad \text{and} \qquad b\cop{c}a=\cop{\left(ac b\right)},
\end{equation}
where $\cop{c}$ denotes $c\in \coring{C}$ as seen in $\cop{\coring{C}}$.
\end{remark}

Let $A$ be a commutative algebra, we denote by $\proj{A}$ the full subcategory of the category of (one sided, preferably right) $A$-modules whose objects are finitely generated and projective.  For a given morphism of commutative algebras $\phi:A \to B$ we denote by $\phi_{\Sscript{*}}: \rmod{B} \to \rmod{A}$ the restriction functor between the categories of right modules.

%%%%%%%%%%%%%%%%%%%%%%%%%%%%%%%

\section{Hopf algebroids and Lie-Rinehart algebras: Definitions and examples}\label{sec:HAlgds}

A Hopf algebroid can be naively thought as a Hopf algebra over a non-commutative ring. In the present paper we are going to focus on the distinguished classes of commutative and cocommutative Hopf algebroids (\ie those that have a closer connection with algebraic and differential geometry), instead of dealing with them in the full generality. Therefore, and for the sake of the unaccustomed reader, we will recall in the present section the definitions of these objects together with some significant examples, that is to say, the universal enveloping Hopf algebroids of Lie-Rinehart algebras.

\subsection{Commutative Hopf algebroids}\label{ssec:CHAlgds}
We recall here from \cite[Appendix A1]{ravenel} the definition of commutative Hopf algebroid. We also expound some examples which will be needed in the forthcoming sections.

A \emph{commutative Hopf algebroid over $\Bbbk$} is a \emph{cogroupoid} object in the category $\mathrm{CAlg}_{\K }$  of commutative $\Bbbk$-algebras, or equivalently, a groupoid in the category of affine schemes. Thus, a Hopf algebroid consists of a  pair of commutative algebras $\left( A,\mathcal{H}\right) $, where $A$ is the \emph{base algebra} and $\cH$ is the \emph{total algebra}   with a diagram of algebra maps:
\begin{equation}\label{Eq:loop}
\xymatrix@C=45pt{ A \ar@<1ex>@{->}|(.4){\scriptstyle{s}}[r] \ar@<-1ex>@{->}|(.4){\scriptstyle{t}}[r] & \ar@{->}|(.4){ \scriptstyle{\varepsilon}}[l] \ar@(ul,ur)^{\scriptstyle{\cS}} \cH \ar@{->}^-{\scriptstyle{\Delta}}[r] & \cH \tensor{A}\cH , }
\end{equation}
where to perform the tensor product over $A$, the algebra $\cH$ is considered as an $A$-bimodule of the form ${}_{\scriptstyle{s}}\cH_{\scriptstyle{t}}$, i.e., $A$ acts on the left through $s$ while it acts on the right through $t$.
The maps $s,t:A\rightarrow \mathcal{H}$ are called the \emph{source} and \emph{target} respectively, and $\etaup:=s\tensor{}t: A\tensor{}A \to \cH$ , $a\tensor{}a'\mapsto s(a)t(a')$ is the \emph{unit},  $\varepsilon :\mathcal{H}\rightarrow A$ the \emph{counit},  $\Delta :\mathcal{H}\rightarrow \mathcal{H}\tensor{A} \mathcal{H}$ the \emph{comultiplication} and $\cS:\mathcal{H}\rightarrow \mathcal{H}$ the \emph{antipode}. These  have to satisfy the following compatibility conditions.
\begin{itemize}
\item The datum $( \sHt, \Delta, \varepsilon)$ has to be a coassociative and counital coalgebra in the category of $A$-bimodules, i.e., an $A$-coring. At the level of groupoids, this encodes a unitary and associative composition law between morphisms.
\item The antipode has to satisfy $\cS \circ s=t$, $\cS \circ t = s$ and $\cS^2=\id{\cH}$, which encode the fact that the inverse of a morphism interchanges source and target and that the inverse of the inverse is the original morphism.
\item The antipode has to satisfy also $\cS(h_1)h_2=(t\circ \varepsilon)(h)$ and $h_1\cS(h_2)=(s\circ \varepsilon)(h)$, which encode the fact that the composition of a morphism with its inverse on either side gives an identity morphism (the notation $h_1\otimes h_2$ is a variation of the Sweedler's Sigma notation, with the summation symbol understood, and it stands for $\Delta(h)$).
\end{itemize}

\begin{remark}\label{rem:Santicocomm} Let us make the following observations on the previous definition:
\begin{enumerate}[label=(\arabic*), ref=(\arabic*)]
\item Note that there is no need to require that $\varepsilon \circ s= \id{A}= \varepsilon \circ t$, as  it is implied by the first condition.
\item\label{item:Santicocomm} Since the inverse of a composition of morphisms is the reverse composition of the inverses, the antipode $\cS$ of a commutative Hopf algebroid is an anti-cocommutative map. This means that,  $\tau\Delta(\cS(u))=(\cS\tensor{A}\cS)(\Delta(u))$ in $\Hs\tensor{A}\, \tH$, explicitly, $\cS(u_1)\tensor{A}\cS(u_2) = \cS(u)_2\tensor{A}\cS(u)_1$ for all $u\in\cH$. Thus, $\cS: \sHt \to \tHs$ is an isomorphism of $A$-corings.
\end{enumerate}
\end{remark}

A \emph{morphism of commutative Hopf algebroids} is a pair of algebra maps
$\left( \phi _{\Sscript{0}},\phi _{\Sscript{1}}\right) :\left( A,\mathcal{H}\right)
\rightarrow \left( B,\mathcal{K}\right)$
such that
\begin{eqnarray*}
\phi _{1}\circ s =s\circ \phi _{0},&\qquad& \phi _{1}\circ t=t\circ \phi
_{0}, \\
\Delta \circ \phi _{1} =\chi \circ \left( \phi _{1}\otimes _{A}\phi
_{1}\right) \circ \Delta ,&\qquad& \varepsilon \circ \phi _{1}=\phi _{0}\circ
\varepsilon , \\
\cS\circ \phi _{1} =\phi _{1}\circ \cS &&
\end{eqnarray*}
where $\chi :\mathcal{K}\tensor{A}\mathcal{K\rightarrow }\mathcal{K} \tensor{B}\mathcal{K}$ is the obvious map induced by $\phi_0$, that is $\chi \left(h\tensor{A}k\right) =h\tensor{B}k$. The category  obtained in this way is denoted by $\HAlgd$, and if the base algebra $A$ is fixed, then  the resulting category will be denoted by $\Halgd{A}$.

\begin{example}\label{exm:Halgd}
Here there are some common examples of Hopf algebroids (see  also \cite{ElKaoutit:2015}):
\begin{enumerate}
\item Let $A$ be an algebra. Then the pair $(A, A\otimes_{}A)$ admits a Hopf algebroid structure given by $s(a)=a\otimes_{}1$, $t(a)=1\otimes_{}a$, $\cS(a\otimes_{}a')=a'\otimes_{}a$, $\varepsilon(a\tensor{}a')=aa'$ and $\Delta(a\otimes_{}a')= (a\otimes_{}1) \tensor{A} (1\otimes_{}a')$, for any $a, a' \in A$.
\item Let $(B,\Delta, \varepsilon, \sS)$ be a Hopf algebra and $A$ a right $B$-comodule algebra with coaction $A \to A\otimes_{}B$, $a \mapsto a_{\Sscript{(0)}} \otimes_{} a_{\Sscript{(1)}}$. This means that $A$ is right $B$-comodule and the coaction is an algebra map (see e.g.~\cite[\S 4]{Montgomery:1993}).   Consider  the algebra  $\cH= A\otimes_{}B$ with  algebra extension $ \etaup: A\otimes_{}A \to  \cH$, $a'\otimes_{}a \mapsto a'a_{\Sscript{(0)}}\otimes_{}a_{\Sscript{(1)}}$. Then $(A,\cH)$ has  a structure of Hopf algebroid, known as  \emph{split Hopf algebroid}:
$$
\Delta(a\otimes_{}b) = (a\otimes_{}b_{\Sscript{1}}) \otimes_{A} (1_{\Sscript{A}}\otimes_{}b_{\Sscript{2}}), \;\;\varepsilon(a\otimes_{}b)=a\varepsilon(b),\;\; \cS(a\otimes_{}b)= a_{\Sscript{(0)}}\otimes_{}  a_{\Sscript{(1)}}\sS(b).
$$
\item Let $B$ be as in part $(2)$ and $A$ any algebra. Then $(A, A\otimes_{}B\otimes_{}A)$ admits in a canonical way a structure of Hopf algebroid.  For $a,a'\in A$ and $b\in B$, its structure maps are given as follows
\begin{gather*}
s(a)=a\otimes 1_{\Sscript{B}}\otimes 1_{\Sscript{A}}, \quad t(a)=1_{\Sscript{A}}\otimes 1_{\Sscript{B}}\otimes a, \quad \varepsilon(a\otimes b\otimes a')=aa'\varepsilon(b), \\
\Delta(a\otimes b\otimes a')= \big(a\otimes b_1\otimes 1_{\Sscript{A}}\big) \tensor{A} \big(1_{\Sscript{A}}\otimes b_2\otimes a'\big), \quad \cS(a\otimes b\otimes a')=a'\otimes \sS(b)\otimes a.
\end{gather*}
 \end{enumerate}
Notice that (1) may be recovered from (3) by considering $B=\Bbbk$ as Hopf $\Bbbk$-algebra with trivial structure.
\end{example}

\subsection{Co-commutative Hopf algebroids}\label{ssec:cocom}
Next, we recall the definition of a cocommutative Hopf algebroid. It can be considered as a revised (right-handed and cocommutative) version of the notion of a $\times_A$-Hopf algebra as it appears in \cite[Theorem and Definition 3.5]{Schau:DADOQGHA}. However, to define the underlying right bialgebroid structure we preferred to mimic \cite{Lu} as presented in \cite[Definition 2.2]{BrzezinskiMilitaru} (in light of \cite[Theorem 3.1]{BrzezinskiMilitaru}, this is something we may do). See also \cite[A.3.6]{Kapranov:2007} and compare with \cite[Definition 2.5.1]{Kowalzig} and \cite[\S4.1]{Szlachanyi} as well.

A \emph{(right) co-commutative Hopf algebroid} over a commutative algebra is the datum of a commutative algebra $A$,  a possibly  noncommutative algebra $\cU$ and an algebra map $s=t: A\to \cU$  landing not necessarily in the center of $\cU$, with the following additional structure maps:
\begin{itemize}
\item A  morphism of right $A$-modules $\varepsilon: \cU \to A$ which satisfies
\begin{equation}\label{eq:doubleepsilon}
\varepsilon(uv)=\varepsilon(\varepsilon(u)v),
\end{equation}
for all $u, v \in \cU$;
\item An $A$-ring map $\Delta: \cU \to \cU \times_{\Sscript{A}} \cU$, where the module
\begin{equation}\label{takeuchi}
\cU\times_{\Sscript{A}} \cU:=\left\{ \sum_{\Sscript{i}} u_{\Sscript{i}}\tensor{A} v_{\Sscript{i}} \in \cU_{\Sscript{A}}~ \tensor{A} ~\cU_{\Sscript{A}} \mid~ \sum_{\Sscript{i}} au_{\Sscript{i}}\tensor{A} v_{\Sscript{i}}= \sum_{\Sscript{i}} u_{\Sscript{i}}\tensor{A} av_{\Sscript{i}}  \right\}
\end{equation}
is endowed with the algebra structure
\begin{equation*}
\sum_{\Sscript{i}} u_{\Sscript{i}}\times_{\Sscript{A}} v_{\Sscript{i}} ~ .~ \sum_{\Sscript{j}} u'_{\Sscript{j}}\times_{\Sscript{A}}v'_{\Sscript{j}} = \sum_{\Sscript{i,j}} u_{\Sscript{i}}u'_{\Sscript{j}} \times_{\Sscript{A}} v_{\Sscript{i}}v'_{\Sscript{j}}, \qquad 1_{\cU\times_{\Sscript{A}} \cU}=1_{\cU}\tensor{A}1_{\cU}
\end{equation*}
and the $A$-ring structure given by the algebra map $1: A \rightarrow \cU \times_{\Sscript{A}}\cU,~ \Big(  a \mapsto a \times_{\Sscript{A}} 1_{\Sscript{\cU}}=   1_{\Sscript{\cU}}  \times_{\Sscript{A}} a \Big)$;
\end{itemize}
subject to the conditions
\begin{itemize}
\item $\Delta$ is coassociative, co-commutative in a suitable sense and has $\varepsilon$ as a right and left counit;
\item the canonical map
\begin{equation*}
\lfun{\beta}{\cU_{\Sscript{A}} ~\tensor{A}~ {}_{\Sscript{A}}\cU}{\cU_{\Sscript{A}}~\tensor{A}~ \cU_{\Sscript{A}}}{u\tensor{A}v}{uv_{\Sscript{1}}\tensor{A}v_{\Sscript{2}}}
\end{equation*}
is bijective, where we denoted $\Delta(v)=v_{\Sscript{1}}\otimes_{A}v_{\Sscript{2}}$ (summations understood). As a matter of terminology, the map $\beta^{-1}(1\tensor{A}-):\cU\to \cU_{\Sscript{A}}\tensor{A}{_{\Sscript{A}}\cU}$ is the so-called \emph{translation map}.
\end{itemize}

The first three conditions say that the category of all right $\cU$-modules is in fact a symmetric monoidal category with tensor product given by $-\tensor{A}-$ (see the details below), and the forget full functor to the category of $A$-bimodules is strict monoidal. The last condition says that this forgetful functor also preserves right inner homs-functors.  The pair  $(A,\cU)$ is then referred to as  a \emph{right co-commutative Hopf algebroid over $\Bbbk$}.  From now on the terminology co-commutative Hopf algebroid stands for right ones.

The aforementioned monoidal structure is detailed as follows:  Given  a  co-commutative Hopf algebroid $(A,\cU)$, the identity object is the base algebra $A$, with right $\cU$-action given by $a \centerdot  u =\varepsilon(au)$.
The tensor product of two right $\cU$-modules $M$ and $N$ is the $A$-module $M_A\tensor{A}N_A$ endowed with the following right $\cU$-action:
\begin{equation}\label{Eq:mono}
(m\tensor{A}n) \centerdot u = (m \centerdot u_{\Sscript{1}}) \tensor{A} (n \centerdot u_{\Sscript{2}}).
\end{equation}
The symmetry is provided by the one in $A$-modules, that is to say, the flip $M\tensor{A}N \to N\tensor{A}M$ is a natural isomorphism of right $\cU$-modules.
The dual object of a right $\cU$-module $M$ whose underlying $A$-module is finitely generated and projective, is the $A$-module $M^{*}=\hom{-A}{M}{A}$ with the right $\cU$-action
\begin{equation}\label{Eq:centerdot}
\varphi \centerdot u : M \longrightarrow A, \quad \Big( m \longmapsto \varphi(m\centerdot u_{\Sscript{-}}) \centerdot u_{\Sscript{+}}  \Big),
\end{equation}
where $u_{\Sscript{-}}\tensor{A}u_{\Sscript{+}} = \beta^{-1}(1\tensor{A}u)$ (summation understood). It is easily checked that, for every $a\in A$ and $u,v\in\cU$, one has
\begin{gather}
(au)_{\Sscript{-}}\tensor{A}(au)_{\Sscript{+}}=u_{\Sscript{-}}\tensor{A}au_{\Sscript{+}}\label{form:beta1},\\
au_{\Sscript{-}}\tensor{A}u_{\Sscript{+}} = u_{\Sscript{-}}\tensor{A}u_{\Sscript{+}}a\label{form:beta2},\\
v_{\Sscript{-}}u_{\Sscript{-}}\tensor{A}u_{\Sscript{+}}v_{\Sscript{+}}= (uv)_{\Sscript{-}}\tensor{A}(uv)_{\Sscript{+}}\label{form:beta3},\\
(1_\cU)_{\Sscript{-}}\tensor{A}(1_\cU)_{\Sscript{+}}=1_\cU \tensor{A} 1_\cU \label{form:beta4},\\
(u_{\Sscript{-}})_1\tensor{A}(u_{\Sscript{-}})_2\tensor{A}u_{\Sscript{+}}=(u_{\Sscript{+}})_{\Sscript{-}}\tensor{A}u_{\Sscript{-}}\tensor{A} (u_{\Sscript{+}})_{\Sscript{+}}\label{form:beta5},\\
u_{\Sscript{-}}\tensor{A}(u_{\Sscript{+}})_1\tensor{A}(u_{\Sscript{+}})_2=(u_1)_{\Sscript{-}}\tensor{A}(u_1)_{\Sscript{+}}\tensor{A} u_2\label{form:beta6},\\
u_{\Sscript{-}}u_{\Sscript{+}}=\varepsilon(u)1_{\cU}\label{form:beta7},\\
(u_{\Sscript{-}})_{\Sscript{-}}\tensor{A}(u_{\Sscript{-}})_{\Sscript{+}}u_{\Sscript{+}}=u\tensor{A} 1_{\cU}\label{form:beta8},\\
u_1(u_2)_{\Sscript{-}}\tensor{A}(u_2)_{\Sscript{+}}=1_{\cU} \tensor{A}u \label{form:beta9}.
\end{gather}

Morphisms between co-commutative Hopf algebroids over the same algebra $A$ are canonically defined, and the resulting category is denoted by $\CHalgd{A}$.

\subsection{Lie-Rinehart algebras and the universal enveloping algebroid}\label{ssec:LR}
Let $A$ be a commutative algebra over a field $\K$ of characteristic $0$ and denote by $\mathrm{Der}_{\Sscript{\K}}(A)$ the Lie algebra of all linear derivations of $A$.  Consider a Lie algebra $L$ which is also an $A$-module and let $\omega: L \to \mathrm{Der}_{\Sscript{\K}}(A)$ be an $A$-linear morphism of Lie algebras.
In honour of Rinehart \cite{Rin:DFOGCA}, the pair $(A,L)$ is called a \emph{Lie-Rinehart algebra} with \emph{anchor} map $\omega$ provided that
\begin{equation}\label{Eq:Aseti}
{[ X, aY ]} = a{[X,Y]}+X(a)Y,
\end{equation}
for all $X, Y \in L$ and $a, b \in A$, where $X(a)$ stands for $\omega(X)(a)$.

Apart from the natural examples $(A,\mathrm{Der}_{\Sscript{\K}}(A))$ (with anchor the identity map), another  basic source of examples are the smooth global sections of a given Lie algebroid over a smooth manifold.

\begin{example}\label{exam:VayaCon}
A \emph{Lie algebroid}  is a  vector bundle $\mathcal{L} \to \mathcal{M}$ over a smooth manifold, together with a map $\omega: \mathcal{L} \to T\mathcal{M}$ of vector bundles and a Lie structure $[-,-]$ on the  vector space  $\Gamma(\cL)$  of global smooth sections of $\mathcal{L}$, such that the induced map $\Gamma(\omega): \Gamma(\cL) \to \Gamma(T\mathcal{M})$ is a Lie algebra homomorphism, and for all $X, Y \in \Gamma(\cL)$ and any  smooth function  $f \in \mathcal{C}^{\infty}(\mathcal{M})$  one has
\begin{equation}\label{eq:LieAlgd}
[X,fY]\,=\, f[X,Y]+ \Gamma(\omega)(X)(f)Y.
\end{equation}
Then the pair $(\mathcal{C}^{\infty}(\mathcal{M}), \Gamma(\cL))$ is obviously a Lie-Rinehart algebra. In the Appendix \ref{ssec:LA-LG}, we give a detailed description, using elementary algebraic arguments,  of the Lie-Rinehart algebra attached to the Lie algebroid of a given Lie groupoid.
\end{example}

\begin{remark}
The fact that the map $\Gamma(\omega): \Gamma(\cL) \to \Gamma(T\mathcal{M})$ in Example \ref{exam:VayaCon} is a Lie algebra homomorphism is a consequence of the Jacobi identity and of Relation \eqref{eq:LieAlgd} (see \eg \cite{Grabowski, Herz, Magri}). Therefore, it should be omitted from the definition of a Lie algebroid. Nevertheless, we decided to keep the somewhat redundant definition above to make it easier for the unaccustomed reader to see the parallel with Lie-Rinehart algebras.
\end{remark}

As in the classical case of (co-commutative) Hopf algebras, primitive elements of a (co-commutative) Hopf algebroid\footnote{In fact, the claim is true in general for bialgebroids over a commutative base algebra, but we are interested mainly in the particular case of co-commutative Hopf algebroids.} form a Lie-Rinehart algebra, see \cite{Kowalzig, MoerdijkLie}\footnote{In fact, in \cite{MoerdijkLie} the terminology used is $R/k$-bialgebra (as in \cite{Nichols}). Nevertheless, as we will see, the universal enveloping algebra of a Lie-Rinehart algebra inherits actually a Hopf algebroid structure in the sense of \cite{Schau:DADOQGHA}.}.

\begin{example}[{Primitive elements as Lie-Rinehart algebra}]\label{exam:PU}
Let $(A,\cU)$ be a co-commutative Hopf algebroid. An element $X \in \cU$ is said to be \emph{primitive}, if  it satisfies
$$
\Delta(X)\,=\, 1\tensor{A}X+ X \tensor{A}1,\quad \text{and }\quad \varepsilon(X)=0.
$$
{Notice that the second equality is a consequence of the first one and the counitality property.}
The vector space of all primitive elements $\prim(\cU)$ inherits simultaneously a structure  of $A$-module and Lie algebra, where the $A$-action descends from the right $A$-module structure of $\cU$. In fact, the pair $(A,\prim(\cU))$ is a Lie-Rinehart algebra with anchor map:
$$
\omega: \prim(\cU) \longrightarrow \Ders{\K}{A}, \quad \Big( X \longmapsto \big[a \mapsto -\varepsilon(t(a) X) \big] \Big).
$$
Indeed,  $\omega$ is a Lie algebra and $A$-linear map, since we have
\begin{align*}
\omega(XY-YX)(a) & =-\varepsilon(t(a)(XY-YX))=\varepsilon(t(a)YX)-\varepsilon(t(a)XY)=\varepsilon(\varepsilon(t(a)Y)X)-\varepsilon(\varepsilon(t(a)X)Y) \\
 & = \omega(X)(\omega(Y)(a))-\omega(Y)(\omega(X)(a))=\left[\omega(X),\omega(Y)\right](a),\\
\omega\big(Xt(b)\big)(a) & =-\varepsilon\big(t(a)Xt(b)\big)=-\varepsilon(t(a)X)b=\omega(X)(a)b.
\end{align*}
Equation \eqref{Eq:Aseti} is derived from the following computation
\begin{align*}
\left[X,Yt(a)\right]-\left[X,Y\right]t(a) & =XYt(a)-Yt(a)X-XYt(a)+YXt(a)=Y(Xt(a)-t(a)X)\\
&=Yt(-\varepsilon(t(a)X))=Yt(\omega(X)(a)),
\end{align*}
where the  third equality follows from the fact that $\Delta(t(a)X)=Xt(a)\tensor{A}1+ 1\tensor{A}t(a)X$ which in turns leads to the equality
$$
\varepsilon(t(a)X)1_{\cU}= t(a)X-Xt(a), \quad \text{for every }\; a \in A \, \text{ and }\, X \in \prim(\cU).
$$
\end{example}

A \emph{morphism of Lie-Rinehart algebras} $f: (A, L) \to (A,K)$ is an $A$-linear and Lie algebra map $f: L \to K$ which is compatible with the anchors. That is,  if the following diagram is commutative
$$
\xymatrix{  L  \ar@{->}^-{f}[rr]  \ar@{->}_-{\omega}[rd]  & & K \ar@{->}^-{\omega'}[ld]  \\ &  \Ders{\K}{A}  &}
$$
The category so  constructed well be denoted by $\LieR{A}$

Next we give our main example of co-commutative Hopf algebroids. The \emph{(right) universal enveloping Hopf algebroid} of a given Lie-Rinehart algebra $(A,L)$ is an algebra $ \cV_{\Sscript{A}}\left( L\right) $ endowed with a morphism $\iota _{\Sscript{A}}:A\rightarrow  \cV_{\Sscript{A}}\left( L\right) $ of algebras and a  Lie algebra morphism $\iota _{\Sscript{L}}:L\rightarrow  \cV_{\Sscript{A}}\left( L\right) $ such that
\begin{equation}\label{eq:compLRalg}
\iota_L\left(aX\right) = \iota_L(X)\iota_A(a) \quad \text{and} \quad \iota _{\Sscript{L}}\left(X\right) \iota _{\Sscript{A}}\left( a\right) -\iota _{\Sscript{A}}\left( a\right) \iota _{\Sscript{L}}\left( X\right) =\iota _{\Sscript{A}}\left( X
\left( a\right) \right)
\end{equation}
for all $a\in A$ and $X\in L$, which is universal with respect to this property. In details, this means that if $\left( W,\phi _{\Sscript{A}},\phi _{\Sscript{L}}\right) $ is another algebra with a morphism $\phi _{\Sscript{A}}:A\rightarrow W$ of algebras and a morphism $\phi _{\Sscript{L}}:L\rightarrow W$ of Lie algebras  such that
\begin{equation*}
\phi_L\left(aX\right) = \phi_L(X)\phi_A(a) \quad \text{and} \quad \phi _{\Sscript{L}}\left( X\right)\phi _{\Sscript{A}}\left( a\right) -\phi _{\Sscript{A}}\left( a\right) \phi _{\Sscript{L}}\left( X\right) =\phi _{\Sscript{A}}\left( X \left(a\right) \right) ,
\end{equation*}
then there exists a unique algebra morphism $\Phi : \cV_{\Sscript{A}}
\left( L\right) \rightarrow W$ such that $\Phi \, \iota _{\Sscript{A}}=\phi _{\Sscript{A}}$ and $
\Phi \, \iota _{\Sscript{L}}=\phi _{\Sscript{L}}$.

Apart from the well-known constructions of \cite{Rin:DFOGCA} and \cite{MoerdijkLie}, the universal enveloping Hopf algebroid of a Lie-Rinehart algebra $(A,L)$ admits several other equivalent realizations. For instance, one can use  the smash product (right) $A$-bialgebroid $A\#U_{\K}(L)$, as introduced  by Sweedler in \cite{sweedler}, and quotient this algebra by a proper ideal, in order to perform the universal enveloping of $(A,L)$.
In this paper we opted for the following construction which comes from \cite{LaiachiPaolo2}. Set $\eta :L\rightarrow A\otimes L;\, X\longmapsto 1_{\Sscript{A}}\otimes X$ and consider the tensor $A$-ring $T_{\Sscript{A}}\left( A\otimes L\right) $ of the $A$-bimodule $A\otimes L$. It can be shown that
\begin{equation*}
 \cV_{\Sscript{A}}\left( L\right) \cong \frac{T_{\Sscript{A}}\left( A\otimes L\right) }{\cJ}
\end{equation*}
where the two sided ideal $\cJ$ is generated by the set
\begin{equation*}
\cJ:=\left\langle \left.
\begin{array}{c}
\eta \left( X\right) \otimes _{\Sscript{A}}\eta \left( Y\right) -\eta \left( Y\right)
\otimes _{\Sscript{A}}\eta \left( X\right) -\eta \left( \left[ X,Y\right] \right) , \\
 \eta \left( X\right) \cdot a -a\cdot
\eta \left( X\right)-\omega \left( X\right)
\left( a\right)
\end{array}
\right| \; X,Y\in L, \; a\in A\right\rangle.
\end{equation*}
We have the algebra morphism $\iota _{\Sscript{A}}:A\rightarrow
\cV_{\Sscript{A}}\left( L\right);\, a\longmapsto a+\cJ$ and the  Lie algebra map $\iota _{\Sscript{L}}:L\rightarrow \cV_{\Sscript{A}}\left( L\right);\, X\longmapsto \eta \left( X\right) +\cJ$ that
satisfy the compatibility condition \eqref{eq:compLRalg}. It turns out that $\cV_{\Sscript{A}}(L)$ is a co-commutative right Hopf algebroid over $A$ with structure maps induced by the assignments
\begin{gather*}
\varepsilon \left( \iota _{\Sscript{A}}\left( a\right) \right)   =a, \qquad \varepsilon \left( \iota _{\Sscript{L}}\left( X\right) \right) =0, \\
\Delta \left( \iota _{\Sscript{A}}\left( a\right) \right)  =\iota _{\Sscript{A}}\left( a\right) \times _{\Sscript{A}}1_{\Sscript{ \cV_{\Sscript{A}}\left( L\right)}
}=1_{ \Sscript{\cV_A( L)} }\times _{\Sscript{A}}\iota _{\Sscript{A}}\left( a\right) , \\
\Delta \left( \iota _{\Sscript{L}}\left( X\right) \right)  =\iota _{\Sscript{L}}\left( X\right) \times _{\Sscript{A}}1_{ \Sscript{\cV_A\left( L\right)}
}+1_{\Sscript{\cV_A\left( L\right)} }\times _{\Sscript{A}}\iota _{\Sscript{L}}\left( X\right) , \\
\beta^{-1}\left( 1_{\Sscript{ \cV_A\left( L\right) }} \tensor{A } \iota_{\Sscript{A} }\left( a \right)\right)   = \iota _{\Sscript{A}}\left( a\right) \tensor{A}1_{\Sscript{ \cV_A\left( L\right)} }=1_{\Sscript{ \cV_A\left( L\right) }}\tensor{A}\iota _{\Sscript{A}}\left( a\right) ,  \\
\beta^{-1}\left( 1_{ \Sscript{\cV_A\left( L\right) }} \tensor{A } \iota_{\Sscript{L }}\left( X \right)\right)   = 1_{ \Sscript{\cV_A\left( L\right) }}\tensor{A}\iota _{\Sscript{L}}\left( X\right) -\iota
_{\Sscript{L}}\left( X\right) \tensor{A}1_{\Sscript{ \cV_A\left( L\right)} }.
\end{gather*}

\begin{remark}\label{rem:primfunctor}
The primitive functor $\prim:\CHalgd{A}\to \LieR{A}$, assigning to a  co-commutative Hopf algebroid $(A,\cU)$ the space $\prim(\cU)$ and to a morphism $f:(A,\cU)\to(A,\cV)$ its restriction to the primitive elements, admits as a left adjoint the functor $\cV_A:\LieR{A}\to \CHalgd{A}$, which assigns to a Lie-Rinehart algebra $(A,L)$ its universal enveloping Hopf algebroid $\VL$ and to a morphism of Lie-Rinehart algebras $f:(A,L)\to (A,K)$ the morphism of co-commutative Hopf algebroids $\cV_A(f)$ induced by the universal property of $\VL$. The unit $L\to \prim(\VL)$ of the adjunction is given by the corestriction of the map $\iota_L$, while the counit $\cV_A(\prim(\cU))\to \cU$ is given by the universal property of its domain applied to the inclusion of $\prim(\cU)$ in $\cU$. The verification is straightforward. For the analogue in the case of left bialgebroids we refer to \cite[Theorem 3.1]{MoerdijkLie} or \cite[Proposition 4.2.3]{Kowalzig}.
\end{remark}

\begin{remark}\label{rem:LUEA-mod}
Given a Lie-Rinehart algebra $(A,L)$, there exists a notion of left  $(A,L)$-module, see \cite[\S1]{Huebschmann:Poisson}. As it happens for the universal enveloping algebra of an ordinary Lie algebra, the definition of the universal enveloping algebroid $\left(U(A,L),\jmath_A,\jmath_L\right)$ (as introduced \eg in \cite[page 63]{Huebschmann:Poisson}) is designed in such a way that left $(A,L)$-modules bijectively correspond to left $U(A,L)$-modules in a natural way, as claimed in \cite[page 65]{Huebschmann:Poisson}. In fact, this correspondence turns out to be an isomorphism of categories. In the present paper, working with right co-commutative Hopf algebroids, we are interested in dealing with \emph{right} modules over the universal enveloping algebroid associated to a Lie-Rinehart algebra. As a consequence, we define right $(A,L)$-modules to be left modules over $(A,L^{\Sscript{\text{op}}},-\omega)$, where $(A,L^{\Sscript{\text{op}}})$ is the Lie-Rinehart algebra with same underlying $A$-module $L$, with opposite bracket and opposite anchor map with respect to $(A,L)$ (equivalently, $A$-modules $M$ with a morphism of Lie-Rinehart algebras from $L^{\text{op}}$ to the Atiyah algebra of $M$). They are in one-to-one correspondence with right $\cV_A(L)$-modules. Moreover,
\begin{enumerate}[label=(\emph{\alph*})]
\item in general we have $\cV_A(L)\cong U(A,L^{\Sscript{\text{op}}})^{\Sscript{\text{op}}}$ (see \cite[Proposition 2.1.12]{ChemlaGavarini})
\item\label{item:2remLUEA-mod} in the particular case of ${}_AL$ free, \ie $L=\bigoplus_{i}AX_i$, we have that $U(A,L)$ with $\jmath_A$ and $\jmath'_L$ given by $\jmath'_L(\sum_ia_iX_i):=\sum_i\jmath_L(X_i)\jmath_A(a_i)$ is the right universal enveloping algebra of $(A,L)$ (symmetrically for $\cV_A(L)$ on the other side).
\end{enumerate}

It is worthy to point out however that our definition of a right representation differs slightly from the one given in \cite[page 430]{Huebschmann-LR}. The reason to introduce this new one is threefold: first of all this is more symmetric, secondly it ensures that $A$ is a right representation as much naturally as it is a left one, that is to say, via the anchor map $\omega$, and thirdly because with this definition right representations correspond to right modules over the right universal enveloping algebra in a natural way.
\end{remark}

%%%%%%%%%%%%%%%%%%%%%%%%%%%

\section{A dual for cocommutative Hopf algebroids}\label{sec:duality}

It is well-known that, for Hopf algebras, the functor $\Der{\K}{-}{\K}:\mathsf{CHAlg}^{\mathrm{op}}_{\Sscript{\Bbbk}}\to \mathsf{Lie}_{\Sscript{\Bbbk}}$ is right adjoint to the functor $\rcirc{(U(-))}:\mathsf{Lie}_{\Sscript{\Bbbk}}\to \mathsf{CHAlg}^{\mathrm{op}}_{\Sscript{\Bbbk}}$, where $\mathsf{CHAlg}_{\Sscript{\Bbbk}}$ and $\mathsf{Lie}_{\Sscript{\Bbbk}}$ denote the categories of commutative Hopf $\Bbbk$-algebras and that of Lie $\Bbbk$-algebras, respectively. Indeed, this can be seen as the composition of the two adjunctions $(U,\mathsf{Prim})$ and $(\rcirc{(-)},\rcirc{(-)})$, where $U:\mathsf{Lie}_{\Sscript{\Bbbk}}\to\mathsf{CCHAlg}_{\Sscript{\Bbbk}}$ is the universal enveloping functor, $\mathsf{Prim}:\mathsf{CCHAlg}_{\Sscript{\Bbbk}}\to \mathsf{Lie}_{\Sscript{\Bbbk}}$ is the functor of primitive elements and $\rcirc{(-)}$ denotes the finite (or Sweedler) dual. Since we plan to extend this construction to the Hopf algebroid framework, we first need an analogue of the finite dual. This section and the next one are devoted to this construction. In fact, by following two different but equally valid approaches, we will provide even two possible such analogues.

\subsection{Tannaka reconstruction process}\label{ssec:Tannaka}
Let  $A$ be a commutative algebra and $\fomega: \cat{A} \rightarrow \proj{A}$  be a  faithful $\K$-linear functor (referred to as  a \emph{fiber functor}), where $\cat{A}$ is a $\K$-linear (essentially) small category. The image ${^{\fomega} P}$ of an object $P$ of $\cat{A}$ under $\fomega$ will be denoted by $P$ itself when no confusion may be expected. Given $P,Q\in \cat{A}$, we denote by $T_{PQ}=\hom{\cat{A}}{P}{Q}$ the $\K$-module of all morphisms in $\cat{A}$ from $P$ to $Q$. The symbol $T_P$ is reserved to the ring (in fact, algebra) of endomorphisms of $P$.  Clearly, $S_P = \rend{P}{A}$ is a ring extension of $T_P$ via $\fomega$. In this way, every image ${}^{\fomega}P$ of an object $P \in \cat{A}$, becomes canonically a $(T_P,A)$-bimodule.

Now consider the following direct sum of $A$-corings
\begin{eqnarray*}
  \fk{B}\left(\cat{A}\right) &=& \bigoplus_{P  \, \in \,   \cat{A}} P^*\tensor{T_P}P
\end{eqnarray*}
and its $A$-sub-bimodule $\fk{J}_{\cat{A}}$ generated by the set
\begin{equation}\label{Eq:JA}
\LR{\underset{}{} q^* \tensor{T_Q} tp - q^* t \tensor{T_P}p |\, \, q^* \in Q^*,\, p \in P,\, t \in T_{PQ},\, P,Q \in \cat{A} },
\end{equation}
where $q^* t = q^* \circ \fomega(t)$ and  $tp=\fomega(t)(p)$. By \cite[Lemma 4.2]{ElKaoutit/Gomez:2004b}, $\fk{J}_{\cat{A}}$ is a coideal of the $A$-coring $\fk{B}(\cat{A})$. Therefore, we can consider the quotient $A$-coring
\begin{eqnarray}\label{comatInf}
  \Scr{R}\left( \cat{A}\right) :\,=\, \fk{B}(\cat{A})/\fk{J}_{\cat{A}} \,=\, \left(\underset{P\,\in\,\cat{A}}{\bigoplus} \rcomatrix{T_P}{P}\right)/ \fk{J}_{\cat{A}}
\end{eqnarray}
and this is the \emph{infinite comatrix $A$-coring} associated to the fiber functor $\fomega: \cat{A} \rightarrow \proj{A}$.  Furthermore, it is clear that any object $P \in \cat{A}$ admits (via the functor $\fomega$) the structure of a right $\Scr{R}(\cat{A})$-comodule, which leads to a well-defined  functor $\chi: \cat{A} \to \Acom{{\Scr{R}(\cA)}}$ (see \S\ref{ssec:Notations} for the notation), and that $\fomega$ factors through the forgetful functor $\Scr{O}: \Acom{\Scr{R}(\cat{A})} \to \rmod{A}$ via $\chi$, that is, $\fomega=\Scr{O}\circ \chi$.

The image of an element $p^{*}\tensor{T_{P}}p$ in the quotient $\Scr{R}(\cA)$, after identifying $p^{*}\tensor{T_{P}}p$ with its image in the direct sum, will be denoted by $\bara{p^{*}\tensor{T_{P}}p}$. These are generic elements in $\Scr{R}(\cA)$.  In fact, we have
\begin{equation}\label{Eq:Galway}
\sum_{i}^{n} \bara{p_{i}^{*}\tensor{T_{P_{i}}}p_{i}} \,\, =\,\, \bara{q^{*} \tensor{T_{Q}} q},
\end{equation}
where $Q=\oplus_{i=1}^{n}P_{i}$,  $q=p_{1}\dotplus \cdots \dotplus p_{n} \in Q$ and $q^{*}=\sum_{i} p_{i}^{*} \pi_{i} \in Q^{*}$, $\pi_{i}:Q \to P_{i}$ are the canonical projections.

\begin{remark}\label{rem:Adjunction}
The typical examples of the pairs $(\cA, \fomega)$ which we will deal with here are either the category $\Amod{R}$ of right $R$-modules, for a given $A$-ring $R$, which are finitely generated and projective as $A$-modules with  $\fomega$  the forgetful functor, or the category $\Acom{C}$ of right $C$-comodules, for a given $A$-coring $C$, which are finitely generated and projective as $A$-modules and $\fomega$ is the forgetful functor as well. In the first case  we obtain a functor $(-)^{\circ}: \ring{A} \to (\coanillos{A})^{\Sscript{op}}$, which was named \emph{the finite dual functor} in {\cite[\S2.1]{LaiachiGomez}}. It is noteworthy to mention that  from its own construction it is not clear whether the functor $(-)^{\circ}$ is left adjoint to the functor ${}^*(-) :(\coanillos{A})^{\Sscript{op}}  \to \ring{A}$ which sends any $A$-coring $C$ to its right convolution algebra ${}^*C$.  In the next section we will provide, using the Special Adjoint Functor Theorem (SAFT), a left adjoint of ${}^*(-)$ and study some of its properties.
\end{remark}

Assume now that $\cA$ is a symmetric rigid monoidal category and $\fomega$ is a symmetric strict monoidal functor.  Then
one can endow  the associated infinite comatrix $A$-coring $\rR(\cat{A})$  of equation \eqref{comatInf} with a structure of commutative $(A\tensor{\K}A)$-algebra.  The multiplication is given as follows:
\begin{equation}\label{Eq:mo}
(\bara{p^*\tensor{T_P}p})\,.\, (\bara{q^*\tensor{T_Q}q}) \,\, =\,\, \bara{(q^* \star p^*) \tensor{T_{Q\tensor{\peque{A}}P}} (q\tensor{\peque{A}}p)},
\end{equation}
where
\begin{equation}\label{Eq:star}
 (\varphi \tensor{A} \psi) : P\tensor{A}Q \longrightarrow A, \quad \lr{ x\tensor{\peque{A}}y \longmapsto   \varphi(x) \, \psi(y) }.
\end{equation}

The unit is the algebra map $A\tensor{\K}A \to \Scr{R}(\cA)$ which sends $a\tensor{}a' \to \bara{ l_{a}\tensor{T_{A}}a'}$, where $l_{a}$ is the image of $a$ by the isomorphism $A\cong A^{*}$ and as above we identify the identity object of $\cA$ with its image $A$. Notice that $T_{A}$ is a subring of $A$ and does not necessarily coincide with the base field $\Bbbk$.

It turns out that $(A,\rR(\cat{A}))$  with this algebra structure is actually a commutative Hopf algebroid. The antipode is given by the map
\begin{equation}\label{Eq:antipode}
\cS: \Scr{R}( \cA) \longrightarrow  \Scr{R}( \cA), \quad \Big(  \bara{p^{*}\tensor{T_{P}}p} \longmapsto \bara{ev_{p}\tensor{T_{P^{*}}}p^*} \Big),
\end{equation}
where $ev_{p}$ is the image of $p$ under the isomorphism of $A$-modules $P\cong (P^{*})^{*}$.

The previous construction, which we may call \emph{Tannaka's reconstruction process}, is in fact functorial. That is,  if $\cat{F}: \cat{A} \to \cat{A}'$ is a given symmetric monoidal $\K$-linear functor  such that
\begin{equation}\label{Eq:triangle}
\begin{gathered}
\xymatrix@R=15pt@C=40pt{  \cat{A} \ar@{->}^-{\cat{F}}[rr]  \ar@{->}_-{\fomega}[rd] & & \cat{A}'  \ar@{->}^-{\fomega'}[ld] \\ & \proj{A} & }
\end{gathered}
\end{equation}
is a commutative diagram,  then there is a morphism of Hopf algebroids $\Scr{R}(\cat{F}) : \Scr{R}(\cat{A}) \to \Scr{R}(\cat{A}')$ which renders commutative the following diagram:
\begin{equation}\label{Eq:diag}
\begin{gathered}
\xymatrix@R=15pt{   & \Acom{\rR(\cat{A})} \ar@{->}^-{\Scr{R}(\cF)_*}[rrr] \ar@/_1pc/@{->}^-{\oO}[rddd]  & & & (\cat{A}')^{\Sscript{\rR(\cat{A}')}} \ar@/^1.5pc/@{->}^-{\oO'}[llddd]  \\  \cat{A} \ar@{->}^-{\cat{F}}[rrr] \ar@{->}^-{\chi}[ru] \ar@/_1pc/@{->}^-{\fomega}[rrdd] & & & \cat{A}' \ar@{->}^-{\chi'}[ru] \ar@/^1pc/@{->}_-{\fomega'}[ldd] & \\ & & & &   \\ & & \proj{A} & & }
\end{gathered}
\end{equation}
where $\Scr{R}(\cF)_{*}$ is the restriction of the induced functor $\Scr{R}(\cF)_{*}: \rcomod{\rR(\cat{A})} \to \rcomod{\rR(\cat{A}')}$ sending any right $\rR(\cat{A})$-comodule $(M, \varrho_M)$ to the right $\rR(\cat{A}')$-comodule
$$
\xymatrix@C=40pt{M \ar@{->}^-{\varrho_M}[r]& M\tensor{A}\rR(\cat{A}) \ar@{->}^-{M\tensor{A}\Scr{R}(\cF)}[r]  & M \tensor{A}\rR(\cat{A}'), }
$$
and acting obviously on morphisms.
Explicitly, we have
\begin{equation}\label{eq:Rmap}
\Scr{R}(\cat{F}):\overline{p^* \tensor{T_P} p} \longmapsto \overline{p^* \tensor{T_{\cat{F}(P)}} p}.
\end{equation}

\begin{remark}\label{rem:Del}
It is noteworthy to mention that the underlying category $\cA$ is not assumed to be abelian nor the subalgebra $T_{A}$ of $A$ coincides with the base field $\K$. Thus we are not assuming that the pair $(\cA,\fomega)$ is a Tannakian category in the sense of \cite{Deligne:1990}. The obtained Hopf algebroids have then less properties then one constructed from the Tannakian categories. One of these missing properties is, for instance, that the functor $\chi: \cA \to \Acom{\rR(\cA)}$ is not necessarily an equivalence of categories, and that the skeleton of the full subcategory $\Acom{\rR(\cA)}$ does not necessarily form a set of small generators in the whole category of $\rR(\cA)$-comodules. Nevertheless, the conditions which we are taking on the pairs $(\cA, \fomega)$ are sufficient  to build up the construction of \S\ref{ssec:circ} below.
\end{remark}

Next we will give another description of the  Hopf algebroid $(A, \rR(\cA))$ by using rings with enough orthogonal idempotents and unital modules, which will be helpful in the sequel.  Let $(\cA,\fomega)$ as above, and consider the Gabriel's ring $\bara{\cA}$ attached to $\cA$  introduced in \cite{Gabriel:1962}. That is,  using the above notation, we have that  $\bara{\cA}:=\oplus_{P,\, Q\, \in \, \cA}T_{PQ}$ is an algebra with enough orthogonal idempotents, and  where  the multiplication of two composable morphisms is  their composition, otherwise is zero. Set $\Sigma=\oplus_{P\, \in \cA}P$ and $\Sigmad=\oplus_{P\, \in \cA}P^{*}$ direct sums of $A$-modules, and identify any element in $P$ (resp.~in $Q^{*}$) with its image in $\Sigma$ (resp.~in $\Sigmad$). It turns out that $\Sigma$ is an unital $(\bara{\cA}, A)$-bimodule while $\Sigmad$ is an unital $(A,\bara{\cA})$-bimodule. Therefore one can perform the $A$-bimodule $\infcomatrix{\cA}{\Sigma}$. The  pair $(A,\infcomatrix{\cA}{\Sigma})$ is also a commutative Hopf algebroid (its structure maps are identical to those exhibited in equations \eqref{Eq:mo} and \eqref{Eq:antipode}), and by the universal property of $\rR(\cA)$ we have  that the map
\begin{equation}\label{Eq:coringsiso}
(A,\rR(\cA) ) \longrightarrow (A, \infcomatrix{\cA}{\Sigma}), \quad \Big( \bara{p^{*}\tensor{T_{P}}p} \longmapsto p^{*}\tensor{\bara{\cA}}p \Big)
\end{equation}
establishes an isomorphism of Hopf algebroids, as it was shown in \cite{ElKaoutit/Gomez:2004b}.  Moreover this isomorphism is  natural with respect to the pairs $(\cA, \fomega)$. In this way, for a given functor $\cF: (\cA,\fomega) \to (\cA',\fomega')$  satisfying \eqref{Eq:triangle}  its image is  given by:
\begin{equation}\label{Eq:RF}
\rR(\cF): \infcomatrix{\cA}{\Sigma} \longrightarrow \infcomatrix{\cA'}{\Sigma}, \quad \Big( p^{*}\tensor{\bara{\cA}}p \longmapsto p^{*}\tensor{\bara{\cA'}}p \Big).
\end{equation}

\subsection{The zeta map and Galois corings}\label{ssec:Galoiscoring}
Let $(A, R)$ be a ring over $A$ and consider its final dual $(A,\Circ{R})$ constructed as in \S\ref{ssec:Tannaka} from the pair $(\Amod{R}, \fomega)$, where $\fomega$ is the forgetful functor, see also  Remark \ref{rem:Adjunction}. Then there is an $(A,A)$-bimodule map

\begin{equation}\label{Eq:zeta}
\mb{\zeta}_{\Sscript{R}}: \Circ{R} \longrightarrow  R^{*}, \quad \Big(  p^{*}\tensor{\bara{\Amod{R}}} p \longmapsto \Big[  r \mapsto p^{*}(p \, r) \Big] \Big)
\end{equation}
where the latter is the right $A$-linear dual of $R$ endowed with  its canonical $A$-bimodule structure.

\begin{remark}\label{rem:zetanat}
Notice that $\zeta$ should be more properly denoted by $\mb{\zeta}_R$ if we want to stress the dependence on $R$. Moreover, if $f:S\to R$ is an $A$-ring map, then
$\rdual{f}\circ \mb{\zeta}_R = \mb{\zeta}_S\circ \rcirc{f}.$
Indeed,
\begin{equation*}
\mb{\zeta}_S\left(\rcirc{f}\left(\overline{\varphi\tensor{T_N^R}n}\right)\right)(s) \stackrel{\eqref{eq:Rmap}}{=} \mb{\zeta}_S\left(\overline{\varphi\tensor{T_{\Amod{f}(N)}^S}n}\right)(s) = \varphi\left(nf(s)\right) = \rdual{f}\left(\mb{\zeta}_R\left(\overline{\varphi\tensor{T_N^R}n}\right)\right)(s)
\end{equation*}
for all $s\in S, \overline{\varphi\tensor{T_N^R}n}\in\rcirc{R}$ and where $T_N^R=\End{R}{N}$.
\end{remark}

For the reader's sake, we include here the subsequent result.

\begin{lemma}[{\cite[3.4]{LaiachiGomez}}]\label{lema:zeta}
The map $\mb{\zeta}$ of \eqref{Eq:zeta} fulfils the following equalities for every $z\in R^{\circ}$, $x,y\in R$
\begin{equation}\label{eq:zetaprop}
\zeta(z)\big(xy)=\zeta(z_1)\Big(\zeta(z_2)(x)y\Big), \qquad \zeta(z)(1_{R})=\varepsilon(z) \qquad \text{and} \qquad \zeta(azb)(u)=a\zeta(z)(bu).
\end{equation}
\end{lemma}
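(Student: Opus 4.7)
The three identities concern the behaviour of $\mb{\zeta}_R$ with respect to, respectively, the comultiplication, the counit and the $A$-bimodule structure of $\Circ{R}$. Using \eqref{Eq:Galway}, an arbitrary element of $\Circ{R}$ can be represented as a single generator $z=\overline{p^{*}\tensor{T_P}p}$ for a suitable $P\in\Amod{R}$, so it suffices to verify each identity on such elements. The main computational tool will be the explicit description of the structure maps on the comatrix coring $\Circ{R}\cong \Sigmad\tensor{\bara{\cA}}\Sigma$ of \eqref{Eq:coringsiso}: if $\{e_i,e_i^{*}\}_{i=1}^n$ is a (finite) right $A$-linear dual basis of $P$, then
\begin{equation*}
\Delta\bigl(\overline{p^{*}\tensor{T_P}p}\bigr)=\sum_{i=1}^{n}\overline{p^{*}\tensor{T_P}e_i}\tensor{A}\overline{e_i^{*}\tensor{T_P}p},\qquad \varepsilon\bigl(\overline{p^{*}\tensor{T_P}p}\bigr)=p^{*}(p),
\end{equation*}
while the $A$-bimodule structure is $a\,\overline{p^{*}\tensor{T_P}p}\,b=\overline{(ap^{*})\tensor{T_P}(p\cdot b)}$, with $ap^{*}$ defined by \eqref{Eq:actions}.

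The counitality identity $\zeta(z)(1_R)=\varepsilon(z)$ is immediate from the two displays above, since $\zeta(z)(1_R)=p^{*}(p\cdot 1_R)=p^{*}(p)=\varepsilon(z)$. For the bimodule identity $\zeta(azb)(u)=a\zeta(z)(bu)$, I would simply unwind the definitions: $\zeta(azb)(u)=(ap^{*})(p\cdot b\cdot u)=a\cdot p^{*}(p\cdot(bu))=a\zeta(z)(bu)$, using that the $A$-ring map $A\to R$ intertwines the $R$-action on $P$ with the $A$-action and that the left $A$-action on $p^{*}\in P^{*}$ is pointwise multiplication in $A$.

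The core step is the multiplicative identity $\zeta(z)(xy)=\zeta(z_1)\bigl(\zeta(z_2)(x)y\bigr)$. Starting from the comultiplication formula above, the right-hand side equals
\begin{equation*}
\sum_{i=1}^{n}\zeta\bigl(\overline{p^{*}\tensor{T_P}e_i}\bigr)\Bigl(\zeta\bigl(\overline{e_i^{*}\tensor{T_P}p}\bigr)(x)\,y\Bigr)=\sum_{i=1}^{n}p^{*}\Bigl(e_i\cdot\bigl(e_i^{*}(p\cdot x)\cdot y\bigr)\Bigr).
\end{equation*}
Here the inner scalar $e_i^{*}(p\cdot x)\in A$ passes through the right $R$-action on $P$ via the $A$-ring structure, so $e_i\cdot(e_i^{*}(p\cdot x)\cdot y)=(e_i\cdot e_i^{*}(p\cdot x))\cdot y$. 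Applying the dual basis relation $\sum_{i}e_i\cdot e_i^{*}(q)=q$ with $q=p\cdot x$ collapses the sum to $p^{*}\bigl((p\cdot x)\cdot y\bigr)=p^{*}(p\cdot xy)=\zeta(z)(xy)$.

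The only delicate point is tracing the bimodule conventions between \eqref{Eq:actions}, the comatrix-coring presentation and the $T_P$-balancing in $\overline{(-)\tensor{T_P}(-)}$; once these are pinned down, each identity is a one-line computation. I would therefore organize the proof as: (i) recall the structure maps on $\rR(\Amod{R})\cong\Sigmad\tensor{\bara{\Amod{R}}}\Sigma$; (ii) reduce to a generator $z=\overline{p^{*}\tensor{T_P}p}$ via \eqref{Eq:Galway}; (iii) verify the three identities in the order counit, bimodule, multiplicative, invoking the dual basis of $P$ for the last one.
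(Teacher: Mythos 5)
Your proof is correct: the paper itself states this lemma without proof (it is quoted from \cite[3.4]{LaiachiGomez}), and your computation — reducing to a generator $\overline{p^{*}\tensor{T_P}p}$, using the comatrix-coring structure maps and the dual basis relation $\sum_i e_i\,e_i^{*}(q)=q$ for $q=px$ — is exactly the standard argument given in that reference. The bimodule conventions you flag are handled correctly, since the left $A$-action on $p^{*}$ is pointwise and the right $A$-action on $P$ is the restriction of the $R$-action along $A\to R$, which is all that is needed for the middle step $e_i\cdot\bigl(e_i^{*}(px)y\bigr)=\bigl(e_i\,e_i^{*}(px)\bigr)\cdot y$.
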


In contrast with the classical case of algebras over fields, the map $\mb{\zeta}$ is not known to be injective,  unless some condition are imposed on the base algebra $A$. For instance, if $A$ is a Dedekind domain then $\mb{\zeta}$ is always injective. Strong consequences of the injectivity of $\mb{\zeta}$ were discussed in \cite{LaiachiGomez}, some of them can be seen as follows.  In general, it is known that the functor $ \cL: \Acom{\Circ{R}} \to \Amod{R}$ induced by the obvious functor $\Acom{\Circ{R}} \to \Amod{\ldual{\left(\Circ{R}\right)}}$ (see \eg \cite[\S19.1]{BrzezinskiWisbauer}) and by the canonical map $\eta_R:R\to \ldual{\left(\Circ{R}\right)}$ (where $\eta_R(r)(p^*\tensor{\bara{\cA_{R}}}p)=\rdual{p}(pr)$ for every $R$-module $P$ and all $r\in R, p\in P, \rdual{p}\in\rdual{P}$) has a right inverse functor  $\chi: \Amod{R} \to \Acom{\Circ{R}}$ which sends each right $R$-module $P \in \Amod{R}$ to the right $\Circ{R}$-comodule $\cL(P)$ with underlying $A$-module $P$ and coaction
$$
\varrho:P \longrightarrow P\tensor{A}\Circ{R}, \quad \Big( p \longmapsto \sum_{i}e_{i}\tensor{A}\big( e_{i}^{*} \tensor{\bara{\Amod{R}}} p\big)\Big),
$$
where $\{e_{i},e_{i}^{*}\}_{i}$ is any dual basis for $P$. If $\zeta$ is assumed to be injective then $\chi$ and $\cL$ are mutually inverses and so  $\Amod{R}$ is isomorphic to $\Acom{\Circ{R}}$ (see Remark \ref{rem:drwho}). Now we give  the notion of \emph{Galois corings}.

\begin{definition}\label{def:Galois}
Let $(A, C)$ be a coring. Then $(A, C)$ is said to be \emph{Galois} (or \emph{$\Acom{C}$-Galois}), if it can be reconstructed from  the category $\Acom{C}$, that is, provided that the canonical map
\begin{equation*}
\can{}: \infcomatrix{\Acom{C}}{\Sigma} \longrightarrow C, \quad \Big(  p^{*}\tensor{\bara{\Acom{C}}}p \longmapsto p^{*}(p_{\Sscript{(0)}}) \,  p_{\Sscript{(1)}} \Big),
\end{equation*}
is an isomorphism of $A$-corings, where $\varrho_{\Sscript{P}}(p)=p_{\Sscript{(0)}} \tensor{A} p_{\Sscript{(1)}}$ is the $C$-coaction on $p \in P$.
\end{definition}

\subsection{The finite dual of co-commutative Hopf algebroid via Tannaka reconstruction}\label{ssec:circ}
Next we want to apply the Tannaka reconstruction process to a certain full subcategory of the category of right modules  over a co-commutative  Hopf algebroid. So take $(A, \cU)$ to be such a Hopf algebroid. Following the notation of \S\ref{ssec:Notations}, we denote by $\Amod{\cU}$ the full subcategory of right $\cU$-modules whose underlying $A$-module is finitely generated and projective,  and by $\fomega: \Amod{\cU} \to \proj{A}$ the associated forgetful functor.  Joining together the results from \S\ref{ssec:cocom} and \S\ref{ssec:Tannaka}, we get that the pair $(\Amod{\cU}, \fomega)$ satisfies the necessary assumptions such that the  algebra $(A,\rR(\Amod{\cU}))$ resulting from the Tannaka reconstruction process is a commutative Hopf algebroid. It is this Hopf algebroid which we refer to as the \emph{finite dual} of $(A,\cU)$ and we denote it by $(A, \Circ{\cU})$. The subsequent result is contained in \cite[Theorem 4.2.2]{LaiachiGomez}. We give here the main steps of its proof.

\begin{invisible}
We just point out that in the reference it is claimed that $(A,\rcirc{U})$ is a commutative Hopf algebroid \emph{over $T_A$}. In light of the terminology introduced there in the footnote 2 at page 5 this means that  the pair $(A,\rcirc{U})$ forms a Hopf algebroid even when we consider both $A$ and $\rcirc{U}$ as algebras over $T_A$, which is an extension of $\K$. In particular, it forms a commutative Hopf algebroid over $\K$.
\end{invisible}
\begin{proposition}\label{prop:Fdual}
Let $A$ be a commutative algebra. Then the finite dual establishes a contravariant functor
$$
(-)^{\circ}: \CHalgd{A} \longrightarrow \Halgd{A}
$$
from the category of co-commutative Hopf algebroids to the category of commutative ones.
\end{proposition}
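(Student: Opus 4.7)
The plan is to verify that the Tannaka reconstruction machinery already developed in Sections \ref{ssec:Tannaka} and \ref{ssec:circ} is functorial on morphisms of co-commutative Hopf algebroids. The assignment on objects is already in place: given $(A,\cU)\in\CHalgd{A}$, the pair $\bigl(\Amod{\cU},\fomega\bigr)$ is symmetric rigid monoidal (via \eqref{Eq:mono} and \eqref{Eq:centerdot}) and $\fomega$ is a strict symmetric monoidal $\K$-linear fiber functor, so $(A,\cU^{\circ})=(A,\rR(\Amod{\cU}))\in\Halgd{A}$.

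First, given a morphism $f:(A,\cU)\to (A,\cV)$ in $\CHalgd{A}$, I would consider the restriction of scalars functor $\Amod{f}:\Amod{\cV}\to\Amod{\cU}$ sending a right $\cV$-module $(M,\cdot)$ to $M$ endowed with the action $m\cdot u:=m\cdot f(u)$. Since $f$ is an $A$-ring map with $\varepsilon_{\cV}\circ f=\varepsilon_{\cU}$ and $(f\tensor{A}f)\circ \Delta_{\cU}=\Delta_{\cV}\circ f$, the functor $\Amod{f}$ is $\K$-linear, the underlying $A$-module is unchanged (so it lands in $\proj{A}$ and $\fomega\circ \Amod{f}=\fomega'$, i.e.\ the diagram \eqref{Eq:triangle} commutes), and the monoidal structure \eqref{Eq:mono} together with the translation-map duality \eqref{Eq:centerdot} is preserved because $\Delta$ and $\beta^{-1}$ are intertwined by $f$. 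Hence $\Amod{f}$ is a symmetric strict monoidal $\K$-linear functor over $\proj{A}$.

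Next, by the functoriality of the Tannaka reconstruction recalled at the end of Section \ref{ssec:Tannaka} (diagram \eqref{Eq:diag} and formulas \eqref{eq:Rmap}--\eqref{Eq:RF}), $\Amod{f}$ induces a morphism of commutative Hopf algebroids
\[
f^{\circ}:=\rR(\Amod{f}):\cV^{\circ}\longrightarrow \cU^{\circ},\qquad \overline{p^{*}\tensor{T_{P}^{\cV}}p}\longmapsto \overline{p^{*}\tensor{T_{\Amod{f}(P)}^{\cU}}p}.
\]
That $f^{\circ}$ respects source, target, counit, comultiplication and antipode follows because each of these structure maps on $\rR(\cA)$ is defined entirely in terms of the $A$-module structure of the objects and of the evaluation/coevaluation of their duals, data which are left untouched by $\Amod{f}$. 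In particular, the multiplication \eqref{Eq:mo} and the antipode \eqref{Eq:antipode} are preserved since $\Amod{f}$ is symmetric strict monoidal and preserves duals.

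Finally, functoriality of $(-)^{\circ}$ is immediate from the formula for $f^{\circ}$: given a second morphism $g:(A,\cV)\to (A,\cW)$, one has $\Amod{g\circ f}=\Amod{f}\circ\Amod{g}$ on modules, and likewise $\id{(A,\cU)}$ induces the identity functor on $\Amod{\cU}$; applying $\rR(-)$ and reading off the explicit formula \eqref{Eq:RF} yields $(g\circ f)^{\circ}=f^{\circ}\circ g^{\circ}$ and $\id{(A,\cU)}^{\circ}=\id{\cU^{\circ}}$. The only delicate point I expect is the careful verification that $\Amod{f}$ really is a \emph{strict} symmetric monoidal functor between the rigid monoidal categories $(\Amod{\cV},\tensor{A},A)$ and $(\Amod{\cU},\tensor{A},A)$; this reduces to showing that the $\cU$-action \eqref{Eq:mono} on $M\tensor{A}N$ obtained by restricting the $\cV$-action through $f$ coincides with the $\cU$-action obtained from restricting each factor separately, which is a direct consequence of the comultiplicativity of $f$, and similarly for the duals using \eqref{form:beta1}--\eqref{form:beta9} together with the fact that $f$ commutes with the translation maps.
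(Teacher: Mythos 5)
Your proposal is correct and follows essentially the same route as the paper: restriction of scalars along $f$ gives a symmetric strict monoidal $\K$-linear functor $\Amod{\cV}\to\Amod{\cU}$ commuting with the fiber functors, and the functoriality of the Tannaka reconstruction from \S\ref{ssec:Tannaka} then produces the morphism $f^{\circ}$ of commutative Hopf algebroids, with compatibility with composition and identities read off from the explicit formula \eqref{Eq:RF}. The extra care you take about duals and translation maps is a fair elaboration of the paper's ``it is easily checked'' but does not change the argument.
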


\begin{proof}
Given a morphism $\phiup: \cU \to \cU'$ of co-commutative Hopf algebroids,  the  restriction of scalars leads to  a $\K$-linear functor $\cF_{\Sscript{\phi}}: \Amod{\cU'} \to \Amod{\cU}$ which commutes with the  forgetful  functor, that is, such that $\fomega \circ \cF_{\Sscript{\phi}}=\fomega'$. Using the monoidal structure described in  \eqref{Eq:mono}, it is easily checked that  $\cF_{\Sscript{\phi}}$ is a symmetric strict monoidal  functor. Therefore (see \S\ref{ssec:Tannaka}) we have a morphism $\phiup^{\circ}: \cU'{}^{\circ} \to \Circ{\cU}$ of Hopf algebroids. The compatibility of  $(-)^{\circ}$ with the composition law and the identity morphisms is obvious.
\end{proof}

\subsection{The zeta map and Galois Hopf algebroids}\label{ssec:Galois}
Let $(A, \cU)$ be a co-commutative Hopf algebroid and consider
its right $A$-linear dual $\rdual{\cU}$, regarded as an $(A\otimes A)$-algebra with the convolution product induced by the comultiplication $\Delta:\cU_A\to \cU_A\tensor{A}\cU_A$, that is to say,
\begin{equation*}
(f*g)(u)=f( u_1)g(u_2),\quad  \text{for every }\, f,g \in \rdual{\cU}, u \in \cU .
\end{equation*}
The canonical $A$-bilinear map from \S\ref{ssec:Galoiscoring}
\begin{equation}\label{Eq:zetaH}
\zeta = \boldsymbol{\zeta}_{\cU}: \Circ{\cU} \longrightarrow  \cU^{*}, \quad \Big(  p^{*}\tensor{\bara{\Amod{\cU}}} p \longmapsto \Big[  u \mapsto p^{*}(p \, u) \Big] \Big)
\end{equation}
is an $(A\tensor{}A)$-algebra map and it fulfils \eqref{eq:zetaprop} for $R=\cU$. If $\zeta$ is injective, then there is an isomorphism of symmetric rigid monoidal categories $\Acom{\Circ{\cU}}\cong \Amod{\cU}$ (see \cite[Theorem 4.2.2]{LaiachiGomez}).
The subsequent definition is a particular instance of Definition \ref{def:Galois}.
\begin{definition}\label{def:GaloisH}
A commutative Hopf algebroid $(A, \cH)$ is called \emph{Galois} (or \emph{$\Acom{\cH}$-Galois}), if  its underlying $A$-coring is Galois in the sense of Definition \ref{def:Galois}, \ie if the canonical map
\begin{equation*}
\can{}: \infcomatrix{\Acom{\cH}}{\Sigma} \longrightarrow \cH, \quad \Big(  p^{*}\tensor{\bara{\Acom{\cH}}}p \longmapsto s(p^{*}(p_{\Sscript{(0)}})) p_{\Sscript{(1)}} \Big),
\end{equation*}
is an isomorphism of Hopf algebroids, where $\varrho_{\Sscript{P}}(p)=p_{\Sscript{(0)}} \tensor{A} p_{\Sscript{(1)}}$ is the $\cH$-coaction on $p \in P$. The full subcategory of Galois commutative Hopf algebroids with base algebra $A$ is denoted by $\Sf{GCHAlgd}_{\Sscript{A}}$.
\end{definition}

\begin{remark}\label{rem:circGalois}
Let $(A,\cU)$ be a co-commutative Hopf algebroid. When the canonical map $\zeta:\Circ{\cU}\to\rdual{\cU}$ is injective, the reconstructed object $\Circ{\cU}$ is Galois (see \cite[Proposition 3.3.3]{LaiachiGomez}). The inverse of the canonical map $\can{}$ is provided by the assignment $\infcomatrix{\Amod{\cU}}{\Sigma}\to \infcomatrix{\Acom{\Circ{\cU}}}{\Sigma}, \, p^{*}\tensor{\bara{\Amod{\cU}}}p \mapsto p^{*}\tensor{\bara{\Acom{\Circ{\cU}}}}p$, employing the canonical isomorphism $\Acom{\Circ{\cU}}\cong \Amod{\cU}$.
Later on, we will recover the same isomorphism under an apparently weaker condition. We point out also that this condition makes of $\Circ{\cU}$ a Galois coring, even if we replace $\cU$ simply by an $A$-ring $R$ (see \eg Remark \ref{rem:circGalois2}).
\end{remark}

\begin{example}\label{exam:Galois}
Several well-known Hopf algebroids are Galois as the following list of examples shows.
\begin{enumerate}
\item Any commutative Hopf algebra  over a field (i.e., a Hopf algebroid with source equal target with base algebra is a  field) is Galois Hopf algebroid.
\item Let $B \to A$ be a faithfully flat extension of commutative algebras. Then $(A, A\tensor{B}A)$ is a Galois Hopf algebroid.
\item Any Hopf algebroid $(A,\cH)$ whose unit map $\etaup: A\tensor{}A \to \cH$ is a faithfully flat extension of algebras is actually Galois. In other words, any geometrically transitive Hopf algebroid is Galois,  see \cite{ElKaoutit:2015} for more details.
\item The \emph{Adams Hopf algebroids} as defined in \cite{Hovey:2004} and studied in \cite{Schappi:2012} are Galois.
\end{enumerate}
We point out that the first three cases are in fact a particular instance of a more general result \cite[Theorem 5.7]{ElKaoutit/Gomez:2004b}, which asserts that  any flat Hopf algebroid whose category of comodules $\rcomod{\cH}$ admits $\Acom{\cH}$ as a set of small generators, is a Galois Hopf algebroid.
\end{example}

%%%%%%%%%%%%%%%%%%%%%%%%%%%%%%%%%%%%%%%%%%%%%%%%%%%%%%%%%%%%%%%%%%%%%%%%%%%%

\section{An alternative dual via SAFT}\label{ssec:saft}
In this section we propose a different candidate for the finite dual of a given co-commutative Hopf algebroid. Its construction is based upon the well-known  Special Adjoint Functor Theorem. We also establish a natural transformation between this new contravariant functor and the one already recalled in Subsection \ref{ssec:circ}. As before, we start by the general setting of rings.

\subsection{Finite dual using SAFT: The general case of $A$-rings.}\label{ssec:FDGeneral}
Let $A$ be a commutative algebra. Consider the category $\bimod{A}$ of $A$-bimodules. Then the functor $\rdual{(-)}:\bimod{A}\to (\bimod{A})^{\Sscript{\text{op}}}$ admits a right adjoint $\ldual{(-)}:(\bimod{A})^{\Sscript{\text{op}}}\to \bimod{A}$, where $\rdual{M}=\hom{-\, A}{M}{A}$ with structure of $A$-bimodules as in \eqref{Eq:actions}. The latter functor induces a functor
\begin{equation}\label{eq:dual}
\ldual{(-)}:(\coanillos{A})^{\Sscript{\text{op}}}\longrightarrow \ring{A},
\end{equation}
where the category $\ring{A}$ stands for $\Bbbk$-algebras $R$ with an algebra map $A \to R$ (whose image is not necessarily in the centre of $R$). The functor of \eqref{eq:dual} is explicitly given as follows: For a  given an $A$-coring $(C,\Delta,\varepsilon)$ we have that the $A$-ring structure on $\ldual{C}$ is given as in \eqref{eq:convolution}.
As a consequence of the Special Adjoint Functor Theorem, the functor of equation \eqref{eq:dual} admits a left adjoint
 \begin{equation}\label{eq:dag}
 \rsaft{(-)}:\ring{A}\to (\coanillos{A})^{\Sscript{\text{op}}},
 \end{equation}
 see \cite[Corollary 9]{Porst-Street}. For future reference, let us retrieve explicitly the $A$-ring morphism
\begin{equation}\label{eq:unitbullet}
\eta'_R :R\longrightarrow \ldual{\left( R^{\bullet }\right)},\qquad \Big(r\longmapsto \left[ z\longmapsto \xi \left( z\right) \left( r\right) \right]\Big)
\end{equation}
(i.e., unit of the previous adjunction).

\begin{remark}\label{rem:rsaft}
Given an $A$-ring $R$, the $A$-coring $\rsaft{R}$ is uniquely determined by the following universal property: it comes endowed with an $A$-bimodule morphism $\xi:\rsaft{R}\to \rdual{R}$ which satisfies the analogous of the relations \eqref{eq:zetaprop} and if $C$ is an $A$-coring endowed with a $A$-bimodule map $f:C\to \rdual{R}$ satisfying the same relations, then there is a unique $A$-coring map $\what{f}:C\to \rsaft{R}$ such that $\xi\circ \what{f}=f$. Conversely, notice that given an $A$-coring map $g:C\to \rsaft{R}$, the composition $\xi\circ g$ satisfies the relations in \eqref{eq:zetaprop}. As a consequence, if $g,g':C\to \rsaft{R}$ are coring maps such that $\xi\circ g=\xi \circ g'$, then $g=g'$.
\end{remark}

\begin{remark}\label{rem:univpropbullet}
For the reader sake, we show how the adjunction follows from this universal property.  Let $R$ be an $A$-ring, let $C$ be an $A$-coring and $h:R\to \ldual{C}$ be a $\K$-linear map. Denote by $f:C\to \rdual{R}$ the map defined by $f(c)(r)=h(r)(c)$ for all $r\in R$ and $c\in C$. We compute
 \begin{align*}
  h(bxa)(c) & = f(c)(bxa) = f(c)(bx)a = af(c)(bx) =(af(c)b)(x) , \\
 (bh(x)a)(c) & \stackrel{\eqref{eq:convolution}}{=} h(x)(cb)a = ah(x)(cb) = h(x)(acb) = f(acb)(x), \\
 h(xy)(c) & = f(c)(xy), \\
 (h(x)*h(y))(c) & \stackrel{\eqref{eq:convolution}}{=} h(y)(c_1h(x)(c_2)) = \big(h(x)(c_2)h(y)\big)(c_1) = h(h(x)(c_2)y)(c_1) = f(c_1)(f(c_2)(x)y), \\
 h(1_R)(c) & =  f(c)(1_R), \\
  1_{\rdual{C}}(c) & \stackrel{\eqref{eq:convolution}}{=} \varepsilon(c).
 \end{align*}
 Consequently, we see that $h$ from $R$ to $\ldual{C}$ is an $A$-ring morphism if and only if $f$ corresponds to $h$ via the adjunction $(\rdual{(-)},\ldual{(-)})$ and satisfies the conditions in \eqref{eq:zetaprop}. Since there is a $1$--$1$ correspondence between these $f$'s and the $\what{f}\,$'s as above, we are done. Note also that given an $A$-ring map $h:R_1\to R_2$, we can consider the $A$-bimodule map $\rdual{h}:\rdual{R_2}\to \rdual{R_1}$. If we pre-compose $\rdual{h}$ with $\xi_2:\rsaft{R_2}\to \rdual{R_2}$, the map $f:=\rdual{h}\circ \xi_2$ satisfies conditions \eqref{eq:zetaprop} since $f(z)(r)=\xi_2(z)(h(r))$ for all $z\in\rsaft{R_2}$, $r\in R_1$ and $h$ is multiplicative, unital and $A$-bilinear. As a consequence, the universal property of $\rsaft{R_2}$ yields a unique $A$-coring map $\rsaft{h}:=\what{f}:\rsaft{R_2}\to \rsaft{R_1}$ such that $\xi_1\circ \rsaft{h} = \rdual{h} \circ \xi_2$.
\end{remark}

\begin{example}[the map zeta-hat]\label{exam:zetahat}
Let $R$ and $\Circ{R}$ as in \S\ref{ssec:Galoiscoring} together with the  $A$-bimodules morphism $\zeta$ of equation \eqref{Eq:zeta}.  By Lemma \ref{lema:zeta} and the universal property of $\rsaft{R}$,  there is an $A$-corings morphism
\begin{equation}\label{Eq:zetahat}
\what{\zeta}: R^{\circ}  \longrightarrow \rsaft{R},
\end{equation}
such that $\xi \circ \what{\zeta}= \zeta$. In light of Remark \ref{rem:zetanat}, this induces a natural transformation
$
\mb{\what{\zeta}}:\rcirc{(-)} \rightarrow  \rsaft{(-)}.
$
\end{example}

 \begin{lemma}\label{lem:maggico}
 Given an $A$-ring $R$ and the canonical map $\mb{\xi}_{\Sscript{R}}:=\xi:\rsaft{R}\to \rdual{R}$, we have that $\ker{\xi}$ contains no non-zero coideals of $\rsaft{R}$ (\ie $\xi$ is \emph{cogenerating} in the sense of {\cite[Definition 1.13]{Michaelis-PBW}}). In particular, $\xi$ is injective if and only if $\ker{\xi}$ is a coideal of $\rsaft{R}$.
 \end{lemma}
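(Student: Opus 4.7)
My plan is to exploit the universal property of $\rsaft{R}$ recorded in Remark \ref{rem:rsaft} in order to turn a putative non-zero coideal inside $\ker(\xi)$ into a contradiction. The argument is essentially categorical: an $A$-coring map that, composed with $\xi$, reproduces $\xi$ itself must be the identity by the uniqueness clause of the universal property.

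Suppose $I\subseteq \ker(\xi)$ is a coideal of $\rsaft{R}$ and let $\pi\colon \rsaft{R}\to \rsaft{R}/I=:C$ be the canonical projection of $A$-corings. Since $I\subseteq \ker(\xi)$, the map $\xi$ factors through $\pi$ as $\xi=\bar{\xi}\circ \pi$ for a unique $A$-bimodule map $\bar{\xi}\colon C\to \rdual{R}$. The key observation is that $\bar{\xi}$ inherits from $\xi$ the three identities in \eqref{eq:zetaprop}: because $\pi$ is a surjective $A$-coring map, the comultiplication and counit of $C$ are determined by those of $\rsaft{R}$ through $\pi$, and each of the relations in \eqref{eq:zetaprop} is preserved by a surjection once it holds for $\xi$. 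Hence the universal property of $\rsaft{R}$ (Remark \ref{rem:rsaft}) applies to $\bar{\xi}$ and produces a unique $A$-coring map $\what{\bar{\xi}}\colon C\to \rsaft{R}$ with $\xi\circ \what{\bar{\xi}}=\bar{\xi}$.

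The composite $\what{\bar{\xi}}\circ \pi\colon \rsaft{R}\to \rsaft{R}$ is then an $A$-coring endomorphism satisfying
\[
\xi\circ \big(\what{\bar{\xi}}\circ \pi\big)=\bar{\xi}\circ \pi=\xi=\xi\circ \mathrm{id}_{\rsaft{R}}.
\]
Applying the uniqueness part of the universal property of $\rsaft{R}$ (equivalently, the last sentence of Remark \ref{rem:rsaft}) to the two coring maps $\what{\bar{\xi}}\circ \pi$ and $\mathrm{id}_{\rsaft{R}}$ forces $\what{\bar{\xi}}\circ \pi=\mathrm{id}_{\rsaft{R}}$. In particular $\pi$ is injective, so $I=\ker(\pi)=0$. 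This proves that $\ker(\xi)$ contains no non-zero coideals.

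For the final statement, if $\xi$ is injective then $\ker(\xi)=0$ is trivially a coideal. Conversely, if $\ker(\xi)$ is itself a coideal of $\rsaft{R}$, the first part applied to $I=\ker(\xi)$ yields $\ker(\xi)=0$, hence $\xi$ is injective. I expect no serious obstacle; the only point requiring care is to verify that $\bar{\xi}$ genuinely satisfies the three identities of \eqref{eq:zetaprop}, which is routine from the coring structure of the quotient $C=\rsaft{R}/I$.
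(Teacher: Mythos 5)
Your argument is correct and is essentially identical to the paper's own proof: factor $\xi$ through the quotient coring $C=\rsaft{R}/I$, check that the induced map $\bar{\xi}$ still satisfies \eqref{eq:zetaprop}, lift it via the universal property, and use uniqueness to conclude that $\pi$ is a split (hence injective) surjection. The only cosmetic difference is that the paper writes out the routine verification of the three identities for $\bar{\xi}$, which you correctly identify as the one point needing care.
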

 \begin{proof}
 By definition, a coideal $\cJ$ of $\rsaft{R}$ is an $A$-subbimodule such that the quotient $A$-bimodule $C:=\rsaft{R}/\cJ$ is an $A$-coring and the canonical projection $\pi:\rsaft{R}\to C$ is an $A$-coring map. If $\cJ\subseteq \ker{\xi}$, then $\xi$ factors through a map $\bar{\xi}:C\to \rdual{R}$ such that $\bar{\xi}\circ \pi=\xi$. Any $c\in C$ is of the form $\pi(x)$ for some $x\in \rsaft{R}$, so that
 \begin{align*}
 \bar{\xi}(c_1)\big(\bar{\xi}(c_2)(r)r'\big) & = \bar{\xi}(\pi(x)_1)\big(\bar{\xi}(\pi(x)_2)(r)r'\big) = {\xi}(x_1)\big({\xi}(x_2)(r)r'\big) \stackrel{\eqref{eq:zetaprop}}{=} \xi(x)(rr') = \bar{\xi}(c)(rr'), \\
  \bar{\xi}(c)(1_R) & = \xi(x)(1_R) \stackrel{\eqref{eq:zetaprop}}{=}  \varepsilon_{\rsaft{R}}(x) = \varepsilon_C(c), \\
   \bar{\xi}(acb)(r) & = \bar{\xi}(a\pi(x)b)(r)=\bar{\xi}(\pi(axb))(r)=\xi(axb)(r) = a\xi(x)(br)= a\bar{\xi}(c)(br).
 \end{align*}
 As a consequence of the universal property of $\rsaft{R}$, there exists a unique $A$-coring map $\sigma:C\to \rsaft{R}$ such that $\xi\circ \sigma = \bar{\xi}$. Now, $\xi\circ \sigma\circ \pi= \bar{\xi}\circ \pi = \xi$, so that the uniqueness in the universal property entails that $\sigma\circ \pi = \id{\rsaft{R}}$. Since $\pi$ is surjective, this forces $\pi$ to be invertible whence $\cJ=0$.
 \end{proof}

Next, we want to relate the two categories $\Amod{R}$ and $\Acom{\rsaft{R}}$ (see \S \ref{ssec:Notations} for definition), but before we recall the following general construction that has been and will be used more or less implicitly along the paper. As a matter of notation, if ${}_{B}M_{A}$ is a $\left(B,A\right) $-bimodule such that $M_{A}$ is finitely generated and projective with dual basis $\left\{ e_{i},e_{i}^{\ast }\right\} _{i}$, then we are going to set
$$
db_{M} :B\rightarrow M\tensor{A}M^{\ast },\quad \left(b\longmapsto \sum_{i}be_{i}\tensor{A}e_{i}^{\ast }\right) \qquad \text{and} \qquad
ev_{M} :M^{\ast }\tensor{B}M\rightarrow A,\quad \Big(f\tensor{B}m\longmapsto f\left( m\right) \Big).
$$
Notice that $db$ is $B$-bilinear while $ev$ is $A$-bilinear and we have the following isomorphism
\begin{align}
\beta :\hom{D-B}{M}{N\tensor{C}P} \rightarrow \hom{C-B}{N^{\ast }\tensor{D}M}{P}, \quad \Big(g\longmapsto \left( ev_{N}\tensor{C}P\right) \circ \left( N^{\ast }\tensor{D}g\right)\Big) \label{eq:beta}
\end{align}
for $B,C,D$ algebras and ${}_{D}M_{B}, {}_{D}N_{C}, {}_{C}P_{B}$ bimodules such that $N_{C}$ is finitely generated and projective.

\begin{invisible}
The following direct computations
\begin{align*}
\sum_{i}ae_{i}\tensor{A}e_{i}^{\ast } &=\sum_{i,j}e_{j}e_{j}^{\ast }\left( ae_{i}\right) \tensor{A}e_{i}^{\ast }=\sum_{j}e_{j}\otimes_{A}\sum_{i}e_{j}^{\ast }\left( ae_{i}\right) e_{i}^{\ast } =\sum_{j}e_{j}\tensor{A}e_{j}^{\ast }a,\\
\left( fa\right) \left( n\right) &=f\left( an\right) ,\\
\left( cf\right) \left( nc^{\prime }\right) &=cf\left( n\right) c^{\prime },
\end{align*}
imply that $db$ is well-defined and $B$-bilinear while $ev$ is well-defined and $A$-bilinear.
\end{invisible}

For every $(B,A)$-bimodule $N$ we set
$${_B\Coac_A}(N,N\tensor{A}C) := \left\{\rho \in \hom{B-A}{N}{N\tensor{A}C}\mid (N,\rho)\in\Acom{C}\right\}.$$

\begin{lemma}\label{lemma:coactions}
For every $(B,A)$-bimodule $N$ such that $N_A$ is finitely generated and projective, the assignment
$
\beta_C :\hom{B-A}{N}{N\tensor{A}C} \rightarrow \hom{A-A}{N^{\ast }\tensor{B}N}{C}
$
of Equation \eqref{eq:beta} induces an isomorphism
\begin{equation}\label{eq:betabar}
\bar{\beta}_C : {_B\Coac_A}(N,N\tensor{A}C) \to \Coring_A(N^*\tensor{B}N,C)
\end{equation}
natural in $C$.
\end{lemma}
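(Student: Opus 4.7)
The plan is to show that the isomorphism $\beta_C$ of equation \eqref{eq:beta} restricts to a bijection between right $C$-coactions on $N$ and $A$-coring morphisms out of the comatrix coring $N^*\tensor{B}N$, with naturality in $C$ following from the naturality of $\beta$ itself. Since $\beta_C$ is already known to be a bijection, it is enough to check that the counitality and coassociativity axioms on one side correspond, through $\beta_C$, to the counit and comultiplicativity axioms of an $A$-coring morphism on the other side.

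First, I will unpack the relevant structures. For $\rho \in \hom{B-A}{N}{N\tensor{A}C}$ written in Sweedler-like notation as $\rho(n)=n_{(0)}\tensor{A}n_{(1)}$, the associated map $g := \beta_C(\rho)\colon N^*\tensor{B}N \to C$ is given by $g(f\tensor{B}n) = f(n_{(0)})n_{(1)}$. Recall that $N^*\tensor{B}N$ is the comatrix $A$-coring with counit $\varepsilon(f\tensor{B}n)=f(n)$ and comultiplication $\Delta(f\tensor{B}n) = \sum_i (f\tensor{B}e_i) \tensor{A} (e_i^*\tensor{B}n)$ for any dual basis $\{e_i,e_i^*\}$ of $N_A$. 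The key ingredient that will be used throughout is that, because $N_A$ is finitely generated and projective, the canonical map $N\tensor{A}V \to \hom{A-}{N^*}{V}$ is injective for every $A$-bimodule $V$; equivalently, $N^*$ separates points of $N\tensor{A}V$.

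Next, I will treat the counit. The equation $\varepsilon_C \circ g = \varepsilon_{N^*\tensor{B}N}$ reads $f(n_{(0)})\varepsilon_C(n_{(1)}) = f(n)$ for all $f\in N^*$ and $n\in N$, which by the separating property is equivalent to $(N\tensor{A}\varepsilon_C)\circ \rho = \id{N}$, i.e.\ counitality of $\rho$. For comultiplicativity, a direct computation using right $A$-linearity of $\rho$ and the dual basis identity $n_{(0)} = \sum_i e_i e_i^*(n_{(0)})$ yields
\begin{equation*}
(g\tensor{A}g)\Delta(f\tensor{B}n) = \sum_i f(e_{i(0)})e_{i(1)}\tensor{A}e_i^*(n_{(0)})n_{(1)} = f(n_{(0)(0)})n_{(0)(1)}\tensor{A}n_{(1)},
\end{equation*}
while $\Delta_C\circ g(f\tensor{B}n) = f(n_{(0)})n_{(1)(1)}\tensor{A}n_{(1)(2)}$. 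Equality of these expressions for all $f$ and $n$ is, by the separating argument applied to $N\tensor{A}C\tensor{A}C$, equivalent to the coassociativity of $\rho$.

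With these two equivalences in place, $\beta_C$ restricts to the claimed bijection $\bar{\beta}_C$, and naturality in $C$ is inherited from the naturality of $\beta$ in the obvious way: for $\phi\colon C\to C'$ an $A$-coring morphism, both $\beta_{C'}\circ (N\tensor{A}\phi)_*$ and $\phi_*\circ \beta_C$ send $\rho$ to $f\tensor{B}n \mapsto \phi(f(n_{(0)})n_{(1)})$. I expect the main obstacle to be precisely the computation identifying the comatrix comultiplication with the iterated coaction: the passage from $\sum_i f(e_{i(0)})e_{i(1)}\tensor{A}e_i^*(n_{(0)})n_{(1)}$ to $f(n_{(0)(0)})n_{(0)(1)}\tensor{A}n_{(1)}$ requires careful bookkeeping of the $A$-actions and repeated use of right $A$-linearity of $\rho$ together with the dual basis formula; once this identity is established, both directions of the characterization follow cleanly from the separating property of $N^*$.
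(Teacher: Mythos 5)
Your proof is correct and follows essentially the same route as the paper: a direct dual-basis computation matching the (co)unit and coassociativity axioms for $\rho$ against the counit and comultiplicativity axioms for $\beta_C(\rho)$ with respect to the comatrix coring structure on $N^*\tensor{B}N$. The only cosmetic difference is that you establish both implications at once via the separating property of $N^*$ on $N\tensor{A}V$, whereas the paper handles one direction by adapting \cite[Proposition 2.7]{ComatrixGalois} and the other by computing directly with $\beta_C^{-1}$.
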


\begin{proof}
By adapting \cite[Proposition 2.7]{ComatrixGalois}, one proves that $\beta_C$ induces $\bar{\beta}_C$. A direct computation shows that $\beta_C^{-1}$ restricts to $\bar{\beta}_C^{-1}:\Coring_A(N^*\tensor{B}N,C)\to {_B\Coac_A}(N,N\tensor{A}C)$, providing an inverse for $\bar{\beta}_C$.
\begin{invisible}
To this aim, consider a coring morphism $\varphi\in \Coring_A(N^*\tensor{B}N,C)$ and set $\gamma:=\beta_C^{-1}(\varphi)$. Let $\{e_i,e_i^*\}_i$ be a dual basis for $N$. Then the following direct computation
\begin{align*}
(N\tensor{A} \varepsilon)\gamma(n) & = \sum_{i}e_i\varepsilon\varphi(e_i^*\tensor{B}n) = n \\
(N\tensor{A} \Delta)\gamma(n) & = \sum_{i}e_i \tensor{A} \Delta\varphi(e_i^*\tensor{B}n) = \sum_{i,j}e_i \tensor{A} \varphi(e_i^*\tensor{B}e_j) \tensor{A} \varphi(e_j^*\tensor{B}n)  \\
 & = \sum_{j}\gamma(e_j) \tensor{A} \varphi(e_j^*\tensor{B}n) = (\gamma \tensor{A} C) \gamma(n)
\end{align*}
shows that $\gamma\in {_B\Coac_A}(N,N\tensor{A}C)$. Thus, $\beta_C^{-1}$ restricts to $\bar{\beta}_C^{-1}:\Coring_A(N^*\tensor{B}N,C)\to {_B\Coac_A}(N,N\tensor{A}C)$, providing an inverse for $\bar{\beta}_C$.
\end{invisible}
\end{proof}

\begin{lemma}\label{lemma:fmono}
Let $C$ be an $A$-coring, $M$ an $A$-bimodule and $f:C\to M$ an $A$-bilinear map. The following are equivalent
\begin{enumerate}[label=({\alph*}),ref=\emph{(\alph*)}]
\item\label{item:fmono2} $(N\tensor{A}f)\circ\rho = (N\tensor{A}f)\circ\rho'$ implies $\rho=\rho'$ for every $\rho,\rho'\in{\Coac_A}(N,N\tensor{A} C)$ and for every $N\in \proj{A}$,
\item\label{item:fmono3} $f\circ\alpha = f\circ\beta$ implies $\alpha=\beta$ for every $\alpha,\beta: E\to C$ coring maps and for every $A$-coring $E$ with $\can{E}$ (split) epimorphism of corings,
\item\label{item:fmono1} $f\circ\alpha = f\circ\beta$ implies $\alpha=\beta$ for every $\alpha,\beta: N^*\tensor{B} N\to C$ coring maps, for every algebra $B$ and every bimodule ${_BN_A}$ such that $N_A\in \proj{A}$.
\end{enumerate}
\end{lemma}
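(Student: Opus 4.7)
The plan uses Lemma \ref{lemma:coactions} as the key technical ingredient. Before exploiting it, I would record that the comparison $\beta_C$ from \eqref{eq:beta} is natural in its codomain at the level of $A$-bimodules: an easy dual-basis computation yields, for every $A$-bilinear $f:C\to M$ and every $\rho\in\hom{B-A}{N}{N\tensor{A}C}$, the identity
\[
\beta_M\big((N\tensor{A}f)\circ\rho\big)=f\circ\beta_C(\rho).
\]
Thanks to this, the equation $f\circ\alpha=f\circ\beta$ between coring maps $\alpha,\beta:N^*\tensor{B}N\to C$ translates bijectively, via $\bar\beta_C$, into the equation $(N\tensor{A}f)\circ\rho=(N\tensor{A}f)\circ\rho'$ between the corresponding $(B,A)$-bilinear coactions, which are in particular right $A$-linear. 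Since $A$ is commutative, viewing any $N\in\proj{A}$ as a symmetric $(A,A)$-bimodule identifies ${_A\Coac_A}(N,N\tensor{A}C)$ with ${\Coac_A}(N,N\tensor{A}C)$, so that \emph{(a)}$\Rightarrow$\emph{(c)} follows by forgetting the left $B$-action (every $(B,A)$-bilinear coaction is in particular right $A$-linear) and \emph{(c)}$\Rightarrow$\emph{(a)} by specialising to $B=A$.

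For \emph{(c)}$\Rightarrow$\emph{(b)}, I would pre-compose $\alpha$ and $\beta$ with $\can{E}:\rR(\Acom{E})\to E$ and decompose $\rR(\Acom{E})$ along its building blocks $P^*\tensor{T_P}P$ for $P\in\Acom{E}$. Each such component is itself a comatrix $A$-coring (with $T_P=\End{\Acom{E}}{P}$), and a routine calculation using the $E$-coaction on $P$ shows that $\can{E}$ restricts on each block to an $A$-coring map $P^*\tensor{T_P}P\to E$. Condition \emph{(c)}, applied with $B=T_P$ and $N=P$ for every $P\in\Acom{E}$, then forces $\alpha\circ\can{E}=\beta\circ\can{E}$ on every block; since these blocks surject onto $\rR(\Acom{E})$ and $\can{E}$ is an epimorphism of $A$-corings by hypothesis, we conclude $\alpha=\beta$.

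For \emph{(b)}$\Rightarrow$\emph{(c)}, the crux is to exhibit $E:=N^*\tensor{B}N$ as a coring whose canonical map $\can{E}$ admits a section. With a dual basis $\{e_i,e_i^*\}$ for $N_A$, I would set $\rho_N:N\to N\tensor{A}E$, $n\mapsto\sum_ie_i\tensor{A}(e_i^*\tensor{B}n)$, and check that $\rho_N$ is a $(B,A)$-bilinear $E$-coaction (the left $B$-linearity hinges on the dual-basis identity $e_i^*b=\sum_je_i^*(be_j)e_j^*$ in $N^*$). Consequently, $N\in\Acom{E}$ and left multiplication by $B$ embeds into $T_N$, whence $\sigma:E\to\rR(\Acom{E})$, $n^*\tensor{B}n\mapsto\overline{n^*\tensor{T_N}n}$, is well defined and $A$-coring linear. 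The dual-basis identity $\sum_in^*(e_i)e_i^*=n^*$ then gives $\can{E}\circ\sigma=\id{E}$, so $\can{E}$ is a split epimorphism and condition \emph{(b)}, applied to $E$ with the coring maps $\alpha$ and $\beta$, delivers the conclusion.

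I expect the main obstacle to lie in this last implication, where one must carefully track the interplay between the left $B$-action and the right $A$-action on $N$ to ensure both that the candidate coaction $\rho_N$ is $(B,A)$-bilinear and that $\sigma$ factors through the defining tensor relation $n^*b\tensor{B}n=n^*\tensor{B}bn$ of $E$; the remaining coring-theoretic verifications then reduce to standard bookkeeping on the comatrix coring structure.
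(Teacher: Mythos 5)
Your proposal follows essentially the same route as the paper's own proof: the equivalence of (a) and (c) via the naturality of $\beta$ and Lemma \ref{lemma:coactions}, the reduction of (c)$\Rightarrow$(b) to the comatrix blocks $P^*\tensor{T_P}P$ of $\rR(\Acom{E})$ followed by surjectivity of the projection and epimorphy of $\can{E}$, and, for (b)$\Rightarrow$(c), the construction of a coring section of $\can{N^*\tensor{B}N}$ out of the dual-basis coaction $n\mapsto\sum_ie_i\tensor{A}(e_i^*\tensor{B}n)$. All of that is sound and matches the argument in the text.

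The one step that does not work as written is the claim that, since $A$ is commutative, regarding $N\in \proj{A}$ as a symmetric $(A,A)$-bimodule identifies ${_A\Coac_A}(N,N\tensor{A}C)$ with ${\Coac_A}(N,N\tensor{A}C)$; you rely on this for (c)$\Rightarrow$(a). A right $A$-linear coaction satisfies $\rho(na)=n_{(0)}\tensor{A}n_{(1)}a$, whereas left $A$-linearity for the symmetric structure would force $\rho(na)=n_{(0)}\tensor{A}an_{(1)}$. Since the $A$-coring $C$ need not be a symmetric bimodule (think of $\sHt$ for a Hopf algebroid with $s\neq t$, with $N=\cH$ and $\rho=\Delta$), these two expressions differ in general, so not every object of ${\Coac_A}(N,N\tensor{A}C)$ is $(A,A)$-bilinear and the identification fails. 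The repair is immediate: specialise to $B=\K$ rather than $B=A$. Every right $A$-linear coaction is trivially $(\K,A)$-bilinear, so $\bar{\beta}_C$ converts a pair $\rho,\rho'$ as in (a) into coring maps $N^*\tensor{\K}N\to C$ to which (c) applies, and your naturality identity $\beta_M\big((N\tensor{A}f)\circ\rho\big)=f\circ\beta_C(\rho)$ transports the hypothesis correctly. With that substitution the argument is complete.
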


\begin{proof}
First of all, observe that \ref{item:fmono2} is equivalent to the same statement but with $\rho,\rho'\in{_B\Coac_A}(N,N\tensor{A} C)$, for every algebra $B$ and every bimodule ${_BN_A}$ such that $N_A\in \proj{A}$.
To prove that \ref{item:fmono1} is equivalent to \ref{item:fmono2} consider the commutative diagram, for every $N\in\proj{A}$,
\begin{equation*}
\xymatrix{
{{_B\Coac_A}(N,N\tensor{A}C)} \ar[r]^{\bar{\beta}_{C}} \ar@{^(->}[d] & \Coring_A(N^*\tensor{B} N,C) \ar@{^(->}[d] \\
\hom{B-A}{N}{N\tensor{A}C} \ar[r]^{{\beta}_{C}} \ar[d]_-{\hom{B-A}{N}{N\tensor{A}f}} & \hom{A-A}{N^*\tensor{B} N}{C} \ar[d]^-{\hom{A-A}{N^*\otimes N}{f}} \\
\hom{B-A}{N}{N\tensor{A}M} \ar[r]^{{\beta}_{\rdual{R}}} & \hom{A-A}{N^*\tensor{B} N}{M}
}
\end{equation*}
Since the horizontal arrows are isomorphisms, the vertical composition on the right is injective (\ie \ref{item:fmono1} holds) if and only if the vertical composition on the left is (\ie \ref{item:fmono2} holds).

To prove the remaining implications, let us show first that $\rdual{N}\tensor{B}N$ is a coring with $\can{\rdual{N}\tensor{B}N}$ (split) epimorphism of corings, for every algebra $B$ and every bimodule ${_BN_A}$ as in the statement. Notice that $N\in \Acom{\rdual{N}\tensor{B}N}$ with coaction $n\mapsto \sum_i e_i\tensor{A} (e_i^*\tensor{B}n)$, where $\{e_i,e_i^*\}_i$ is a dual basis for $N_A$. Thus we may consider the composition
$$
\xymatrix@R=5pt{
N^*\tensor{B}N \ar[r]^-{(*)} & N^*\tensor{T_N}N \ar[r]^-{\iota_N} & \rR(N^*\tensor{B}N) \ar[r]^-{\can{N^*\tensor{B}N}} & N^*\tensor{B}N \\
f \tensor{B} n \ar@{|->}[r] & f\tensor{T_N} n \ar@{|->}[r] & \overline{f\tensor{T_N} n} \ar@{|->}[r] & \sum_if(e_i)e_i^*\tensor{B}n = f\tensor{B}n
}
$$
where $(*)$ is the isomorphism of \cite[Lemma 3.9]{ComatrixGalois} and $T_N:=\mathsf{End}^{N^*\tensor{B}N}(N)$. This shows that $\can{N^*\tensor{B}N}$ is a (split) epimorphism of corings for every $N$ and $B$ as above and hence \ref{item:fmono1} follows from \ref{item:fmono3}.

Conversely, let us show that \ref{item:fmono1} implies \ref{item:fmono3}. Let $\alpha,\beta,E$ be as in \ref{item:fmono3} such that $f\circ\alpha = f\circ\beta$. Denote by $\pi:\bigoplus_{N\, \in\,\Acom{E}}N^*\tensor{T_N}N \to \rR(E)$ the canonical projection and by $j_N:N^*\tensor{T_N}N\to\bigoplus_{N\,\in\,\Acom{E}}N^*\tensor{T_N}N$ the canonical injection. Then we have that
$$
f\circ \alpha \circ \can{E} \circ \pi \circ j_N = f\circ \beta \circ \can{E} \circ \pi \circ j_N
$$
for every $N\in\Acom{E}$. In light of the hypothesis and since $\iota_N$ and $\pi$ are morphisms of corings, we have that $\alpha \circ \can{E} \circ \pi \circ \iota_N = \beta \circ \can{E} \circ \pi \circ \iota_N$. By the universal property of the coproduct, the surjectivity of $\pi$ and the fact that $\can{E}$ is a (split) epimorphism we get that $\alpha = \beta$.
\end{proof}

\begin{corollary}
For every $A$-ring $R$, the canonical morphism $\xi:\rsaft{R}\to \rdual{R}$ satisfies the equivalent properties of Lemma \ref{lemma:fmono}.
\end{corollary}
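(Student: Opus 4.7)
The plan is to verify condition \ref{item:fmono1} of Lemma \ref{lemma:fmono}, which is essentially tautological for $\xi$ given the universal property of $\rsaft{R}$ already established in Remark \ref{rem:rsaft}. Concretely, one picks an algebra $B$ and a $(B,A)$-bimodule $N$ with $N_A\in\proj{A}$, observes that the comatrix object $N^{\ast}\tensor{B}N$ carries a canonical $A$-coring structure (for instance via the identity coaction of $N$ on itself and the correspondence of Lemma \ref{lemma:coactions}), and then considers two $A$-coring maps $\alpha,\beta:N^{\ast}\tensor{B}N\to \rsaft{R}$ satisfying $\xi\circ\alpha=\xi\circ\beta$.

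The key step is to invoke the last sentence of Remark \ref{rem:rsaft}: since $\xi\circ\alpha$ and $\xi\circ\beta$ coincide as $A$-bimodule maps $N^{\ast}\tensor{B}N\to\rdual{R}$ satisfying the relations of \eqref{eq:zetaprop}, the uniqueness part of the universal property of $\rsaft{R}$ forces $\alpha=\beta$. This gives condition \ref{item:fmono1}, and by Lemma \ref{lemma:fmono} the other two equivalent conditions \ref{item:fmono2} and \ref{item:fmono3} follow automatically.

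There is essentially no obstacle to overcome here: the content of the corollary is the conceptual recognition that the cogenerating property for $\xi$ isolated in Lemma \ref{lemma:fmono} is precisely the uniqueness half of the universal property defining $\rsaft{R}$ as the left adjoint of $\ldual{(-)}$. In particular, no further computation is required and no assumption on $R$ (such as flatness or projectivity over $A$) is needed.
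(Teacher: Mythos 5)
Your proof is correct and essentially the same as the paper's: the paper verifies condition \ref{item:fmono3} of Lemma \ref{lemma:fmono} while you verify condition \ref{item:fmono1}, but both are immediate instances of the same fact, namely the uniqueness clause in Remark \ref{rem:rsaft} stating that coring maps $g,g':C\to\rsaft{R}$ with $\xi\circ g=\xi\circ g'$ must coincide. No gap; your closing observation that the corollary is just the recognition of this uniqueness as the cogenerating property is exactly the point.
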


\begin{proof}
From the universal property of $\xi$ (see Remark \ref{rem:rsaft}), it satisfies \ref{item:fmono3} of Lemma \ref{lemma:fmono}.
\end{proof}

\begin{remark}
An open question at the present moment is whether $\zeta:\rcirc{R}\to \rdual{R}$ satisfies \ref{item:fmono3} of Lemma \ref{lemma:fmono} as well. Note that $\can{\rcirc{R}}$ is a split epimorphism of corings because $\can{\rcirc{R}}\circ \rR(\chi) = \id{\rcirc{R}}$. As we will see, an affirmative answer would be equivalent to require that $\Acom{\what{\zeta}}$ is an isomorphism of categories.
\end{remark}

Now, in one direction, we have that every object $(M, \varrho_{\Sscript{M}})$ in $\Acom{\rsaft{R}}$ becomes right $R$-module as follows:
\begin{equation}\label{Eq:Raction}
m \ldot r \,=\,  m_{\Sscript{(0)}} \xi(m_{\Sscript{(1)}})(r)
\end{equation}
for every $m\in M$ and $r\in R$. Its underlying $A$-module coincides with the image of $(M,\varrho_{\Sscript{M}})$ by the forgetful functor $\oO:\Acom{\rsaft{R}} \to \rmod{A}$. Clearly this construction is functorial and so  we have a functor
\begin{equation}\label{chi}
\cL': \Acom{\rsaft{R}} \longrightarrow \Amod{R}
\end{equation}
such that
\begin{equation}\label{eq:deflL}
\cL'\circ \Acom{\what{\zeta}} = \cL,
\end{equation}
where $\Acom{\what{\zeta}}:\Acom{\rcirc{R}}\to \Acom{\rsaft{R}}$ is the induced functor.
 Conversely, consider the functor
\begin{equation}\label{Eq:Lp}
\chi':= \Acom{\what{\zeta}}\circ \chi : \Amod{R} \longrightarrow \Acom{\rsaft{R}}.
\end{equation}
Notice that, since $\chi$ is a right inverse of $\cL$, we have
\begin{equation}\label{eq:L'X'}
\cL' \circ \chi' \stackrel{\eqref{Eq:Lp}}{=} \cL' \circ \Acom{\what{\zeta}}\circ \chi \stackrel{\eqref{eq:deflL}}{=} \cL \circ \chi = \id{\Amod{R}}.
\end{equation}

\begin{proposition}\label{prop:iso}
The functors $\mathcal{L}^{\prime }$ and $\chi ^{\prime }$ establish an isomorphism between the categories $\Acom{\rsaft{R}}$ and $\Amod{R}$.
\end{proposition}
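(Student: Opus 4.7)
The equality $\cL'\circ \chi' = \id{\Amod{R}}$ is already recorded in \eqref{eq:L'X'}, so the entire task reduces to verifying $\chi' \circ \cL' = \id{\Acom{\rsaft{R}}}$. Since both $\cL'$ and $\chi'$ act on the underlying $A$-modules as the identity (and on morphisms simply as the underlying $A$-linear maps), the question is purely about coaction structures, and the morphism part will follow automatically once the object-level statement is established.

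Fix an object $(M,\varrho_M)\in \Acom{\rsaft{R}}$, with dual basis $\{e_i,e_i^{*}\}_i$ for $M_A$. First I would write down explicitly the coaction produced by $\chi'\circ\cL'$: applying $\cL'$ turns $M$ into a right $R$-module via $m\ldot r = m_{\Sscript{(0)}}\xi(m_{\Sscript{(1)}})(r)$ as in \eqref{Eq:Raction}, and then $\chi'=\Acom{\what{\zeta}}\circ \chi$ endows $M$ with the $\rsaft{R}$-coaction
\begin{equation*}
\varrho'(m) \;=\; \sum_i e_i\tensor{A}\what{\zeta}\left(\,\overline{e_i^{*}\tensor{\bara{\Amod{R}}}m}\,\right).
\end{equation*}
Both $\varrho'$ and $\varrho_M$ lie in $\Coac_A(M,M\tensor{A}\rsaft{R})$ with $M\in\proj{A}$, so it suffices to prove $\varrho'=\varrho_M$.

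The key computation is to show $(M\tensor{A}\xi)\circ \varrho' = (M\tensor{A}\xi)\circ \varrho_M$ by testing both sides on an arbitrary $r\in R$. On the one hand, using $\xi\circ\what{\zeta}=\zeta$ and the defining formula \eqref{Eq:zetaH},
\begin{equation*}
\sum_i e_i\,\xi\!\left(\what{\zeta}\left(\overline{e_i^{*}\tensor{\bara{\Amod{R}}}m}\right)\right)(r) \;=\; \sum_i e_i\,\zeta\!\left(\overline{e_i^{*}\tensor{\bara{\Amod{R}}}m}\right)(r) \;=\; \sum_i e_i\, e_i^{*}(m\ldot r) \;=\; m\ldot r,
\end{equation*}
while on the other hand $m_{\Sscript{(0)}}\,\xi(m_{\Sscript{(1)}})(r)=m\ldot r$ by definition of the $R$-action on $\cL'(M,\varrho_M)$. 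Hence the two composites agree.

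Finally, I would invoke the corollary following Lemma \ref{lemma:fmono}: the canonical map $\xi:\rsaft{R}\to\rdual{R}$ satisfies condition \ref{item:fmono2} of that lemma, so $(M\tensor{A}\xi)\circ\varrho'=(M\tensor{A}\xi)\circ\varrho_M$ forces $\varrho'=\varrho_M$. This yields $\chi'\circ\cL'=\id{\Acom{\rsaft{R}}}$ on objects, and combined with \eqref{eq:L'X'} concludes the isomorphism of categories. The only delicate point is recognizing that the object $M$ we are handling belongs to $\proj{A}$ (which is built into the definition of $\Acom{\rsaft{R}}$) so that the cogenerating property of $\xi$ applies; once this is noticed, everything falls into place.
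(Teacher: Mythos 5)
Your proof is correct, and it rests on the same key fact as the paper's: condition \ref{item:fmono2} of Lemma \ref{lemma:fmono} for $\xi$, together with the identification $M\tensor{A}\rdual{R}\cong \Hom{A}{R}{M}$ valid for $M\in\proj{A}$. The organization differs slightly: the paper never writes down the composite coaction $\varrho'=\chi'(\cL'(M,\varrho_M))$ explicitly, but instead shows that $\cL'$ is injective on objects (via the bijection $\tau$ of \eqref{eq:taubij} and the cogenerating property of $\xi$), and then deduces $\chi'\circ\cL'=\id{}$ abstractly from $\cL'\circ\chi'\circ\cL'=\cL'$ together with faithfulness and injectivity on objects. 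Your route is the more explicit one: you compute $\varrho'$ directly, check that $(M\tensor{A}\xi)\circ\varrho'$ and $(M\tensor{A}\xi)\circ\varrho_M$ both encode the same $R$-action $m\ldot r$, and then cancel $M\tensor{A}\xi$ on coactions. The two arguments are mathematically equivalent; yours has the advantage of exhibiting the recovered coaction concretely, while the paper's cancellability formulation is what it reuses later (e.g.\ in Theorem \ref{thm:drwho}, where injectivity of $\cL$ on objects is one of the equivalent conditions). One small point worth making explicit in your write-up: passing from agreement of the evaluations at all $r\in R$ to agreement of the two maps $M\to M\tensor{A}\rdual{R}$ uses the injectivity of the canonical map $M\tensor{A}\rdual{R}\to\Hom{A}{R}{M}$, which again is where $M\in\proj{A}$ enters — so finite generation and projectivity of $M$ is used twice, not only for the cogenerating property of $\xi$.
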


\begin{proof}
In light of \eqref{eq:L'X'}, it is enough to prove that $\chi ^{{\prime} }\circ \mathcal{L}^{{\prime} }=\id{\Acom{\rsaft{R}}}$. Note that, still by \eqref{eq:L'X'}, we have $\cL'\circ \chi' \circ \cL' = \cL'$. From the latter equality the thesis follows once proved that $\cL'$ is cancellable on the left. Since $\cL'$ is always faithful, it remains to prove that it is injective on objects. Observe that for every $M\in\proj{A}$, the assignment $m\tensor{A}f \mapsto \left[r\mapsto mf(r)\right]$ yields an isomorphism of right $A$-modules $M\tensor{A}\rdual{R} \to \hom{A}{R}{M}$, which in turn induces a bijection
\begin{equation}\label{eq:taubij}
\tau:\hom{A}{M}{M\tensor{A}\rdual{R}} \to \hom{A}{M\tensor{A} R}{M}; \quad \rho \mapsto \left[m\tensor{A} r\mapsto m_{0}^{(\rho)}m_{1}^{(\rho)}(r)\right]
\end{equation}
where we set $m_{0}^{(\rho)}\tensor{A}m_{1}^{(\rho)} = \rho(m)$ for every $m\in M$.
Let $\left( N,\rho \right) $ be an object in $\Acom{\rsaft{R}}$ and consider $\cL'\left( N,\rho \right) $. This is the $A$-module $N$ endowed with the $R$-action \eqref{Eq:Raction}, \ie
$$
m.r=m_{(0)}\xi(m_{(1)})(r) \stackrel{\eqref{eq:taubij}}{=} \tau\left(\left(N\tensor{A}\xi\right)\circ \rho\right)(m\tensor{A}r)
$$
for every $m\in N, r\in R$. Denote it by $\mu_{\rho}$. Now, $\cL'(N,\rho_N) = \cL'(P,\rho_P)$ if and only if $(N,\mu_{\rho_N})=(P,\mu_{\rho_P})$, if and only if $N=P$ and $\mu_{\rho_N}=\mu_{\rho_P}$. Since $N=P$, we may consider $\rho_N,\rho_P\in {\Coac_A}(N,N\tensor{A}\rsaft{R})$ and since $\tau$ is bijective and $\xi$ satisfies \ref{item:fmono2} of Lemma \ref{lemma:fmono}, the relation $\mu_{\rho_N}=\mu_{\rho_P}$ is equivalent to $\rho_N=\rho_P$.
\end{proof}

\begin{corollary}\label{coro:circbullet}
Let $R$ be an $A$-ring. Then
\begin{enumerate}[label=(\arabic*), ref=\emph{(\arabic*)}]
\item\label{item:circbullet1} If $\zeta$ is injective, then $R^{\circ}$ is the largest Galois $A$-coring inside $\rdual{R}$ with respect to the property of equation \eqref{eq:zetaprop}.
\item\label{item:circbullet2} $\rsaft{R}$ is a Galois $A$-coring as in Definition \ref{def:Galois} if and only if the map  $ \what{\zeta}: R^{\circ}  \to \rsaft{R}$ of \eqref{Eq:zetahat} is an isomorphism of $A$-corings.
\end{enumerate}
\end{corollary}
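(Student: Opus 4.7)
The strategy is to transport $\can{\rsaft{R}}$ through the isomorphism of Proposition \ref{prop:iso}. Concretely, the pair of mutually inverse functors $\cL':\Acom{\rsaft{R}}\to\Amod{R}$ and $\chi':\Amod{R}\to\Acom{\rsaft{R}}$ commute with the forgetful functors into $\proj{A}$ on the respective finitely generated projective subcategories, so by the Tannaka functoriality recorded in \eqref{Eq:triangle}--\eqref{Eq:RF} they induce a canonical isomorphism of $A$-corings
\begin{equation*}
\psi:\rR\bigl(\Acom{\rsaft{R}}\bigr)\xrightarrow{\;\cong\;}\rR\bigl(\Amod{R}\bigr)=\rcirc{R},\qquad \overline{p^{*}\tensor{T_{P}^{\rsaft{R}}}p}\longmapsto \overline{p^{*}\tensor{T_{\cL'(P)}^{R}}p}.
\end{equation*}
The key point is then to verify that $\can{\rsaft{R}}$ corresponds to $\what{\zeta}$ under $\psi$, i.e.\ that $\what{\zeta}\circ\psi=\can{\rsaft{R}}$. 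On a generator $\overline{p^{*}\tensor{T_{P}^{\rsaft{R}}}p}$ the right-hand side is $p^{*}(p_{(0)})\,p_{(1)}\in\rsaft{R}$, while the image of $\overline{p^{*}\tensor{T_{\cL'(P)}^{R}}p}$ under $\zeta=\xi\circ\what{\zeta}$ sends $r\in R$ to $p^{*}(p\cdot r)=p^{*}\bigl(p_{(0)}\xi(p_{(1)})(r)\bigr)=\xi\bigl(p^{*}(p_{(0)})p_{(1)}\bigr)(r)$, by the very definition \eqref{Eq:Raction} of the $R$-action on $\cL'(P)$. Thus $\xi\circ\what{\zeta}\circ\psi=\xi\circ\can{\rsaft{R}}$; since both $\can{\rsaft{R}}$ and $\what{\zeta}\circ\psi$ are coring maps and $\xi$ is cogenerating by Lemma \ref{lemma:fmono}, the equality $\what{\zeta}\circ\psi=\can{\rsaft{R}}$ follows. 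Therefore $\rsaft{R}$ is Galois (i.e.\ $\can{\rsaft{R}}$ is an isomorphism) if and only if $\what{\zeta}$ is an isomorphism.

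\textbf{Plan for (1).} Assuming $\zeta$ injective, $\rcirc{R}$ is Galois by Remark \ref{rem:circGalois} and the injective $A$-bilinear map $\zeta:\rcirc{R}\hookrightarrow\rdual{R}$ satisfies the relations \eqref{eq:zetaprop}, so $\rcirc{R}$ is a Galois $A$-coring sitting inside $\rdual{R}$ along a map with the stated property. For maximality, I would start from an arbitrary Galois $A$-coring $C$ together with an $A$-bilinear map $f:C\to\rdual{R}$ satisfying \eqref{eq:zetaprop}, and build a coring morphism $g_{f}:C\to\rcirc{R}$ with $\zeta\circ g_{f}=f$. The construction is Tannakian: conditions \eqref{eq:zetaprop} are exactly what is needed to turn every $C$-comodule $(P,\rho_{P})\in\Acom{C}$ into a right $R$-module $F_{f}(P)$ via $p\cdot r:=p_{(0)}f(p_{(1)})(r)$, producing a $\K$-linear functor $F_{f}:\Acom{C}\to\Amod{R}$ that commutes with the forgetful functors into $\proj{A}$. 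Diagram \eqref{Eq:diag} then yields a coring morphism $\rR(F_{f}):\rR(\Acom{C})\to\rR(\Amod{R})=\rcirc{R}$, and precomposition with the isomorphism $\can{C}^{-1}:C\to\rR(\Acom{C})$ (available because $C$ is Galois) gives the desired $g_{f}:C\to\rcirc{R}$. A straightforward computation on a generator $c=\can{C}(\overline{p^{*}\tensor{T_{P}^{C}}p})=p^{*}(p_{(0)})p_{(1)}$ shows $\zeta(g_{f}(c))(r)=p^{*}(p_{(0)})f(p_{(1)})(r)=f(c)(r)$, so $\zeta\circ g_{f}=f$, and uniqueness of $g_{f}$ follows from the assumed injectivity of $\zeta$.

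\textbf{Main obstacle.} The subtle step is the transport of $\can{\rsaft{R}}$ through $\psi$ in the proof of (2): it is important to keep track of how the endomorphism rings $T_{P}^{\rsaft{R}}$ and $T_{\cL'(P)}^{R}$ are identified under $\cL'$, and then to invoke the cogenerating property of $\xi$ (Lemma \ref{lemma:fmono}\ref{item:fmono3}) to upgrade the equality $\xi\circ\what{\zeta}\circ\psi=\xi\circ\can{\rsaft{R}}$ to $\what{\zeta}\circ\psi=\can{\rsaft{R}}$. Everything else reduces to unwinding definitions and to the functoriality of the Tannaka reconstruction recorded in \S\ref{ssec:Tannaka}.
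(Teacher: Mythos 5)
Your proposal is correct and follows essentially the paper's own route: both arguments rest on the category isomorphism of Proposition \ref{prop:iso} and the key identity $\what{\zeta}\circ\rR(\cL')=\can{\rsaft{R}}$ (your $\psi$ is exactly $\rR(\cL')=\rR(\chi')^{-1}$), which the paper obtains from naturality of $\can{}$ while you verify it on generators and cancel $\xi$ via its universal property (Remark \ref{rem:rsaft} gives this cancellability directly, with no hypothesis on the source coring), and your functor $F_{f}$ in part (1) coincides with the paper's composite $\cL'\circ\Acom{\what{f}}$, so the resulting map $g_{f}$ is the paper's $f'$. The one point to adjust in (1): the maximality claim requires $g_{f}$ to be \emph{injective}, which follows immediately from $\zeta\circ g_{f}=f$ once you record that $f$ — being the inclusion of $C$ ``inside'' $\rdual{R}$ — is injective; your closing remark about uniqueness of $g_{f}$ is true but is not what ``largest'' asks for.
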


\begin{proof}
By Proposition \ref{prop:iso}, we know that $\chi'$ and $\cL'$ establish an  isomorphism of categories $\Amod{R} \cong \Acom{\rsaft{R}}$. Therefore, using the functor $\rR$ of \S\ref{ssec:Tannaka}, we have that $\rR(\chi')$ is an isomorphism.
We compute
\begin{equation}\label{eq:can'}
\Sf{can}' \circ \rR(\chi') \stackrel{\eqref{Eq:Lp}}{=} \Sf{can}' \circ \rR\left(\Acom{\what{\zeta}}\right) \circ \rR(\chi) \stackrel{(\Sf{can} \text{ nat})}{=} \what{\zeta} \circ \Sf{can}  \circ \rR(\chi)  = \what{\zeta}
\end{equation}
where $\Sf{can}$ and $\Sf{can}'$ are the canonical morphisms of $\rcirc{R}$ and $\rsaft{R}$, respectively. Concerning \ref{item:circbullet1}, given a Galois $A$-coring $C$ endowed with an injective $A$-bimodule map $f:C\to \rdual{R}$ satisfying the analogue of \eqref{eq:zetaprop}, by Remark \ref{rem:rsaft} there is a unique $A$-coring map $\what{f}:C\to\rsaft{R}$ such that $\xi\circ \what{f}=f$. Note that $\what{f}$ is necessarily injective. Consider the $A$-coring map $f':=\rR(\chi')^{-1}\circ \rR\left(\Acom{\what{f}}\right) \circ\Sf{can}_C^{-1}:C\to \rcirc{R}$. Then, by \eqref{eq:can'}, we have $\what{\zeta}\circ f' = \Sf{can}'\circ \rR\left(\Acom{\what{f}}\right) \circ\Sf{can}_C^{-1} = \what{f}$, so that $f'$ is injective and hence $R^{\circ}$ is the largest Galois $A$-coring inside $\rdual{R}$ with respect to the property of equation \eqref{eq:zetaprop}. The fact that $\rcirc{R}$ is Galois follows from \cite[Proposition 3.3.2]{LaiachiGomez} together with the observation that $\Sf{can}  \circ \rR(\chi) = \id{\rcirc{R}}$. Concerning \ref{item:circbullet2}, it follows from \eqref{eq:can'}.
\end{proof}

\begin{remark}\label{rem:fgp}
Assume that $R$ is an $A$-ring which is finitely generated and projective as a right $A$-module, then the map $\psi:\rdual{R}\tensor{A}\rdual{R}\to \rdual{\left(R\tensor{A}R\right)}$ given by $\psi(f\tensor{A}g)(r\tensor{A}r')=f(g(r)r')$ is invertible, so that we can define $\Delta:=\psi^{-1}\circ \rdual{m}$ and $\varepsilon: \rdual{R}\to A$ by $\varepsilon(f)=f(1)$. As a consequence $(\rdual{R},\Delta,\varepsilon)$ is an $A$-coring. It is easy to check that the identity map $\id{}:\rdual{R}\to \rdual{R}$ fulfills \eqref{eq:zetaprop}. By the universal property of $\rsaft{R}$, there exists a unique morphism $\what{\id{}}:\rdual{R}\to \rsaft{R}$ of $A$-corings such that $\xi\circ \what{\id{}} = \id{}$. Clearly, $\xi \circ \what{\id{}} \circ \xi = \xi \circ \id{}$, so that we get the equality of the $A$-coring maps $\what{\id{}} \circ \xi = \id{}$. Thus $\xi$ is invertible. On the other hand, we know from \cite[Corollary 3.3.5]{LaiachiGomez}, that the map $\zeta: \Circ{R} \to \rdual{R}$ of equation \eqref{Eq:zeta} is, in a natural way, an isomorphism of $A$-corings. Therefore, the natural transformation $\what{\zeta}_{-}: \Circ{(-)} \to \rsaft{(-)}$  when restricted to $A$-rings with finitely generated and projective underlying right $A$-modules, leads to a natural isomorphism.
\end{remark}

Our next aim is to give a complete characterization of when the functors $\cL$ and $\chi$ establish an isomorphism between the categories $\Acom{\rcirc{R}}$  and $\Amod{R}$, by analogy with Proposition \ref{prop:iso}.

\begin{theorem}\label{thm:drwho}
The following are equivalent for $\what{\zeta}:\rcirc{R}\to\rsaft{R}$
\begin{enumerate}[label=({\roman*}),ref=\emph{(\roman*)}]
\item\label{item:drwho1} $\Acom{\what{\zeta}}$ is an isomorphism,
\item\label{item:drwho2} $\Acom{\what{\zeta}}$ is injective on objects,
\item\label{item:drwho3} $\cL$ is an isomorphism,
\item\label{item:drwho4} $\cL$ is injective on objects,
\item\label{item:drwho5} $\Coring_A(C,\what{\zeta})$ is bijective for every coring $C$ whose $\can{C}$ is a split epimorphism of corings,
\item\label{item:drwho6} $\Coring_A(C,\what{\zeta})$ is injective for every coring $C$ whose $\can{C}$ is a split epimorphism of corings.
\end{enumerate}
\end{theorem}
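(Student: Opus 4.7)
The implications (i)$\Rightarrow$(ii), (iii)$\Rightarrow$(iv), and (v)$\Rightarrow$(vi) are immediate, since an isomorphism of categories is bijective on objects and a bijection of sets is injective. The equivalences (i)$\Leftrightarrow$(iii) and (ii)$\Leftrightarrow$(iv) then come for free from the factorisation $\cL=\cL'\circ\Acom{\what{\zeta}}$ recorded in equation \eqref{eq:deflL}, once one recalls from Proposition \ref{prop:iso} that $\cL'$ is an isomorphism of categories. Consequently, the substance of the proof reduces to establishing (ii)$\Leftrightarrow$(vi), (ii)$\Rightarrow$(i), and (i)$\Rightarrow$(v).

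For (ii)$\Leftrightarrow$(vi), the plan is to apply Lemma \ref{lemma:fmono} to the $A$-bilinear coring map $\what{\zeta}:\rcirc{R}\to\rsaft{R}$. Condition (vi) is literally property \ref{item:fmono3} of Lemma \ref{lemma:fmono} for $f=\what{\zeta}$. Its equivalent counterpart \ref{item:fmono2} says that $(N\tensor{A}\what{\zeta})\circ \rho=(N\tensor{A}\what{\zeta})\circ \rho'$ forces $\rho=\rho'$ for any two coactions on an object $N\in\proj{A}$; since the functor $\Acom{\what{\zeta}}$ sends $(N,\rho)$ to $(N,(N\tensor{A}\what{\zeta})\circ \rho)$, this is precisely condition (ii).

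The technical heart of the argument is (ii)$\Rightarrow$(i). Faithfulness of $\Acom{\what{\zeta}}$ is automatic as it is the identity on the underlying $A$-linear maps, and surjectivity on objects follows from Proposition \ref{prop:iso} together with $\chi'=\Acom{\what{\zeta}}\circ \chi$, which yields $\Acom{\what{\zeta}}(\chi(\cL'(N,\sigma)))=\chi'(\cL'(N,\sigma))=(N,\sigma)$ for every $(N,\sigma)\in \Acom{\rsaft{R}}$. The crucial step is fullness. Given an $A$-linear map $f:M\to N$ which is a morphism from $\Acom{\what{\zeta}}(M,\rho_M)$ to $\Acom{\what{\zeta}}(N,\rho_N)$ in $\Acom{\rsaft{R}}$, I would set $\delta:=(f\tensor{A}\rcirc{R})\circ \rho_M-\rho_N\circ f$ and consider the $A$-linear map
\begin{equation*}
\rho':=\begin{pmatrix}\rho_M & 0 \\ \delta & \rho_N\end{pmatrix}:M\oplus N\longrightarrow (M\oplus N)\tensor{A}\rcirc{R}.
\end{equation*}
A short matrix computation using the coassociativity of $\rho_M$ and $\rho_N$ together with the very definition of $\delta$ shows that $\rho'$ is a well-defined coaction; this verification, in which the mixed terms containing $\delta$ must cancel, is the only non-routine step of the argument. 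The hypothesis on $f$ reads exactly $(N\tensor{A}\what{\zeta})\circ \delta=0$, and hence $((M\oplus N)\tensor{A}\what{\zeta})\circ \rho'=((M\oplus N)\tensor{A}\what{\zeta})\circ (\rho_M\oplus \rho_N)$. Condition (ii) then forces $\rho'=\rho_M\oplus\rho_N$, i.e.~$\delta=0$, which is precisely the statement that $f$ is a morphism in $\Acom{\rcirc{R}}$.

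Finally, to prove (i)$\Rightarrow$(v) it is enough to establish surjectivity of $\Coring_A(C,\what{\zeta})$, since injectivity is already covered by (vi) via the equivalence with (ii). Fix an $A$-coring $C$ with a coring-section $s:C\to \rR(\Acom{C})$ of $\can{C}$ and a coring map $\alpha:C\to \rsaft{R}$. The induced corestriction $\alpha_{\Sscript{*}}:\Acom{C}\to \Acom{\rsaft{R}}$ is compatible with the forgetful functors to $\proj{A}$, so the inverse provided by (i) yields a functor $\beta:=\left(\Acom{\what{\zeta}}\right)^{-1}\circ \alpha_{\Sscript{*}}:\Acom{C}\to \Acom{\rcirc{R}}$, still compatible with the forgetful functors, to which one may apply the Tannaka construction $\rR$. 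Setting
\begin{equation*}
\alpha':=\can{\rcirc{R}}\circ \rR(\beta)\circ s:C\longrightarrow \rcirc{R},
\end{equation*}
two applications of the naturality of $\can{}$ (exactly as used in equation \eqref{eq:can'}), together with the identities $\Acom{\what{\zeta}}\circ \beta=\alpha_{\Sscript{*}}$ and $\can{C}\circ s=\id{C}$, give $\what{\zeta}\circ \alpha'=\alpha$. This closes the circle (i)$\Rightarrow$(v)$\Rightarrow$(vi)$\Leftrightarrow$(ii)$\Rightarrow$(i), which, combined with the equivalences with (iii) and (iv) already established, completes the proof.
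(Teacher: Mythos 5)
Your proof is correct. Most of it coincides with the paper's argument: the reduction of (ii)$\Leftrightarrow$(vi) to Lemma \ref{lemma:fmono} applied to $f=\what{\zeta}$, the use of the factorisation $\cL=\cL'\circ\Acom{\what{\zeta}}$ from \eqref{eq:deflL} together with Proposition \ref{prop:iso} for (i)$\Leftrightarrow$(iii) and (ii)$\Leftrightarrow$(iv), and the surjectivity of $\Coring_A(C,\what{\zeta})$ via $\rR$, the naturality of $\can{}$ and the splitting of $\can{C}$ are exactly what the paper does. The one point where you genuinely diverge is the passage from ``injective on objects'' to ``isomorphism''. The paper closes this gap on the module side: it proves (iv)$\Rightarrow$(iii) by the purely formal cancellation $\cL\circ\chi\circ\cL=\cL$, which together with faithfulness and injectivity on objects of $\cL$ forces $\chi\circ\cL=\id{}$, so $\cL$ is invertible with inverse $\chi$. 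You instead prove (ii)$\Rightarrow$(i) directly on the comodule side, establishing fullness of $\Acom{\what{\zeta}}$ by encoding the defect $\delta=(f\tensor{A}\rcirc{R})\circ\rho_M-\rho_N\circ f$ into a triangular coaction $\rho'$ on $M\oplus N$ and invoking injectivity on objects to conclude $\rho'=\rho_M\oplus\rho_N$, i.e.\ $\delta=0$. I checked that $\rho'$ is indeed counital (since $(N\tensor{A}\varepsilon)\circ\delta=f-f=0$) and coassociative (the mixed terms $(\delta\tensor{A}\rcirc{R})\circ\rho_M+(\rho_N\tensor{A}\rcirc{R})\circ\delta$ collapse to $(N\tensor{A}\Delta)\circ\delta$ as you indicate), so the argument is sound. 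The paper's route is shorter because it exploits the a priori one-sided inverse $\chi$ of $\cL$; yours is self-contained at the level of $\Acom{\what{\zeta}}$ and makes fullness explicit rather than deducing it formally, which is a reusable device in situations where no section of the functor is available.
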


\begin{remark}\label{rem:drwho}
Observe that $\zeta$ injective (respectively monomorphism of corings) implies that $\what{\zeta}$ is injective (respectively monomorphism of corings), which in turn implies \ref{item:drwho6} of Theorem \ref{thm:drwho}.
\end{remark}

\begin{proof}[Proof of Theorem \ref{thm:drwho}]
The equivalences $\ref{item:drwho1} \iff \ref{item:drwho3}$ and $\ref{item:drwho2}\iff \ref{item:drwho4}$ follows immediately from \eqref{eq:deflL} and Proposition \ref{prop:iso}. Obviously, \ref{item:drwho3} implies \ref{item:drwho4}. Conversely, since $\cL$ is faithful and injective on objects, the implication follows from $\cL\circ\chi\circ\cL=\cL$, which in turn follows from $\cL\circ\chi=\id{\Amod{R}}$.

Moreover, notice that $\Acom{\what{\zeta}}$ is injective on objects is equivalent to \ref{item:fmono2} of Lemma \ref{lemma:fmono} for $f=\what{\zeta}$, which in turn is equivalent to \ref{item:fmono3} of the same lemma, that is to say, to \ref{item:drwho6}. Obviously \ref{item:drwho5} implies \ref{item:drwho6}. Conversely, assume \ref{item:drwho6} and pick a $g\in \Coring_A(C,\rsaft{R})$. It induces a functor $\Acom{g}:\Acom{C}\to \Acom{\rsaft{R}}$ and a diagram with commuting squares
$$
\xymatrix@C=35pt{
{\rR(\Acom{C})} \ar[r]^-{\rR(\Acom{g})} \ar[d]_-{\can{C}} & \rR(\Acom{\rsaft{R}}) \ar[d]_-{\can{}'} & \rR(\Acom{\rcirc{R}}) \ar[l]_-{\rR(\Acom{\what{\zeta}})} \ar[d]^-{\can{}} \\
C \ar[r]_-{g} & \rsaft{R} & \rcirc{R} \ar[l]^-{\what{\zeta}}
}
$$
By assumption, there is a coring map $\sigma : C\to \rR(\Acom{C})$ such that $\can{C}\circ \sigma = \id{C}$. Thus we may consider the coring map
$\tilde{g} := \can{} \circ \rR(\Acom{\what{\zeta}})^{-1} \circ \rR(\Acom{g}) \circ \sigma$
which satisfies $\what{\zeta}\circ \tilde{g} = g$ by definition. Therefore, $\Coring_A(C,\what{\zeta})$ is surjective as well.
\end{proof}

\subsection{The finite dual of co-commutative Hopf algebroid via SAFT}\label{ssec:rsaft}
In this subsection we use another general construction relying on the Special Adjoint Functor Theorem (SAFT)  \S\ref{ssec:saft}, in order to construct another functor from the category of (right) co-commutative Hopf algebroids to the category of commutative ones. We also compare both functors constructed so far.

Fix a commutative algebra $A$ and consider as before the categories $\CHalgd{A}$ and $\Halgd{A}$ of co-commutative and commutative Hopf algebroids, respectively. The functor $\ldual{(-)}:\op{\coanillos{A}}\to\ring{A}$, assigning to each $A$-coring $\fk{C}$ the ring $\hom{A-}{\fk{C}}{A}$ with the convolution product \eqref{eq:convolution}, admits a left adjoint, denoted by $\rsaft{(-)}$, in view of SAFT. Consider  the following diagram
\begin{equation}\label{Eq:diagH}
\begin{gathered}
 \xymatrix @R=15pt{  \ring{A} \ar@{->}^-{\rsaft{(-)}}[rr] & &  \coanillos{A}  \\  \CHalgd{A} \ar@{^{(}->}[u] & &  \ar@{^{(}->}[u] \Halgd{A},}
 \end{gathered}
\end{equation}
where the vertical functors are the canonical forgetful ones.

 \begin{theorem}\label{thm:Teoremone}
 The functor $\rsaft{(-)}$ in diagram \eqref{Eq:diagH} induces a contravariant functor
$$
\rsaft{(-)}:\CHalgd{A} \longrightarrow \Halgd{A}.
$$
Explicitly, given a cocommutative Hopf algebroid $(A,\cU)$ and the canonical $A$-bilinear map $\xi:\rsaft{\cU}\to \rdual{\cU}$, the structure of commutative Hopf algebroid of $\rsaft{\cU}$ is uniquely determined by the following relations
 \begin{equation}\label{eq:algddag}
 (\xi\circ\eta)(a\otimes b)(u)=\varepsilon(bu)a,\quad \xi(xy)(u)=\xi(x)(u_1)\xi(y)(u_2), \quad
 \xi\big(\cS(x)\big)(u)=\varepsilon\big(\xi(x)(u_{\Sscript{-}})u_{\Sscript{+}}\big).
 \end{equation}
The datum $(A,\rsaft{\cU},\xi)$ fulfils the following universal property. Let $(A,\cH)$ be a commutative Hopf algebroid and $f:\cH\to \rdual{\cU}$ an $A\tensor{}A$-algebra map satisfying \eqref{eq:zetaprop}, where the $A\tensor{}A$-algebra structure of $\rdual{\cU}$ is given by the convolution product and the unit is $a\otimes b\mapsto \left[u\mapsto \varepsilon(bu)a\right]$. Then the unique map $\what{f}:\cH\to \rsaft{\cU}$ given by the universal property of $\rsaft{\cU}$ as a coring becomes a morphism of commutative Hopf algebroids.
 \end{theorem}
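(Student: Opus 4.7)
The plan is to transfer the algebraic structure from $\rdual{\cU}$ to $\rsaft{\cU}$ by repeatedly invoking the universal property of $\rsaft{\cU}$ as an $A$-coring (Remark \ref{rem:rsaft}) together with the cogeneration property of $\xi$ from Lemma \ref{lem:maggico}, which states that two $A$-coring maps into $\rsaft{\cU}$ coincide if and only if their compositions with $\xi$ do. As a preparatory step, one observes that cocommutativity of $\cU$ makes the convolution product on $\rdual{\cU}$ commutative, and that $a\otimes b\mapsto \left[u\mapsto \varepsilon(bu)a\right]$ realises $\rdual{\cU}$ as an $A\otimes A$-algebra.

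For the multiplication, I would consider the $A$-coring $\rsaft{\cU}\odot\rsaft{\cU}$ of Remark \ref{rem:circdot} together with the $A$-bilinear map
$$\mu':\rsaft{\cU}\odot\rsaft{\cU}\longrightarrow\rdual{\cU},\qquad x\odot y\longmapsto \xi(x)\ast \xi(y),$$
which fulfils the analogues of the three relations \eqref{eq:zetaprop} by a direct computation based on $\xi$ sharing the same property and on the definition of the convolution product. The universal property then supplies a coring lift $\what{\mu'}:\rsaft{\cU}\odot\rsaft{\cU}\to\rsaft{\cU}$ that, thanks to the very construction of $\odot$, descends to an $A$-balanced map $m:\rsaft{\cU}\tensor{A}\rsaft{\cU}\to\rsaft{\cU}$, the desired multiplication. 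The source $s$ and target $t$ arise by lifting, through the universal property of $\rsaft{\cU}$, the $A$-linear maps $A\to\rdual{\cU}$ given by $a\mapsto \left[u\mapsto a\varepsilon(u)\right]$ and $a\mapsto \left[u\mapsto \varepsilon(au)\right]$ (viewing $A$ as the trivial $A$-coring), and $\eta$ is then defined as $m\circ (s\tensor{}t)$. For the antipode I would pass to the coring $\cop{\rsaft{\cU}}$ of Remark \ref{rem:cop} and check that the $A$-bilinear map $\rsaft{\cU}\to \rdual{\cU}$, $x\mapsto \left[u\mapsto \varepsilon\big(\xi(x)(u_{\Sscript{-}})u_{\Sscript{+}}\big)\right]$, satisfies the analogue of \eqref{eq:zetaprop} relative to $\cop{\rsaft{\cU}}$; such a verification relies on the translation-map identities \eqref{form:beta1}--\eqref{form:beta9} and on the anti-cocommutativity recalled in Remark \ref{rem:Santicocomm}\ref{item:Santicocomm}.

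Once all structure maps are available, each Hopf algebroid axiom to be verified on $\rsaft{\cU}$ --- commutativity and associativity of $m$, unitality, the source/target identities $\cS\circ s=t$ and $\cS\circ t=s$, the involutivity $\cS^2=\id{\rsaft{\cU}}$, and the antipode identities $\cS(x_1)x_2=(t\circ\varepsilon)(x)$ and $x_1\cS(x_2)=(s\circ\varepsilon)(x)$ --- reduces, upon composition with $\xi$, to a known identity in $\rdual{\cU}$ inherited from the co-commutative Hopf algebroid structure of $\cU$ (the antipode axioms relying crucially on \eqref{form:beta7}--\eqref{form:beta9}); cogeneration then yields the identity on $\rsaft{\cU}$ itself, and the same principle produces the uniqueness dictated by \eqref{eq:algddag}. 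Finally, the universal property is established by observing that, given an $A\otimes A$-algebra map $f:\cH\to\rdual{\cU}$ as in the statement, the coring lift $\what{f}$ produced by Remark \ref{rem:rsaft} commutes with every Hopf algebroid structure map because, after post-composition with $\xi$, the two sides unwind respectively to $f$ and to its image under the structure maps of $\rdual{\cU}$, which agree by hypothesis; functoriality of $\rsaft{(-)}$ is then a direct consequence, applied to $f=\rdual{\phi}\circ \xi_{\cV}$ for a morphism $\phi:\cU\to \cV$ (cf.\ Remark \ref{rem:univpropbullet}). I expect the antipode axioms to be the main technical obstacle, due to the delicate interplay between the defining formula $\xi(\cS(x))(u)=\varepsilon\big(\xi(x)(u_{\Sscript{-}})u_{\Sscript{+}}\big)$ and the translation map of $\cU$.
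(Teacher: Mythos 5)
Your architecture is the same as the paper's: lift the unit, the convolution product (via the coring $\rsaft{\cU}\odot\rsaft{\cU}$ of Remark \ref{rem:circdot}) and the map $\alpha\mapsto\big[u\mapsto\varepsilon(\alpha(u_{\Sscript{-}})u_{\Sscript{+}})\big]$ (via $\cop{\rsaft{\cU}}$) through the universal property of Remark \ref{rem:rsaft}, and then use Lemma \ref{lem:maggico} to force the axioms. Two steps, however, do not work as written.

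First, neither the source nor the target can be obtained by lifting a map $A\to\rdual{\cU}$ from the trivial $A$-coring $A$: the third relation in \eqref{eq:zetaprop} demands $f(a\cdot 1\cdot b)(u)=af(1)(bu)$, which for your source candidate reads $ab\,\varepsilon(u)=a\,\varepsilon(bu)$ and for the target candidate reads $\varepsilon(abu)=a\,\varepsilon(bu)$; both fail in general because $\varepsilon(bu)\neq b\,\varepsilon(u)$ (already for $\cV_A(L)$ with nontrivial anchor one has $\varepsilon(\iota_A(a)\iota_L(X))=-X(a)$ while $a\,\varepsilon(\iota_L(X))=0$). The paper sidesteps this by lifting the \emph{whole} unit at once, $A\otimes A\to\rdual{\cU}$, $a\otimes b\mapsto[u\mapsto\varepsilon(bu)a]$, from the $A$-coring $A\otimes A$ of Example \ref{exm:Halgd}(1), whose bimodule structure $a'(a\otimes b)b'=a'a\otimes bb'$ keeps the two variables separated so that \eqref{eq:zetaprop} does hold; $s$ and $t$ are then the two restrictions of $\eta$.

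Second, the version of cogeneration you quote (two coring maps into $\rsaft{\cU}$ agree iff their composites with $\xi$ do) settles commutativity, associativity and $\cS^{2}=\id{}$, where both sides of each identity are coring maps, but it does not reach the antipode axiom $\cS(x_1)x_2=\eta(1\otimes\varepsilon(x))$ nor the compatibility $\eta(a\otimes b)x=axb$ needed to identify the coring structure with $\Hst$ for the new algebra structure: the assignment $x\mapsto\cS(x_1)x_2$ is \emph{not} a coring endomorphism of $\rsaft{\cU}$, so uniqueness of coring lifts is not applicable. One must instead use the full strength of Lemma \ref{lem:maggico}: show that the $A$-subbimodule spanned by the defect elements $\cS(x_1)x_2-\eta(1\otimes\varepsilon(x))$, $\eta(a\otimes b)x-axb$, etc., is a coideal (it is annihilated by $\varepsilon$ and sent to zero by $(\pi\tensor{A}\pi)\circ\Delta$, where $\pi$ is the projection onto the quotient) and is contained in $\ker{\xi}$, hence vanishes. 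This coideal-span argument is precisely how the paper closes these cases; without it your verification of the antipode identities and of the unitality of $m$ is incomplete.
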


\begin{proof}
Set $A^e:=A\otimes A$, the  enveloping algebra, which we consider as a commutative Hopf algebroid with base algebra $A$. By Remark \ref{rem:rsaft}, the map $f:A^e\to \rdual{\cU}$, given by the assignment $a\otimes b\mapsto \left[u\mapsto \varepsilon(bu)a\right]$,  yields a unique $A$-coring map $\eta:A^e\to \rsaft{\cU}$ such that $\xi\circ \eta = f$ (recall that the $A$-coring structure on $A^e$ is the one given in Example \ref{exm:Halgd}).
\begin{invisible}
Taken elements $a, b, a', b' \in A$ and $u,v \in \cU$, we compute
\begin{align*}
f((a\otimes b)_1)\big(f((a\otimes b)_2)(u)v\big) &  = f(a\otimes 1)\big(f(1\otimes b)(u)v\big) = f(a\otimes 1)(\varepsilon(bu)v) = \varepsilon(\varepsilon(bu)v)a \\
 & = \varepsilon((bu)v)a = \varepsilon(b(uv))a  = f(a\otimes b)(uv), \\
f(a\otimes b)(1_{\Sscript{\cH}}) & = \varepsilon(b1_{\Sscript{\cH}})a = \varepsilon(1_{\Sscript{\cH}}b)a = ab = \varepsilon(a\otimes b), \\
f(a'(a\otimes b)b')(u) & = f(a'a\otimes b'b)(u) = \varepsilon(b'bu)a'a= a'\varepsilon(bb'u)a=a'f(a\otimes b)(b'u).
\end{align*}
From this it follows, by the universal property of $\rsaft{\cU}$, that there is a unique $A$-coring map $\eta:A^e\to \rsaft{\cU}$ such that $\xi\circ \eta = f$ (recall that the $A$-coring structure on $A^e$ is the one given in Example \ref{exm:Halgd}).
\end{invisible}
To introduce the multiplication, we resort to the operation $\odot$ recalled in Remark \ref{rem:circdot} in the case $C=D=\rsaft{\cU}$. Then by Remark \ref{rem:rsaft} again, the map $h:\rsaft{\cU}\odot  \rsaft{\cU}\to \cU^*$, given by $h(x\odot  y)(u)=\xi(x)(u_1)\xi(y)(u_2)$ for all $x,y\in \rsaft{\cU}$ and $u\in \cU$, gives rise to a unique $A$-coring map $m:\rsaft{\cU}\odot \rsaft{\cU}\to \rsaft{\cU}$ such that $\xi\circ m = h$.

\begin{invisible}
which is well-defined by the following argument
\begin{equation*}
\xi(axb)(u_1)\xi(y)(u_2) \stackrel{\eqref{eq:zetaprop}}{=} a\xi(x)(bu_1)\xi(y)(u_2) \stackrel{\eqref{takeuchi}}{=}\xi(x)(u_1)a\xi(y)(bu_2) \stackrel{\eqref{eq:zetaprop}}{=}  \xi(x)(u_1)\xi(ayb)(u_2).
\end{equation*}
Let us check that $h$ satisfies the conditions \eqref{eq:zetaprop}. Compute
\begin{eqnarray*}
h((x\odot y)_1)\big(h((x\odot y)_2)(u)v\big) & =&  h(x_1\odot y_1)\big(h(x_2\odot y_2)(u)v\big) \\ &=&  h(x_1\odot y_1)\big(\xi(x_2)(u_1)\xi(y_2)(u_2)v\big)\\
 & =& \xi(x_1)\big(v_1\big)\xi(y_1)\big(\xi(x_2)(u_1)\xi(y_2)(u_2)v_2\big) \\
 & \stackrel{\eqref{takeuchi}}{=}&  \xi(x_1)\big(\xi(x_2)(u_1)v_1\big)\xi(y_1)\big(\xi(y_2)(u_2)v_2\big) \\
  & \stackrel{\eqref{eq:zetaprop}}{=}&  \xi(x)(u_1v_1)\xi(y)(u_2v_2) =h(x\odot y) (uv),
\end{eqnarray*}
\begin{eqnarray*}
h(a(x\odot y)b)(u) & =& h(x\odot ayb)(u)= h(axb\odot y)(u) \\ &=& \xi(axb)(u_1)\xi(y)(u_2)
  \\ &\stackrel{\eqref{eq:zetaprop}}{=}&  a\xi(x)(bu_1)\xi(y)(u_2) \\ &=& a\xi(x)((bu)_1)\xi(y)((bu)_2)\\
   & = &a h(x\odot y)(bu)
\end{eqnarray*}
and
\begin{align*}
  h(x\odot y)(1_\cU) & = \xi(x)(1_\cU)\xi(y)(1_\cU) \stackrel{\eqref{eq:zetaprop}}{=} \varepsilon(x)\varepsilon(y) = \varepsilon(x\odot y).
\end{align*}
Therefore, by the universal property of $\rsaft{\cU}$ again, there is a unique $A$-coring map $m:\rsaft{\cU}\odot \rsaft{\cU}\to \rsaft{\cU}$ such that $\xi\circ m = h$.
\end{invisible}

Consider now the map $\lambda:\rdual{\cU}\to \rdual{\cU}$ defined by $\lambda(\alpha)(u)=\varepsilon\left(\alpha(u_{\Sscript{-}})u_{\Sscript{+}}\right)$. Then the map $f:=\lambda\circ \xi:\rsaft{\cU}\to \rdual{\cU}$, regarded as a morphism from $\cop{\rsaft{\cU}}$ to $\rdual{\cU}$ (see Remark \ref{rem:cop}), induces by Remark \ref{rem:rsaft} a unique $A$-coring map $\cS:\cop{\rsaft{\cU}}\to \rsaft{\cU}$ such that $\xi\circ \cS=f=\lambda \circ \xi$.

\begin{invisible}
We compute
\begin{align*}
f(x_2)\left(f(x_1)(u)v\right) & =\lambda(\xi(x_2))\left(\lambda(\xi(x_1))(u)v\right) = \varepsilon\Big[\xi(x_2)\big[\left(\lambda(\xi(x_1))(u)v\right)_{\Sscript{-}}\big]\big(\lambda(\xi(x_1))(u)v\big)_{\Sscript{+}}\Big] \\
 & \stackrel{\eqref{form:beta1}}{=}\varepsilon\Big[\xi(x_2)\left(v_{\Sscript{-}}\right)\lambda(\xi(x_1))(u)v_{\Sscript{+}}\Big]= \varepsilon\Big[\xi(x_2)\left(v_{\Sscript{-}}\right)\varepsilon\left(\xi(x_1)(u_{\Sscript{-}})u_{\Sscript{+}}\right)v_{\Sscript{+}}\Big] \\
  & =\varepsilon\Big[\varepsilon\left(\xi(x_1)(u_{\Sscript{-}})u_{\Sscript{+}}\right)\xi(x_2)\left(v_{\Sscript{-}}\right)v_{\Sscript{+}}\Big] = \varepsilon\Big[\xi(x_1)(u_{\Sscript{-}})u_{\Sscript{+}}\xi(x_2)\left(v_{\Sscript{-}}\right)v_{\Sscript{+}}\Big] \\
   & \stackrel{\eqref{form:beta2}}{=} \varepsilon\Big[\xi(x_1)\Big(\xi(x_2)\left(v_{\Sscript{-}}\right)u_{\Sscript{-}}\Big)u_{\Sscript{+}}v_{\Sscript{+}}\Big] \stackrel{\eqref{eq:zetaprop}}{=} \varepsilon\Big[\xi(x)\Big(v_{\Sscript{-}}u_{\Sscript{-}}\Big)u_{\Sscript{+}}v_{\Sscript{+}}\Big] \\
    & \stackrel{\eqref{form:beta3}}{=} \varepsilon\Big[\xi(x)\Big((uv)_{\Sscript{-}}\Big)(uv)_{\Sscript{+}}\Big] = f\left(x\right)(uv),\\
f(x)(1_\cU) & = \varepsilon\left(\xi(x)((1_\cU)_{\Sscript{-}})(1_\cU)_{\Sscript{+}}\right) \stackrel{\eqref{form:beta4}}{=} \varepsilon\left(\xi(x)(1_{\cU})1_{\cU}\right) \stackrel{\eqref{eq:zetaprop}}{=} \varepsilon\left(\varepsilon(x)1_{\cU}\right)=\varepsilon(x), \\
f(bxa)(u) & = \varepsilon\left(\xi(bxa)(u_{\Sscript{-}})u_{\Sscript{+}}\right) \stackrel{\eqref{eq:zetaprop}}{=} \varepsilon\left(b\xi(x)(au_{\Sscript{-}})u_{\Sscript{+}}\right) \stackrel{\eqref{form:beta2}}{=} \varepsilon\left(b\xi(x)(u_{\Sscript{-}})u_{\Sscript{+}}a\right) \\
 & = \varepsilon\left(\xi(x)(u_{\Sscript{-}})bu_{\Sscript{+}}\right)a \stackrel{\eqref{form:beta1}}{=} a\varepsilon\left(\xi(x)((bu)_{\Sscript{-}})(bu)_{\Sscript{+}}\right) = af(x)(bu).
\end{align*}
The foregoing computations show that if we regard $f$ as a morphism from $\cop{\rsaft{\cU}}$ to $\rdual{\cU}$, then it satisfies the conditions of \eqref{eq:zetaprop} and hence by the universal property of $\rsaft{\cU}$ there exists a unique $A$-coring map $\cS:\cop{\rsaft{\cU}}\to \rsaft{\cU}$ such that $\xi\circ \cS=f=\lambda \circ \xi$.
\end{invisible}

So far, we have defined a map $\eta$, a multiplication $m$ and a map $\cS$ satisfying the relations in \eqref{eq:algddag}. Let us check that these maps convert $\rsaft{\cU}$ into a commutative Hopf algebroid.
\begin{invisible}
Notice that the cocommutativity of $\cU$ forces $\xi(xy)(u)=\xi(yx)(u)$ for every $u\in\cU$ and $x,y\in\rsaft{\cU}$ by Equation \eqref{eq:algddag}. Thus $\xi(xy)=\xi(yx)$. Now, it is easy to check that $\cJ=\mathsf{Span}_{A,A}\left\{xy-yx\mid x,y\in\rsaft{\cU}\right\}$ is a coideal of $\rsaft{\cU}$ because $(\pi \tensor{A}\pi)\Delta(\cJ)=0$ and $\varepsilon(\cJ)=0$, where $\pi:\rsaft{\cU}\to \rsaft{\cU}/\cJ$ denotes the canonical projection on the quotient. On the other hand, we have just checked that $\cJ\subseteq \ker{\xi}$, so that $\cJ=0$ in view of Lemma \ref{lem:maggico}. This proves that $\rsaft{\cU}$ is commutative.
\end{invisible}
One proves that, for $a,b\in A$ and $x,y,z\in\rsaft{\cU}$, the elements of the form
$$
xy-yx, \enskip \eta(a\otimes b)x-axb, \enskip (xy)z-x(yz), \enskip \cS(xy)-\cS(y)\cS(x), \enskip \cS\left(1_{\rsaft{\cU}}\right)- 1_{\rsaft{\cU}}, \enskip \cS^2(x)-x, \enskip \cS(x_1)x_2-\eta(1\otimes \varepsilon(x))
$$
span an $A$-bimodule $\cJ$ which is a coideal of $\rsaft{\cU}$ because $(\pi \tensor{A}\pi)\Delta(\cJ)=0$ and $\varepsilon(\cJ)=0$, where $\pi:\rsaft{\cU}\to \rsaft{\cU}/\cJ$ denotes the canonical projection on the quotient. Moreover, it is contained in $\ker{\xi}$ so that $\cJ=0$ in view of Lemma \ref{lem:maggico}. This proves that all the elements displayed above vanish in $\rsaft{\cU}$.
As a consequence, we get in addition that
\begin{itemize}
\item $\rsaft{\cU}$ is commutative.
\item the $A$-coring structure of $\rsaft{\cU}$ is the one induced by $\eta$. Furthermore, we deduce that $\eta(a\otimes b)=a1_{\rsaft{\cU}} b$ where $1_{\rsaft{\cU}}:=\eta(1\otimes 1)$. Thus it follows easily that $\eta(a\otimes b)\eta(a'\otimes b')=\eta(aa'\otimes bb')$ for every $a,b\in A$. Note also that $1_{\rsaft{\cU}}x=\eta(1\otimes 1)x=x$, so that $m$ is unital.
\item $\Delta$ and $\varepsilon$ are morphisms of algebras, since both $m$ and $\eta$ are morphisms of $A$-corings.
\item $s,t$ are algebra maps as $\eta$ is.
\end{itemize}
\begin{invisible}
For every $u\in\cU$, $a,b\in A$ and $x\in\rsaft{\cU}$ we compute
\begin{align*}
\xi\big(\eta(a\otimes b)x\big)(u) & \stackrel{\eqref{eq:algddag}}{=} \xi(\eta(a\otimes b))(u_1)\xi(x)(u_2) \stackrel{\eqref{eq:algddag}}{=} \varepsilon(bu_1)a\xi(x)(u_2) \stackrel{\eqref{takeuchi}}{=} \varepsilon(u_1)a\xi(x)(bu_2) \\
 & \stackrel{\eqref{eq:zetaprop}}{=} \xi(axb)(u_2)\varepsilon(u_1)= \xi(axb)(u_2\varepsilon(u_1)) = \xi(axb)(u),
\end{align*}
so that $\alpha(a,b,x):=\eta(a\otimes b)x-axb\in\ker{\xi}$. Since $m$ is an $A$-coring map, we have that
\begin{align*}
\Delta(\alpha(a,b,x)) & = \Delta(\eta(a\otimes b))\Delta(x)-a\Delta(x)b=\eta(a\otimes 1)x_1\tensor{A}\eta(1\otimes b)x_2-ax_1\tensor{A}x_2b \\
 & = \alpha(a,1,x_1)\otimes \eta(1\otimes b)x_2+ax_1\otimes \alpha(1,b,x_2), \\
\varepsilon(\alpha(a,b,x)) & = \varepsilon(\eta(a\otimes b))\varepsilon(x)-a\varepsilon(x)b=ab\varepsilon(x)- a\varepsilon(x)b=0.
\end{align*}
As a consequence, the $\alpha(a,b,x)$ for $a,b\in A$ and $x\in\rsaft{\cU}$ span an $A$-bimodule $\cJ$ which is a coideal of $\rsaft{\cU}$ and it is contained in $\ker{\xi}$. Again by Lemma \ref{lem:maggico} $\cJ=0$ and hence $\eta(a\otimes b)x=axb$.

This means that the $A$-coring structure of $\rsaft{\cU}$ is the one induced by $\eta$. Furthermore, we deduce that $\eta(a\otimes b)=a1_{\rsaft{\cU}} b$ where $1_{\rsaft{\cU}}:=\eta(1\otimes 1)$. Thus it follows easily that $\eta(a\otimes b)\eta(a'\otimes b')=\eta(aa'\otimes bb')$ for every $a,b\in A$. Note also that $1_{\rsaft{\cU}}x=\eta(1\otimes 1)x=x$, so that $m$ is unital.

To show that $m$ is also associative, notice that the elements $(xy)z-x(yz)$ for $x,y,z\in\rsaft{\cU}$ span an $A$-bimodule $\cJ$ which is a coideal of $\rsaft{\cU}$ as well. Moreover, this is contained in $\ker{\xi}$ since for all $u\in\cU$
\begin{align*}
\xi((xy)z)(u) & \stackrel{\eqref{eq:algddag}}{=}\xi(xy)(u_1)\xi(z)(u_2)\stackrel{\eqref{eq:algddag}}{=}\xi(x)\left((u_1)_1\right)\xi(y)\left((u_1)_2\right)\xi(z)(u_2)=\xi(x)\left(u_1\right)\xi(y)\left(u_2\right)\xi(z)\left(u_3\right)
\end{align*}
and the same result is obtained by computing $\xi(x(yz))(u)$.

Since both $m$ and $\eta$ are morphisms of $A$-corings, it follows that $\Delta$ and $\varepsilon$ are morphisms of algebras. Moreover, $s,t$ are algebra maps as $\eta$ is.

To conclude, let us show that $\cS$ satisfies the conditions to be an antipode. Again, notice that the elements $\cS(xy)-\cS(x)\cS(y)$ for $x,y\in\rsaft{\cU}$ span an $A$-bimodule $\cJ$ which is a coideal of $\rsaft{\cU}$. Moreover, this is contained in $\ker{\xi}$ since for all $u\in\cU$
\begin{align*}
\xi\left(\cS(xy)\right)(u) & \stackrel{\eqref{eq:algddag}}{=} \varepsilon\Big(\xi(xy)(u_{\Sscript{-}})u_{\Sscript{+}}\Big)\stackrel{\eqref{eq:algddag}}{=} \varepsilon\Big(\xi(x)\left((u_{\Sscript{-}})_1\right)\xi(y)\left((u_{\Sscript{-}})_2\right)u_{\Sscript{+}}\Big) \\
 & \stackrel{\eqref{form:beta5}}{=} \varepsilon \Big(\xi(x)\left(\big(u_{\Sscript{+}}\big)_{\Sscript{-}}\right)\xi(y)\left(u_{\Sscript{-}}\right)\big(u_{\Sscript{+}}\big)_{\Sscript{+}}\Big) \stackrel{\eqref{form:beta1}}{=} \varepsilon \Big(\xi(x)\left(\big(\xi(y)\left(u_{\Sscript{-}}\right)u_{\Sscript{+}}\big)_{\Sscript{-}}\right)\big(\xi(y)\left(u_{\Sscript{-}}\right)u_{\Sscript{+}}\big)_{\Sscript{+}}\Big) \\
 & = \xi(\cS(x))\Big(\xi(y)\left(u_{\Sscript{-}}\right)u_{\Sscript{+}}\Big) = \xi(\cS(x))\Big(\xi(y)\left(u_{\Sscript{-}}\right)\left(u_{\Sscript{+}}\right)_2\varepsilon\left(\left(u_{\Sscript{+}}\right)_1\right)\Big) \\
  & \stackrel{\eqref{takeuchi}}{=} \xi(\cS(x))\Big(\left(u_{\Sscript{+}}\right)_2\varepsilon\left(\xi(y)\left(u_{\Sscript{-}}\right)\left(u_{\Sscript{+}}\right)_1\right)\Big) \stackrel{\eqref{form:beta6}}{=} \xi(\cS(x))\Big(u_2\varepsilon\left(\xi(y)\left(u_1\right)_{\Sscript{-}}\left(u_1\right)_{\Sscript{+}}\right)\Big) \\
   & \stackrel{\eqref{eq:algddag}}{=} \xi(\cS(x))\left(u_2\xi(\cS(y))(u_1)\right) = \xi(\cS(y))(u_1)\xi(\cS(x))\left(u_2\right)  \stackrel{\eqref{eq:algddag}}{=} \xi(\cS(y)\cS(x))(u),
\end{align*}
Thus $\cS$ is multiplicative. Furthermore,
\begin{align*}
\xi\left(\cS\left(1_{\rsaft{\cU}}\right)\right)(u) & \stackrel{\eqref{eq:algddag}}{=} \varepsilon\left(\xi\left(1_{\rsaft{\cU}}\right)\left(u_{\Sscript{-}}\right)u_{\Sscript{+}}\right) \stackrel{\eqref{eq:algddag}}{=} \varepsilon\left(\xi\left(\eta(1\otimes 1)\right)\left(u_{\Sscript{-}}\right)u_{\Sscript{+}}\right) \stackrel{\eqref{eq:algddag}}{=} \varepsilon\left(\varepsilon\left(u_{\Sscript{-}}\right)u_{\Sscript{+}}\right) = \varepsilon\left(u_{\Sscript{-}}u_{\Sscript{+}}\right) \stackrel{\eqref{form:beta7}}{=} \varepsilon\left(\varepsilon(u)1_\cU\right)\\
 & = \varepsilon(u)=\xi\left(\eta(1\otimes 1)\right)(u) = \xi\left(1_{\rsaft{\cU}}\right)(u)
\end{align*}
for all $u\in\cU$. As above we deduce that $\cS\left(1_{\rsaft{\cU}}\right)- 1_{\rsaft{\cU}}$ generates a coideal and so it vanishes. Thus, $\cS$ is a morphism of algebras.

We compute further
\begin{align*}
\xi\left(\cS^2(x)\right)(u) & \stackrel{\eqref{eq:algddag}}{=} \varepsilon\Big(\xi\left(\cS(x)\right)\left(u_{\Sscript{-}}\right)u_{\Sscript{+}}\Big) \stackrel{\eqref{eq:algddag}}{=} \varepsilon\Big(\varepsilon\Big(\xi\left(x\right)\big(\left(u_{\Sscript{-}}\right)_{\Sscript{-}}\big)(u_{\Sscript{-}})_{\Sscript{+}}\Big)u_{\Sscript{+}}\Big) = \varepsilon\Big(\xi\left(x\right)\big(\left(u_{\Sscript{-}}\right)_{\Sscript{-}}\big)(u_{\Sscript{-}})_{\Sscript{+}}u_{\Sscript{+}}\Big) \\
 & \stackrel{\eqref{form:beta8}}{=}   \varepsilon\Big(\xi\left(x\right)\left(u\right)1_\cU \Big) = \xi\left(x\right)\left(u\right),
\end{align*}
for all $u\in\cU$ and $x\in\rsaft{\cU}$, so that $\xi\circ \cS^2 = \xi$ and since both $\cS^2$ and $\id{\rsaft{\cU}}$ are coring maps, they are equal.
\end{invisible}
The compatibility of $\cS$ with $s$ and $t$ follows from
\begin{equation*}
\cS(\eta(a\otimes b))=\cS(a1_{\rsaft{\cU}}b)\stackrel{(*)}{=}b\cS(1_{\rsaft{\cU}})a = b1_{\rsaft{\cU}}a = \eta(b\otimes a),
\end{equation*}
where in $(*)$ we used that $\cS:\cop{\rsaft{\cU}}\to \rsaft{\cU}$.
\begin{invisible}
Finally, we compute
\begin{align*}
\xi\left(\cS(x_1)x_2\right)(u) & \stackrel{\eqref{eq:algddag}}{=} \xi\left(\cS(x_1)\right)(u_1)\xi\left(x_2\right)(u_2) \stackrel{\eqref{eq:algddag}}{=} \varepsilon\Big(\xi(x_1)\left((u_1)_{\Sscript{-}}\right)(u_1)_{\Sscript{+}}\Big)\xi\left(x_2\right)(u_2)
= \varepsilon\Big(\xi(x_1)\left((u_1)_{\Sscript{-}}\right)(u_1)_{\Sscript{+}}\xi\left(x_2\right)(u_2)\Big)\\
& \stackrel{\eqref{form:beta2}}{=} \varepsilon\Big(\xi(x_1)\Big(\xi\left(x_2\right)(u_2)(u_1)_{\Sscript{-}}\Big)(u_1)_{\Sscript{+}}\Big) \stackrel{\eqref{eq:zetaprop}}{=} \varepsilon\Big(\xi(x)\Big(u_2(u_1)_{\Sscript{-}}\Big)(u_1)_{\Sscript{+}}\Big) \stackrel{(\blacktriangle)}{=} \varepsilon\Big(\xi(x)\Big(u_1(u_2)_{\Sscript{-}}\Big)(u_2)_{\Sscript{+}}\Big)  \\
 & \stackrel{\eqref{form:beta9}}{=} \varepsilon\Big(\xi(x)\left(1_\cU\right)u\Big) \stackrel{\eqref{eq:zetaprop}}{=} \varepsilon\left(\varepsilon(x)u\right) \stackrel{\eqref{eq:algddag}}{=} \xi\left(\eta(1\otimes \varepsilon(x))\right)(u)
\end{align*}
where in $(\blacktriangle)$ we used the co-commutativity of $\cU$. As above we deduce that the elements $\cS(x_1)x_2-\eta(1\otimes \varepsilon(x))$ for $x\in\rsaft{\cU}$ generate a coideal and so they vanishes.
\end{invisible}
Summing up $(A,\rsaft{\cU})$ is an object in $\Halgd{A}$.

Now let us check $\rsaft{(-)}$ is compatible with the morphisms. To this aim, let $(A,\cH)$ be a commutative Hopf algebroid and $f:\cH\to \rdual{\cU}$ an $A^e$-algebra map satisfying \eqref{eq:zetaprop}. The universal property of $\rsaft{\cU}$ yields a unique map $\what{f}:{}_{\scriptstyle{s}}\cH_{\scriptstyle{t}}\to \rsaft{\cU}$ of $A$-corings such that $\xi\circ \what{f} = f$. By the trick we used above, the elements $\what{f}(1_\cH)-1_{\rsaft{\cU}}$ and $\what{f}(x)\what{f}(y)-\what{f}(xy)$ for $x,y\in\cH$ vanish in $\rsaft{\cU}$ because they generate a coideal $\cJ$ which is contained in $\ker{\xi}$. We just point out that by $A$-bilinearity of the involved maps, $\xi\circ \what{f}\circ \eta_\cH = \xi \circ \eta_{\rsaft{\cU}}$ and hence the equality of the coring maps $\what{f}\circ \eta_\cH = \eta_{\rsaft{\cU}}$.
\begin{invisible}
\begin{align*}
\xi\left(\what{f}(1_\cH)\right)(u) & = \left(\xi\circ \what{f}\,\right)(1_{\cH})(u) = f\left(1_{\cH}\right)(u) = \varepsilon(u) \stackrel{\eqref{eq:algddag}}{=} \xi\left(1_{\rsaft{\cU}}\right)(u)
\end{align*}
for all $u\in\cU$ it follows that $\xi\left(\what{f}(1_\cH)\right)=\xi\left(1_{\rsaft{\cU}}\right)$. By $A$-bilinearity of the involved maps, we may claim that $\xi\circ \what{f}\circ \eta_\cH = \xi \circ \eta_{\rsaft{\cU}}$ and hence the equality of the coring maps $\what{f}\circ \eta_\cH = \eta_{\rsaft{\cU}}$. Furthermore, for all $u\in \cU$, we compute
\begin{align*}
\xi(\what{f}(x)\what{f}(y))(u) \stackrel{\eqref{eq:algddag}}{=} \xi(\what{f}(x))(u_1)\xi(\what{f}(y))(u_2) = f(x)(u_1)f(y)(u_2) = f(xy)(u) =\xi(\what{f}(xy))(u)
\end{align*}
so that $\what{f}(x)\what{f}(y)-\what{f}(xy)$ generates a coideal $\cJ$ which is contained in $\ker{\xi}$ whence $\what{f}$ is multiplicative.
\end{invisible}
Summing up, we showed that $\left(\,\id,\what{f}\,\right)$ is a morphism of commutative bialgebroids. Since the compatibility with the antipodes comes for free, we conclude that $\what{f}$ is a morphism of commutative Hopf algebroids.

Let $(\id{A},\phi):(A,\cU_1)\to (A,\cU_2)$ be a morphism of cocommutative Hopf algebroids. Apply the previous construction to $f=\rdual{\phi}\circ \xi_2$, once observed that $\xi_2$ is a morphism of $A$-rings in view of \eqref{eq:algddag}, that $\rdual{\phi}$ is so as well by a direct computation and that $f$ satisfies \eqref{eq:zetaprop} (see also Remark \ref{rem:univpropbullet}). As a consequence $\what{f}$, which is $\rsaft{\phi}$ by definition, becomes a morphism of commutative Hopf algebroids. This leads to the stated functor and finishes the proof.
\end{proof}

\begin{remark}\label{rem:UcUb}
Let $(A,\cU)$ be a co-commutative Hopf algebroid over $\Bbbk$ and consider both duals $(A,\Circ{\cU})$ and $(A,\rsaft{\cU})$ as commutative Hopf algebroids over $\Bbbk$.
\begin{enumerate}[label=(\arabic*),ref=(\arabic*)]
\item\label{item:UcBc1} In view of the second claim in Theorem \ref{thm:Teoremone} and of \eqref{eq:zetaprop}, the canonical map $\zeta:\cU^\bcirc\to \rdual{\cU}$ given in \eqref{Eq:zetaH} induces a unique morphism of commutative Hopf algebroids $\what{\zeta}:\cU^\bcirc\to\rsaft{\cU}$ such that $\xi\circ \what{\zeta} = \zeta$. If $A$  is a field, then $\what{\zeta}$ is an isomorphism of Hopf algebras.
\item\label{item:UcBc2}  If the underlying right $A$-module of $\cU$ is finitely generated and projective, then the map $\what{\zeta}$ induces an isomorphism of commutative Hopf algebroids $(A,\Circ{\cU})$ and $(A,\rsaft{\cU})$ (see also Remark \ref{rem:fgp}).
\item\label{item:UcBc3} Consider an $A$-ring $R$ and the map $\psi:\rdual{R}\tensor{A}\rdual{R}\to \rdual{\left(R\tensor{A}R\right)}$ given by $\psi(f\tensor{A}g)(r\tensor{A}r')=f(g(r)r')$. Given an $A$-coring $C$ and an $A$-bimodule map $f:C\to \rdual{R}$ satisfying \eqref{eq:zetaprop}, for every $x\in\ker{f}$ and for all $r,r'\in R$ we have
$$
0 = f(x)(rr') \stackrel{\eqref{eq:zetaprop}}{=} \psi\left(f(x_1)\tensor{A}f(x_2)\right)(r\tensor{A}r'), \qquad 0 = f(x)(1_R) \stackrel{\eqref{eq:zetaprop}}{=} \varepsilon(x).
$$
Thus $\varepsilon\left(\ker{f}\right)=0$. Moreover, if we assume $\psi$ injective, we also have $(f\tensor{A}f)\left(\Delta\left(\ker{f}\right)\right)=0$. These two equalities are very close but not sufficient to claim that $\ker{f}$ is a coideal of $C$. This would be useful in case $C=\rsaft{R}$ and $f=\xi$ to deduce that $\xi$ is injective by Lemma \ref{lem:maggico}.

The fact that $\ker{f}$ is a coideal of $C$ can be obtained under a further assumption as follows. Write $f$ as $\tilde{f}\circ \pi$ where $\pi:C\to C/\ker{f}$ is the canonical projection and $\tilde{f}:C/\ker{f}\to \rdual{R}$ is the obvious induced map. Then $(\tilde{f}\tensor{A}\tilde{f})(\pi\tensor{A}\pi)\left(\Delta\left(\ker{f}\right)\right)=0$. Thus, if we assume that $\tilde{f}\tensor{A}\tilde{f}$ is injective, we can conclude that $(\pi\tensor{A}\pi)\left(\Delta\left(\ker{f}\right)\right)=0$.
\end{enumerate}
\end{remark}

We finish this section by the following useful lemma.

\begin{lemma}\label{lema:data}
Let $(A,\cH)$ be a commutative Hopf algebroid and $(A,\cU)$ a co-commutative one. Then, there is a bijective correspondence between the following sets of data:
\begin{enumerate}[label={\alph*}),ref=\emph{\alph*})]
\item\label{item:6.4a} morphisms $\what{f}:\cH \to \rsaft{\cU}$ of commutative Hopf algebroids;
\item\label{item:6.4b} morphisms $f:\cH\to \rdual{\cU}$ of $A^e$-algebras satisfying \eqref{eq:zetaprop};
\item\label{item:6.4c} morphisms $h:\cU\to \ldual{\cH}$ of $A$-rings satisfying for all $a,b\in A$, $u\in \cU$ and $x,y\in\cH$
\begin{equation}\label{eq:simildag}
h(u)(\eta(a\otimes b))=\varepsilon(bu)a \qquad \text{and} \qquad h(u)(xy)=h(u_1)(x)h(u_2)(y).
\end{equation}
\end{enumerate}
\end{lemma}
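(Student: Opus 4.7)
The proof naturally splits into two successive bijections: \emph{(a)} $\leftrightarrow$ \emph{(b)}, which is essentially already contained in Theorem \ref{thm:Teoremone}, and \emph{(b)} $\leftrightarrow$ \emph{(c)}, which is obtained from the classical $A$-bimodule adjunction $\rdual{(-)}\dashv \ldual{(-)}$ together with the computation carried out in Remark \ref{rem:univpropbullet}.

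For \emph{(a)} $\leftrightarrow$ \emph{(b)}, the plan is simply to invoke the second part of Theorem \ref{thm:Teoremone}: it provides, for every $A^e$-algebra map $f:\cH\to \rdual{\cU}$ satisfying \eqref{eq:zetaprop}, a unique morphism $\what{f}:\cH\to \rsaft{\cU}$ of commutative Hopf algebroids such that $\xi\circ \what{f}=f$. In the opposite direction, starting from a morphism $\what{f}:\cH\to \rsaft{\cU}$ of commutative Hopf algebroids and observing that the canonical map $\xi:\rsaft{\cU}\to \rdual{\cU}$ is itself an $A^e$-algebra map fulfilling \eqref{eq:zetaprop} (by construction, in view of \eqref{eq:algddag}), the composite $f=\xi\circ\what{f}$ is an $A^e$-algebra map satisfying \eqref{eq:zetaprop}. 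Uniqueness in the universal property of $\rsaft{\cU}$ ensures that these two assignments are mutually inverse.

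For \emph{(b)} $\leftrightarrow$ \emph{(c)}, the idea is to exploit the standard adjunction between the right dual $\rdual{(-)}$ and the left dual $\ldual{(-)}$ on $\bimod{A}$, which yields a natural bijection between $A$-bilinear maps $f:\cH\to \rdual{\cU}$ and $A$-bilinear maps $h:\cU\to \ldual{\cH}$ via the formula $h(u)(x)=f(x)(u)$. The calculations already displayed in Remark \ref{rem:univpropbullet} (with $C=\cH$ and $R=\cU$) show that, under this bijection, $h$ is a morphism of $A$-rings if and only if $f$ satisfies \eqref{eq:zetaprop}. Then it remains to match the additional conditions on both sides, namely that $f$ be $A^e$-algebra multiplicative and unital with respect to the $A^e$-algebra structure on $\rdual{\cU}$ described in Theorem \ref{thm:Teoremone}, and that $h$ satisfy the two identities \eqref{eq:simildag}. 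By unfolding the definitions of convolution and of the unit $a\otimes b\mapsto \bigl[u\mapsto \varepsilon(bu)a\bigr]$ on $\rdual{\cU}$, the multiplicativity of $f$, i.e.\ $f(xy)(u)=f(x)(u_1)f(y)(u_2)$, translates to $h(u)(xy)=h(u_1)(x)h(u_2)(y)$ (the second identity of \eqref{eq:simildag}), and the unitality of $f$, i.e.\ $f(\eta(a\otimes b))(u)=\varepsilon(bu)a$, translates to $h(u)(\eta(a\otimes b))=\varepsilon(bu)a$ (the first identity of \eqref{eq:simildag}).

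No substantive obstacle is expected, since the proof is entirely formal: it combines two universal properties/adjunctions already available. The only delicate point is bookkeeping, that is, carefully keeping track of the bimodule and algebra structures in play and of the fact that \eqref{eq:zetaprop} and \eqref{eq:simildag} together, under the correspondence $h(u)(x)=f(x)(u)$, encode precisely that $f$ is an $A^e$-algebra map and $h$ is an $A$-ring map, respectively. Nothing extra needs to be verified concerning the antipode, because the antipode of a commutative Hopf algebroid is uniquely determined by the bialgebroid structure, so the compatibility between $\what{f}$ and the antipodes is automatic.
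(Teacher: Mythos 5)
Your proposal is correct and follows essentially the same route as the paper: the bijection \emph{a)}$\leftrightarrow$\emph{b)} via the universal property in Theorem \ref{thm:Teoremone} and composition with $\xi$, and the bijection \emph{b)}$\leftrightarrow$\emph{c)} via the transpose $h(u)(x)=f(x)(u)$ together with the computations of Remark \ref{rem:univpropbullet}, with \eqref{eq:simildag} matching the $A^e$-algebra conditions on $f$. The observation that antipode compatibility comes for free also agrees with the paper's argument.
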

\begin{proof}
From \ref{item:6.4b} to \ref{item:6.4a} we go by the second part of Theorem \ref{thm:Teoremone}. From \ref{item:6.4a} to \ref{item:6.4b} we compose $\what{f}$ with the canonical map $\xi$ which by \eqref{eq:algddag} is a morphism of $A^e$-algebras and satisfies \eqref{eq:zetaprop}. The correspondence is bijective because of the universal property of $\rsaft{\cU}$. A direct computation shows that we have a correspondence between $A$-bimodule maps $f:\cH\to \rdual{\cU}$ satisfying \eqref{eq:zetaprop} and $A$-ring maps $h:\cU\to \ldual{\cH}$ (see Remark \ref{rem:univpropbullet}). Thus the correspondence between \ref{item:6.4b} and \ref{item:6.4c} is given by $h(u)(x)  = f(x)(u)$ for all $x,y\in\cH$ and $u\in\cU$, since relations \eqref{eq:simildag} corresponds to $f$ being an $A^e$-algebra map.
\end{proof}

\begin{remark}\label{rem:I}
Let $(A,L)$ be a Lie-Rinehart algebra and set $(A,\cU)=(A,\VL)$ and $(A,\cH)=(A, \rsaft{\VL})$ in Lemma \ref{lema:data}.  So corresponding to the identity morphism of commutative Hopf algebroids $\id{\rsaft{\VL}}$, there is a morphism of $A$-rings $\fk{i}: \VL \to {}^{*}(\rsaft{\VL})$ satisfying relations \eqref{eq:simildag}.  On generators it is explicitly given by:
\begin{equation}\label{Eq:I}
\fk{i}: \VL \longrightarrow {}^{*}\big(\rsaft{\VL}\big), \quad \Big( \iota_{\Sscript{L}}(X) \longmapsto \Big[ z \mapsto \xi(z)(\iota_{\Sscript{L}}(X)) \Big]  \Big).
\end{equation}
\end{remark}

%%%%%%%%%%%%%%%%%%%%%%%%%%

\section{Differentiation in Hopf algebroids framework}\label{sec:LI}

Given a commutative algebra $A$, the assignment that associates every $A$-module $M$ with the space $\Der{\K}{A}{M}$ of $\K$-linear derivations on $A$ with coefficients in $M$ gives a representable functor $\Der{\K}{A}{-}:\rmod{A}\to\mathsf{Set}$ whose representing object is the so-called module of K\"ahler differentials (or simply K\"ahler module) $\Omega_{\K}(A)$. In this section we are going to explore these facts in the Hopf algebroids framework. In addition, we will see how derivations on Hopf algebroids with coefficients in the base algebra are related with Lie-Rinehart algebras and provide for us a contravariant functor $\lL:\Halgd{A}\to\LieR{A}$, called \emph{the differential functor}. {This functor can be seen as the algebraic counterpart of the construction of Lie algebroid from a Lie groupoid. Analysing the case of split Hopf algebroids we will come across a construction described in \cite{DemazureGabriel} for affine group $\Bbbk$-scheme actions.}

\subsection{Derivations with coefficients in modules}\label{ssec: Der}
Next we fix a commutative Hopf algebroid $(A,\cH)$. All modules over $\cH$  are \emph{right} $\cH$-modules and with central action, that is, the left action is the same as the right action,  in the sense that $ m.u= u \, m$, for every $u \in \cH$ and $m \in M$ a right $\cH$-module.  Let us denote by $\rmod{\cH}$ the category of $\cH$-modules and their morphisms. When we restrict to $A$ via the unit map $\etaup$, we will denote by $\Ms$ and $\Mt$ the distinguished $A$-modules resulting from $M_{\Sscript{\cH}}$.  In particular, for $\cH_{\Sscript{\cH}}$ this means that we are considering $\Hs$ as an $A$-algebra via the source map $s$, while $\Ht$ is an $A$-algebra via the target map $t$.

\begin{definition}\label{def:Dernew}
Let $p:A\to\cH$ and $\varphi:\cH\to\cH$ be algebra morphisms and $M_{\cH}$ be an $\cH$-module. Set $M_\varphi:=\varphi_*(M)$, the $\cH$-module obtained by restriction of scalars via $\varphi$, \ie $m\cdot u=m.\varphi(u)$ for all $m\in M, u\in \cH$. It is assumed to be an $A$-module via further restriction of scalars: $m\cdot a = m.\varphi(p(a))$. We define the following right $\cH$-module
\begin{equation*}
\Der{A}{\Hp}{M_\varphi}:=\Big\{\delta \in \Hom{A}{\Hp}{M_{\varphi p}}\mid \delta(uv)=\delta(u)\cdot v+\delta(v)\cdot u=\delta(u).\varphi(v)+\delta(v).\varphi(u)\text{ for all } u,v\in\cH\Big\}
\end{equation*}
with $\cH$-action given by $(\delta v)(u)=\delta(u).v$ for all $u,v\in\cH,\delta\in \Der{A}{\Hp}{M_\varphi}$.
\end{definition}

\begin{remark}
Notice that the condition $\delta \in \Hom{A}{\Hp}{M_{\varphi p}}$ in the definition of $\Der{A}{\Hp}{M_\varphi}$ in Definition \ref{def:Dernew} means that
$$
\delta(up(a))=\delta(u).\varphi(p(a))
$$
for all $a\in A, u\in\cH$. Moreover, since the condition $\delta(uv)=\delta(u)\cdot v+\delta(v)\cdot u$ for all $u,v\in\cH$ implies that $\delta(1_{\cH})=0$, we have that $\delta(p(a))=\delta(1_{\cH}).\varphi(p(a))=0$ for all $a\in A$, whence $\delta\circ p=0$.
\end{remark}

As a matter of notation, if we have $p:A\to \cH$ an algebra map, $f:\Ht\to\Hp$ and $g:\Hs\to\Hp$ two $A$-algebra maps and $\delta:\Hs\to \Mp$ and $\lambda:\Ht\to\Mp$ two $A$-linear morphisms, then we will set
\begin{equation}\label{eq:*prods}
(f*g)(u):= f(u_1)g(u_2), \quad (f*\delta)(u):=  \delta(u_2).f(u_1) \quad \text{and} \quad (\lambda*g)(u):= \lambda(u_1).g(u_2)
\end{equation}
for every $u\in\cH$. Notice that the compatibility conditions with $A$ are needed to have that every $*$-product above is well-defined.

\begin{lemma}
Let $p,q:A\to \cH$ be algebra morphisms. Let also $\gamma:\Hq\to\Hp$, $\varphi,\, \beta:\Ht\to\Hp$ and $\psi,\, \alpha:\Hs\to\Hp$ be $A$-algebra morphisms. These induce $\cH$-module morphisms
\begin{gather}\label{eq:isosnew}
\begin{gathered}
\xymatrix@R=0pt{  \Der{A}{\Ht}{M_\varphi}  \ar@{->}[r]    &  \Der{A}{\Ht}{M_{\varphi*\psi}},  \\ \ar@{|->}^{}[r] \delta   & \delta * \psi }  \qquad
\xymatrix@R=0pt{  \Der{A}{\Hs}{M_\alpha} \ar@{->}[r]    & \Der{A}{\Hs}{M_{\beta*\alpha}},  \\ \ar@{|->}^{}[r] \delta   & \beta*\delta }  \\
\xymatrix@R=0pt{  \Der{A}{\Hp}{M_{\varphi}} \ar@{->}[r]    &  \Der{A}{\cH_{q}}{M_{\varphi\gamma}}.  \\ \ar@{|->}^{}[r] \delta   & \delta\circ\gamma }
\end{gathered}
\end{gather}
\end{lemma}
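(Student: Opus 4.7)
The plan is to verify, for each of the three displayed assignments, that (a) the output lies in the prescribed derivation space and (b) the assignment is right $\cH$-linear. All three cases follow the same pattern, so I would treat the first in detail and then indicate how the remaining two reduce to analogous computations.

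For $\delta \mapsto \delta * \psi$, given by $(\delta * \psi)(u) := \delta(u_1).\psi(u_2)$, I would first check well-definedness across the balancing $u_1 t(a) \tensor{A} u_2 = u_1 \tensor{A} s(a) u_2$ in $\cH \tensor{A} \cH$, which is immediate from the $A$-linearity of $\delta$ on $\Ht$ and of $\psi$ on $\Hs$ together with centrality of the $\cH$-action on $M$. Next, using $\Delta(t(a)) = 1 \tensor{A} t(a)$ and multiplicativity of $\Delta$, I would compute
\begin{equation*}
(\delta * \psi)\bigl(t(a)u\bigr) = \delta(u_1).\psi\bigl(t(a)u_2\bigr) = \psi(t(a)).\bigl(\delta(u_1).\psi(u_2)\bigr) = \psi(t(a)).(\delta * \psi)(u),
\end{equation*}
which is precisely the $A$-linearity required, since the module structure on $M_{\varphi * \psi}$ is $m \cdot a = m.(\varphi * \psi)(t(a)) = m.\psi(t(a))$. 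For the Leibniz rule I would expand $(\delta * \psi)(uv)$ via $\Delta(uv) = \Delta(u)\Delta(v)$, apply the defining identity $\delta(xy) = \delta(x).\varphi(y) + \delta(y).\varphi(x)$, and rearrange using commutativity of $\cH$ to recognise $(\delta * \psi)(u).(\varphi * \psi)(v) + (\delta * \psi)(v).(\varphi * \psi)(u)$; the same rearrangement also shows that $\varphi * \psi$ is itself an algebra endomorphism of $\cH$, so that $M_{\varphi * \psi}$ makes sense as an $\cH$-module.

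The argument for $\delta \mapsto \beta * \delta$ is entirely symmetric, with the roles of the two Sweedler factors interchanged and $\Delta(s(a)) = s(a) \tensor{A} 1$ used in place of $\Delta(t(a)) = 1 \tensor{A} t(a)$. The third assignment $\delta \mapsto \delta \circ \gamma$ is considerably simpler: since $\gamma$ is multiplicative, the Leibniz rule of $\delta$ yields at once that $\delta \circ \gamma$ is a derivation with restriction map $\varphi \circ \gamma$, while $A$-linearity from $\Hq$ to $M_{\varphi\gamma}$ follows from $\gamma \circ q = p$ together with the analogous property of $\delta$. Finally, right $\cH$-linearity of each assignment is a one-line check using centrality of the $\cH$-action on $M$; for instance, for the first,
\begin{equation*}
\bigl((\delta v) * \psi\bigr)(u) = \bigl(\delta(u_1).v\bigr).\psi(u_2) = \bigl(\delta(u_1).\psi(u_2)\bigr).v = \bigl((\delta * \psi) v\bigr)(u).
\end{equation*}

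I do not expect a genuinely difficult step in this lemma; the main obstacle will be careful bookkeeping of which $A$-module structure on $M$ is being used at each stage, since every subscript $M_{(-)}$ in the statement inherits its $A$-action from the composition of the corresponding endomorphism with one of $s$, $t$, $p$, $q$, and this action can differ between the source and the target of each displayed assignment. Once that organisation is in place, each verification reduces to a short computation relying only on multiplicativity of $\Delta$, commutativity of $\cH$, and centrality of the module action.
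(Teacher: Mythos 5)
Your proposal is correct and follows essentially the same route as the paper: one verifies the Leibniz identity, the $A$-linearity of $\delta*\psi$ with respect to $t$ (resp.\ $s$, $q$ for the other two maps), and the right $\cH$-linearity of the assignment, treating the first map in detail and noting the other two are analogous. The only (welcome) extras are your explicit checks that the formula descends through the balanced tensor product and that $\varphi*\psi$ is again an algebra endomorphism of $\cH$, points the paper disposes of in the remark following the definition of the $*$-products.
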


\begin{proof}
The proof is simply a matter of checking that the assignments are well-defined and $\cH$-linear. Let us do it for the upper left one and leave the others to the reader.

By definition \eqref{eq:*prods} we have that $(\delta*\psi)(u)=\delta(u_1).\psi(u_2)$ for all $u\in\cH$. For every $a\in A,u,v\in\cH$ we may compute directly
\begin{align*}
(\delta*\psi)(uv) & =\delta(u_1).\varphi(v_1)\psi(u_2)\psi(v_2)+\delta(v_1).\varphi(u_1)\psi(u_2)\psi(v_2) \\\
 & =(\delta*\psi)(u).(\varphi*\psi)(v)+(\delta*\psi)(v).(\varphi*\psi)(u), \\
(\delta*\psi)(ut(a)) & =(\delta*\psi)(u).(\varphi*\psi)(t(a))+(\delta*\psi)(t(a)).(\varphi*\psi)(u) \\
 & =(\delta*\psi)(u).(\varphi*\psi)(t(a))+\delta(1_\cH).\psi(t(a))(\varphi*\psi)(u) \\
 & =(\delta*\psi)(u).(\varphi*\psi)(t(a)), \\
\big((\delta v)*\psi\big)(u) & =(\delta v)(u_1).\psi(u_2) = \delta(u_1).v\psi(u_2)= (\delta*\psi)(u).v = \big((\delta*\psi).v\big)(u),
\end{align*}
and this concludes the required checks.
\end{proof}

\begin{remark}
Notice that the latter morphism in \eqref{eq:isosnew} is a particular instance of a more general result, claiming that for $A$-algebras $p:A\to  \cH$ and $q:A \to \cK$ every $A$-algebra morphism $\phi:\cH\to\cK$ induces a natural transformation $\phi_*\left(\Der{A}{\cK}{M}\right)\to \Der{A}{\cH}{\phi_*(M)},\,(\delta\mapsto \delta\circ\phi)$ in $\cH$-modules.
\end{remark}

\begin{corollary}\label{cor:PQ}
Let  $M$ be an $\cH$-module. For every $p, \, q \, \in \{s, t\}$,  we set:
\begin{equation}\label{eq:distDer}
 \Der{A}{\Hp}{\Mq}:=\Der{A}{\Hp}{M_{q\varepsilon}}\qquad \text{and}\qquad\Derk{p}{\cH}{M}:=\Der{A}{\Hp}{M_{\id{\cH}}}=\Der{A}{\Hp}{M}.
\end{equation}
Then we have the following  isomorphisms of $\cH$-modules
\begin{align}\label{eq:isos}
\begin{gathered}
\xymatrix@R=0pt@C=30pt{  \Derk{t}{\cH}{M} \ar@{->}^{\cong }[r]    &  \Der{A}{\Ht}{\Ms} , \\ \ar@{|->}^{}[r] \delta   & \delta*\cS \\  \gamma*\id{\cH}   & \ar@{|->}^{}[l]  \gamma }  \qquad
\xymatrix@R=0pt@C=30pt{  \Derk{s}{\cH}{M} \ar@{->}^{\cong }[r]    & \Der{A}{\Hs}{\Mt},  \\ \ar@{|->}^{}[r] \delta   & \cS*\delta  \\  \id{\cH}*\gamma  & \ar@{|->}^{}[l]  \gamma }  \\
\xymatrix@R=0pt@C=30pt{  \Der{A}{\Hp}{\Mq}  \ar@{->}^{\cong }[r]    &  \Der{A}{\Hq}{\Mq}.  \\ \ar@{|->}^{}[r] \delta   & \delta \circ \cS  \\  \gamma \circ \cS  &\ar@{|->}^{}[l] \gamma     }
\end{gathered}
\end{align}
\end{corollary}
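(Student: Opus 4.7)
The plan is to deduce all three isomorphisms from the immediately preceding lemma by making judicious choices of algebra maps built from the antipode. Recall that by the axioms of a commutative Hopf algebroid, $\cS$ restricts to $A$-algebra maps $\cS\colon \Hs \to \Ht$ and $\cS\colon \Ht \to \Hs$, since $\cS\circ s = t$ and $\cS \circ t = s$, and that $\cS^2 = \id_{\cH}$.

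For the first isomorphism $\Derk{t}{\cH}{M} \cong \Der{A}{\Ht}{\Ms}$, I would apply the upper-left case of \eqref{eq:isosnew} with $p = t$, $\varphi = \id_{\cH}\colon \Ht \to \cH$ and $\psi = \cS\colon \Hs \to \Ht$. The antipode axiom $u_1\cS(u_2) = s(\varepsilon(u))$ gives $\varphi * \psi = s\circ \varepsilon$, so the lemma produces the stated assignment $\delta \mapsto \delta*\cS$ with values in $\Der{A}{\Ht}{M_{s\varepsilon}} = \Der{A}{\Ht}{\Ms}$. For the candidate inverse $\gamma \mapsto \gamma * \id_{\cH}$, I would apply the same lemma with $\varphi = s\varepsilon$ and $\psi = \id_{\cH}$, using that $s(\varepsilon(u_1))u_2 = u$ by counitality to obtain $(s\varepsilon)*\id_{\cH} = \id_{\cH}$.

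To see that these two maps are mutually inverse, I would compute on elements: for $\delta \in \Derk{t}{\cH}{M}$,
\[
((\delta*\cS)*\id_{\cH})(u) = \delta(u_1).\cS(u_2).u_3 = \delta(u_1).t(\varepsilon(u_2)) = \delta\bigl(u_1 t(\varepsilon(u_2))\bigr) = \delta(u),
\]
where the second equality uses $\cS(h_1)h_2 = t\varepsilon(h)$ applied to $h=u_2$, the third uses the right $A$-linearity of $\delta$ with respect to $t$, and the fourth uses counitality. The symmetric check that $(\gamma*\id_{\cH})*\cS = \gamma$ uses instead $u_1\cS(u_2) = s\varepsilon(u)$ and the $A$-linearity of $\gamma$ with respect to the $s$-module structure on $\Ms$. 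The second isomorphism $\Derk{s}{\cH}{M} \cong \Der{A}{\Hs}{\Mt}$ is entirely dual: apply the upper-right case of \eqref{eq:isosnew} with $\alpha = \id_{\cH}$ and $\beta = \cS\colon \Ht \to \Hs$, using now $\cS(u_1)u_2 = t\varepsilon(u)$ to identify $\beta*\alpha = t\varepsilon$, and verify the inverses by the same bookkeeping.

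Finally, the third isomorphism $\Der{A}{\Hp}{\Mq} \cong \Der{A}{\Hq}{\Mq}$ is the most direct: I would apply the lower case of \eqref{eq:isosnew} with $\gamma = \cS$ interchanging the roles of $\Hp$ and $\Hq$, observing that precomposition with $\cS$ preserves the target $A$-module structure $M_{q\varepsilon}$ because $q\varepsilon \circ \cS = q\varepsilon$. The involutivity $\cS^2 = \id_{\cH}$ then immediately guarantees that the assignment $\delta \mapsto \delta\circ \cS$ is its own inverse. There is no genuinely hard step in the proof; the entire corollary reduces to a bookkeeping exercise with the antipode axioms and counitality, once the correct algebra maps in \eqref{eq:isosnew} have been identified.
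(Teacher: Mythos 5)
Your proof is correct and takes exactly the route the paper intends: the paper's own proof of this corollary is literally the single word ``Straightforward,'' and your derivation of all three isomorphisms from the maps of \eqref{eq:isosnew} --- choosing $\varphi,\psi,\alpha,\beta,\gamma$ among $\id_{\cH}$, $\cS$, $s\varepsilon$, $t\varepsilon$ and verifying mutual inverses via $u_1\cS(u_2)=s\varepsilon(u)$, $\cS(u_1)u_2=t\varepsilon(u)$, counitality and $\cS^2=\id_{\cH}$ --- is precisely the bookkeeping that word stands in for. The only point left tacit is that the maps are $\cH$-linear, but this is already guaranteed by the preceding lemma, so nothing is missing.
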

\begin{proof}
Straightforward.
\end{proof}

Let us denote by $\cI:=\ker{\varepsilon}$ the augmentation ideal of $\cH$. For every $p\in \{s, t\}$, we have that $u-p(\varepsilon(u))\in\cI$ for all $u\in\cH$ and hence
\begin{equation}\label{eq:I/I2}
v(u-p(\varepsilon(u)))+\cI^2 = p(\varepsilon(v))(u-p(\varepsilon(u)))+\cI^2
\end{equation}
in $\cI/\cI^2$ for all $v\in\cH$. We can define the surjective  map associated to $\cI$
\begin{equation}\label{Eq:pis}
\xymatrix@R=0pt@C=40pt{  \Hp \ar@{->}^{\pip }[r]  &  \frac{\cI}{\cI^{2}}  \\ \ar@{|->}^{}[r] u   & \Big( u - p(\varepsilon(u))\, + \, \cI^{2}  \Big)  }
\end{equation}
which enjoys the following properties.

\begin{lemma}\label{lema:propPi}
Consider $\cI/\cI^{2}$ as an $\cH$-module via \eqref{eq:I/I2}. Then, for every $p\in \{s, t\}$ and $u, v \in \cH$, the map $\pip$ satisfies:
\begin{equation}\label{eq:propPi1}
\pip\circ p\,=\, 0, \qquad \pip(uv)\,=\, \pip(u) \, p(\varepsilon(v)) + p(\varepsilon(u))\,  \pip(v)
\end{equation}
In particular, $\pip \, \in \, \Derk{p}{\cH}{\cI/\cI^{2}}=\Der{A}{\Hp}{\big(  \cI/\cI^{2}\big)_{\Sscript{p}}}$. Furthermore, for every $u, v \in \cH$, we have
\begin{equation}\label{eq:propPi4}
u_{1}\tensor{A}u_{2}\, \pis(v) = u\tensor{A} \pis(v) \, \in \Hst \tensor{A} {}_{\Sscript{s}}\big(\cI/\cI^{2}\big), \qquad \pit(v) u_{1}\tensor{A}u_{2} = \pit(v)\tensor{A} u  \, \in\big(  \cI/\cI^{2}\big)_{\Sscript{t}}\tensor{A} \Hst.
\end{equation}
Moreover, the maps
\begin{equation}\label{Eq:tensorpis}
\psis: \Hst \longrightarrow \Hst\, \tensor{A} \, {}_{\Sscript{s}}\Big( \frac{\cI}{\cI^{2}} \Big), \; \Big[  u \longmapsto u_{1}\tensor{A}\pis(u_{2})\Big];\quad \psit: \Hst \longrightarrow \Big( \frac{\cI}{\cI^{2}} \Big)_{\Sscript{t}}\tensor{A} \Hst ,\;  \Big[  u \longmapsto \pit(u_{1})\tensor{A}u_{2}\Big]
\end{equation}
are well-defined left and right $A$-module morphisms, respectively.
\end{lemma}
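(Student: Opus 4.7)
The plan is to verify the three clusters of claims in order: the derivation identities in \eqref{eq:propPi1}, the tensor-product identities in \eqref{eq:propPi4}, and finally the well-definedness together with the $A$-linearity of $\psis$ and $\psit$.

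First, the identity $\pip\circ p=0$ is immediate from $\varepsilon\circ p=\id{A}$ (see Remark \ref{rem:Santicocomm}), since $\pip(p(a))=p(a)-p(\varepsilon(p(a)))+\cI^2=0$. For the Leibniz-type relation, I would observe that both $u-p(\varepsilon(u))$ and $v-p(\varepsilon(v))$ lie in $\cI$, so that their product lies in $\cI^2$. Expanding this product and reducing modulo $\cI^2$ yields the congruence
\[
uv\equiv u\,p(\varepsilon(v))+p(\varepsilon(u))\,v-p(\varepsilon(u))\,p(\varepsilon(v))\pmod{\cI^2}.
\]
Rewriting the right-hand side in terms of $\pip(u)$ and $\pip(v)$, and recalling that by \eqref{eq:I/I2} the $\cH$-action on $\cI/\cI^2$ restricts to multiplication by $p(\varepsilon(\cdot))$, one recovers the desired Leibniz identity, and consequently that $\pip\in\Derk{p}{\cH}{\cI/\cI^2}$.

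Next, for the first identity in \eqref{eq:propPi4}, I would use \eqref{eq:I/I2} with $p=s$ to rewrite $u_2\,\pis(v)=s(\varepsilon(u_2))\pis(v)$. Inside $\Hst\tensor{A}{}_s(\cI/\cI^2)$, which is balanced by the right action on the first factor through $t$ and the left action on the second factor through $s$, this scalar transports across the tensor to give $u_1\tensor{A}s(\varepsilon(u_2))\pis(v)=u_1\,t(\varepsilon(u_2))\tensor{A}\pis(v)$. Applying the right counit axiom $u_1 t(\varepsilon(u_2))=u$ of the $A$-coring $\sHt$ then yields the first equality. The second equality is obtained symmetrically: one starts from $u_1\,\pit(v)=t(\varepsilon(u_1))\pit(v)$, uses the commutativity of $\cH$ to move $t(\varepsilon(u_1))$ to the right of $\pit(v)$ in $\cI/\cI^2$, crosses the tensor over $A$, and invokes the left counit axiom $s(\varepsilon(u_1))u_2=u$.

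Finally, for $\psis=(\cH\tensor{A}\pis)\circ\Delta$, the well-definedness reduces to showing that $\pis\colon{}_s\cH\to{}_s(\cI/\cI^2)$ is left $A$-linear through $s$, which follows at once from the computation $\pis(s(a)u)=s(a)u-s(a\varepsilon(u))+\cI^2=s(a)\,\pis(u)$. Left $A$-linearity of $\psis$ will then reduce to the identity $\Delta(s(a)u)=s(a)u_1\tensor{A}u_2$, itself a consequence of the multiplicativity of $\Delta$ together with $\Delta(s(a))=s(a)\tensor{A}1$ (which in turn comes from $\Delta$ being an $A$-bimodule map on $\sHt$). The case of $\psit$ is handled symmetrically. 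The main obstacle throughout is simply the careful bookkeeping of the various $A$-bimodule structures on $\cH$ and $\cI/\cI^2$ and of the correct form of the counit axiom used at each step; once the conventions are fixed, every verification is a direct routine computation.
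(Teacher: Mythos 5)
Your proposal is correct and follows essentially the same route as the paper: the Leibniz identity via $(u-p(\varepsilon(u)))(v-p(\varepsilon(v)))\in\cI^2$ is the standard argument the paper leaves implicit ("follows easily by the definition of $\pip$"), and your manipulations for \eqref{eq:propPi4} — moving $s(\varepsilon(u_2))$ (resp. $t(\varepsilon(u_1))$) across the balanced tensor product and invoking the counit axiom — are exactly the paper's two displayed computations. The well-definedness of $\psis$ and $\psit$ from the $A$-linearity of $\pip$ is likewise how the paper concludes.
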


\begin{proof}
The properties \eqref{eq:propPi1} follow easily by the definition of $\pip$. Concerning \eqref{eq:propPi4}, we have
$$u_{1}\tensor{A}u_{2}\pis(v)\overset{\eqref{eq:I/I2}}{=}u_{1}\tensor{A}s(\varepsilon(u_{2}))\pis(v)=u_{1}t(\varepsilon(u_{2}))\tensor{A}\pis(v)=u\tensor{A}\pis(v),$$
$$\pis(v)u_{1}\tensor{A}u_{2}\overset{\eqref{eq:I/I2}}{=}\pis(v)t(\varepsilon(u_{1}))\tensor{A}u_{2}=\pis(v)\tensor{A}s(\varepsilon(u_{1}))u_{2}=\pis(v)\tensor{A}v.$$
It is now clear that $\Derk{p}{\cH}{\cI/\cI^{2}}=\Der{A}{\Hp}{\big(  \cI/\cI^{2}\big)_{\Sscript{p}}}$ and that $\pip$ belongs to this set. As a consequence $\pip\in \Hom{A}{\Hp}{\big(  \cI/\cI^{2}\big)_{\Sscript{p}}}$ whence it makes sense to define $\psis:=(\Hst\tensor{A} \pis)\circ \Delta$ and $\psit:=(\pit\tensor{A} \Hst)\circ \Delta$.
\end{proof}

Now we show that, for every $p,q \in \{s,t\}$,  $\Der{A}{\Hp}{(-)_q}: \rmod{\cH} \to \rmod{\cH}$  is a kind of  a representable functor.

\begin{lemma}\label{lema:rep}
Given $p, q, r \in \{s,t\}$ with $p\neq q$ and $M$ is an $\cH$-module. Then there is a natural isomorphism
\begin{equation}\label{Eq:natural}
\xymatrix@R=0pt@C=30pt{  \Der{A}{\Hp}{\Mq}  \ar@{->}^-{\cong }[rr]  &   &  \Hom{A}{\big( \frac{\cI}{\cI^{2}} \big)_{\Sscript{p}}}{\Mq}  \\ \ar@{|->}^{}[rr] \delta   & & \overline{\delta}:=\Big[ \pip(u) \mapsto \delta(u)  \Big]  \\  f \circ \pip   & & \ar@{|->}^{}[ll]  f, }
\end{equation}
of $\cH$-modules.
\end{lemma}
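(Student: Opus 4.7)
The plan is to verify separately that each assignment in \eqref{Eq:natural} is well-defined, that they are mutually inverse, $\cH$-linear, and natural in $M$. The key technical ingredients come from Lemma \ref{lema:propPi}: the Leibniz-type identity $\pip(uv)=\pip(u)p(\varepsilon(v))+p(\varepsilon(u))\pip(v)$, the vanishing $\pip\circ p=0$, and the observation that the right $\cH$-action on $\cI/\cI^{2}$ defined by \eqref{eq:I/I2} factors through $p\circ\varepsilon$, so that the left $A$-action on $(\cI/\cI^{2})_{\Sscript{p}}$ is given by $a\cdot\pip(u)=p(a)\pip(u)+\cI^{2}$.

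First, given $\delta\in \Der{A}{\Hp}{\Mq}$, I will check that the rule $\overline{\delta}(u+\cI^{2}):=\delta(u)$ for $u\in \cI$ (which makes sense because $\pip(u)=u+\cI^{2}$ when $\varepsilon(u)=0$) defines a well-defined $A$-linear map $(\cI/\cI^{2})_{\Sscript{p}}\to \Mq$. Vanishing on $\cI^{2}$ follows from the derivation identity: for $u,v\in \cI$ one has $\delta(uv)=\delta(u).q(\varepsilon(v))+\delta(v).q(\varepsilon(u))=0$. For $A$-linearity, using $\delta\circ p=0$ and $\varepsilon\circ p=\id{A}$, I compute $\overline{\delta}(a\cdot\pip(u))=\delta(p(a)u)=\delta(p(a)).q(\varepsilon(u))+\delta(u).q(a)=\delta(u).q(a)=a\cdot\delta(u)$.

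Conversely, for $f\in \Hom{A}{(\cI/\cI^{2})_{\Sscript{p}}}{\Mq}$, I will check that $f\circ \pip:\Hp\to\Mq$ lies in $\Der{A}{\Hp}{\Mq}$. Applying $f$ to $\pip(uv)=\pip(u)p(\varepsilon(v))+p(\varepsilon(u))\pip(v)$ and exploiting both the $A$-linearity of $f$ and the description of the $\cH$-action on $\cI/\cI^{2}$, I obtain $(f\circ\pip)(uv)=(f\circ\pip)(u).q(\varepsilon(v))+(f\circ\pip)(v).q(\varepsilon(u))$, which is exactly the Leibniz rule for the structure map $\varphi=q\varepsilon$. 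The required right $A$-linearity of $f\circ\pip$ is inherited directly from $\pip$, and $\left(f\circ\pip\right)\circ p=0$ follows from $\pip\circ p=0$.

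The two constructions are mutually inverse because $\pip$ surjects onto $\cI/\cI^{2}$: from the definitions one reads $\overline{f\circ\pip}=f$ and $\overline{\delta}\circ \pip=\delta$ immediately. The $\cH$-linearity of \eqref{Eq:natural} is immediate from the formulas $(\delta v)(u)=\delta(u).v$ and $(fv)(x)=f(x).v$, and naturality in $M$ reduces to observing that a morphism $g:M\to N$ of $\cH$-modules commutes with both assignments by post-composition. The only bookkeeping subtlety, and therefore the point requiring some care, is to keep the roles of $p$ and $q$ clearly separated: $p$ governs the $A$-module structure of the domain $(\cI/\cI^{2})_{\Sscript{p}}$ and enters the Leibniz identity of $\pip$, while $q\varepsilon$ is the structure map defining the codomain action appearing in the derivation property.
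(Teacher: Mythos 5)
Your proof is correct and is essentially the paper's argument: both rest on the same two facts, namely that a derivation $\delta$ vanishes on $\cI^{2}$ (so it descends to $\cI/\cI^{2}$) and that $\pip$ itself satisfies the Leibniz identity of Lemma \ref{lema:propPi} (so $f\circ\pip$ is a derivation), with surjectivity of $\pip$ giving that the two assignments are mutually inverse. The only difference is presentational — the paper packages these verifications as the statement that $\pip$ is the coequalizer of the pair $(mult,\ \varepsilon\tensor{A}\cH+\cH\tensor{A}\varepsilon)$ and then invokes its universal property, whereas you carry out the same checks directly on representatives.
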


\begin{proof}
First note that $\varepsilon\in \Hom{A}{\Hp}{A}$ so that it makes sense to consider the following diagram.
\begin{equation}\label{Diag:CoeqI}
\xymatrix@C=45pt{ \Hp \tensor{A} \Hp \ar@<.5ex>[rr]^-{mult}  \ar@<-.5ex>[rr]_-{\varepsilon\tensor{A}\cH\,+\,\cH\tensor{A}\varepsilon}&& \Hp \ar[rr]^-{\pip} &&\big(\frac{\cI}{\cI^{2}}\big)_{\Sscript{p}}.}
\end{equation}
Let us check that it is a coequalizer of $A$-modules. Let $N$ be an $A$-module and let $\delta \in\Hom{A}{\Hp}{N}$ such that $uv-up(\varepsilon(v)) - p( \varepsilon(u))v \in\ker{\delta}$  for every $u,v \in \cH$.
$$
\xymatrix{ 0 \ar[r] & \ker{\pip} \ar[r] & \Hp \ar[r]^-{\pip} & \big(\frac{\cI}{\cI^{2}}\big)_{p} \ar[r] & 0.}
$$
If $u\in\ker{\pip}$ (\ie $u-p(\varepsilon(u))\in \cI^2$), then $\delta(u)\stackrel{(\delta p=0)}{=}\delta(u-p(\varepsilon(u)))\in\delta(\cI^{2})\subseteq \cI p(\varepsilon(\cI))=0$ so that $\delta$ factors through a unique map $\overline{\delta}:\cI/\cI^{2}\to N $ such that $\overline{\delta}\circ\pip=\delta$.

On the other hand, by Lemma \ref{lema:propPi}, the map $\pip$ coequalizes the parallel pair in the diagram above. Thus \eqref{Diag:CoeqI} is a coequalizer as claimed. Now, for $N=\Mq$ it is clear that the maps $\delta \in\Hom{A}{\Hp}{N}$ coequalizing the parallel pair in \eqref{Diag:CoeqI} are exactly the elements in $\Der{A}{\Hp}{\Mq} $ so that they bijectively correspond to the elements in $\Hom{A}{\big( \frac{\cI}{\cI^{2}} \big)_{\Sscript{p}}}{\Mq}$ by the universal property of the coequalizer. This correspondence is clearly $\cH$-linear and natural in $M$.
\end{proof}

\subsection{The K\"ahler module of a Hopf algebroid}\label{ssec:Omega}
Next, we investigate the K\"ahler module of $\cH$ and construct the universal derivation. {The linear dual of this module with values in the base algebra, will have a structure of Lie-Rinehart algebra. This construction can be seen as the algebraic  counterpart of the geometric construction of a Lie algebroid from a given Lie groupoid\footnote{In Appendix \ref{ssec:LA-LG} we will review the latter construction,  from a slightly different point of view.}. In case the Hopf algebroid we start with is a split one, then we show that this construction already appeared  in the setting of  affine group $\Bbbk$-scheme actions \cite{DemazureGabriel}, see also  Appendix \ref{sec:A1} for more details.
}

Keep the above notations. For instance,  the underlying $A$-modules of the  $\cH$-module $(\cI/\cI^{2})$ are denoted by $\big(\cI/\cI^{2}\big){}_{\Sscript{p}}={}_{\Sscript{p}}\big(\cI/\cI^{2}\big)$, for every $p \in \{s,t\}$.

\begin{proposition}\label{prop:Omega}
For a Hopf algebroid $(A, \cH)$ and a $\cH$-module $M$, there is a natural isomorphism
\begin{equation}\label{Eq:NatOmega}
\xymatrix@R=0pt@C=30pt{  \Derk{s}{\cH}{M} \ar@{->}^-{\cong}[rr]  &   &  \Hom{\cH}{\Hst \, \tensor{A}\, {}_{\Sscript{s}}\big( \frac{\cI}{\cI^{2}}\big)}{M}  \\ \ar@{|->}^{}[rr] \delta   & & \Big[ u\tensor{A}\pis(v) \longmapsto u\, \cS(v_{1})\delta(v_{2})  \Big]  \\  \Big[ u\longmapsto u_{1}\, f(1\tensor{A} \pis(u_{2}) \Big]   & & \ar@{|->}^{}[ll]  f, }
\end{equation}
of  $\cH$-modules.
\end{proposition}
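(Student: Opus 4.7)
The natural isomorphism is obtained by composing three previously established natural bijections. Combining Corollary \ref{cor:PQ} (with $p=s, q=t$) and Lemma \ref{lema:rep} (with the same indices) already produces
\[
\Derk{s}{\cH}{M}\;\xrightarrow{\;\cong\;}\;\Der{A}{\Hs}{\Mt}\;\xrightarrow{\;\cong\;}\;\Hom{A}{{}_{\Sscript{s}}(\cI/\cI^{2})}{\Mt},
\]
where the first arrow sends $\delta$ to $\cS\ast\delta:v\mapsto \cS(v_{1})\delta(v_{2})$ and the second factors such a map through $\pis$. I point out here that, since $s(a)-t(a)\in\cI$ for every $a\in A$, one has $(s(a)-t(a))\cI\subseteq\cI^{2}$; consequently the $s$- and $t$-induced $A$-module structures on $\cI/\cI^{2}$ coincide and ${}_{\Sscript{s}}(\cI/\cI^{2})=(\cI/\cI^{2})_{\Sscript{s}}$.

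The remaining ingredient is a tensor--hom adjunction of the form
\[
\Hom{\cH}{\Hst\tensor{A}N}{M}\;\xrightarrow{\;\cong\;}\;\Hom{A}{N}{\Mt},\qquad f\longmapsto f(1\tensor{A}-),
\]
valid for any left $A$-module $N$ (via $s$) and any right $\cH$-module $M$, where the right $\cH$-action on $\Hst\tensor{A}N$ is given by multiplication on the left factor, $(h\tensor{A}n)\cdot u=hu\tensor{A}n$ (this is well defined on the balanced tensor product because $\cH$ is commutative, so $t(a)u=ut(a)$). The $A$-linearity of $f(1\tensor{A}-)$ with target $\Mt$ follows from
\[
f(1\tensor{A}s(a)n)=f(t(a)\tensor{A}n)=f\bigl((1\tensor{A}n)\cdot t(a)\bigr)=f(1\tensor{A}n)\cdot t(a),
\]
using $\cH$-linearity. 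The inverse sends $g:N\to\Mt$ to the map $h\tensor{A}n\mapsto g(n)\cdot h$, whose well-definedness on the balanced tensor product and whose $\cH$-linearity follow by inspection. Specialising to $N={}_{\Sscript{s}}(\cI/\cI^{2})$ yields the third bijection in the chain.

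Chasing $\delta\in\Derk{s}{\cH}{M}$ through the three bijections gives, in order,
\[
\delta\;\longmapsto\;\cS\ast\delta\;\longmapsto\;\bigl[\pis(v)\mapsto\cS(v_{1})\delta(v_{2})\bigr]\;\longmapsto\;\bigl[u\tensor{A}\pis(v)\mapsto u\cdot\cS(v_{1})\delta(v_{2})\bigr],
\]
which is precisely the formula displayed in \eqref{Eq:NatOmega}. Reversing the chain produces $f\mapsto\bigl[u\mapsto u_{1}\cdot f(1\tensor{A}\pis(u_{2}))\bigr]$, matching the stated inverse. Naturality in $M$ is inherited from the naturality of each of the three intermediate bijections. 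The only genuinely new verification is the tensor--hom adjunction, and this reduces to routine uses of $\cH$-linearity and the commutativity of $\cH$; no real obstacle arises.
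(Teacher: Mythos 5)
Your argument is correct and is essentially identical to the paper's: the paper's own proof of Proposition \ref{prop:Omega} is the one-line remark that it follows from Corollary \ref{cor:PQ}, Lemma \ref{lema:rep} and the usual hom-tensor adjunction, which is precisely the three-step chain you compose, and chasing $\delta$ and $f$ through it yields exactly the displayed formulas. The details you add -- the coincidence of the $s$- and $t$-induced $A$-module structures on $\cI/\cI^{2}$, and the explicit extension/restriction-of-scalars adjunction along $t$ with the well-definedness check using commutativity of $\cH$ -- are just the routine verifications the paper leaves implicit.
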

\begin{proof}
It follows from Corollary \ref{cor:PQ}, Lemma \ref{lema:rep} and the usual hom-tensor  adjunction.
\end{proof}

\begin{corollary}\label{coro:Omega}
Let $(A, \cH)$ be an Hopf algebroid. Then the K\"ahler module $\Omega_{A}^{s}(\cH)$ of $\cH$ with respect to the source map is, up to a canonical isomorphism,  given by:
$$
\Omega_{A}^{s}(\cH) \, \cong \, \Hst \tensor{A} {{{}_{\Sscript{s}}}}\Big(\frac{\cI}{\cI^{2}}\Big), \quad \Big( \psis: \Hs \longrightarrow  \Omega_{A}^{s}(\cH), \; \big[  u \longmapsto u_{1}\tensor{A} \pis(u_{2}) \big] \Big)
$$
where $\psis$ is the morphism of Eq. \eqref{Eq:tensorpis} and now becomes the universal derivation.
\end{corollary}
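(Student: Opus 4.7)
The corollary is a direct consequence of Proposition \ref{prop:Omega} via the Yoneda lemma. Indeed, the K\"ahler module $\Omega^s_A(\cH)$ is by definition the right $\cH$-module that represents the functor $M \mapsto \Derk{s}{\cH}{M}$ together with its universal derivation, and Proposition \ref{prop:Omega} provides a natural isomorphism
\begin{equation*}
\Derk{s}{\cH}{M} \;\cong\; \Hom{\cH}{\Hst\tensor{A}{}_{\Sscript{s}}(\cI/\cI^2)}{M}
\end{equation*}
in $M$. Thus Yoneda forces a canonical isomorphism $\Omega^s_A(\cH) \cong \Hst \tensor{A} {}_{\Sscript{s}}(\cI/\cI^2)$, and nothing further is needed for the first assertion.

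To identify the universal derivation, the plan is to trace the identity map $\id{}$ of $\Hst\tensor{A}{}_{\Sscript{s}}(\cI/\cI^2)$ through the explicit inverse formula appearing in \eqref{Eq:NatOmega}. This inverse sends $f$ to $u\mapsto u_1 \cdot f(1_\cH \tensor{A}\pi^s(u_2))$. Evaluated on $f=\id{}$ and recalling that the $\cH$-module structure on $\Hst \tensor{A}{}_{\Sscript{s}}(\cI/\cI^2)$ acts through the left tensor-factor (so that $u_1\cdot (1_\cH \tensor{A}\pi^s(u_2)) = u_1 \tensor{A} \pi^s(u_2)$), one recovers exactly the map $\psi^s$ of \eqref{Eq:tensorpis}.

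Finally, the universality of $\psi^s$ is read off from naturality: given any $\delta \in \Derk{s}{\cH}{M}$, the natural isomorphism produces a unique $\cH$-linear $f\colon \Omega^s_A(\cH)\to M$ whose image under the inverse is $\delta$, that is
\begin{equation*}
\delta(u) \;=\; u_1\, f\big(1_\cH\tensor{A}\pi^s(u_2)\big) \;=\; f\big(u_1\tensor{A}\pi^s(u_2)\big) \;=\; f(\psi^s(u)),
\end{equation*}
and uniqueness of $f$ is the usual Yoneda uniqueness. There is essentially no obstacle here: all the real work (verifying the Leibniz identity via \eqref{eq:propPi1}–\eqref{eq:propPi4}, the hom-tensor adjunction, the coequalizer description underlying Lemma \ref{lema:rep}) has already been carried out in Section \ref{ssec: Der}, so the corollary is a direct application of representability.
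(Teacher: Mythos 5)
Your proposal is correct and follows essentially the same route as the paper: the paper's proof likewise takes $M:=\Hst\tensor{A}{}_{\Sscript{s}}(\cI/\cI^2)$ in Proposition \ref{prop:Omega} and observes that the derivation corresponding to $f=\id{}$ under the natural isomorphism \eqref{Eq:NatOmega} is exactly $\psis$, with representability then giving the universal property. Your additional remarks (tracing the inverse formula, noting that the $\cH$-action is through the left tensor factor, and spelling out $\delta = f\circ\psis$) are accurate expansions of steps the paper leaves implicit.
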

\begin{proof}
It is clear that,  if we take $M:= \Hst \tensor{A} {}_{\Sscript{s}}\Big(\frac{\cI}{\cI^{2}}\Big)$ in Proposition \ref{prop:Omega}, then  the map corresponding to $f:=\id{ }$ is exactly  the morphism $\psis$ so that $\psis\in\Derk{s}{\cH}{\Hst \tensor{A} {}_{\Sscript{s}}\Big(\frac{\cI}{\cI^{2}}\Big)}$.
\end{proof}

\begin{remark}
The analogous of Corollary \ref{coro:Omega} holds for $t$ as well, in the sense that we have an isomorphism of $\cH$-modules $\Derk{t}{\cH}{M}\cong \Hom{\cH}{({\cI}/{\cI^{2}}){}_{\Sscript{t}} \, \tensor{A}\, \Hst }{M}$ which makes of $\Omega_{A}^{t}(\cH) \, \cong \, ({\cI}/{\cI^{2}}){}_{\Sscript{t}} \, \tensor{A}\, \Hst$ the K\"ahler module with respect to the target. The universal derivation turns out to be the morphism $\psit$ of \eqref{Eq:tensorpis}.
\end{remark}

Next, we give another example of Lie-Rinehart algebra attached to a given Hopf algebroid.
Recall that the structure of  $A$-coring on $\cH$ is given  on the bimodule $\Hst$ and that a left $\cH$-comodule is a left $A$-module $N$ together with a coassociative and counital left $A$-linear coaction $\rho_N:{}_AN\to \Hst\tensor{A}{}_AN$. One can consider the distinguished left $\cH$-comodule $(\sH,\Delta)$.
The usual adjunction between  $\Hst \tensor{A}- : \lmod{A} \to \lcomod{\cH}$ and the forgetful functor $\oO: \lcomod{\cH} \to \lmod{A}$ leads to a bijection
\begin{equation}\label{Eq:theta}
\theta:\ldual{\cH}\longrightarrow \mathrm{End}^{\Sscript{\cH}}(\cH), \qquad\Big(\alpha\longmapsto[u\mapsto u_1t(\alpha(u_2))]\Big)
\end{equation}
where $\mathrm{End}^{\Sscript{\cH}}(\cH)$ denotes the endomorphism ring of the left $\cH$-comodule $(\sH,\Delta)$. It is, in fact, an $A$-ring via the ring map
\begin{equation}\label{Eq:AHH}
A \longrightarrow \mathrm{End}^{\Sscript{\cH}}(\cH),\qquad \Big(a\longmapsto \big[a\cdot\id{\cH}:u\mapsto ut(a)\big]\Big)
\end{equation}
As a consequence, there exists a unique $A$-ring structure on $\ldual{\cH}$ such that $\theta$ becomes an $A$-ring homomorphism and it is explicitly given by
\begin{equation}\label{Eq:ast}
A\longrightarrow \ldual{\cH}, \quad \Big(a\longmapsto \big[u\mapsto \varepsilon(u)a\big]\Big), \qquad \alpha * \beta: \ldual{\cH} \longrightarrow A, \quad \Big( u \longmapsto \alpha\big(u_{1}t(\beta(u_{2})\big)  \Big).
\end{equation}

\begin{remark}\label{rem:starH} Let us make the following observations.
\begin{enumerate}[label=(\arabic*)]
\item Notice that  the $\ldual{\cH}$ of equation \eqref{Eq:ast} is \emph{not} the convolution algebra of the $A$-coring $\Hst$ as defined in \eqref{eq:convolution}, but it is its opposite.
\item The $A$-bimodule structure on $\ldual{\cH}$ is explicitly given, for all $a,b\in A, u\in \cH$, by
\begin{align*}
(a\cdot \alpha\cdot b)(u) = \big((a\varepsilon)*\alpha*(b\varepsilon)\big)(u) = a\varepsilon\bigg(u_1t\Big(\alpha\Big(u_2t\big(b\varepsilon(u_3)\big)\Big)\Big)\bigg) = a\alpha\left(ut\left(b\right)\right).
\end{align*}
\item\label{item:starH} One may consider also the adjunction between  $- \tensor{A}\Hst : \rmod{A} \to \rcomod{\cH}$ and the forgetful functor $\oO: \rcomod{\cH} \to \rmod{A}$. By repeating the foregoing procedure for the distinguished $\cH$-comodule $(\Ht,\Delta)$ one may endow $\rdual{\cH}$ with an $A$-ring structure with product
\begin{equation}\label{eq:ast'}
(f*'g)(u)=f\Big(s\big(g(u_1)\big)u_2\Big).
\end{equation}
However, this turns out to be isomorphic as an $A$-ring to $\ldual{\cH}$ via $\ldual{\cH}\to \rdual{\cH}, \left(f\mapsto f\circ\cS\right)$ in light of \ref{item:Santicocomm} of Remark \ref{rem:Santicocomm}. Indeed, for all $f,g\in\ldual{\cH},u\in\cH$ we have $\varepsilon(\cS(u)) =\varepsilon(u)$ and
\begin{align*}
\Big((f\circ \cS)*'(g\circ\cS)\Big)(u) & \stackrel{\eqref{eq:ast'}}{=} (f\circ \cS)\Big(s\big((g\circ\cS)(u_1)\big)u_2\Big) = f\Big(\cS(u_2)t\big(g(\cS(u_1))\big)\Big) \\
& = f\Big(\cS(u)_1t\big(g(\cS(u)_2)\big)\Big) \stackrel{\eqref{Eq:ast}}{=} \big((f*g)\circ\cS\big)(u).
\end{align*}
\end{enumerate}
\end{remark}

In this direction, notice that $ \Derk{s}{\cH}{\cH} $ admits a Lie $\K$-algebra structure given by the commutator bracket. We can consider the (left) $A$-submodule of $\mathrm{End}^{\Sscript{\cH}}(\cH)$ defined by
\begin{multline}\label{Eq:Der}
\begin{gathered}
\DerHs \, :=\, \mathrm{End}^{\Sscript{\cH}}(\cH) \cap  \Derk{s}{\cH}{\cH} \\ \, =\LR{ \delta \in \Hom{\K}{\cH}{\cH}|\, \, \delta \circ s=0, \,\delta(uv)=\delta(u)v+u\delta(v), \, \Delta(\delta(u))= u_{1}\tensor{A} \delta(u_{2})\; \text{ for every}\, u, v \in \cH }
\end{gathered}
\end{multline}
which inherits form $ \Derk{s}{\cH}{\cH} $ a Lie $\K$-algebra structure.

From now on, we will denote by $\Aep$ the $\cH$-module with underlying $A$-module $A$ and action via the algebra map $\varepsilon$. Notice that (with the conventions introduced at the beginning of \S\ref{ssec: Der}) $\As =\At=A$, since we know that $\varepsilon \circ s=\varepsilon \circ t=id$. Thus, there is only one  $A$-module  structure on $\Derk{s}{\cH}{\Aep}$, given by
\begin{equation}\label{Eq:LR}
a\, \delta : \cH \longrightarrow \Aep, \quad \Big(  u \longmapsto a\delta(u)  \Big).
\end{equation}

\begin{lemma}\label{lema:LR}
The isomorphism $\theta$ of equation \eqref{Eq:theta} induces an isomorphism $\theta^\prime $ of $A$-modules which makes commutative the following diagram
$$
\xymatrix@R=18pt@C=40pt{  &\ldual{\cH} \ar@{->}^-{\theta}[r]  & \mathrm{End}^{\Sscript{\cH}}(\cH)         \\
\ldual{\Big(\frac{\cI}{\cI^{2}}\Big)}\ar@{->}^{\cong}[r]\ar@{->}^{\ldual{(\pis)}}[ur] & \ar@{-->}^{\theta^\prime}[r]   \Derk{s}{\cH}{\Aep}\ar@{^{(}->}^{}[u]    & \DerHs.   \ar@{^{(}->}^{}[u]  }
$$
Moreover, $\Derk{s}{\cH}{\Aep}$ admits a structure of Lie $\K$-algebra with bracket
\begin{equation}\label{Eq:bracket}
[\delta, \delta']:= \delta*\delta'-\delta'*\delta : \cH \longrightarrow \Aep, \quad \Big(  u \longmapsto \delta\big( u_{1}t(\delta'(u_{2}) \big) - \delta'\big( u_{1}t(\delta(u_{2}) \big) \Big)
\end{equation}
which turns $\theta^\prime$ into an isomorphism of Lie $\K$-algebras and this structure can be transferred to $\ldual{\Big(\frac{\cI}{\cI^{2}}\Big)}$ in a unique way making $\ldual{(\pis)}$ an inclusion of Lie $\K$-algebras.
\end{lemma}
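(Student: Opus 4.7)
The strategy is to show that $\theta$ restricts to the desired bijection $\theta'$ and then to transfer both the commutativity of the triangle and the Lie structure through this restriction. For the forward inclusion, take $\alpha \in \Derk{s}{\cH}{\Aep}$. Then $\theta(\alpha)=(\id{\cH}\tensor{A}(t\circ\alpha))\circ\Delta$ is tautologically left $\cH$-colinear. Using commutativity of $\cH$, $\Delta(t(a))=1\tensor{A}t(a)$, and $v_1 t(\varepsilon(v_2))=v$, I would compute
\begin{equation*}
\theta(\alpha)(uv)=u_1v_1\, t\bigl(\alpha(u_2)\varepsilon(v_2)+\varepsilon(u_2)\alpha(v_2)\bigr)=\theta(\alpha)(u)v+u\,\theta(\alpha)(v),
\end{equation*}
while the vanishing on $s$ follows from $\Delta\circ s=s\tensor{A}1$. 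Conversely, for $\delta\in\DerHs$, the left $A$-linearity of $\theta^{-1}(\delta)$ together with $s(\varepsilon(u_1))u_2=u$ gives $\theta^{-1}(\delta)=\varepsilon\circ\delta$, and the $s$-derivation properties transfer immediately.

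\textbf{Triangle.} I would then invoke Lemma~\ref{lema:rep} with $p=s$, $q=t$, $M=\Aep$ (observing that $(\Aep)_s=(\Aep)_t=A$ since $\varepsilon\circ s=\varepsilon\circ t=\id{A}$) to obtain the horizontal $A$-linear isomorphism $\ldual{(\cI/\cI^2)}\cong\Derk{s}{\cH}{\Aep}$ sending $f\mapsto f\circ\pis=\ldual{(\pis)}(f)$. Postcomposing with $\theta'$ gives exactly $\theta\circ\ldual{(\pis)}$, so the outer triangle commutes by construction.

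\textbf{Lie structure.} To introduce the bracket, I would first verify that $\theta$ is in fact a morphism of associative $A$-rings, i.e.\ $\theta(\alpha\ast\beta)=\theta(\alpha)\circ\theta(\beta)$. The key calculation is
\begin{equation*}
\Delta\bigl(u_1 t(\beta(u_2))\bigr)=\Delta(u_1)\Delta\bigl(t(\beta(u_2))\bigr)=u_1\tensor{A}u_2\, t(\beta(u_3)),
\end{equation*}
which uses that $\Delta$ is multiplicative (hence needs commutativity of $\cH$) together with $\Delta\circ t=1\tensor{A}t$. Consequently $\theta$ intertwines the commutator brackets $[\alpha,\beta]=\alpha\ast\beta-\beta\ast\alpha$ on $\ldual{\cH}$ and $[f,g]=f\circ g-g\circ f$ on $\mathrm{End}^{\Sscript{\cH}}(\cH)$. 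Now $\DerHs$ is manifestly closed under the latter: left $\cH$-colinear maps form a subring of $\mathrm{End}_{\Bbbk}(\cH)$ under composition, and the commutator of two $s$-derivations into $\cH$ is again an $s$-derivation by the classical Leibniz computation. Pulling back along $\theta$, this shows $\Derk{s}{\cH}{\Aep}$ is closed under $[-,-]$, giving the claimed Lie $\Bbbk$-algebra structure and making $\theta'$ a Lie algebra isomorphism. Transporting the bracket across the horizontal iso equips $\ldual{(\cI/\cI^2)}$ with the unique Lie structure for which $\ldual{(\pis)}$ factors as $\ldual{(\cI/\cI^2)}\xrightarrow{\cong}\Derk{s}{\cH}{\Aep}\hookrightarrow\ldual{\cH}$, hence becomes an inclusion of Lie algebras.

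\textbf{Main obstacle.} The content of the lemma is almost entirely bookkeeping once the correct target $A$-module structures (via $s$, $t$, and $\varepsilon$) are kept straight; the only computation with any real bite is the expansion of $\Delta(u_1 t(\beta(u_2)))$ proving $\theta$ multiplicative. This step fundamentally uses that $\cH$ is commutative, and everything else -- the derivation checks, the identification $\theta^{-1}(\delta)=\varepsilon\circ\delta$, and the triangle -- follows by straightforward manipulation.
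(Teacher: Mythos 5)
Your proposal is correct and follows essentially the same route as the paper's proof: identify $\theta^{-1}(\delta)=\varepsilon\circ\delta$ to get one inclusion, verify directly that $\theta(\alpha)$ kills $s$, is colinear, and is a derivation for the other, invoke Lemma~\ref{lema:rep} for the commutative triangle, and transfer the Lie structure via the multiplicativity $\theta(\alpha*\beta)=\theta(\alpha)\circ\theta(\beta)$ together with the fact that $\DerHs$ is closed under commutators. The only cosmetic difference is that you re-derive the multiplicativity of $\theta$ (via $\Delta(u_1t(\beta(u_2)))=u_1\tensor{A}u_2\,t(\beta(u_3))$), whereas the paper takes it for granted since the $*$-product of \eqref{Eq:ast} was defined precisely to make $\theta$ an $A$-ring homomorphism.
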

\begin{proof}
Note that $\theta^{-1}(\delta)=\varepsilon\circ\delta$ for every $\delta\in \mathrm{End}^{\Sscript{\cH}}(\cH)$ so that it is clear that $\theta^{-1}( \DerHs)\subseteq \Derk{s}{\cH}{\Aep}$. On the other hand, given $\delta\in \Derk{s}{\cH}{\Aep} $, for every $a \in A$ and $u,v \in \cH$,  we have
\begin{gather*}
\theta(\delta)(s(a))=s(a)_1t(\delta(s(a)_2))=s(a)t(\delta(1))=0,\\
\Delta(\theta(\delta)(u)) = \Delta(u_1t(\delta(u_2))) = u_1\tensor{A}u_2t(\delta(u_3)) = u_1\tensor{A}\theta(\delta)(u_2)
\end{gather*}
and
\begin{gather*}
  \theta(\delta)(uv)=u_1v_1t(\delta(u_2v_2))=u_1v_1t\Big(\delta(u_2)\varepsilon(v_2)+\varepsilon(u_2)\delta(v_2)\Big)\\
  =u_1t(\delta(u_2))v_1t(\varepsilon(v_2))+u_1t(\varepsilon(u_2))v_1t(\delta(v_2))  =\theta(\delta)(u)v+u\theta(\delta)(v).
\end{gather*}
Therefore, $\theta(\Derk{s}{\cH}{\Aep} )\subseteq \DerHs$. It is now clear that $\theta$ induces an isomorphism
$$
\theta^\prime:\Derk{s}{\cH}{\Aep} \longrightarrow \DerHs
$$
making the right square diagram in the statement commutative. Since $\theta (\delta*\delta^\prime)=\theta (\delta)\circ \theta (\delta^\prime)$ and $\DerHs$ is a Lie subalgebra of $\mathrm{End}^{\Sscript{\cH}}(\cH)$ we get that $\Derk{s}{\cH}{\Aep}$ becomes a Lie subalgebra of $\ldual{\cH}$ with bracket defined as in the statement. Since $\Derk{s}{\cH}{\Aep}=\Der{A}{\Hs}{\As}$, we can apply Lemma \ref{lema:rep} to complete the diagram with the commutative triangle in the statement.
\end{proof}

In contrast with the Hopf algebra case, the Lie algebra $\Derk{s}{\cH}{\Aep}$ admits a richer structure. Namely that of Lie-Rinehart algebra. The anchor map is provided as follows.

\begin{proposition}\label{prop:LR}
Let $(A, \cH)$ be a Hopf algebroid. The pair $(A, \Derk{s}{\cH}{\Aep})$ is a Lie-Rinehart algebra with anchor map:
\begin{equation}\label{eq:LR}
\xymatrix@R=0pt@C=40pt{  \Derk{s}{\cH}{\Aep} \ar@{->}^-{\omega:=\Derk{s}{t}{\Aep}}[rr]  &   &  \mathrm{Der}_{\Sscript{\K}}(A)  \\ \ar@{|->}^{}[rr] \delta   & & \delta\circ t.   }
\end{equation}
\end{proposition}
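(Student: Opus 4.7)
My plan is to verify the four conditions defining a Lie-Rinehart algebra structure: the $A$-module structure on $\Derk{s}{\cH}{\Aep}$ (already given in \eqref{Eq:LR}), the Lie bracket (already produced in Lemma \ref{lema:LR}), and then the two key facts involving the anchor, namely that $\omega$ lands in $\Der{\K}{A}{A}$ as an $A$-linear Lie algebra map, and that the Leibniz-type identity \eqref{Eq:Aseti} is satisfied.

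First I would observe that, for $\delta\in\Derk{s}{\cH}{\Aep}$, since $t$ is an algebra map and $\varepsilon\circ t=\id{A}$, for every $a,b\in A$ one has
\[
\omega(\delta)(ab)=\delta(t(a)t(b))=\delta(t(a))\varepsilon(t(b))+\varepsilon(t(a))\delta(t(b))=\omega(\delta)(a)\,b+a\,\omega(\delta)(b),
\]
so $\omega(\delta)\in\Der{\K}{A}{A}$. $A$-linearity of $\omega$ is immediate from \eqref{Eq:LR}: $\omega(a\delta)(b)=(a\delta)(t(b))=a\,\omega(\delta)(b)$. To see that $\omega$ is a Lie algebra map, I would use the crucial identity $\Delta(t(a))=1_{\cH}\tensor{A}t(a)$ (which follows from $\Delta$ being $A$-bilinear and unital with respect to the ${}_s\cH_t$-bimodule structure). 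Plugging into the bracket formula \eqref{Eq:bracket},
\[
[\delta,\delta'](t(a))=\delta\bigl(1_{\cH}\cdot t(\delta'(t(a)))\bigr)-\delta'\bigl(1_{\cH}\cdot t(\delta(t(a)))\bigr)=\omega(\delta)\bigl(\omega(\delta')(a)\bigr)-\omega(\delta')\bigl(\omega(\delta)(a)\bigr),
\]
which is $[\omega(\delta),\omega(\delta')](a)$.

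The remaining step, and where the argument requires a little care, is the Leibniz identity $[\delta,a\delta']=a[\delta,\delta']+\omega(\delta)(a)\,\delta'$. Here I would use the commutativity of $\cH$ to rewrite $u_1t(a)t(\delta'(u_2))=t(a)\,u_1t(\delta'(u_2))$ inside $[\delta,a\delta'](u)$ and then apply the derivation property of $\delta$:
\[
\delta\bigl(t(a)\,u_1 t(\delta'(u_2))\bigr)=\delta(t(a))\,\varepsilon\bigl(u_1 t(\delta'(u_2))\bigr)+\varepsilon(t(a))\,\delta\bigl(u_1 t(\delta'(u_2))\bigr).
\]
The second summand contributes $a(\delta*\delta')(u)$, and combining with the term $-a(\delta'*\delta)(u)$ yields $a[\delta,\delta'](u)$. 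For the first summand, using $\varepsilon(t(-))=\id$ and the counit property $s(\varepsilon(u_1))u_2=u$ together with the $A$-linearity of $\delta'$ (with respect to the $s$-module structure on $\cH$), one computes
\[
\delta(t(a))\,\varepsilon(u_1)\,\delta'(u_2)=\omega(\delta)(a)\,\delta'\bigl(s(\varepsilon(u_1))u_2\bigr)=\omega(\delta)(a)\,\delta'(u),
\]
which gives precisely the extra term $\omega(\delta)(a)\,\delta'(u)$. This completes the verification of \eqref{Eq:Aseti}.

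The main technical obstacle is keeping track of the intertwined $A$-actions (through $s$, $t$, and $\varepsilon$) while manipulating Sweedler indices; the commutativity of $\cH$ and the two counit identities $s(\varepsilon(u_1))u_2=u=u_1t(\varepsilon(u_2))$ are what make all the rewrites legitimate. Once these bookkeeping issues are sorted, every step is a direct computation from the derivation and $A$-linearity properties already recorded.
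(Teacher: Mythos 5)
Your proof is correct and follows essentially the same route as the paper: the same computation $(\delta*\delta')(t(a))=\omega(\delta)(\omega(\delta')(a))$ for compatibility of $\omega$ with the brackets, and the same expansion of $[\delta,a\delta'](u)$ via commutativity of $\cH$ and the derivation property of $\delta$ applied to $t(a)\cdot u_1t(\delta'(u_2))$, with the counit identity $s(\varepsilon(u_1))u_2=u$ producing the extra term $\omega(\delta)(a)\delta'(u)$. The only addition is your explicit check that $\omega(\delta)$ is a derivation of $A$, which the paper leaves as "clearly well-defined".
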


\begin{proof}
The map $\omega$ is clearly a well-defined $A$-linear map. Let us check that its also a Lie $\K$-algebra map. Take $\delta, \delta' \in \Derk{s}{\cH}{\Aep}$ and an element $a \in A$, then
\begin{eqnarray*}
  (\delta*\delta')(t(a)) \stackrel{\eqref{Eq:ast}}{=}\delta(t(a)_1t(\delta'(t(a)_2))=\delta(t(\delta'(t(a))))=(\delta\circ t)((\delta'\circ t)(a))
\end{eqnarray*}
so that $(\delta*\delta')\circ t=(\delta\circ t)\circ(\delta'\circ t)$ and hence
\begin{align*}
  \omega\big([\delta, \delta']\big)=[\delta, \delta']\circ t=(\delta*\delta'-\delta'*\delta)\circ t=(\delta\circ t)\circ(\delta'\circ t)-(\delta'\circ t)\circ(\delta\circ t) \\ =\omega(\delta)\omega(\delta') - \omega(\delta')\omega(\delta)=[\omega(\delta), \omega(\delta')].
\end{align*}
Therefore, $\omega\big([\delta, \delta']\big) = [\omega(\delta), \omega(\delta')]$.  We still have to show that $\omega$ satisfies equation \eqref{Eq:Aseti}. So take $a \in A$ and $\delta, \delta'$ as above. Then, for any element $u \in\cH$,  we have,
\begin{eqnarray*}
[\delta,  a \delta'] (u) & =& \delta\big(u_{1}t(a\delta'(u_{2}))\big) - a\delta'\big(u_{1}t(\delta(u_{2}))\big)   \\   &=&   \delta\big(u_{1}t(a)t(\delta'(u_{2}))\big) - a(\delta'*\delta)(u)
\\   &=&    \delta(t(a))\varepsilon\Big(u_{1}t(\delta'(u_{2}))\Big) + \varepsilon(t(a)) \delta\big(u_{1}t(\delta'(u_{2})\big) - a(\delta'*\delta)(u)
\\   &=&    \delta(t(a)) \delta'(u) + a(\delta*\delta')(u)  -a(\delta'*\delta)(u)
\\ &=& a\big(  \delta*\delta' -\delta'*\delta\big)(u)   +\delta(t(a))\delta'(u)
\\ &=& a[\delta,  \delta'] (u) + \omega(\delta)(a)\delta'(u).
\end{eqnarray*}
This implies that $[\delta,  a \delta']= a[\delta,  \delta'] + \omega(\delta)(a)\delta'$ and the proof is complete.
\end{proof}

\begin{remark}\label{rem:LR}
One can perform another construction of a Lie-Rinehart algebra from a given Hopf algebroid $(A,\cH)$ by interchanging $s$ with $t$, however, the result will be the same up to a canonical isomorphism.  In fact, by resorting to \ref{item:Santicocomm} of Remark \ref{rem:Santicocomm}, Corollary \ref{cor:PQ} and \ref{item:starH} of Remark \ref{rem:starH}, one may prove that there is an isomorphism of Lie-Rinehart algebras
$$
\Derk{s}{\cH}{\Aep} \stackrel{\cong}{\longrightarrow} \Derk{t}{\cH}{\Aep}, \qquad \Big(\delta\longmapsto \delta\circ \cS\Big)
$$
where the latter is a Lie-Rinehart algebra with anchor map $\omega':=\Derk{t}{s}{\Aep}$.
\begin{invisible}
Indeed, for example,
\begin{align*}
\left([\delta,\delta']\circ \cS\right)(u) & = (\delta*\delta')(\cS(u))-(\delta'*\delta)(\cS(u)) \\
& \stackrel{\eqref{Eq:ast}}{=} \delta\Big(\cS(u)_1t\big(\delta'(\cS(u)_2)\big)\Big)-\delta'\Big(\cS(u)_1t\big(\delta(\cS(u)_2)\big)\Big) \\
& = \delta\Big(\cS(u_2)t\big(\delta'(\cS(u_1))\big)\Big)-\delta'\Big(\cS(u_2)t\big(\delta(\cS(u_1))\big)\Big) \\
& = (\delta\circ\cS)\Big(s\big((\delta'\circ\cS)(u_1)\big)u_2\Big)-(\delta'\circ\cS)\Big(s\big((\delta\circ\cS)(u_1)\big)u_2\Big) \\
& \stackrel{\eqref{eq:ast'}}{=} \left[\delta\circ\cS,\delta'\circ\cS\right](u).
\end{align*}
\end{invisible}
\end{remark}

\begin{example}\label{Exam:Main}
Let $(H,m,u,\Delta,\varepsilon,S)$ be a commutative Hopf $\K$-algebra and let $(A,\mu,\eta,\rho)$ be a left $H$-comodule commutative algebra, that is: an algebra in the monoidal category of left $H$-comodules which is commutative as $\K$-algebra. By the left-hand version of $(2)$ in Example \ref{exm:Halgd}, we know that $\cH:=H\otimes A$ is a split Hopf algebroid with its canonical algebra structure (i.e., $(x\otimes a)(y\otimes b)=xy\otimes ab$) and
\begin{gather*}
\eta_{\cH}(a\otimes b)=a_{-1}\otimes a_0b, \quad \Delta_{\cH}(x\otimes a)=(x_1\otimes 1)\tensor{A} (x_2\otimes a), \quad
\varepsilon_{\cH}(x\otimes a)=\varepsilon(x)a, \quad \cS(x\otimes a)=S(x)a_{-1}\otimes a_0.
\end{gather*}
Notice that tensoring by $A$ over $\K$ induces an anti-homomorphism of Lie algebras
\begin{equation*}
\tau:\Der{\K}{H}{\K_\varepsilon}\to \Derk{t}{\cH}{A_{\varepsilon_{\cH}}}; \quad \left[\delta\mapsto \delta\otimes A\right].
\end{equation*}
Indeed,
\begin{align*}
(\delta\tensor{}A)(xy\otimes ab) & = \delta(xy)ab = \delta(x)\varepsilon(y)ab+\varepsilon(x)\delta(y)ab  \\
& = (\delta\otimes A)(x\otimes a)\varepsilon_{\cH}(y\otimes b) + \varepsilon_{\cH}(x\otimes a)(\delta\otimes A)(y\otimes b),\\
\left[\tau(\delta),\tau(\delta')\right](x\otimes a) & = (\tau(\delta)*\tau(\delta'))(x\otimes a)-(\tau(\delta')*\tau(\delta))(x\otimes a) \\
 & = \tau(\delta)\left(s\left(\tau(\delta')(x_1\otimes 1)\right)(x_2\otimes a)\right)-\tau(\delta')\left(s\left(\tau(\delta)(x_1\otimes 1)\right)(x_2\otimes a)\right) \\
 & = \delta'(x_1)\tau(\delta)\left(x_2\otimes a\right)-\delta(x_1)\tau(\delta')\left(x_2\otimes a\right) \\
 & = \delta'(x_1)\delta(x_2)a-\delta(x_1)\delta'(x_2)a = \tau(\delta'*\delta-\delta*\delta')(x\otimes a).
\end{align*}
Consider now the composition
\begin{equation}\label{Eq:DG}
\xymatrix{
\Der{\K}{H}{\K_\varepsilon} \ar[r]^-{\tau} & \Derk{t}{\cH}{A_{\varepsilon_{\cH}}} \ar[r]^-{\omega} & \Ders{\K}{A}}
\end{equation}
where $\omega(\delta):=\Derk{t}{s}{A_{\varepsilon_{\cH}}}(\delta)=\delta\circ s$ is the anchor map of the Lie-Rinehart algebra $ \Derk{t}{\cH}{A_{\varepsilon_{\cH}}}$. For every $\delta\in\Der{\K}{H}{\K_\varepsilon}$, it follows by a direct check that for all $a\in A$
\begin{equation*}
\omega\left(\tau(\delta)\right)(a)=\tau(\delta)\left(a_{-1}\otimes a_0\right) = \delta(a_{-1})a_0.
\end{equation*}

Let us see now that the anti-homomorphism of Lie algebras of Equation \eqref{Eq:DG} already appeared in \cite{DemazureGabriel} in geometric terms. To this aim, notice that $H$ and $A$ give rise to an affine $\K$-group $\cG:=\Calg\left(H,-\right)$ and an affine $\K$-scheme $\cX:=\Calg\left(A,-\right)$, respectively. Hence the map in \eqref{Eq:DG} becomes the anti-homomorphism of Lie algebras $\cL ie(\cG)(\K) \to  \Ders{\K}{\mathscr{O}_\K(\cX)}$ (see \cite[II, \S4, n\textsuperscript{o}4, Proposition 4.4, page 212]{DemazureGabriel}). For the sake of completeness, we include such a construction in  \S\ref{sec:A1} of the Appendix and we show  that these two anti-homomorphisms of Lie algebras are essentially the same.
What we just showed is that the map \eqref{Eq:DG} descends from the anchor map of the Lie-Rinehart algebra $\Derk{t}{\cH}{A_{\varepsilon_{\cH}}}$ of $(A,\cH)$.
\end{example}

\subsection{The differential functor and base change}\label{ssec:lL}
Below we show that the construction performed in Proposition \ref{prop:LR} is functorial. We also discuss  the compatibility of this construction with the base ring change.

\begin{proposition}\label{prop:L}
Fix a commutative  algebra $A$. Then the correspondence
$$
\lL: \Halgd{A} \longrightarrow \LieR{A}, \quad \Big(  \cH \longrightarrow \lL(\cH):=\Derk{s}{\cH}{\Aep} \Big)
$$
establishes a contravariant functor form the category of commutative Hopf algebroids with base algebra $A$ to the category of Lie-Rinehart algebras over $A$.
\end{proposition}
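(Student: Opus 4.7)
The object-part of the functor is already taken care of by Proposition \ref{prop:LR}, so the core of the proof is to define $\lL$ on morphisms and check the various compatibilities, then verify the functoriality axioms. Given a morphism $(\id_A,\phi):(A,\cK)\to(A,\cH)$ in $\Halgd{A}$, the plan is to set
\begin{equation*}
\lL(\phi):\Derk{s}{\cH}{\Aep}\longrightarrow \Derk{s}{\cK}{\Aep},\quad \Big(\delta\longmapsto \delta\circ\phi\Big).
\end{equation*}
First I would check that this is well-defined: the compatibility $\phi\circ s_{\cK}=s_{\cH}\circ\id_A$ gives $\delta\circ\phi\circ s_{\cK}=\delta\circ s_{\cH}=0$; the compatibility $\varepsilon_{\cH}\circ\phi=\varepsilon_{\cK}$ together with multiplicativity of $\phi$ gives, for $u,v\in\cK$,
\begin{equation*}
(\delta\circ\phi)(uv)=\delta(\phi(u)\phi(v))=\delta(\phi(u))\varepsilon_{\cH}(\phi(v))+\varepsilon_{\cH}(\phi(u))\delta(\phi(v))=(\delta\circ\phi)(u)\varepsilon_{\cK}(v)+\varepsilon_{\cK}(u)(\delta\circ\phi)(v),
\end{equation*}
so $\delta\circ\phi\in\Derk{s}{\cK}{\Aep}$. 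The $A$-linearity of $\lL(\phi)$ is automatic from the pointwise definition of the $A$-action in \eqref{Eq:LR}.

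Next I would verify compatibility with the Lie bracket \eqref{Eq:bracket} and with the anchor map \eqref{eq:LR}. For the anchor, using $\phi\circ t_{\cK}=t_{\cH}$ one immediately has $\omega_{\cK}(\lL(\phi)(\delta))=\delta\circ\phi\circ t_{\cK}=\delta\circ t_{\cH}=\omega_{\cH}(\delta)$, which gives the triangle displayed after Example \ref{exam:PU} (the anchor does land in $\Ders{\K}{A}$ in both cases, so this is the required compatibility). For the bracket, the key identity is $\Delta_{\cH}\circ\phi=(\phi\tensor{A}\phi)\circ\Delta_{\cK}$, which together with $\phi\circ t_{\cK}=t_{\cH}$ yields, for every $u\in\cK$,
\begin{equation*}
\big(\delta\circ\phi\big)\big(u_{1}\,t_{\cK}((\delta'\circ\phi)(u_{2}))\big)=\delta\big(\phi(u)_{1}\,t_{\cH}(\delta'(\phi(u)_{2}))\big)=(\delta\ast\delta')(\phi(u)),
\end{equation*}
and subtracting the symmetric expression shows $\lL(\phi)([\delta,\delta'])=[\lL(\phi)(\delta),\lL(\phi)(\delta')]$. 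Thus $\lL(\phi)$ is a morphism in $\LieR{A}$.

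Finally, functoriality (contravariance) is immediate: $\lL(\id_{\cH})(\delta)=\delta\circ\id_{\cH}=\delta$, and for two composable morphisms $(\id_A,\phi):(A,\cK)\to(A,\cH)$ and $(\id_A,\psi):(A,\cL)\to(A,\cK)$ one has $\lL(\phi\circ\psi)(\delta)=\delta\circ\phi\circ\psi=\lL(\psi)(\lL(\phi)(\delta))$, whence $\lL(\phi\circ\psi)=\lL(\psi)\circ\lL(\phi)$. I do not anticipate any real obstacle; the only step that requires a genuine (though short) calculation is the bracket compatibility, which uses exactly the coassociativity-type identity $\Delta_{\cH}\circ\phi=(\phi\tensor{A}\phi)\circ\Delta_{\cK}$ built into the definition of a morphism of Hopf algebroids.
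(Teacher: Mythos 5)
Your proof is correct and follows essentially the same route as the paper: define $\lL$ on morphisms by precomposition, observe $A$-linearity, verify bracket compatibility via $\Delta\circ\phi=(\phi\tensor{A}\phi)\circ\Delta$, and check the anchor condition using $\phi\circ t=t$. The paper simply declares the $A$-linearity and Lie algebra morphism parts "clear" and only writes out the anchor computation, so your version just supplies the details it omits.
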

\begin{proof}
Let $\phi: \cH \to \cK$ be a morphism in $\Halgd{A}$. We need to check that the map
\begin{equation}\label{eq:defL}
\lL_{\Sscript{\phi}}: \lL(\cK) \longrightarrow \lL(\cH), \quad \Big(  \delta \longmapsto \delta \circ \phi \Big)
\end{equation}
is a morphism of Lie-Rinehart algebras. This map is  clearly an $A$-linear and a Lie algebra morphism. Thus, we only need to check that it is compatible with the anchor, which is immediate as the following argument shows. For $a \in A$ and $\delta \in \lL(\cH)$, we have $\omega(\lL_{\Sscript{\phi}}(\delta))\,=\, \lL_{\Sscript{\phi}}(\delta)\circ t_{\cH}\,=\, \delta\circ\phi\circ t_{\cH}\,=\, \delta\circ t_{\cK}\, =\, \omega(\delta)$.
\end{proof}

The functor $\lL$ will be referred to as \emph{the differential functor}.
Notice that since the notion of a morphism of Lie-Rinehart algebras over different algebras is not always possible (mainly due to the problem of connecting $\Ders{\K}{A}$ and $\Ders{\K}{B}$ in a natural way), the differential functor cannot be defined on maps of Hopf algebroids with different base algebras.
Let us analyse closely this situation.

Let $(\phi_{0}, \phi_{1}):(A,\cH) \to (B,\cK)$ be  morphism of Hopf algebroids and consider the associated extended morphism of Hopf algebroids $(\id{},\, \phi): (B, B\tensor{A}\cH\tensor{A}B) \to (B,\cK)$ where $\phi(b\tensor{A}u\tensor{A}b')= s_{\cK}(b)\phi_{1}(u)t_{\cK}(b')$.  Define also the map $\kappa:\cH\to B\tensor{A}\cH\tensor{A}B$ which maps $u$ to $1\tensor{A}u\tensor{A}1$ and note that $\phi\circ \kappa=\phi_1$. Denote by $B_{\Sscript{\phi\varepsilon}}$ the $\cH$-bimodule $B$ with action given by the algebra extension $\phi_{0}\varepsilon: \cH \to B$.

In what follows, by abuse of notation, we will denote by $\ldual{f}$ the pre-composition with a morphism  $f$, i.e., the map $g\mapsto g\circ f$. The domain and codomain of this map will be clear from the context. Similarly we will use the notation ${}_*f$ for $g\mapsto f\circ g$.
In this way, we have the following linear maps :
\begin{eqnarray*}
{{}_{*}(\phi_0):=\Derk{s}{\cH}{\phi_0}:} \, \Derk{s}{\cH}{\Aep}  \longrightarrow \Derk{s}{\cH}{B_{\Sscript{\phi\varepsilon}}}, &  \Big( \delta \longmapsto   {\phi_0  \circ \delta} \Big) \\
{\lL_{\phi}=\ldual{\phi}:=\Derk{s}{\phi}{B}:} \,  \Derk{s}{\cK}{\Bep}  \longrightarrow \Derk{s}{B\tensor{A}\cH\tensor{A}B}{\Bep}, &  \Big( \delta \longmapsto  \delta \circ \phi \Big) \\
{\ldual{\kappa}:=\Derk{s}{\kappa}{B}:} \,  \Derk{s}{B\tensor{A}\cH\tensor{A}B}{\Bep}  \longrightarrow \Derk{s}{\cH}{B_{\Sscript{\phi\varepsilon}}}, &  \Big( \delta \longmapsto {\delta\circ \kappa }\Big) \\
{\ldual{t}:=\Derk{s}{t}{B}:} \,  \Derk{s}{\cH}{B_{\Sscript{\phi\varepsilon}}}   \longrightarrow  \Derk{}{A}{B}, &  \Big( \gamma \longmapsto {\gamma \circ t} \Big).
\end{eqnarray*}

\begin{proposition}\label{prop:Cartesian}
Let $(\phi_{0},\phi_{1}): (A,\cH) \to (B,\cK)$ be as above. Then we have a commutative diagram of $A$-modules
\begin{equation}\label{Eq:Etale}
\begin{gathered}
\xymatrix{  \Derk{s}{\cK}{\Bep}  \ar@{->}^{\ldual{\phi}}[rr]\ar[drr]|{\ldual(\phi_1)}  & &   \Derk{s}{B\tensor{A}\cH\tensor{A}B}{\Bep}   \ar@{->}^{\omega}[rr]  \ar@{->}^{\ldual{\kappa}}[d]   & & \Ders{\K}{B}   \ar@{->}^{\ldual(\phi_{0})}[d]  \\ \Derk{s}{\cH}{\Aep}  \ar@{->}^{{}_{*}(\phi_0)}[rr]  &  & \Derk{s}{\cH} {B_{\Sscript{\phi\varepsilon}}}   \ar@{->}^{\ldual{t}}[rr] & &  \Derk{}{A}{B} }
\end{gathered}
\end{equation}
where the right-hand side square is cartesian.  Moreover $\ldual{\phi}$ is a map of Lie-Rinehart algebras.
\end{proposition}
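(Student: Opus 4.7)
My plan is to verify three distinct claims contained in the proposition: (i) commutativity of the two cells in diagram \eqref{Eq:Etale}, (ii) cartesianness of the right-hand square, and (iii) the assertion that $\lL_\phi = \ldual{\phi}$ respects the Lie--Rinehart structure. The last item is essentially free: since $(\id_B,\phi):(B, B\otimes_A\cH\otimes_A B) \to (B,\cK)$ is a morphism in $\Halgd{B}$, I would simply invoke Proposition \ref{prop:L} to deduce that $\lL_\phi=\ldual{\phi}$ is a morphism in $\LieR{B}$.

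For commutativity, the triangle $\ldual{\kappa}\circ\ldual{\phi} = \ldual{(\phi_1)}$ is immediate from the identity $\phi\circ\kappa=\phi_1$ that is built into the definition of $\kappa$ and the extended morphism $\phi$. The square is slightly more subtle: I would first establish, purely at the level of the total algebra, the identity $1\otimes_A t_\cH(a)\otimes_A 1 = 1\otimes_A 1_\cH\otimes_A\phi_0(a)$ in $B\otimes_A\cH\otimes_A B$, by pushing the middle factor across the right-hand tensor using the defining bimodule structure (right $A$-action on $\cH$ through $t_\cH$ and left $A$-action on $B$ through $\phi_0$). Once this is in hand, evaluating a $\tilde{\delta}\in\Derk{s}{B\otimes_A\cH\otimes_A B}{\Bep}$ on both sides yields $\ldual{t}\bigl(\ldual{\kappa}(\tilde{\delta})\bigr)(a)=\ldual{(\phi_0)}\bigl(\omega(\tilde{\delta})\bigr)(a)$.

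The crux of the argument is the cartesianness of the right-hand square, which I would treat by constructing an explicit inverse to the natural map $\tilde{\delta}\mapsto (\ldual{\kappa}(\tilde\delta),\omega(\tilde\delta))$. Given a compatible pair $(\delta, D)$ with $\delta\circ t_\cH = D\circ\phi_0$, I propose to define
\begin{equation*}
\tilde{\delta}\bigl(b\otimes_A u\otimes_A b'\bigr) := bb'\delta(u) + b\phi_0\bigl(\varepsilon_\cH(u)\bigr)D(b').
\end{equation*}
I would then verify, in this order: (a) well-definedness across both tensor junctions, where the $A$-balancing on the left requires the compatibility with $s_\cH$ (using $\delta\circ s_\cH=0$ and $A$-linearity of $\delta$), and on the right requires precisely the hypothesis $\delta\circ t_\cH=D\circ\phi_0$ together with $D$ being a derivation; (b) vanishing on the source $\tilde{\delta}(b\otimes 1_\cH\otimes 1) = 0$; (c) the derivation identity with respect to the counit $\varepsilon(b\otimes u\otimes b') = b\phi_0(\varepsilon_\cH(u))b'$, which decomposes into three pieces (the $D$-derivation, the $\delta$-derivation, and the Leibniz interaction between the two); and (d) the reconstruction identities $\tilde{\delta}\circ\kappa=\delta$ and $\tilde{\delta}\circ t = D$. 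Uniqueness I would deduce from the fact that every element of $B\otimes_A\cH\otimes_A B$ factors as a product $s(b)\cdot\kappa(u)\cdot t(b')$, so that a source-killing, $\varepsilon$-Leibniz derivation is determined by its restriction to $\kappa(\cH)$ and $t(B)$.

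The main obstacle I anticipate is the well-definedness check on the right-hand balancing: one must ensure that the formula for $\tilde{\delta}$ is insensitive to sliding $t_\cH(a)$ across the second tensor into $\phi_0(a)$ on the right, and this is exactly where the compatibility $\delta\circ t_\cH=D\circ\phi_0$ enters, together with the Leibniz rule for $D$. All other verifications are routine manipulations with the counital and Leibniz properties, but that one clause is the conceptual heart of why the square is cartesian.
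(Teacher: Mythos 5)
Your proposal is correct and follows essentially the same route as the paper: the triangle from $\phi\circ\kappa=\phi_1$, the square from the identity $\kappa(t(a))=1\tensor{A}1\tensor{A}\phi_0(a)$ (the paper packages this by naming the map $\tau:b\mapsto 1\tensor{A}1\tensor{A}b$ and noting $\omega=\ldual{\tau}$), and cartesianness via the same explicit inverse $\tilde{\delta}(b\tensor{A}u\tensor{A}b')=b\delta(u)b'+b\phi_0(\varepsilon(u))D(b')$, with the compatibility $\delta\circ t=D\circ\phi_0$ entering precisely at the right-hand balancing, and the Lie--Rinehart claim delegated to Proposition \ref{prop:L}. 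No gaps.
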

\begin{proof}
We only show that the square is cartesian. Define $\tau:B\to B\tensor{A}\cH\tensor{A}B:b\mapsto 1\tensor{A}1\tensor{A}b$. Then $\kappa(t(a))=1\tensor{A}t(a)\tensor{A}1=1\tensor{A}1\tensor{A}\phi_0(a)=\tau(\phi_0(a))$ so that $\kappa\circ t=\tau\circ\phi_0$. Note that $\omega=\ldual{\tau}:=\Derk{}{\tau}{B}$ so that $\ldual{t}\circ\ldual{\kappa}=\ldual{(\kappa\circ t)}=\ldual{(\tau\circ\phi_0)}=\ldual(\phi_0)\circ \ldual{\tau}$ and the square commutes. Hence we have the diagonal map
$$(\ldual{\kappa},\ldual{\tau}):\Derk{s}{B\tensor{A}\cH\tensor{A}B}{\Bep}\longrightarrow \Derk{s}{\cH} {B_{\Sscript{\phi\varepsilon}}}\, \underset{\Derk{}{A}{B}}{\times} \,\Ders{\K}{B}, \quad \Big(\delta\longmapsto (\ldual{\kappa}(\delta),\ldual{\tau}(\delta)) \Big).
$$
Let us check that this map is invertible. Take $\delta\in \Derk{s}{B\tensor{A}\cH\tensor{A}B}{\Bep}$.
Then $$\delta(b\tensor{A}u \tensor{A}b')=b\delta(\kappa(u)\tau(b'))=b\delta(\kappa(u))b'+b\phi_0(\varepsilon(u))\delta(\tau(b'))$$
so that $\delta(b\tensor{A}u \tensor{A}b')=b\delta_1(u)b'+b\phi_0(\varepsilon(u))\delta_2(b')$ where we set $\delta_1:=\delta\circ \kappa=\ldual{\kappa}(\delta)$ and $\delta_2:=\delta\circ \tau=\ldual{\tau}(\delta)$. Thus the map $(\ldual{\kappa},\ldual{\tau})$ is injective. It is also surjective as any pair $(\delta_1,\delta_2)$ in its codomain is image of
$$
\delta:B\tensor{A}\cH\tensor{A}B\to \Bep,\quad\Big(b\tensor{A}u \tensor{A}b'\mapsto b\delta_1(u)b'+b\phi_0(\varepsilon(u))\delta_2(b')\Big) .
$$
This is a well-defined map thanks to the equality  $ \ldual{t}(\delta_1)=\ldual(\phi_0) (\delta_2)$.
\begin{invisible}
Indeed,
\begin{align*}
\delta\left(b\phi_0(a)\tensor{A}u\tensor{A}\phi_0(a')b'\right) & = b\phi_0(a)\delta_1(u)\phi_0(a')b'+b\phi_0(a)\phi_0(\varepsilon(u))\delta_2(\phi_0(a')b') \\
 & = b\phi_0\Big(\varepsilon\big(s(a)\big)\Big)\delta_1(u)\phi_0\Big(\varepsilon\big(t(a')\big)\Big)b'+b\phi_0\Big(\varepsilon\big(s(a)u\big)\Big)\delta_2(\phi_0(a')b') \\
 & = b\delta_1\Big(s(a)u\Big)\phi_0\Big(\varepsilon\big(t(a')\big)\Big)b'+b\phi_0\Big(\varepsilon\big(s(a)u\big)\Big)\delta_2(\phi_0(a'))b'+b\phi_0(\varepsilon(s(a)u))\phi_0(a')\delta_2(b') \\
 & = b\delta_1\Big(s(a)u\Big)\phi_0\Big(\varepsilon\big(t(a')\big)\Big)b'+b\phi_0\Big(\varepsilon\big(s(a)u\big)\Big)\delta_1\big(t(a')\big)b'+b\phi_0\Big(\varepsilon\big(s(a)ut(a')\big)\Big)\delta_2(b') \\
 & = b\delta_1\Big(s(a)ut(a')\Big)b'+b\phi_0\Big(\varepsilon\big(s(a)ut(a')\big)\Big)\delta_2(b')
\end{align*}
\end{invisible}
Furthermore,  it is clear that $\delta \circ s =0$ and  one shows that $\delta$ is a derivation as follows. For every $b,b',c,c' \in B$ and $u,v \in \cH$, we have
\begin{align*}
  \delta((b\tensor{A}u \tensor{A}b')&(c\tensor{A}v \tensor{A}c')) = \delta(bc\tensor{A}uv \tensor{A}b'c') \\
	&= bc\delta_1(uv)b'c'+bc\phi_0(\varepsilon(uv))\delta_2(b'c')\\
  &=bc\Big(\delta_1(u)\phi_0(\varepsilon(v))+\phi_0(\varepsilon(u))\delta_1(v)\Big)b'c' + bc\phi_0(\varepsilon(u))\phi_0(\varepsilon(v))\Big(\delta_2(b')c'+b'\delta_2(c')\Big)\\
	&= \Big(b\delta_1(u)b'+b\phi_0(\varepsilon(u))\delta_2(b')\Big)c\phi_0(\varepsilon(v))c'+ b\phi_0(\varepsilon(u))b'\Big(c\delta_1(v)c' +c\phi_0(\varepsilon(v))\delta_2(c')\Big)\\
& = \delta(b\tensor{A}u \tensor{A}b')\varepsilon(c\tensor{A}v \tensor{A}c')+\varepsilon(b\tensor{A}u \tensor{A}b')\delta(c\tensor{A}v \tensor{A}c').
\end{align*}
Note that $\ldual{\kappa}\circ\ldual{\phi}=\ldual(\phi\circ\kappa)=\ldual(\phi_1)$ so that triangle drawn in the statement commutes.
Since $\ldual{\phi}=\lL_{\phi}$, we have that $\ldual{\phi}$ is by Proposition \ref{prop:L} a morphism of Lie-Rinehart algebras and this completes the proof.
\end{proof}

\begin{remark}\label{rem:Zuki}
 As one can expect there is no hope in general to obtain a morphism of Lie-Rinehart algebras which could relate $\Derk{s}{\cK}{\Bep}$ with ${\Derk{s}{\cH} {\Aep}}$ in diagram \eqref{Eq:Etale}. Even if we extended the $A$-module ${\Derk{s}{\cH} {\Aep}}$  to the $B$-module ${\Derk{s}{\cH} {\Aep}}\tensor{A}B$, then one still have to endow this $B$-module with a Lie-Rinehart algebra structure over $B$, which is not always feasible. Nevertheless, if we assume that ${}_*(\phi_0):\Derk{s}{\cH}{\Aep}  \to \Derk{s}{\cH} {B_{\Sscript{\phi\varepsilon}}}$ is a split-epimorphism, i.e., that there is some map $\gamma $ such that ${}_*(\phi_0)\circ \gamma=\id{ }$, then ${}_*(\phi_0)\circ (\gamma\circ\ldual(\phi_1))=\ldual(\phi_1)=\ldual{\kappa}\circ\ldual{\phi}$ so that $\gamma\circ\ldual(\phi_1):\Derk{s}{\cK}{\Bep} \to\Derk{s}{\cH} {\Aep}$ completes the diagram but it is not clear which kind of morphism it is.
\end{remark}

%%%%%%%%%%%%%%%%%%%%%%%%%%%%%%%%%%

\section{Integrations functors in Lie-Rinehart algebras framework}\label{sec:iI}
In this section we construct functors from the category of Lie-Rinehart algebras to the category of commutative Hopf algebroids over a fixed commutative base algebra $A$. These functors are termed \emph{the integration functors}. There are in fact two ways of constructing the integration functor depending on which dual we are using, that is, depending on which contravariant functors we will use: $\Circ{(-)}$ or $\rsaft{(-)}$. Nevertheless, as we will see in the forthcoming section, the first one will lead (under some condition on the base algebra) to an adjunction only when restricted to Galois Hopf algebroids while the second one gives an adjunction to the whole category of commutative Hopf algebroids.

\begin{lemma}\label{lem:I}
Let $A$ be a commutative algebra. Then there are  contravariant functors
\begin{align*}
\iI:=\rcirc{(-)}\circ \cV_A: \LieR{A} \longrightarrow \Halgd{A}, \quad \Big(  L \longrightarrow \VL^{\circ} \Big),
\\ \iI':=\rsaft{(-)}\circ \cV_A: \LieR{A} \longrightarrow \Halgd{A}, \quad \Big(  L \longrightarrow \rsaft{\VL} \Big)
\end{align*}
together with  a natural transformation $\nabla :=\mb{\hat{\zeta}}_{\cV_A}: \iI \to \iI'$.
\end{lemma}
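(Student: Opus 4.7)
The statement assembles pieces that are essentially all in place by this point of the paper, so the plan is mainly bookkeeping: verify that the relevant composites of functors behave contravariantly, check that the map $\what{\zeta}$ is natural in its argument, and then whisker everything with $\cV_A$.

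First I would observe that $\cV_A\colon \LieR{A}\to \CHalgd{A}$ is a (covariant) functor by Remark \ref{rem:primfunctor}, while $\rcirc{(-)}\colon \CHalgd{A}\to \Halgd{A}$ and $\rsaft{(-)}\colon \CHalgd{A}\to \Halgd{A}$ are contravariant functors by Proposition \ref{prop:Fdual} and Theorem \ref{thm:Teoremone}, respectively. Since the composition of a covariant functor with a contravariant one is contravariant, the definitions $\iI:=\rcirc{(-)}\circ\cV_A$ and $\iI':=\rsaft{(-)}\circ\cV_A$ yield two contravariant functors as claimed, with the explicit description given in the statement.

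Next I would build the natural transformation $\nabla$ in two stages. The first stage is to promote the pointwise maps $\what{\zeta}_R\colon R^{\circ}\to \rsaft{R}$ from Example \ref{exam:zetahat} into a natural transformation of contravariant functors $\mb{\what{\zeta}}\colon \rcirc{(-)}\Rightarrow \rsaft{(-)}$ defined on $\ring{A}$. Given an $A$-ring morphism $f\colon S\to R$, one has the two identities $\xi_R\circ \what{\zeta}_R=\zeta_R$, $\xi_S\circ \what{\zeta}_S=\zeta_S$, together with the naturality of $\zeta$ (Remark \ref{rem:zetanat}) and of $\xi$ (Remark \ref{rem:univpropbullet}), which imply
\begin{equation*}
\xi_S\circ \rsaft{f}\circ \what{\zeta}_R \;=\; \rdual{f}\circ \xi_R\circ\what{\zeta}_R \;=\; \rdual{f}\circ \zeta_R \;=\; \zeta_S\circ \rcirc{f} \;=\; \xi_S\circ \what{\zeta}_S\circ \rcirc{f}.
\end{equation*}
The uniqueness part of the universal property of $\rsaft{S}$ recorded in Remark \ref{rem:rsaft} then forces $\rsaft{f}\circ\what{\zeta}_R=\what{\zeta}_S\circ\rcirc{f}$, which is the required naturality square.

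The second stage is to restrict to cocommutative Hopf algebroids. For an object $(A,\cU)\in\CHalgd{A}$, Remark \ref{rem:UcUb}(1) (whose proof uses the second claim of Theorem \ref{thm:Teoremone}) ensures that $\what{\zeta}_{\cU}\colon \cU^{\circ}\to \rsaft{\cU}$ is in fact a morphism in $\Halgd{A}$ and not merely of $A$-corings. Combining this with the naturality just established, the assignment $\cU\mapsto \what{\zeta}_{\cU}$ defines a natural transformation between the contravariant functors $\rcirc{(-)},\rsaft{(-)}\colon \CHalgd{A}\to \Halgd{A}$. Whiskering with $\cV_A$ then produces the desired natural transformation $\nabla=\mb{\what{\zeta}}_{\cV_A}\colon \iI\Rightarrow \iI'$, whose component at a Lie--Rinehart algebra $(A,L)$ is simply $\what{\zeta}_{\VL}\colon \VL^{\circ}\to \rsaft{\VL}$.

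The only step that requires any thought at all is the naturality square for $\mb{\what{\zeta}}$, and as shown above this reduces to a one-line universal-property argument. Everything else is by invocation of prior results, so I do not anticipate any real obstacle.
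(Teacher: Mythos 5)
Your proposal is correct and follows essentially the same route as the paper, which simply cites Remark \ref{rem:primfunctor}, Proposition \ref{prop:Fdual}, Theorem \ref{thm:Teoremone} and Example \ref{exam:zetahat} for the three functors and the natural transformation. The extra details you supply (the universal-property argument for the naturality square and the appeal to Remark \ref{rem:UcUb} to upgrade $\what{\zeta}_{\cU}$ to a morphism in $\Halgd{A}$) are exactly the verifications the paper delegates to those earlier results.
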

\begin{proof}
$\cV_A$ is the functor of Remark \ref{rem:primfunctor}, $\Circ{(-)}$ and $\rsaft{(-)}$ are those of Proposition \ref{prop:Fdual} and Theorem \ref{thm:Teoremone} respectively and $\mb{\hat{\zeta}}$ is the natural transformation of Example \ref{exam:zetahat}.
\end{proof}

Let $(A,L)$ be a Lie-Rinehart algebra and consider its universal enveloping Hopf algebroid $(A, \VL)$. Attached to this datum,  there are then two commutative Hopf algebroids  $(A,\VL^{\circ})$ and $(A, \rsaft{\VL})$ and one can apply the differentiation functor to these objects and obtain other two Lie-Rinehart algebras.
In fact there is a commutative diagram:
$$
\xymatrix{ (A,L) \ar@{->}^-{\Theta_L}[rr]  \ar@{->}_-{\Theta'_L}[drr] & & (A, \lL(\jJ(L)))  \\ & & (A,\lL(\jJ'(L)))\ar@{->}_-{\lL(\nabla_L)}[u]  }
$$
of morphisms of Lie-Rinehart algebras, where $\Theta$ and $\Theta'$ are natural transformations explicitly given in Appendix \ref{ssec:UAdj}.
The following is a corollary of Theorem \ref{thm:drwho}.
\begin{proposition}\label{prop:drwho}
Assume that $\nabla$ of Lemma \ref{lem:I} is a monomorphism of corings on every component. Then,
$$
\Hom{\Halgd{A}}{\cH}{\nabla_L}:\Hom{\Halgd{A}}{\cH}{\iI(L)}  \to \Hom{\Halgd{A}}{\cH}{\iI'(L)}
$$
is a bijection for every commutative Hopf algebroid $(A,\cH)$ such that $\can{\cH}$ is a split epimorphism of $A$-corings and for every Lie-Rinehart algebra $(A,L)$.
\end{proposition}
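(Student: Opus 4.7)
The plan is to reduce the bijectivity at the level of Hopf algebroid maps to the bijectivity of $\Coring_A(\cH,\nabla_L)$, by exploiting the fact that all the structure maps of a commutative Hopf algebroid (multiplication, unit, source, target) are themselves morphisms of $A$-corings with respect to suitable coring structures, so that compatibility conditions can be tested after post-composition with $\nabla_L$.

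First, I would unpack the hypothesis: since $\nabla_L=\what{\zeta}_{\cV_A(L)}$ is a monomorphism in $\coanillos{A}$, the map $\Coring_A(C,\nabla_L)$ is injective for \emph{every} $A$-coring $C$. In particular condition \ref{item:drwho6} of Theorem \ref{thm:drwho} is satisfied for $R=\cV_A(L)$, and invoking the equivalence \ref{item:drwho6} $\Leftrightarrow$ \ref{item:drwho5}, $\Coring_A(C,\nabla_L)$ is actually bijective whenever $\can{C}$ is a split epimorphism of corings. Applied to $C=\cH$, this gives the ``coring-level'' bijection on which everything rests.

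Injectivity of $\Hom{\Halgd{A}}{\cH}{\nabla_L}$ is then immediate, since any two Hopf algebroid morphisms $f,f'\colon \cH\to \iI(L)$ are in particular coring maps and the monomorphism property of $\nabla_L$ forces $f=f'$ as soon as $\nabla_L\circ f=\nabla_L\circ f'$. For surjectivity, given a Hopf algebroid map $g\colon\cH\to\iI'(L)$, the surjectivity of $\Coring_A(\cH,\nabla_L)$ produces a coring map $f\colon \cH\to\iI(L)$ with $\nabla_L\circ f=g$, and the key step will be to promote $f$ to a morphism of commutative Hopf algebroids. I would verify multiplicativity by comparing the two coring maps $\mu_{\iI(L)}\circ(f\tensor{A}f)$ and $f\circ\mu_\cH$ from $\cH\odot\cH$ to $\iI(L)$ (see Remark \ref{rem:circdot}; recall that in the commutative case $\mu_\cH\colon \cH\odot\cH\to\cH$ is an $A$-coring map). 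Post-composing each with $\nabla_L$ yields the same map, as $g$ is multiplicative and $\nabla_L$ is a morphism of Hopf algebroids by Theorem \ref{thm:TeoremoneA}; applying the monomorphism property of $\nabla_L$ to the coring $\cH\odot\cH$ gives multiplicativity of $f$. An entirely analogous argument, using $A\tensor{}A$ (viewed as an $A$-coring via the enveloping Hopf algebroid of Example \ref{exm:Halgd}(1)) instead of $\cH\odot\cH$, yields compatibility of $f$ with unit, source and target; compatibility with the antipode is then automatic in the commutative setting.

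The most delicate point will be the last one: making sure that the monomorphism property of $\nabla_L$ can indeed be applied to the auxiliary corings $\cH\odot\cH$ and $A\tensor{}A$, whose canonical maps need not be split epimorphisms. This is precisely why the hypothesis is stated as $\nabla_L$ being a \emph{monomorphism in $\coanillos{A}$} rather than merely satisfying condition \ref{item:drwho6} of Theorem \ref{thm:drwho}: the former gives injectivity of $\Coring_A(-,\nabla_L)$ on \emph{all} corings, which is what the upgrading argument requires, while the latter is only needed, together with the assumption on $\can{\cH}$, for the surjective lifting of $g$ to $f$.
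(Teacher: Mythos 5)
Your proposal is correct and follows essentially the same route as the paper: reduce to the bijectivity of $\Coring_A(\cH,\nabla_L)$ via Theorem \ref{thm:drwho}, lift $g$ to a coring map $f$, and then upgrade $f$ to a Hopf algebroid morphism by testing multiplicativity on $\cH\odot\cH$ and unitality on $A\tensor{}A$ against the monomorphism $\nabla_L$, which is exactly why the hypothesis must be a monomorphism of corings on all of $\coanillos{A}$ and not merely condition \ref{item:drwho6}. The only deviation is that you declare antipode compatibility automatic for morphisms of commutative bialgebroids (a fact the paper itself invokes in the proof of Theorem \ref{thm:Teoremone}), whereas here the paper verifies it explicitly by viewing $\cS_{\cH}\colon \cop{\cH}\to\cH$ as a coring map and cancelling $\nabla_L$ once more; both are valid.
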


\begin{proof}
Since $\nabla_L=\mb{\what{\zeta}}_{\cV_A(L)}$, the equivalent conditions of Theorem \ref{thm:drwho} hold. In particular, $\Coring_A(\cH,\nabla_L)$ is bijective and hence $\Hom{\Halgd{A}}{\cH}{\nabla_L}$ is injective. Moreover, consider $g\in \Hom{\Halgd{A}}{\cH}{\iI'(L)}$. By bijectivity of $\Coring_A(\cH,\nabla_L)$ there exists a $f\in \Coring_A(\cH,\iI(L))$ such that $\mb{\what{\zeta}}_{\cV_A(L)}\circ f=g$.

Since $(A,\cH)$ is a commutative Hopf algebroid, its multiplication $m_{\cH}:\cH\otimes \cH\to \cH$ factors through a $A$-bilinear morphism $\bar{m}_{\cH}:\cH\odot \cH\to \cH$ and since $\Delta_{\cH}$ and $\varepsilon_{\cH}$ are algebra morphisms we get that $\bar{m}_{\cH}$ is a morphism of corings. Analogously, also $\bar{m}_{\iI(L)}$ is a morphism of corings. Since $f$ is a morphism of corings as well, it induces a coring map $f\odot f: \cH\odot \cH\to \iI(L)\odot \iI(L)$, where $\odot$ is recalled in Remark \ref{rem:circdot}. From the following computation
\begin{align*}
\mb{\what{\zeta}}_{\cV_A(L)}\circ f\circ \bar{m}_{\cH} & = g \circ \bar{m}_{\cH} = \bar{m}_{\iI'(L)} \circ (g\odot g) = \bar{m}_{\iI'(L)} \circ (\mb{\what{\zeta}}_{\cV_A(L)}\odot \mb{\what{\zeta}}_{\cV_A(L)}) \circ (f\odot f) = \mb{\what{\zeta}}_{\cV_A(L)} \circ \bar{m}_{\iI(L)} \circ (f\odot f)
\end{align*}
and the fact that $\mb{\what{\zeta}}_{\cV_A(L)}$ is a monomorphism of corings, we get that $f\circ \bar{m}_{\cH}=\bar{m}_{\iI(L)} \circ (f\odot f)$ so that $f$ is multiplicative. We also have that $\mb{\what{\zeta}}_{\cV_A(L)}\circ f\circ \eta_{\cH} = g\circ \eta_{\cH} = \eta_{\iI'(L)} = \mb{\what{\zeta}}_{\cV_A(L)}\circ \eta_{\iI(L)}$ and since $\eta_{\cH}$ and $\eta_{\iI(L)}$ are morphisms of corings we get as above that $f\circ \eta_{\cH} = \eta_{\iI(L)}$.

Finally, since $\cS_{\cH}:\cH^{\text{cop}}\to \cH$, where $\cH^{\text{cop}}$ has the structure as in \eqref{eq:Ccop}, is easily checked to be a morphism of corings, from the following computation
$$
\mb{\what{\zeta}}_{\cV_A(L)}\circ f\circ \cS_{\cH} = g\circ\cS_{\cH} = \cS_{\iI'(L)} \circ g = \cS_{\iI'(L)} \circ \mb{\what{\zeta}}_{\cV_A(L)}\circ f=\mb{\what{\zeta}}_{\cV_A(L)}\circ \cS_{\iI(L)} \circ f
$$
we deduce that $f\circ \cS_{\cH}  = \cS_{\iI(L)} \circ f$. We have so proved that $f$ is a morphism of commutative Hopf algebroids and hence that $\Hom{\Halgd{A}}{\cH}{\nabla_L}$ is surjective as well.
\end{proof}

We give now a criterion for the existence of a morphism $(A,L) \to (A, \lL(\cH))$ of Lie-Rinehart algebra.

\begin{lemma}\label{lema:Htilda}
Let $(A,L)$ be a Lie-Rinehart algebra and $(A,\cH)$ a commutative Hopf algebroid. Assume that there is a morphism  $\widetilde{\sigma}: L \to \lL(\cH)= \Derk{s}{\cH}{A_{\Sscript{\varepsilon_{\cH}}}}$ of Lie-Rinehart algebras. The map $\sigma: \VL \to {}^*\cH$ given by $\sigma  \circ \iota_{\Sscript{A}}(a)=a \varepsilon_{\Sscript{\cH}}$, for every $a \in A$, and $\sigma  \circ \iota_{\Sscript{L}}(X)= -\widetilde{\sigma}(X)$, for every $X \in L$, is an $A$-ring map which satisfy the equalities of equation \eqref{eq:simildag}. That is,  for all $a,b\in A$, $u\in \VL$ and $x,y\in\cH$, we have
\begin{equation*}
\sigma(u)(\eta(a\otimes b))=\varepsilon_{\Sscript{\cV}}(\iota_A(b)u)a   \quad \text{ and } \quad  \sigma(u)(xy)=\sigma(u_{\Sscript{1}})(x)\sigma(u_{\Sscript{2}})(y).
\end{equation*}
\end{lemma}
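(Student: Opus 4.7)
My approach is to invoke the universal property of the universal enveloping algebroid $\VL$ to construct $\sigma$, and then verify the two relations of \eqref{eq:simildag} by induction on the length of decompositions of $u\in\VL$ into generators.

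First, I would endow $\ldual{\cH}=\hom{A-}{\cH}{A}$ with its left convolution $A$-ring structure from \eqref{eq:convolution}, so that the multiplication is $(f*g)(x)=g(x_{1}f(x_{2}))$, the unit is $\varepsilon_{\cH}$, and the $A$-bimodule structure is $(afb)(x)=f(xa)b$. With this structure I would define $\phi_{\Sscript{A}}:A\to\ldual{\cH}$, $a\mapsto a\varepsilon_{\cH}$ and $\phi_{\Sscript{L}}:L\to\ldual{\cH}$, $X\mapsto-\widetilde{\sigma}(X)$, and check the hypotheses of the universal property of $\VL$. That $\phi_{\Sscript{A}}$ is a $\K$-algebra map is immediate from $\varepsilon_\cH$ being an algebra map together with the counit identity $\varepsilon(u_{1})\varepsilon(u_{2})=\varepsilon(u)$. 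The compatibility conditions $\phi_{\Sscript{L}}(aX)=\phi_{\Sscript{L}}(X)\phi_{\Sscript{A}}(a)$ and $\phi_{\Sscript{L}}(X)\phi_{\Sscript{A}}(a)-\phi_{\Sscript{A}}(a)\phi_{\Sscript{L}}(X)=\phi_{\Sscript{A}}(X(a))$ follow by direct computation, exploiting commutativity of $\cH$, the counit axiom $u_{1}t(\varepsilon(u_{2}))=u$, the derivation property of $\widetilde{\sigma}(X)$ with $\widetilde{\sigma}(X)\circ s=0$, and the identity $\widetilde{\sigma}(X)\circ t=X$ coming from the fact that $\widetilde{\sigma}$ respects the anchors. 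Checking that $\phi_{\Sscript{L}}$ is a Lie homomorphism hinges on a subtle observation: the bracket on $\Derk{s}{\cH}{\Aep}$ from Lemma \ref{lema:LR} is built from the product of \eqref{Eq:ast}, which by Remark \ref{rem:starH}\,(1) is the \emph{opposite} of the left convolution on $\ldual{\cH}$; the minus sign in $\phi_{\Sscript{L}}$ precisely undoes this opposition. The universal property then yields a unique $\K$-algebra morphism $\sigma:\VL\to\ldual{\cH}$ extending $\phi_{\Sscript{A}}$ and $\phi_{\Sscript{L}}$, and since $\sigma\circ\iota_{\Sscript{A}}=\phi_{\Sscript{A}}$ is the unit of the $A$-ring structure, $\sigma$ is an $A$-ring map.

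To verify the two identities of \eqref{eq:simildag}, I would use that $\VL$ is generated as a $\K$-algebra by $\iota_{\Sscript{A}}(A)\cup\iota_{\Sscript{L}}(L)$. Both identities are straightforward direct checks when $u\in\{1_{\cV},\iota_{\Sscript{A}}(c),\iota_{\Sscript{L}}(X)\}$, using $\Delta_{\cV}(\iota_{\Sscript{A}}(c))=\iota_{\Sscript{A}}(c)\tensor{A}1=1\tensor{A}\iota_{\Sscript{A}}(c)$, $\Delta_{\cV}(\iota_{\Sscript{L}}(X))=\iota_{\Sscript{L}}(X)\tensor{A}1+1\tensor{A}\iota_{\Sscript{L}}(X)$, the derivation property of $\widetilde{\sigma}(X)$, and the fact that $\widetilde{\sigma}(X)\circ t=X$. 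The inductive step consists in showing that the two identities are closed under products: assuming them for $u$ and $v$, expand $\sigma(uv)=\sigma(u)*\sigma(v)$ via the left convolution, apply the inductive hypothesis, and repackage using $\Delta_{\cV}(uv)=u_{1}v_{1}\tensor{A}u_{2}v_{2}$, the fact that intermediate scalars $\sigma(u_{i})(x_{j})\in A$ act on $\cH$ through $t$ (so they commute past other factors by commutativity of $\cH$), together with \eqref{eq:doubleepsilon} and the $A$-bilinearity of $\varepsilon_{\cV}$ to conclude.

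The main obstacle I expect is precisely the bookkeeping in this inductive step, particularly keeping track of how $A$-valued intermediate outputs act on $\cH$ (always through the target map, since the coring bimodule structure is ${}_{\Sscript{s}}\cH_{\Sscript{t}}$) and how to repack the resulting sums in terms of $\Delta_{\cV}(uv)$. A secondary subtle point is the sign convention forced by the fact that the $A$-ring structure on $\ldual{\cH}$ used in Lemma \ref{lema:data} is the opposite of the one implicitly used when equipping $\Derk{s}{\cH}{\Aep}$ with its Lie bracket in Lemma \ref{lema:LR}; this only plays a role in the existence step and is handled by the explicit minus sign in $\phi_{\Sscript{L}}$.
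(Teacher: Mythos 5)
Your proposal is correct and follows essentially the same route as the paper: define $\phi_{\Sscript{A}}(a)=a\varepsilon_{\Sscript{\cH}}$ and $\phi_{\Sscript{L}}(X)=-\widetilde{\sigma}(X)$, invoke the universal property of $\VL$ to obtain the $A$-ring map $\sigma$, and then verify \eqref{eq:simildag} by checking the identities on $1$, $\iota_{\Sscript{A}}(A)$ and $\iota_{\Sscript{L}}(L)$ and showing they are stable under products (the paper phrases this as showing that the set $B$ of elements satisfying the relations is an $A$-subring containing the generators, which is your induction in different clothing). Your explicit remark that the sign in $\phi_{\Sscript{L}}$ compensates for the opposition between the convolution product \eqref{eq:convolution} and the product \eqref{Eq:ast} underlying the bracket of Lemma \ref{lema:LR} is a correct and worthwhile clarification of a point the paper leaves implicit.
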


\begin{proof}
Define $\phi_{\Sscript{A}}:A\to \ldual{\cH}$ sending $a$ to the map $a\varepsilon_{\Sscript{\cH}}$ and $\phi_{\Sscript{L}}:L\to\ldual{\cH}$ which sends $X$ to $-\widetilde{\sigma}(X)$.
\begin{invisible}
We compute for every $x\in\cH, X\in L, a\in A$
\begin{align*}
\left(\phi_{\Sscript{L}}(X)*\phi_{\Sscript{A}}(a)-\phi_{\Sscript{A}}(a)*\phi_{\Sscript{L}}(X)\right)(x) & \stackrel{\eqref{eq:convolution}}{=} \phi_{\Sscript{A}}(a)\Big(x_1t\left(\phi_{\Sscript{L}}(X)(x_2)\right)\Big)-\phi_{\Sscript{L}}(X)\Big(x_1t\left(\phi_{\Sscript{A}}(a)(x_2)\right)\Big) \\
 & = -a \varepsilon_{\cH} \left(x_1t\left(\widetilde{\sigma}(X)(x_2)\right)\right)+\widetilde{\sigma}(X)\big(x_1t\left(a \varepsilon_{\cH}(x_2)\right)\big) \\
 & = -a \varepsilon_{\cH}\left(x_1\right)\widetilde{\sigma}(X)(x_2)+\widetilde{\sigma}(X)(xt(a)) \\
 & = -a\widetilde{\sigma}(X)(s\left(\varepsilon_{\cH}\left(x_1\right)\right)x_2)+\widetilde{\sigma}(X)(x)a+\varepsilon_{\cH}(x)\widetilde{\sigma}(X)(t(a)) \\
 & = \varepsilon_{\cH}(x)\widetilde{\sigma}(X)(t(a)) = \phi_{\Sscript{A}}\left(\widetilde{\sigma}(X)(t(a))\right)(x) \\
 & = \phi_{\Sscript{A}}\left(\omega\left(\widetilde{\sigma}(X)\right)(a)\right)(x) = \phi_{\Sscript{A}}(\omega(X)(a))(x),\\
\Big(\phi_L(X)*\phi_A(a)\Big)(x) & = -a\widetilde{\sigma}(X)(x) = -\widetilde{\sigma}(aX)(x) = \phi_L(aX)(x).
\end{align*}
\end{invisible}
By the universal property of $\cV_{\Sscript{A}}(L)$ there exists a unique algebra morphism $\sigma:\cV_{\Sscript{A}}(L)\to \ldual{\cH}$ such that $\sigma\circ\iota_A =\phi_{\Sscript{A}}$ and $\sigma \circ \iota_{\Sscript{L}} = \phi_{\Sscript{L}}$. Since $\phi_A$ gives the $A$-ring structure of $\ldual{\cH}$, we have that $\sigma$ is an $A$-ring map.
Notice that
\begin{equation}\label{eq:sigmalin}
\sigma\big((\iota_A(a)u\big)(x) = \big(a\sigma(u)\big)(x) \stackrel{\eqref{eq:convolution}}{=} \sigma(u)\big(xt(a)\big)
\end{equation}
for all $u\in\cV_A(L), x\in\cH, a\in A$.
Let us check that $\sigma$ fulfils \eqref{eq:simildag}.
To this aim, let us denote by $B$ the subset of the elements $u\in\cV_{\Sscript{A}}(L)$ such that relations \eqref{eq:simildag} hold for all $a,b\in A$ and $x,y\in\cH$. By means of \eqref{eq:sigmalin} it is straightforward to check that $\iota_A(a)u,uv,1_{\cV_{\Sscript{A}}(L)},\iota_L(X)\in B$ for every $a\in A, X\in L$ and $u,v\in B$.
\begin{invisible}
Assume $u\in B$. Since $\sigma$ is $A$-bilinear, for all $a,b,c\in A$ we may compute
\begin{equation*}
\sigma\left(\iota_A(a)u\right)\left(\eta(b\otimes c)\right)  \stackrel{\eqref{eq:sigmalin}}{=} \sigma\left(u\right)\left(\eta(b\otimes c)\eta(1\otimes a)\right) = \sigma(u)\left(\eta(b\otimes ca)\right)  \stackrel{\left(u\,\in\,  B\right)}{=} \varepsilon_{\cV}(\iota_A(ca)u)b = \varepsilon_{\cV}(\iota_A(c)\iota_A(a)u)b,
\end{equation*}
and for all $x,y\in\cH$
\begin{align*}
\sigma\left(\iota_A(a)u\right)\left(xy\right) & \stackrel{\eqref{eq:sigmalin}}{=} \sigma(u)(xy\eta(1\otimes a)) \stackrel{\left(u \, \in \,  B\right)}{=} \sigma(u_1)(x)\sigma(u_2)(y\eta(1\otimes a)) \\
 & = \sigma(u_1)(x)\sigma(\iota_A(a)u_2)(y) = \sigma(\iota_A(a)u_1)(x)\sigma(u_2)(y) = \sigma\Big(\big(\iota_A(a)u\big)_1\Big)(x)\sigma\Big(\big(\iota_A(a)u\big)_2\Big)(y),
\end{align*}
so that $\iota_A(a)u \in B$.
Moreover, if $v \in B$ as well then
\begin{align*}
\sigma\left(uv\right)\left(\eta(a\otimes b)\right) & =  \big(\sigma\left(u\right)*\sigma\left(v\right)\big)\left(\eta(a\otimes b)\right) = \sigma\left(v\right)\Big(\eta(a\otimes 1)t\Big(\sigma\left(u\right)\left(\eta(1\otimes b)\right)\Big)\Big) \\
 & \stackrel{\left(u\, \in\,  B\right)}{=}  \sigma\left(v\right)\Big(\eta(a\otimes 1)t\Big(\varepsilon_{\cV}(\iota_A(b)u)\Big)\Big) = \sigma\left(v\right)\Big(\eta(a\otimes \varepsilon_{\cV}(\iota_A(b)u))\Big) \\
 & \stackrel{\left(v\, \in \,  B\right)}{=} \varepsilon_{\cV} \left(\iota_A\left(\varepsilon_{\cV}(\iota_A(b)u)\right)v\right)a \stackrel{\eqref{eq:doubleepsilon}}{=} \varepsilon_{\cV}\left(\iota_A(b)uv\right)a
\end{align*}
and
\begin{align*}
\sigma(uv)(xy) & = \Big(\sigma(u)*\sigma(v)\Big)(xy) = \sigma(v)\Big(x_1y_1t\Big(\sigma(u)\big(x_2y_2\big)\Big)\Big) \\
 & \stackrel{\left(u\, \in\,  B\right)}{=}  \sigma(v)\Big(x_1y_1t\Big(\sigma(u_1)(x_2)\Big)t\Big(\sigma(u_2)(y_2)\Big)\Big) \\
 & \stackrel{\left(v\, \in\,  B\right)}{=}  \sigma(v_1)(x_1)\sigma(v_2)\Big(y_1t\Big(\sigma(u_1)(x_2)\Big)t\Big(\sigma(u_2)(y_2)\Big)\Big) \\
 & = \sigma(v_1)(x_1)\sigma(v_2)\Big(y_1t\Big(\sigma(u_2)(y_2)\Big)t\Big(\sigma(u_1)(x_2)\Big)\Big) \\
 & \stackrel{\eqref{eq:sigmalin}}{=} \sigma(v_1)(x_1)\,\sigma\Big(\,\iota_A\Big(\sigma(u_1)(x_2)\Big)v_2\Big)\Big(y_1t\Big(\sigma(u_2)(y_2)\Big)\Big) \\
 & \stackrel{\eqref{takeuchi}}{=} \,\sigma\Big(\,\iota_A\Big(\sigma(u_1)(x_2)\Big)v_1\Big)(x_1)\sigma(v_2)\Big(y_1t\Big(\sigma(u_2)(y_2)\Big)\Big) \\
 & \stackrel{\eqref{eq:sigmalin}}{=} \sigma(v_1)\Big(x_1t\Big(\sigma(u_1)(x_2)\Big)\Big)\sigma(v_2)\Big(y_1t\Big(\sigma(u_2)(y_2)\Big)\Big) \\
 & = \Big(\sigma(u_1)*\sigma(v_1)\Big)(x)\Big(\sigma(u_2)*\sigma(v_2)\Big)(y) = \sigma((uv)_1)(x)\sigma((uv)_2)(y)
\end{align*}
so that $uv\in B$. Furthermore,
\begin{align*}
\sigma\left(1_{\cV_{\Sscript{A}}(L)}\right)(\eta(a\otimes b))  & = \varepsilon_{\cH}(\eta(a\otimes b)) = ab = \varepsilon_{\cV}
(\iota_A(b)1_{\cV_{\Sscript{A}}(L)})a,\\
\sigma\left(1_{\cV_{\Sscript{A}}(L)}\right)(xy) & = \varepsilon_{\cH}(xy) = \varepsilon_{\cH}(x)\varepsilon_{\cH}(y) = \sigma\left(1_{\cV_{\Sscript{A}}(L)}\right)(x)\sigma\left(1_{\cV_{\Sscript{A}}(L)}\right)(y)
\end{align*}
whence $1_{\cV_{\Sscript{A}}(L)}\in B$. Finally, let us check that for all $X\in L$, $\iota_L(X)\in B$. For all $a,b\in A$ we have
\begin{align*}
\sigma(\iota_L(X))\left(\eta(a\otimes b)\right) & = \phi_L(X)\left(\eta(a\otimes b)\right) = -\widetilde{\sigma}(X)\left(\eta(a\otimes b)\right) = -\widetilde{\sigma}(X)\left(s(a)t(b)\right) = -a\widetilde{\sigma}(X)\left(t(b)\right) \\
 & = -a\omega\left(\widetilde{\sigma}(X)\right)(b) = -a\omega\left(X\right)(b)  = a\varepsilon(b\otimes X) = \varepsilon\Big(\iota_A(b)\iota_L(X)\Big)a
\end{align*}
and
\begin{align*}
\sigma(\iota_L(X))(xy) & = \phi_L(X)(xy) = -\widetilde{\sigma}(X)(xy) = -\widetilde{\sigma}(X)(x) \varepsilon_{\cH}(y)-\varepsilon_{\cH}(x)\widetilde{\sigma}(X)(y) \\
 & = \sigma\big(\iota_L(X)\big)(x)\sigma\left(1_{\cV_{\Sscript{A}}(L)}\right)(y)+\sigma\left(1_{\cV_{\Sscript{A}}(L)}\right)(x)\sigma\big(\iota_L(X)\big)(y) \\
 & = \sigma\big(\iota_L(X)_1\big)(x)\sigma\big(\iota_L(X)_2\big)(y).
\end{align*}
Therefore $\iota_L(X)\in B$ and so, in light of the foregoing observations and the fact that $\cV_{\Sscript{A}}(L)$ is generated as an $A$-ring by the images of $\iota_A$ and $\iota_L$, we deduce that $\cV_{\Sscript{A}}(L)\subseteq B$.
\end{invisible}
Therefore in light of the fact that $\cV_{\Sscript{A}}(L)$ is generated as an $A$-ring by the images of $\iota_A$ and $\iota_L$, we deduce that $\cV_{\Sscript{A}}(L)\subseteq B$. Summing up, $\cV_{\Sscript{A}}(L) = B$ and hence $\sigma$ satisfies relations \eqref{eq:simildag}, for all $u\in \cV_{\Sscript{A}}(L)$.
\end{proof}

\begin{lemma}\label{lema:dataI}
Let $(A,L)$ a Lie-Rinehart algebra with anchor map $\omega:L\to \Ders{\K}{A}$ and take $\cU=\cV_{\Sscript{A}}(L)$. Then there is a  bijective correspondence between the following sets of data:
\begin{enumerate}[label=\alph*),ref=\emph{\alph*)}]
\item\label{item:6.4d} morphisms $h:\cV_{\Sscript{A}}(L)\to \ldual{\cH}$ of $A$-rings satisfying \eqref{eq:simildag};
\item\label{item:6.4e} morphisms $\widetilde{h}:L\to \Derk{s}{\cH}{A_{\Sscript{\varepsilon}}}=\lL(\cH)$ of Lie-Rinehart algebras.
\end{enumerate}
\end{lemma}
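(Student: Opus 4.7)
The plan is to exhibit mutually inverse assignments between the two sets and to verify that, on one side, Lemma~\ref{lema:Htilda} already does most of the work, while on the other side the main content is to recognize that an $A$-ring map $h:\cV_{\Sscript{A}}(L)\to \ldual{\cH}$ satisfying \eqref{eq:simildag}, when restricted and suitably signed along $\iota_L$, takes values in $\Derk{s}{\cH}{A_{\Sscript{\varepsilon}}}$ and respects both the $A$-module/Lie bracket structure and the anchor.

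First I would go from \ref{item:6.4e} to \ref{item:6.4d}: given a morphism of Lie-Rinehart algebras $\widetilde{h}:L\to \lL(\cH)$, Lemma~\ref{lema:Htilda} (applied with $\widetilde{\sigma}=\widetilde{h}$) produces an $A$-ring map $h:\cV_{\Sscript{A}}(L)\to \ldual{\cH}$ determined by $h\circ \iota_A(a)=a\varepsilon_{\cH}$ and $h\circ \iota_L(X)=-\widetilde{h}(X)$, and satisfying \eqref{eq:simildag}. Conversely, from \ref{item:6.4d} to \ref{item:6.4e}, given such an $h$, I would set $\widetilde{h}(X):=-h(\iota_L(X))\in \ldual{\cH}$ for every $X\in L$. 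The first relation in \eqref{eq:simildag} with $u=\iota_L(X)$, $a=1$ and arbitrary $b$, together with $\varepsilon_{\cV}(\iota_L(X))=0$, yields $\widetilde{h}(X)(s(b))=0$, i.e.\ $\widetilde{h}(X)\circ s=0$. The second relation in \eqref{eq:simildag} with $u=\iota_L(X)$, combined with $\Delta(\iota_L(X))=\iota_L(X)\times_A 1 + 1\times_A \iota_L(X)$ and $h(1)=\varepsilon_{\cH}$, gives precisely the Leibniz identity $\widetilde{h}(X)(xy)=\widetilde{h}(X)(x)\varepsilon_{\cH}(y)+\varepsilon_{\cH}(x)\widetilde{h}(X)(y)$. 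Hence $\widetilde{h}(X)\in \Derk{s}{\cH}{A_{\Sscript{\varepsilon}}}=\lL(\cH)$.

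Next I would check that $\widetilde{h}$ is a morphism of Lie-Rinehart algebras. $A$-linearity follows from $h(\iota_A(a)\iota_L(X))=a\,h(\iota_L(X))$ (using $A$-bilinearity of $h$), hence $\widetilde{h}(aX)=a\widetilde{h}(X)$ in $\lL(\cH)$ in view of \eqref{Eq:LR}. Compatibility with the brackets is obtained from the relation $\iota_L([X,Y])=\iota_L(X)\iota_L(Y)-\iota_L(Y)\iota_L(X)$ inside $\cV_A(L)$: applying $h$ and using that $h$ is multiplicative together with the explicit formula \eqref{eq:convolution} (which, transported under the isomorphism $\theta$ of \eqref{Eq:theta}, matches the convolution $\ast$ in $\ldual{\cH}$), one recovers the bracket \eqref{Eq:bracket} of $\lL(\cH)$. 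Finally, the anchor compatibility $\omega(\widetilde{h}(X))=\omega(X)$ is obtained by evaluating $\widetilde{h}(X)\circ t$ using the first relation in \eqref{eq:simildag} with $u=\iota_L(X)$ and $a=1$, yielding $\widetilde{h}(X)(t(b))=-\varepsilon_{\cV}(\iota_A(b)\iota_L(X))$, and then rewriting the right-hand side via the defining relation $\iota_L(X)\iota_A(b)-\iota_A(b)\iota_L(X)=\iota_A(X(b))$ to get $X(b)$.

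To finish, I would verify that the two constructions are mutually inverse. Starting from $\widetilde{h}$, passing to $h$ by Lemma~\ref{lema:Htilda} and back to $\widetilde{h}'$ gives $\widetilde{h}'(X)=-h(\iota_L(X))=\widetilde{h}(X)$ by construction. Conversely, starting from $h$, defining $\widetilde{h}(X)=-h(\iota_L(X))$ and reapplying Lemma~\ref{lema:Htilda} yields an $A$-ring map $h'$ which coincides with $h$ on $\iota_A(A)$ (as both equal $a\mapsto a\varepsilon_{\cH}$, the first equality in \eqref{eq:simildag} with $u=1$ being exactly $h\circ\iota_A(a)=a\varepsilon_{\cH}$) and on $\iota_L(L)$ by definition; by the universal property of $\cV_A(L)$, $h=h'$. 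The main technical obstacle is checking the bracket compatibility, which requires unwinding carefully the identification $\theta$ of \eqref{Eq:theta} and the two different $A$-ring structures on $\ldual{\cH}$ discussed in Remark~\ref{rem:starH}, to make sure that the convolution product used in \eqref{eq:simildag} matches the one governing the Lie bracket on $\Derk{s}{\cH}{A_{\Sscript{\varepsilon}}}$ in Lemma~\ref{lema:LR}.
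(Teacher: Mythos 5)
Your proposal is correct and follows essentially the same route as the paper: define $\widetilde{h}(X)=-h(\iota_L(X))$ in one direction, invoke Lemma~\ref{lema:Htilda} in the other, and verify that the two assignments are mutually inverse via the universal property of $\cV_A(L)$. One small slip: taking $a=1$ and $b$ arbitrary in the first relation of \eqref{eq:simildag} evaluates $h(\iota_L(X))$ on $\eta(1\otimes b)=t(b)$ (this is precisely the anchor computation, yielding $-\omega(X)(b)$, not $0$); to see $\widetilde{h}(X)\circ s=0$ take instead $b=1$ and $a$ arbitrary, giving $h(\iota_L(X))(s(a))=\varepsilon(\iota_L(X))\,a=0$.
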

\begin{proof}
Given $h:\cV_{\Sscript{A}}(L)\to \ldual{\cH}$ as in \ref{item:6.4d}, we define $\widetilde{h}(X):=-h(\iota_{\Sscript{L}}(X))$, for any $X \in L$. The latter is left $A$-linear so that $\widetilde{h}(X)\in\Derk{s}{\cH}{A_{\Sscript{\varepsilon}}}$ in view of the following computation
\begin{align*}
\widetilde{h}(X)(xy) & = -h(\iota_{\Sscript{L}}(X))(xy) \stackrel{\eqref{eq:simildag}}{=} -h(\iota_{\Sscript{L}}(X))(x)h\left(1_{\cV_{\Sscript{A}}(L)}\right)(y)-h\left(1_{\cV_{\Sscript{A}}(L)}\right)(x)h(\iota_{\Sscript{L}}(X))(y) \\
 & = \widetilde{h}(X)(x)\varepsilon(y)+\varepsilon(x)\widetilde{h}(X)(y).
\end{align*}
Since $\iota_{\Sscript{L}}$ is right $A$-linear Lie algebra map and $h$ is an $A$-ring morphism, we get that $\widetilde{h}$ is a right $A$-linear Lie algebra map, more precisely $\widetilde{h}(aX)=\widetilde{h}(X)a$ (since we are taking $L$ as a left $A$-module). Moreover, by Proposition \ref{prop:LR}
\begin{align*}
\omega\left(\widetilde{h}(X)\right)(a) & = \widetilde{h}(X)(t(a)) = -h\left(\iota_{\Sscript{L}}(X)\right)(t(a)) = -h\left(\iota_{\Sscript{L}}(X)\right)(1_\cH a) \stackrel{\eqref{eq:convolution}}{=} -\left(ah\left(\iota_{\Sscript{L}}(X)\right)\right)(1_\cH) \\
 & = -h\left(\iota_A(a)\iota_{\Sscript{L}}(X)\right)(1_\cH) \stackrel{\eqref{eq:simildag}}{=} -\varepsilon\left(\iota_A(a)\iota_{\Sscript{L}}(X)\right) \stackrel{\eqref{eq:compLRalg}}{=} \varepsilon\left(\iota_A\left(\omega(X)(a)\right)-\iota_{\Sscript{L}}(X)\iota_A(a)\right) = \omega(X)(a).
\end{align*}
Conversely, starting with $\widetilde{h}:L\to \Derk{s}{\cH}{A_{\Sscript{\varepsilon}}}=\lL(\cH)$ as in \ref{item:6.4e}. By applying Lemma \ref{lema:Htilda}, we know that there is an $A$-ring map $h:  \VL \to {}^*\cH$  as in \ref{item:6.4d}. The bijectivity of the correspondence between the set of maps as  in \ref{item:6.4d} and those of \ref{item:6.4e},  is easily checked.
\end{proof}

\begin{remark}\label{rem:Integrable}
Let $(A,\cH)$ be a Hopf algebroid and consider the canonical map $g: \cV_{\Sscript{A}}(\lL(\cH)) \to \ldual{\cH}$, which corresponds by Lemma \ref{lema:dataI} to $-\id{\Sscript{\lL(\cH)}}$ (in the above notation this means that $\widetilde{g}=-\id{\Sscript{\lL(\cH)}}$).
By Lemma \ref{lema:LR}, we have an algebra morphism $h :=\theta\circ g : \cV_{\Sscript{A}}(\lL(\cH)) \to \End{\Bbbk}{\cH} $. Let us consider the canonical injective maps
\begin{multline}\label{Eq:V}
i_{\Sscript{A}}: A \longrightarrow  \End{\Bbbk}{\cH}, \quad  \Big( a \mapsto \big[  u \mapsto u t(a) \big] \Big)  \\ i_{\Sscript{\lL(\cH)}}: \lL(\cH) \longrightarrow \End{\Bbbk}{\cH}, \quad \Big( \delta \longmapsto\big[ u \mapsto \uo t(\delta(\udos))  \big] \Big)
\end{multline}
of algebras and Lie algebras, respectively. Denote by $\cV$ the sub $\Bbbk$-algebra of $\End{\Bbbk}{\cH} $ generated by the images of $i_{\Sscript{A}}$ and $i_{\Sscript{\lL(\cH)}}$.  The isomorphism stated in Lemma \ref{lema:LR} shows that $\cV$ is the subalgebra of the algebra of differential operators of $\cH$ generated by $A$ and  the derivations of $\cH$ which are right $\cH$-colinear and kill the source map. Clearly  the maps $i_{\Sscript{A}}$ and
$i_{\Sscript{\lL(\cH)}}$ satisfy the equalities of equation \eqref{eq:compLRalg}. Moreover, $h\circ i_{\Sscript{\lL(\cH)}}=\iota_{\Sscript{\lL(\cH)}}$ and $h\circ i_A=\iota_A$. Therefore, $h : \cV_{\Sscript{A}}(\lL(\cH)) \to \End{\Bbbk}{\cH} $ is the unique morphism arising from the universal property of the enveloping algebroid and, as a consequence, we have that it factors through the inclusion $\cV \subset \End{\Bbbk}{\cH} $. In contrast with the classical case of Lie $\Bbbk$-algebras ($\Bbbk$ is of characteristic zero), it is not clear here if the map $h$ is injective or not. Nevertheless, we believe that the first step in studying the problem of integrating a Lie-Rinehart algebra passes through the analysis of the $A$-algebra map $h$.
\end{remark}

%%%%%%%%%%%%%%%%%%%%%%%%%%

\section{Differentiation as a right adjoint functor of the integration functor}\label{sec:adjunctions}

Now that we collected all the required constructions and notions, we can extend the duality between commutative Hopf algebras and Lie algebras given by the differential functor to the framework of commutative Hopf algebroids, as we claimed at the very beginning of \S\ref{sec:duality}.

\begin{theorem}\label{thm:Ap}
Let us keep the notations of Lemma \ref{lem:I}. There is a natural isomorphism
$$
\xymatrix{  \Hom{\Halgd{A}}{\cH}{\iI'(L)}  \ar@{->}^{\cong}[rr] & &  \Hom{\LieR{A}}{L}{\lL(\cH)}, }
$$
for any commutative Hopf algebroid $(A,\cH)$ and Lie-Rinehart algebra $(A,L)$. That is, the integration functor $\iI':\LieR{A}\to\Halgd{A}^{\text{op}}$ is left adjoint to the differentiation functor $\lL:\Halgd{A}^{\text{op}}\to\LieR{A}$.
\end{theorem}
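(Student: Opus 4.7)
The plan is to obtain the desired natural isomorphism by composing the two bijective correspondences already established in Lemmas \ref{lema:data} and \ref{lema:dataI}, applied with $\cU=\cV_{\Sscript{A}}(L)$, and then to verify that the resulting bijection is natural in both arguments.

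More concretely, by Lemma \ref{lema:data}, setting $\cU:=\cV_{\Sscript{A}}(L)$ one has a bijection
\begin{equation*}
\Hom{\Halgd{A}}{\cH}{\rsaft{\cV_{\Sscript{A}}(L)}} \;\longleftrightarrow\; \LR{h:\cV_{\Sscript{A}}(L)\to \ldual{\cH}\;\;A\text{-ring maps satisfying }\eqref{eq:simildag}},
\end{equation*}
where the correspondence sends $\what{f}$ to the $A$-ring map $h$ determined by $h(u)(x)=\xi\!\left(\what{f}(x)\right)(u)$, exploiting the universal property of $\rsaft{\cV_{\Sscript{A}}(L)}$ recalled in Remark \ref{rem:rsaft}. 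In turn, Lemma \ref{lema:dataI} provides a second bijection
\begin{equation*}
\LR{h:\cV_{\Sscript{A}}(L)\to \ldual{\cH}\;\;A\text{-ring maps satisfying }\eqref{eq:simildag}} \;\longleftrightarrow\; \Hom{\LieR{A}}{L}{\lL(\cH)},
\end{equation*}
the correspondence being given explicitly on one side by $\widetilde{h}(X)=-h(\iota_{\Sscript{L}}(X))$ and on the other side by the extension of the assignment $\iota_{\Sscript{A}}(a)\mapsto a\varepsilon_{\cH}$, $\iota_{\Sscript{L}}(X)\mapsto -\widetilde{h}(X)$ afforded by the universal property of $\cV_{\Sscript{A}}(L)$ (see Lemma \ref{lema:Htilda}). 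Composing these two bijections yields the stated isomorphism of sets.

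For naturality, I would verify it separately in each variable. Given a morphism $\phi:\cH\to \cH'$ of commutative Hopf algebroids over $A$, the induced map on the right-hand side is $\lL(\phi)_{\Sscript{*}}=(-)\circ\phi$ by \eqref{eq:defL}, while on the left-hand side pre-composition with $\phi$ corresponds, under the bijection of Lemma \ref{lema:data}, to pre-composition of $h:\cV_{\Sscript{A}}(L)\to\ldual{\cH'}$ with $\ldual{\phi}:\ldual{\cH'}\to\ldual{\cH}$; a direct inspection of the formulas defining both correspondences shows that the two operations intertwine through the composite bijection. Dually, given a morphism $\psi:L\to L'$ of Lie-Rinehart algebras, the induced map on the left-hand side is the pull-back along $\iI'(\psi)=\rsaft{(\cV_{\Sscript{A}}(\psi))}$, and once again a straightforward comparison with the action on $h$'s (via pre-composition by $\cV_{\Sscript{A}}(\psi)$) yields compatibility with the composite bijection.

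Since the two lemmas are already in place, the main substantive content is purely assembly: the only delicate point I foresee is bookkeeping the direction of the various maps and the signs in the definition of $\widetilde{h}$ versus $h$, to make sure the composite bijection and its naturality squares are correctly oriented; no new computations with Hopf-algebroid axioms are required.
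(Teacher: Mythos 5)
Your proposal is correct and follows essentially the same route as the paper: the paper's proof likewise obtains the bijection by composing Lemma \ref{lema:data} (with $\cU=\cV_{\Sscript{A}}(L)$) and Lemma \ref{lema:dataI}, arriving at the explicit formula $X\mapsto\big[u\mapsto -\xi(\phi(u))(\iota_{\Sscript{L}}(X))\big]$, and then simply records that the correspondence is natural. Your additional attention to the naturality squares in each variable is a harmless (indeed welcome) elaboration of what the paper asserts without detail.
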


\begin{proof}
The natural isomorphism is constructed as follows. Given a morphism of commutative Hopf algebroids $\phi: \cH \to \iI'(L)=\rsaft{\VL}$, we have by Lemmas \ref{lema:data} and  \ref{lema:dataI}, the following Lie-Rinehart algebra map: $\cL_{\Sscript{\phi}}: L \to \Derk{s}{\cH}{\Aep}$ sending $X \mapsto \Big[  u \mapsto -\xi(\phi(u)) (\iota_{\script{L}}(X)) \Big]$. As it was shown in those Lemmas, this is a bijective correspondence, which is clearly a natural morphism.
\end{proof}

Notice that, by Theorem \ref{thm:Ap}, we always have the map
\begin{equation}\label{eq:nablamap}
\xymatrix @C=33pt{  \Hom{\Halgd{A}}{\cH}{\iI(L)}  \ar@{->}^{\Hom{\Halgd{A}}{\cH}{\nabla_L}}[rr] & &\Hom{\Halgd{A}}{\cH}{\iI'(L)}  \ar@{->}^{\cong}[r] &  \Hom{\LieR{A}}{L}{\lL(\cH)}, }
\end{equation}
induced by the natural transformation $\nabla=\mb{\hat{\zeta}}\cV_A$ of Lemma \ref{lem:I}. Under some additional hypotheses, this becomes an isomorphism as well.

\begin{theorem}\label{thm:A}
Let $A$ be a commutative algebra for which the map $\mb{\zeta}_{\Sscript{R}}$ of equation \eqref{Eq:zeta} is injective for every $A$-ring $R$ (e.g.,~$A$ is a Dedekind domain). Then there is a natural isomorphism
$$
\xymatrix{  \Hom{\GHalgd{A}}{\cH}{\iI(L)}  \ar@{->}^{\cong}[rr] & &  \Hom{\LieR{A}}{L}{\lL(\cH)}, }
$$
for any commutative Galois Hopf algebroid $(A,\cH)$ and Lie-Rinehart algebra $(A,L)$. That is, the integration functor $\iI:\LieR{A}\to\GHalgd{A}^{\text{op}}$ is left adjoint to the differentiation functor $\lL:\GHalgd{A}^{\text{op}}\to\LieR{A}$.
\end{theorem}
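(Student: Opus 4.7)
The plan is to reduce the claim to a combination of Theorem \ref{thm:Ap} with Proposition \ref{prop:drwho}, once the injectivity hypothesis on $\mb{\zeta}$ has been translated into the conditions required by the latter and once one checks that the restriction to Galois Hopf algebroids is well-posed. First I would observe that, by hypothesis, $\mb{\zeta}_R$ is injective for every $A$-ring $R$ and, in particular, $\mb{\zeta}_{\cV_A(L)}:\iI(L) \to \rdual{\cV_A(L)}$ is injective for every Lie-Rinehart algebra $(A,L)$. Since the construction in Example \ref{exam:zetahat} produces $\nabla_L = \mb{\hat{\zeta}}_{\cV_A(L)}$ as the unique $A$-coring morphism satisfying $\xi \circ \mb{\hat{\zeta}}_{\cV_A(L)} = \mb{\zeta}_{\cV_A(L)}$, the injectivity of $\zeta$ forces the injectivity of $\nabla_L$. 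Being an injective $A$-coring map, $\nabla_L$ is a monomorphism of corings on every component.

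Moreover, Remark \ref{rem:circGalois} ensures that this same assumption makes $\iI(L) = \cV_A(L)^{\circ}$ a Galois Hopf algebroid for every $(A,L)$, so that the contravariant functor $\iI$ genuinely lands in $\GHalgd{A}^{\text{op}}$. Since $\GHalgd{A}$ is a full subcategory of $\Halgd{A}$, for every Galois Hopf algebroid $(A,\cH)$ one has the equality $\Hom{\GHalgd{A}}{\cH}{\iI(L)} = \Hom{\Halgd{A}}{\cH}{\iI(L)}$.

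Next I would invoke that, by Definition \ref{def:GaloisH}, for a Galois Hopf algebroid $(A,\cH)$ the canonical morphism $\can{\cH}$ is an isomorphism of $A$-corings and hence in particular a split epimorphism. The hypotheses of Proposition \ref{prop:drwho} are then fulfilled, which yields that
$$
\Hom{\Halgd{A}}{\cH}{\nabla_L}\,\colon\,\Hom{\Halgd{A}}{\cH}{\iI(L)} \longrightarrow \Hom{\Halgd{A}}{\cH}{\iI'(L)}
$$
is a bijection. Composing with the adjunction isomorphism of Theorem \ref{thm:Ap} produces precisely the composition displayed in \eqref{eq:nablamap} and yields a natural bijection $\Hom{\GHalgd{A}}{\cH}{\iI(L)} \cong \Hom{\LieR{A}}{L}{\lL(\cH)}$.

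The naturality in $(A,\cH)$ and $(A,L)$ is inherited from that of the two building blocks: the natural transformation $\nabla = \mb{\hat{\zeta}}\cV_A$ is natural in $L$ by Lemma \ref{lem:I}, while the adjunction of Theorem \ref{thm:Ap} is natural in both variables. The parenthetical example about Dedekind domains is covered by the fact, already noted in the discussion preceding Lemma \ref{lema:zeta}, that $\mb{\zeta}_R$ is injective for every $A$-ring $R$ as soon as $A$ is Dedekind. I do not expect substantial obstacles, since all the technical work has been carried out in the preceding sections; the only mildly delicate point is confirming that the Galois assumption on $\cH$ is exactly what is needed to trigger Proposition \ref{prop:drwho}, \emph{i.e.}, that an isomorphism of corings is in particular a split epimorphism.
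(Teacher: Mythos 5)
Your proposal is correct and follows essentially the same route as the paper's proof: it reduces the statement to Proposition \ref{prop:drwho} combined with Theorem \ref{thm:Ap}, using Remark \ref{rem:circGalois} to see that $\iI(L)$ is Galois and the fullness of $\GHalgd{A}$ to identify the Hom-sets. The only difference is that you spell out explicitly the two intermediate facts the paper leaves implicit (injectivity of $\mb{\zeta}$ passing to $\nabla_L$ as a coring monomorphism, and $\can{\cH}$ being a split epimorphism because it is an isomorphism), which is harmless.
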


\begin{proof}
First of all, in light of Remark \ref{rem:circGalois} we know that $\iI(L)$ is a commutative Galois Hopf algebroid, whence the statement makes sense. Moreover, since $\GHalgd{A}$ is a full subcategory of $\Halgd{A}$, we have $\Hom{\GHalgd{A}}{\cH}{\iI(L)}=\Hom{\Halgd{A}}{\cH}{\iI(L)}$. In light of Proposition \ref{prop:drwho}, the injectivity of $\mb{\zeta}_{\cV_A(L)}$ implies that $\Hom{\Halgd{A}}{\cH}{\nabla_L}$ is bijective and hence, by Theorem \ref{thm:Ap}, \eqref{eq:nablamap} is a bijection as well.
\end{proof}

\begin{remark}\label{rem:circGalois2}
Observe that in Theorem \ref{thm:A} we may replace the category $\GHalgd{A}$ with the subcategory of $\Halgd{A}$ of all those commutative Hopf algebroids whose canonical map is a split epimorphism of corings, once noticed that $\iI(L)=\rcirc{\cV_A(L)}$ is always in this category because $\can{}\circ \rR(\chi)=\id{\rcirc{R}}$ for every $R$. In addition, the injectivity of $\mb{\zeta}_R$ for every $A$-ring $R$ can be replaced by asking that $\mb{\hat{\zeta}}$ is either injective or a monomorphism of corings on every component. Notice also that these requirements on $\mb{\hat{\zeta}}$ implies that $\chi$ is an isomorphism in view Theorem \ref{thm:drwho}. Hence, by the foregoing, $\can{}$ is invertible and so $\rcirc{R}$ is a Galois coring for every $R$.
\end{remark}

 When we restrict to the category of commutative Hopf algebras, that is, assuming that $A$ is the base field $\Bbbk$ (the source is equal the target in such a case, since all Hopf algebroids are over $\Bbbk$), we have the following well-known adjunction  (recall from Remark \ref{rem:UcUb} \ref{item:UcBc3} that $\iI=\iI'$).
\begin{corollary}
There is a natural isomorphism
$
\Hom{{\sf{CHAlg}}_{\Sscript{\Bbbk}}}{H}{\iI(L)}  \cong \Hom{{\sf{Lie}}_{\Sscript{\Bbbk}}}{L}{\lL(H)},
$
for any commutative Hopf algebra $H$ and Lie algebra $L$. That is, the integration functor $\iI:{\sf{ Lie}}_{\Sscript{\Bbbk}} \to     {\sf{CHAlg}}_{\Sscript{\Bbbk}}^{\Sscript{\text{op}}}$ is left adjoint to the differentiation functor $\lL:{\sf{CHAlg}}_{\Sscript{\Bbbk}}^{\Sscript{\text{op}}} \to {\sf{ Lie}}_{\Sscript{\Bbbk}}$.
\end{corollary}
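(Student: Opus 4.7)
The plan is to deduce this corollary as a direct specialization of Theorem \ref{thm:Ap} (or equivalently Theorem \ref{thm:A}) to the case $A=\Bbbk$, where everything collapses to the classical situation. First I would make explicit the three category identifications required: the category $\LieR{\Bbbk}$ of Lie-Rinehart algebras over $\Bbbk$ coincides with $\mathsf{Lie}_{\Bbbk}$, because $\Ders{\Bbbk}{\Bbbk}=0$ forces the anchor map of any $(A,L)$ with $A=\Bbbk$ to vanish, so that condition \eqref{Eq:Aseti} reduces to $\Bbbk$-linearity of the bracket in the second entry; dually, $\Halgd{\Bbbk}$ coincides with $\mathsf{CHAlg}_{\Bbbk}$, since a Hopf algebroid over $\Bbbk$ is exactly a commutative Hopf algebra (source and target equal the unit); finally, by Example \ref{exam:Galois}(1), every such commutative Hopf algebra is automatically a Galois Hopf algebroid, so in fact $\mathsf{CHAlg}_{\Bbbk}=\GHalgd{\Bbbk}=\Halgd{\Bbbk}$.

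Next I would observe that under these identifications the two integration functors $\iI$ and $\iI'$ coincide. Indeed, on a Lie $\Bbbk$-algebra $L$ we have $\cV_{\Bbbk}(L)=U(L)$, the ordinary universal enveloping algebra, and by Remark \ref{rem:UcUb}(1) the canonical map $\what{\zeta}_{U(L)}\colon U(L)^{\circ}\to U(L)^{\bullet}$ is an isomorphism of commutative Hopf algebras whenever the base algebra is a field. Hence the natural transformation $\nabla=\mb{\hat{\zeta}}\,\cV_{\Bbbk}\colon\iI\to\iI'$ of Lemma \ref{lem:I} is a natural isomorphism, and similarly $\lL$ restricted to $\Halgd{\Bbbk}$ is the classical Lie-algebra-of-derivations functor $\Der{\Bbbk}{-}{\Bbbk_{\varepsilon}}$.

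Finally, I would invoke Theorem \ref{thm:Ap}, which gives a natural bijection
\[
\Hom{\Halgd{\Bbbk}}{H}{\iI'(L)}\;\cong\;\Hom{\LieR{\Bbbk}}{L}{\lL(H)}
\]
for every $H\in\Halgd{\Bbbk}$ and $L\in\LieR{\Bbbk}$. Composing with the bijection induced by $\nabla_{L}\colon \iI(L)\xrightarrow{\cong}\iI'(L)$ and rewriting both sides through the identifications above yields precisely the stated natural isomorphism
\[
\Hom{\mathsf{CHAlg}_{\Bbbk}}{H}{\iI(L)}\;\cong\;\Hom{\mathsf{Lie}_{\Bbbk}}{L}{\lL(H)}.
\]
Alternatively, one may appeal to Theorem \ref{thm:A}: since $\Bbbk$ is a (trivial) Dedekind domain, $\mb{\zeta}_{R}$ is injective for every $\Bbbk$-ring $R$, and the same conclusion follows directly from the Galois version of the adjunction without having to invoke $\nabla$ separately. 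There is no real obstacle here; the only point to take some care with is checking that the category-level identifications are functorial, but this is immediate from the definitions since morphisms of Lie-Rinehart algebras over $\Bbbk$ are just $\Bbbk$-linear Lie maps and morphisms of Hopf algebroids over $\Bbbk$ are just morphisms of Hopf algebras.
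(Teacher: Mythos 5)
Your proposal is correct and follows essentially the same route as the paper: the corollary is obtained by specializing Theorem \ref{thm:Ap} to $A=\Bbbk$, identifying $\LieR{\Bbbk}$ with $\mathsf{Lie}_{\Bbbk}$ and $\Halgd{\Bbbk}$ with $\mathsf{CHAlg}_{\Bbbk}$, and using that $\what{\zeta}$ is an isomorphism over a field (Remark \ref{rem:UcUb}) so that $\iI=\iI'$. Your alternative via Theorem \ref{thm:A} (injectivity of $\mb{\zeta}$ over a field, all commutative Hopf algebras being Galois) is also sound and matches the paper's surrounding remarks.
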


%%%%%%%%%%%%%%%%%%%%%%

\section{Separable morphisms of Hopf algebroids}\label{sec:Separable}

We conclude the theoretical part of the paper by finding equivalent conditions to the surjectivity of the morphism $\lL_{\phi}:\Derk{s}{\cK}{A}\to\Derk{s}{\cH}{A}$ induced by a Hopf algebroid map $\phi:(A,\cH)\to (A,\cK)$. Inspired by \cite[Theorem 4.3.12]{Abe}, we also suggest a definition of \emph{separable morphism} between commutative Hopf algebroids based on this characterization.

Let $(A,\cH)$ be a commutative Hopf algebroid. Consider the category $\rmod{\cH}$ as in \S\ref{ssec: Der}. Let us denote by $\cR_\cH:\rmod{\cH}\to \rmod{\cH}$ the functor given by $\cR_\cH(M):=\Derk{s}{\cH}{M}$ on objects and by $\cR_\cH(f)={_*f}$ on morphisms. Let $\cI=\ker{\varepsilon}$ and set $\indec{\cH}:={_{\Sscript{s}}\left(\cI/\cI^2\right)}$. Given a morphism of commutative Hopf algebroids $(\id,\phi):(A,\cK)\to (A,\cH)$, the universal property of the coequalizer \eqref{Diag:CoeqI} applied to $\cK$ gives a unique $A$-module map $\indec{\phi}:\indec{\cK}\to \indec{\cH}$ such that $\indec{\phi}\circ \pi_\cK^s = \pi_\cH^s\circ \phi$. In this way we get a functor
$$
\indec{-}:\Halgd{A}\longrightarrow  \rmod{A}.
$$
Note that the morphism $\phi\tensor{A}\indec{\phi}:\cK\tensor{A}\indec{\cK}\to \cH\tensor{A}\indec{\cH}$ yields a morphism $\Omega_{A}^s(\phi):\Omega_{A}^s(\cK)\to \Omega_{A}^s(\cH)$ by Corollary \ref{coro:Omega}.

\begin{remark}
We know from Proposition \ref{prop:Omega} that $\cR_\cH(M)\cong \hom{\cH}{\Omega_{A}^s(\cH)}{M}$ whence $\cR_\cH$ admits a left adjoint, namely $\cL_\cH=-\tensor{\cH}\Omega_A^s(\cH)$. Notice that $\Omega_A^s(\cH)\cong \cH\tensor{A} \indec{\cH}$ as $\cH$-modules by Corollary \ref{coro:Omega} and $A\tensor{\cH} \Omega_A^s(\cH) \cong \indec{\cH}$ as $A$-modules. Therefore $\cR_\cH$ preserves small colimits if and only if $\Omega_A^s(\cH)$ is finitely generated and projective as $\cH$-module, if and only if $\indec{\cH}$ is finitely generated and projective as $A$-module.
\end{remark}

\begin{invisible}
Assume $P$ is a finitely generated and projective module over $A$, then $\Hom{A}{P}{-}\cong -\tensor{A}P^*$ whence it preserves all small colimits. Conversely, if $\Hom{A}{P}{-}$ preserves all small colimits then it is right exact (hence $P$ is projective) and it preserves direct limits (hence by Stenstroem, Proposition 3.4, $P$ is finitely presented, in particular finitely generated).
\end{invisible}

\begin{theorem}\label{th:sep}
Let $(\id,\, \phi):(A,\cK)\to (A,\cH)$ be a morphism of commutative Hopf algebroids. Assume that $\indec{\cH}$ and $\indec{\cK}$ are finitely generated and projective $A$-modules. The following assertions are equivalent
\begin{enumerate}[label=({\alph*}),ref=\emph{({\alph*})}]
\item\label{5.2item:a} $\indec{\phi}$ is split-injective.
\item\label{5.2item:b} $\lL_{\Sscript{\phi}}$ is surjective.
\item\label{5.2item:c} $\Derk{s}{\phi}{-}:\Derk{s}{\cH}{-}\to \Derk{s}{\cK}{\phi_{\Sscript{*}}(-)}$ is surjective on each component.
\item\label{5.2item:d} $\Derk{s}{\phi}{\cH}:\Derk{s}{\cH}{\cH}\to \Derk{s}{\cK}{\cH}$ is surjective.
\item\label{5.2item:e} $\cH\tensor{\cK}\Omega_{A}^s(\cK)\to \Omega_{A}^s(\cH):\,h\tensor{\cK}w\mapsto h\Omega_{A}^s(\phi)(w)$ is split-injective.
\end{enumerate}
\end{theorem}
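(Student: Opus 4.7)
My plan is to reduce all five conditions to the single assertion that $\indec{\phi}:\indec{\cK}\to\indec{\cH}$ is split-injective as a morphism of $A$-modules, i.e.\ to condition (a). The key technical tool is Corollary \ref{coro:Omega}, which together with Proposition \ref{prop:Omega} and the extension--restriction adjunction along $t:A\to \cH$ produces, for every right $\cH$-module $M$, a natural chain of isomorphisms
\[
\Derk{s}{\cH}{M}\;\cong\;\hom{\cH}{\cH\tensor{A}\indec{\cH}}{M}\;\cong\;\hom{A}{\indec{\cH}}{{}_{t}M}
\]
(and similarly for $\cK$), under which $\Derk{s}{\phi}{M}$ becomes $\hom{A}{\indec{\phi}}{{}_{t}M}$. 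By using that $\Omega_{A}^{s}(\phi)$ corresponds to $\phi\tensor{A}\indec{\phi}$ via Corollary \ref{coro:Omega} and that $\cH\tensor{\cK}\cK\tensor{A}\indec{\cK}\cong \cH\tensor{A}\indec{\cK}$, the map in (e) is identified with $\cH\tensor{A}\indec{\phi}$ as a morphism of right $\cH$-modules.

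The easy half of the equivalences is then fairly immediate. For (a) $\Leftrightarrow$ (e) one applies $\cH\tensor{A}-$ to an $A$-linear retraction and, conversely, applies $A_{\varepsilon}\tensor{\cH}-$ to an $\cH$-linear retraction of $\cH\tensor{A}\indec{\phi}$, using $\varepsilon\circ s=\id{A}$ to recover an $A$-linear retraction of $\indec{\phi}$. The implication (a) $\Rightarrow$ (c) is immediate, since an $A$-linear retraction splits every $\hom{A}{\indec{\phi}}{-}$, and (c) $\Rightarrow$ (b) and (c) $\Rightarrow$ (d) follow by specialising $M$ to $A_{\varepsilon}$ and to $\cH$ respectively.

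The core of the argument is the pair of implications (b) $\Rightarrow$ (a) and (d) $\Rightarrow$ (a), both of which essentially use the hypothesis that $\indec{\cK}$ is finitely generated and projective over $A$. For (b) $\Rightarrow$ (a), once (b) is translated into surjectivity of $\hom{A}{\indec{\phi}}{A}$, I would fix a finite dual basis $\{p_i,p_i^{\ast}\}_i$ for $\indec{\cK}$, lift each coordinate functional $p_i^{\ast}$ to some $\widetilde{p_i^{\ast}}\in \hom{A}{\indec{\cH}}{A}$ satisfying $\widetilde{p_i^{\ast}}\circ\indec{\phi}=p_i^{\ast}$, and then verify that $x\mapsto \sum_i s\bigl(\widetilde{p_i^{\ast}}(x)\bigr)\,p_i$ defines an $A$-linear retraction of $\indec{\phi}$. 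For (d) $\Rightarrow$ (a) I would first reduce to (b): using that $\varepsilon$ retracts $t:A\to{}_{t}\cH$ in $\lmod{A}$, post-composition with $\varepsilon$ transfers surjectivity of $\hom{A}{\indec{\phi}}{{}_{t}\cH}$ to that of $\hom{A}{\indec{\phi}}{A}$. The main (though modest) obstacle throughout is bookkeeping: tracking the several $A$-module structures on $\cH$ and $M$ (via $s$, $t$ and $\varepsilon$) and the naturality in $\phi$ of all the identifications, to ensure the translation between the derivation-theoretic and the indecomposable-theoretic formulations is fully compatible.
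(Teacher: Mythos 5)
Your proposal is correct, and it rests on the same two pillars as the paper's own proof --- the representability isomorphism $\Derk{s}{\cH}{M}\cong\Hom{A}{\indec{\cH}}{\Mt}$ (Corollary \ref{cor:PQ} combined with Lemma \ref{lema:rep}) and the finite generation/projectivity of the modules $\indec{-}$ --- but your implication graph is organized differently. The paper pivots around condition (b): it proves (a)$\,\Leftrightarrow\,$(b) by dualizing $\indec{\phi}$ and using that a surjection onto the finitely generated projective module $\ldual{(\indec{\cK})}$ automatically splits, obtains (b)$\,\Rightarrow\,$(c) from the natural isomorphism $\varsigma_{\cH}\colon M\tensor{A}\lL(\cH)\to\Derk{s}{\cH}{M}$ (which is where projectivity of $\indec{\cH}$ enters), and recovers (d)$\,\Rightarrow\,$(b) by applying $A\tensor{\cH}-$. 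You pivot around (a): once every condition is rewritten in terms of $\Hom{A}{\indec{\phi}}{-}$, the implication (a)$\,\Rightarrow\,$(c) is immediate, and the two genuinely nontrivial implications (b)$\,\Rightarrow\,$(a) and (d)$\,\Rightarrow\,$(a) are handled respectively by the dual-basis lifting and by post-composition with the retraction $\varepsilon$ of $t$, which reduces (d) to (b). A small dividend of your route is that finite generation and projectivity is only ever invoked for $\indec{\cK}$ (in the dual-basis step), whereas the paper's $\varsigma$-argument uses the hypothesis on both $\indec{\cH}$ and $\indec{\cK}$; the treatment of (a)$\,\Leftrightarrow\,$(e) via the identification of the map in (e) with $\cH\tensor{A}\indec{\phi}$ and the functor $A\tensor{\cH}-$ is the same in both proofs. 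The bookkeeping of the $s$-, $t$- and $\varepsilon$-module structures that you flag as the main hazard is indeed the only delicate point, and your identifications (in particular that $\Derk{s}{\phi}{M}$ corresponds to $\Hom{A}{\indec{\phi}}{\Mt}$, using $\phi\circ t_{\cK}=t_{\cH}$) are compatible with the ones in the paper.
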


\begin{proof}
To prove the equivalence between \ref{5.2item:a} and \ref{5.2item:b}, observe that $\indec{\phi}$  is a split-monomorphism of $A$-modules if and only if $\ldual{(\indec{\phi})}:\ldual{(\indec{\cH})}\to \ldual{(\indec{\cK})}$ is a split-epimorphism of $A$-modules. However, as $\indec{\cK}$ is finitely generated and projective, $\ldual{(\indec{\cK})}$ is finitely generated and projective as well and hence requiring that $\ldual{(\indec{\phi})}$ splits is superfluous. By Lemma \ref{lema:LR}, the map $\ldual{(\indec{\cH})}\to\cL(\cH)$ which assigns to every $f$ the composition $f\circ \pi_\cH^s$ is an isomorphism of $A$-modules. In view of the relation $\indec{\phi}\circ \pi_\cK^s = \pi_\cH^s\circ \phi$ and of the definition \eqref{eq:defL} of $\cL_{\Sscript{\phi}}$ we have that the following diagram commutes
\begin{equation*}
\xymatrix@C=45pt{
\ldual{\big(\indec{\cH}\big)} \ar[d]_-{\ldual{\big(\indec{\phi}\big)}} \ar[r]^-{\ldual{\big(\pi_\cH^s\big)}} & \cL(\cH) \ar[d]^-{\cL_{\Sscript{\phi}}} \\
\ldual{(\indec{\cK})}\ar[r]_-{\ldual{\big(\pi_\cK^s\big)}} & \cL(\cK)
}
\end{equation*}
so that $\ldual{(\indec{\phi})}$ is an epimorphism of $A$-modules if and only if $\cL_{\Sscript{\phi}}$ is. The implications from \ref{5.2item:c} to \ref{5.2item:b} and \ref{5.2item:d} are obtained evaluation the natural transformation $\Derk{s}{\phi}{-}$ on $\Aep$ and $\cH$ respectively. To prove that \ref{5.2item:b} implies \ref{5.2item:c}, consider the following diagram for every $M\in\rmod{\cH}$.
\begin{equation*}
\xymatrix{
M\tensor{A}\Derk{s}{\cH}{\Aep} \ar[r]^-{\id{}} \ar@{.>}[d]_-{\varsigma_\cH} & M\tensor{A}\Der{A}{\Hs}{\At} \ar[r]^-{\cong}_-{\eqref{Eq:natural}} & M\tensor{A} \Hom{A}{\indec{\cH}}{\At} \ar[d]^-{\cong}  \\
\Derk{s}{\cH}{M} \ar[r]_-{\cong}^-{\eqref{eq:isos}} & \Der{A}{\Hs}{\Mt} \ar[r]_-{\cong}^-{\eqref{Eq:natural}}  & \Hom{A}{\indec{\cH}}{\Mt}
}
\end{equation*}
The undashed vertical arrow is the map $m\tensor{A}f\mapsto \left[q\mapsto mtf(q)\right]$ which is invertible because $\indec{\cH}$ is finitely generated and projective. As a result we get the dashed vertical isomorphism $\varsigma_\cH$ given by $m\tensor{A}\delta \mapsto \left[u\mapsto mu_1t\delta(u_2)\right]$ which is clearly $\cH$-linear (with respect to the action of $\cH$ on $M$) and natural in $\cH$. This naturality implies that $\Derk{s}{\phi}{M}$ is an epimorphisms whenever $\cL_\phi$ is. To show the implication from \ref{5.2item:d} to \ref{5.2item:b}, notice that the above naturality implies in particular that $\cH\tensor{A}\cL_\phi$ is an epimorphism of $A$-modules. Now since $\cL_\phi$ can be recovered from $\cH\tensor{A}\cL_\phi$ by applying the functor $A\tensor{\cH}-$, it is an epimorphism as well. Finally, observe that the map in \ref{5.2item:e} can be easily identified with $\cH\tensor{\cK}\indec{\phi}$ since $\Omega_{A}^s(\cK)\cong \cK\tensor{A}\indec{\cK}$ and analogously for $\cH$. Now it is clear that \ref{5.2item:a} implies \ref{5.2item:e} and the other implication follows by applying the functor $A\tensor{\cH}-$, and this finishes the proof.
\end{proof}

\begin{remark}
Assume that $\cH$ and $\cK$ are ordinary commutative Hopf algebras over $A=\K$ and also integral domains such that $G:=\Calg(\cH,\K)$ and $E:=\Calg(\cK,\K)$ are connected affine algebraic $\K$-groups. Notice that any one of these algebras is smooth and then both  $Q(\cH)$ and $Q(\cK)$ are finite-dimensional $\Bbbk$-vector spaces.  Let $\varphi:=\Calg(\phi,\K):G\to E$. By resorting to the notation of \cite[3.1]{Abe}, we have that $d\varphi=\lL_\phi$. Therefore, in view of \cite[Theorem 4.3.12]{Abe}, the separability of the morphism $\varphi$ can be rephrased at the level of commutative Hopf algebroids by requiring that the morphism $\phi$ satisfies the equivalent conditions of Theorem \ref{th:sep}. In this way, a morphism of commutative Hopf algebroids with smooth total algebras may be called a \emph{separable morphism} when it satisfies one of the equivalent conditions of Theorem \ref{th:sep}.
\end{remark}

%%%%%%%%%%%%%%%%%%%%%%%%%%%%%%%%

\section{Some applications and examples}\label{sec:Apendix}

%%%%%%%%%%%%%%%%%%%%
\newcommand{\bo}[1]{\boldsymbol{#1}}
\newcommand{\x}[1]{x_{\Sscript{#1}}}
\newcommand{\y}[1]{y_{\Sscript{#1}}}
\newcommand{\Cc}{\mathbb{C}}
\newcommand{\kn}{\bo{k}_{(k_{\Sscript{1}},\, \cdots,\, k_{\Sscript{n}})}}
\newcommand{\KK}[1]{\bo{k}(#1)}
\newcommand{\lrfrac}[2]{\left( \frac{#1}{#2} \right)}
\newcommand{\affLine}[1]{\mathbb{A}_{\Sscript{#1}}^{1}}
\newcommand{\Derc}[2]{{\rm Der}_{\Sscript{\Cc}}^{\Sscript{s}}(#1, #2_{\Sscript{\varepsilon}})}

\newcommand{\balpha}{\bo{\alpha}}
\newcommand{\bbeta}{\bo{\beta}}
\newcommand{\blambda}{\bo{\lambda}}
\newcommand{\bsigma}{\bo{\sigma}}
\newcommand{\bgamma}{\bo{\gamma}}
\newcommand{\bdelta}{\bo{\delta}}
\newcommand{\brho}{\bo{\varrho}}
\newcommand{\bpartial}{\bo{\partial}}
\newcommand{\biota}{\bo{\iota}}
\newcommand{\bV}{\bo{V}}
\newcommand{\bW}{\bo{W}}
\newcommand{\bv}{\bo{v}}
\newcommand{\bw}{\bo{w}}
\newcommand{\Hx}{\cH_{\Sscript{x}}}
\newcommand{\Kx}{\Bbbk_{\Sscript{x}}}
\newcommand{\Kep}{\Bbbk_{\Sscript{\varepsilon}}}
\newcommand{\LHx}{\lL(\cH)_{\Sscript{x}}}
%%%%%%%%%%%%%%%%%

 This section illustrate some of our theoretical construction elaborated in the previous sections.

\subsection{The isotropy Lie algebra as the Lie algebra of the isotropy Hopf algebra}\label{ssec:isotropy}
In analogy with the Lie groupoid theory, we will show here that the isotropy Lie algebra of the Lie-Rinehart algebra of a given Hopf algebroid coincides, up to a canonical isomorphism, with the Lie algebra of the isotropy Hopf algebra.

Let $(A,\cH)$ be a commutative Hopf algebroid whose character groupoid is not empty. This amounts to the assumption $A(\Bbbk)=\Calg{(A,\Bbbk)} \neq \emptyset$, that is, $\Calg{(A, -)}$ admits $\Bbbk$-points. Take a point $x \in A(\Bbbk)$, and consider the isotropy Hopf $\Bbbk$-algebra $(\Bbbk, \Hx)$ at the point $x$. By definition, see \cite[Definition 5.1]{ElKaoutit:2015} and \cite[Example 1.3.5]{LaiachiGomez}, $\Hx=\Kx \tensor{A}\cH\tensor{A}\Kx$ is the base extension Hopf algebroid of $(A,\cH)$ along the algebra map $x: A \to \K$ (the notation $\Kx$ means that we are considering $\K$ as an $A$-algebra via $x$). The Lie algebra of the commutative Hopf algebra $(\Bbbk, \Hx)$ is by definition the $\Bbbk$-vector space $\Der{\K}{\Hx}{\K_{\Sscript{x \varepsilon}}}$.

On the other hand, for a given point $x \in A(\K)$, we set
\begin{equation}\label{Eq:LHx}
\LHx: = \Big\{ \delta \in {\rm Der}^{\Sscript{s}}_{\Sscript{\K}}(\cH, \K_{\Sscript{x \varepsilon}}) |\,\, \delta \circ t\,=\,0  \Big\}\footnote{By abuse of notation we employ  $\lL(\cH)$ the Lie-Rinehart algebra of $(A,\cH)$ in this equation. However,  this can be justified using  the identification  of the  $A$-module of global sections  $\Gamma(\xX)$  with  $\lL(\cH)$, as  stated in Proposition \ref{prop:Gamma}.}
\end{equation}
These are vectors in the fibre $\xX(\K)_{\Sscript{x}}$ of the vector bundle $\xX(\K)$ at the point $x$, which are killed by the anchor \eqref{eq:LR}; in the notation   of Appendix \ref{ssec:FA} and equation \eqref{Eq:xX}, this is the vector space $\xX^{\Sscript{\ell}}(\K)_{\Sscript{x}}$.  The vector space $\LHx$, $x \in A(\K)$, is referred to as \emph{the isotropy Lie algebra of the Lie algebroid} $\lL(\cH)$. The terminology is justified by the following result.

\begin{proposition}\label{prop:isotropy}
Let $(A,\cH)$ be a commutative Hopf algebroid over $\Bbbk$ with $A(\K) \neq \emptyset$. Then
\begin{enumerate}
\item[(i)] for a given point $x \in A(\K)$, the $\K$-vector space $\LHx$ of equation \eqref{Eq:LHx} admits a structure of Lie algebra whose bracket is given by
$$
[\delta, \delta']: \cH \longrightarrow \K_{\Sscript{x \varepsilon}}, \qquad \Big( u \longmapsto \big( \delta(u_1)\delta'(u_2) - \delta'(u_1)\delta(u_2) \big) \Big);
$$
\item[(ii)] there is an isomorphism of Lie algebras given by
\begin{equation}\label{eq:nablaLX}
\nabla: \LHx \longrightarrow \lL(\Hx)=\Der{\K}{\Hx}{\K_{\Sscript{x \varepsilon}}}, \qquad \Big(  \delta \longmapsto \Big[ 1\tensor{A}u \tensor{A} 1 \mapsto \delta(u)\Big]\Big)
\end{equation}
\end{enumerate}
\end{proposition}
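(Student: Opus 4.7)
The plan is to construct the map $\nabla$ at the level of vector spaces first, show it is a $\Bbbk$-linear isomorphism between $\LHx$ and $\lL(\Hx)=\Der{\Bbbk}{\Hx}{\Bbbk_{x\varepsilon}}$, and then use it to transport the Lie bracket from the (well-known) Lie algebra attached to the commutative Hopf algebra $(\Bbbk,\Hx)$ to $\LHx$, finally checking that the transported bracket matches the formula displayed in $(i)$.

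First, I would make explicit the structure maps of the isotropy Hopf algebra $\Hx=\Bbbk_x\tensor{A}\cH\tensor{A}\Bbbk_x$. In particular, its counit $\bar\varepsilon$ satisfies $\bar\varepsilon\circ\pi=x\circ\varepsilon$, where $\pi:\cH\to\Hx,\ u\mapsto 1\tensor{A}u\tensor{A}1$, and the two defining relations of the tensor product read
\begin{equation*}
1\tensor{A}s(a)u\tensor{A}1=x(a)(1\tensor{A}u\tensor{A}1)=1\tensor{A}ut(a)\tensor{A}1
\end{equation*}
for all $a\in A$ and $u\in\cH$. Given $\delta\in\LHx$, I would define $\tilde\delta:\Hx\to\Bbbk$ by $\tilde\delta(\lambda\tensor{A}u\tensor{A}\mu)=\lambda\delta(u)\mu$ and check that it is well-defined; the key verification uses that $\delta\in\Der{\Bbbk}{\cH}{\Bbbk_{x\varepsilon}}$ kills both $s$ and $t$, so that $\delta(s(a)u)=\delta(s(a))\,x\varepsilon(u)+x\varepsilon(s(a))\,\delta(u)=x(a)\delta(u)$, and similarly on the right. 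A direct calculation, using that $\bar\varepsilon\circ\pi=x\varepsilon$ and that $\pi$ is an algebra map, then shows that $\tilde\delta$ is a derivation with respect to $\bar\varepsilon$, whence $\tilde\delta\in\lL(\Hx)$.

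Conversely, given $\tilde\delta\in\lL(\Hx)$, I would set $\delta:=\tilde\delta\circ\pi:\cH\to\Bbbk_{x\varepsilon}$. Since $\tilde\delta$ is a derivation and $\tilde\delta\circ\bar s=0$ (the source of $\Hx$ being $\mu\mapsto\mu\cdot 1_{\Hx}$), the fact that $\pi\circ s$ and $\pi\circ t$ land in $\Bbbk\cdot 1_{\Hx}$ yields $\delta\circ s=0=\delta\circ t$. The derivation identity for $\delta$ with respect to $x\varepsilon$ follows directly from the derivation identity for $\tilde\delta$ combined with $\bar\varepsilon\circ\pi=x\varepsilon$. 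These two assignments are clearly mutual inverses and $\Bbbk$-linear, so $\nabla$ is a vector space isomorphism.

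The Lie algebra $\lL(\Hx)$ is the standard Lie algebra of the commutative Hopf $\Bbbk$-algebra $\Hx$: it carries the bracket $[\tilde\delta,\tilde\delta'](h)=\tilde\delta(h_1)\tilde\delta'(h_2)-\tilde\delta'(h_1)\tilde\delta(h_2)$. For $h=\pi(u)$ one has $\Delta_{\Hx}(h)=\pi(u_1)\tensor{}\pi(u_2)$, hence the transported bracket on $\LHx$ is exactly $[\delta,\delta'](u)=\delta(u_1)\delta'(u_2)-\delta'(u_1)\delta(u_2)$, proving $(i)$ (with Jacobi inherited from $\lL(\Hx)$) and simultaneously showing $\nabla$ is a Lie algebra morphism, which is $(ii)$. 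The only subtle step, and the main thing to check carefully, is the well-definedness of $\tilde\delta$ on $\Hx$ and of $\delta$ as killing both $s$ and $t$, as this is where the combined vanishing condition in the definition of $\LHx$ plays an essential role.
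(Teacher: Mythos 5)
Your proposal is correct and follows essentially the same route as the paper's proof: one first checks that $\nabla$ is a well-defined $\K$-linear bijection (with inverse $\gamma\mapsto\gamma\circ\pi_x$), using that elements of $\LHx$ are $A$-bilinear with respect to source and target, and then obtains the bracket in $(i)$ by transporting the Lie structure of $\lL(\Hx)$ along $\nabla$. The additional details you supply on the well-definedness of $\tilde\delta$ are exactly the verifications the paper leaves implicit.
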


\begin{proof}
$(ii)$. The map $\nabla$ is a well-defined $\K$-linear morphism, since any vector in $\LHx$ is an $A$-linear map with respect to both source and target. The inverse of $\nabla$ sends any derivation $\gamma \in \lL(\Hx)$ to the derivation $\gamma \pi_{\Sscript{x}}$, where $\pi_{\Sscript{x}}: \cH \to \Hx$ is the canonical algebra map sending $u \mapsto 1_{\Sscript{\Bbbk_x}}\tensor{A}u \tensor{A} 1_{\Sscript{\Bbbk_x}}$. Now, it is easy to check that the bracket of $\lL(\Hx)$ induces the one in $(i)$ via $\nabla$.
\begin{invisible}[Old proof]
$(i)$. The bracket is a well defined  $\K$-linear map, since any vector in $\LHx$ is an $A$-linear map with respect to both source and target.  The fact that it lands in $\LHx$ and satisfies the Jacobi identity are routine  verifications and left to the reader. As for the proof of part $(ii)$, one employ the previous argument to see that $\nabla$ is a well defined $\K$-linear map. Its compatibility with the brackets is immediate. The inverse of $\nabla$ sends any derivation $\gamma \in \lL(\Hx)$ to the derivation $\gamma \upsilon$, where $\upsilon: \cH \to \Hx$ is the canonical algebra map sending $u \mapsto 1\tensor{A}u \tensor{A} 1$.
\end{invisible}
\end{proof}

\subsection{The Lie-Rinehart algebra of Malgrange's Hopf algebroids}\label{ssec:Umemura}
In this final subsection we compute the Lie-Rinehart algebras of some Hopf algebroids which arise from  differential Galois theory over differential Noetherian algebras.  Inspired by  \cite{Malgrange:2000, Malgrange:2001}, \cite{Morikawa/Umemura:2009} and \cite{Umemura:2009}, some of these Hopf algebroids were introduced and described in \cite{LaiachiGomez}.  We also construct a morphism from the  Lie-Rinehart algebra of one those Hopf algebroids,  to the one arising from the global smooth sections of the Lie algebroid of the invertible jets groupoid attached to this Hopf algebroid.

Let us consider the polynomial complex algebra  $A=\mathbb{C}[X]$ and $\{x_0,y_n\mid n\in \N\}$ a set of indeterminates. For a given element $p \in A$, we denote by $\partial p$ its derivative, where  $\partial:= \partial/\partial X$ is the differential of $A$. Consider the Hopf algebroid $(A,\cH)$ over $\mathbb{C}$, where
$$
\cH : = \Cc[x_0,y_0,y_1,\cdots,y_n,\cdots,\frac{1}{y_1}],
$$
is the polynomial $\Cc$-algebra, and  where the structure maps are given as follows.

The source and the target are given by:
\begin{equation}\label{Eq:stH}
s: A \to \cH, \;\Big( X \mapsto x_0:=x  \Big) \quad \text{ and } \quad t: A \to \cH, \; \Big( X \mapsto y_0:=y  \Big)
\end{equation}
The comultiplication is:
\begin{equation}\label{Eq:DH}
\begin{gathered}
\xymatrix@R=0pt{  {}_{\Sscript{s}}\cH_{\Sscript{t}} \ar@{->}^-{\Delta}[rr]  & & {}_{\Sscript{s}}\cH_{\Sscript{t}} \tensor{A} {}_{\Sscript{s}}\cH_{\Sscript{t}}  } \\  \Delta(x)\,\,=\,\, x\tensor{A}1, \quad \Delta(y)\,\,=\,\, 1\tensor{A} y,  \\
\Delta(y_n)\,\,=\,\,  \sum\limits_{\substack{(k_{1},\, k_{2}, \cdots,\, k_{n}) \\ \\ k_{1}+2k_{2}+ \cdots+nk_{n}=n }} \frac{n!}{k_1!\, \cdots\, k_n!} \Big( \lrfrac{y_1}{1!}^{k_1}\, \lrfrac{y_2}{2!}^{k_2}\, \cdots\, \lrfrac{y_n}{n!}^{k_n} \Big) \tensor{A} y_{k_1+k_2+\cdots+k_n}, \; \text{ for  } n \geq 1.
\end{gathered}
\end{equation}
(see \cite{LaiachiGomez} for the symbols in the sum).  Thus, for $n=1, 2, 3, 4$, the image by $\Delta$ of the variables $y_n$'s reads  as follows:
\begin{multline*}
\Delta(y_1)=y_1\tensor{A}y_1, \quad \Delta(y_2)=y_2\tensor{A}y_1+y_1^2\tensor{A}y_2, \quad \Delta(y_3)=y_3\tensor{A}y_1+ 3y_1y_2\tensor{A}y_2+y_1^3\tensor{A}y_3, \\
\Delta(y_{4})= y_{4}\tensor{A}y_{1} + 4 y_{3}y_{1}\tensor{A}y_{2} + 6 y_{2}y_{1}^{2}\tensor{A}y_{3} + 3y_{2}\tensor{A}y_{2} + y_{1}^{4}\tensor{A}y_{4},   \quad \cdots . \hspace{3cm}
\end{multline*}
\begin{invisible}
The antipode is the algebra map:
\begin{equation}\label{Eq:SH}
\begin{gathered}
\xymatrix@R=0pt{  {}_{\Sscript{s}}\cH_{\Sscript{t}} \ar@{->}^-{\sS}[rr]  & & {}_{\Sscript{t}}\cH_{\Sscript{s}}   } \\
\cS(x) \,=\, y,\quad \cS(y)\,=\, x, \quad  \cS(y_1)\,=\, y_1^{-1}, \quad \text{ and }
\\  \cS(y_n) \,=\, \sum\limits_{\substack{(k_{1},\, k_{2}, \cdots,\, k_{n}) \,\neq \,  (n,0,\cdots,0) \\ \\ k_{1}+2k_{2}+ \cdots+nk_{n}=n }} -\frac{n!}{k_1!\, \cdots\, k_n!} \, \cS\big(  y_{k_1+k_2+\cdots+k_n}\big)\,  \Big( \lrfrac{y_1}{1!}^{k_1-n}\, \lrfrac{y_2}{2!}^{k_2}\, \cdots\, \lrfrac{y_n}{n!}^{k_n} \Big), \text{ for  } n \geq 2,
\end{gathered}
\end{equation}
for instance,
\begin{multline*}
\cS(y_1)=y_1^{-1} , \quad \cS(y_2)=-y_2y_1^{-3}, \quad  \cS(y_3)=-y_3y_{1}^{-4} + 3y_{2}^{2}y_{1}^{-5}, \\ \cS(y_{4}) = -y_{4}y_{1}^{-5} + 10 y_{3}y_{2}y_{1}^{-6}- 15 y_{2}^{3}y_{1}^{-7},  \quad \cdots . \hspace{1cm}
\end{multline*}
\end{invisible}
Lastly the counit is given by:
\begin{equation}\label{Eq:EH}
\begin{gathered}
\xymatrix@R=0pt{  {}_{\Sscript{s}}\cH_{\Sscript{t}} \ar@{->}^-{\varepsilon}[rr]  & &  A  } \\
 \varepsilon(x)\,=\, X,\quad \varepsilon(y)\,=\, X, \quad  \varepsilon(y_n)\,\,=\,\, \delta_{1,\, n}, \quad \text{ for every } \; n \geq 1.
\end{gathered}
\end{equation}
An explicit formula for the antipode $\cS:{}_{\Sscript{s}}\cH_{\Sscript{t}} \to {}_{\Sscript{t}}\cH_{\Sscript{s}}$ can be found in \cite[\S5.6]{LaiachiGomez}.

\begin{proposition}\label{prop:H1}
Consider the above Hopf algebroid $(A,\cH)$ over the complex numbers. Then the Lie-Rinehart algebra $\lL(\cH)$ of $(A,\cH)$ has underlying $A$-module the free module $A^{\mathbb{N}}$ whose anchor map is
$$
\omega: A^{\mathbb{N}} \longrightarrow {\rm Der}_{\Sscript{\mathbb{C}}}(A), \quad \Big( \fk{a}:=(a_{n})_{n\, \in \, \mathbb{N}} \longmapsto \Big(  p \mapsto a_{0}\partial p \Big)  \Big)
$$
and the bracket is defined as follows. For sequences $\fk{a}$ and $\fk{b}$ as above, the sequence $[\fk{a}, \fk{b}]$ is given by:
\begin{gather*}
\big[  \fk{a}, \fk{b} \big]_{0} = a_{0}\partial b_{0} - b_{0}\partial a_{0}, \quad  \big[  \fk{a}, \fk{b} \big]_{1} = a_{0}\partial b_{1} - b_{0}\partial a_{1}, \quad \big[  \fk{a}, \fk{b} \big]_{2} = a_{2}b_{1} - b_{2}a_{1}+a_{0}\partial b_{2} - b_{0}\partial a_{2}, \\
\big[  \fk{a}, \fk{b} \big]_{n} \,\, =\,\,  \sum_{i=1}^{n} \binom{n}{i} \big( a_{i}b_{n-i+1}-b_{i}a_{n-i+1}\big) \,+\, \big( a_{0}\partial b_{n} - b_{0}\partial a_{n}\big),\quad \text{ for }\, n \geq 3.
\end{gather*}
\end{proposition}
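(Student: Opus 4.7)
\medskip
\noindent\textbf{Proof plan.} The plan is to reduce every assertion to an explicit computation on the polynomial generators of $\cH$, using the Leibniz rule with respect to $\varepsilon$ and the fact that $\varepsilon(y_k)=\delta_{1,k}$ for $k\geq 1$, which causes a drastic collapse in the combinatorics.

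\smallskip
\noindent\emph{Underlying module.} First I would identify $\lL(\cH)=\Derk{s}{\cH}{A_{\varepsilon}}$ with $A^{\mathbb{N}}$. Since $x=s(X)$ and every $\delta\in \lL(\cH)$ kills the source, we have $\delta(x)=0$. On the other hand $\cH$ is the polynomial $\Cc$-algebra on the generators $x,y,y_{1},y_{2},\dots$ localized at $y_{1}$, and $\delta(y_{1}^{-1})$ is forced by the Leibniz identity $0=\delta(1)=\delta(y_{1})\varepsilon(y_{1}^{-1})+\varepsilon(y_{1})\delta(y_{1}^{-1})$ (note $\varepsilon(y_{1})=1$). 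Thus any $\delta$ is uniquely determined by the sequence $\fk{a}=(\delta(y),\delta(y_{1}),\delta(y_{2}),\dots)\in A^{\mathbb{N}}$, and conversely any such sequence extends by the universal property of the polynomial algebra to a unique $\Cc$-derivation $\cH\to A_{\varepsilon}$. The assignment $\delta\mapsto \fk{a}$ is manifestly $A$-linear with respect to the structure \eqref{Eq:LR}.

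\smallskip
\noindent\emph{Anchor.} By Proposition \ref{prop:LR} the anchor is $\omega(\delta)=\delta\circ t$. Since $t(X)=y$, we get $\omega(\delta)(X)=\delta(y)=a_{0}$; as $\omega(\delta)$ is a $\Cc$-derivation on the polynomial algebra $A=\Cc[X]$, it is determined by its value on $X$, yielding $\omega(\fk{a})=a_{0}\partial$.

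\smallskip
\noindent\emph{Bracket.} To obtain the bracket I would evaluate \eqref{Eq:bracket} on each $y_{n}$ using the explicit formula \eqref{Eq:DH} for $\Delta(y_{n})$. Writing
\[
(\delta\ast\delta')(y_{n})=\sum_{k_{1}+2k_{2}+\cdots+nk_{n}=n}\frac{n!}{k_{1}!\cdots k_{n}!}\frac{1}{(1!)^{k_{1}}\cdots(n!)^{k_{n}}}\,\delta\Big(y_{1}^{k_{1}}\cdots y_{n}^{k_{n}}\,t\big(b_{k_{1}+\cdots+k_{n}}\big)\Big),
\]
where $b_{m}=\delta'(y_{m})$, I apply the Leibniz rule $\delta(uv)=\delta(u)\varepsilon(v)+\varepsilon(u)\delta(v)$. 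Since $\varepsilon(y_{j})=0$ for $j\geq 2$, a monomial $y_{1}^{k_{1}}\cdots y_{n}^{k_{n}}$ has nonzero counit only for $(k_{1},\dots,k_{n})=(n,0,\dots,0)$, and its $\delta$-image is nonzero only either for that same index (giving $\delta(y_{1}^{n})=na_{1}$) or for an index with $k_{1}=n-j$ and $k_{j}=1$ for a single $j\geq 2$ (giving $\delta(\cdot)=a_{j}$). Combined with the coefficient $\binom{n}{j}$ arising from $\frac{n!}{(n-j)!\,j!}$, and with $\delta(t(b_{m}))=a_{0}\partial b_{m}$, this collapses the sum to
\[
(\delta\ast\delta')(y_{n})\,=\,\sum_{i=1}^{n}\binom{n}{i}a_{i}b_{n-i+1}+a_{0}\partial b_{n}.
\]
Antisymmetrizing yields the announced formula. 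The cases $n=0,1$ are handled separately since $\Delta(y)=1\tensor{A}y$ and $\Delta(y_{1})=y_{1}\tensor{A}y_{1}$.

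\smallskip
\noindent\emph{Main obstacle.} The only real difficulty is the bookkeeping in the combinatorial sum for $(\delta\ast\delta')(y_{n})$. The key observation that makes it tractable is the vanishing $\varepsilon(y_{j})=0$ for $j\geq 2$, which forces only two families of multi-indices to survive in the Leibniz expansion; the Lie-algebra axioms then come for free from the general Proposition \ref{prop:LR}, so no further verification is needed.
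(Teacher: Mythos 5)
Your proposal is correct and follows essentially the same route as the paper's proof: identify $\delta$ with the sequence $(\delta(y_0),\delta(y_1),\dots)$, read off the anchor from $\delta\circ t$, and observe that in the Leibniz expansion of $(\delta*\delta')(y_n)$ only the multi-indices $(n,0,\dots,0)$ and $(n-j,0,\dots,0,1,0,\dots,0)$ survive because $\varepsilon(y_j)=0$ for $j\geq 2$. Your write-up actually supplies more of the combinatorial bookkeeping (the coefficient $\binom{n}{j}$ and the explicit collapsed sum) than the paper does; note only that your collapsed formula specialized at $n=2$ yields $a_1b_2-a_2b_1+a_0\partial b_2-b_0\partial a_2$, which differs by a sign in the non-derivative part from the $n=2$ expression displayed in the statement — a discrepancy in the statement itself rather than in your argument.
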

\begin{proof}
Let $\delta$ be an element in $\lL(\cH)=\Derc{\cH}{A}$, then $\delta$ is entirely determined by the sequence of polynomials $(\delta(x_{0}), \delta(y_{0}), \delta(y_{1}), .....)$. Since we know that $\delta(x_{0})=0$, we have a sequence
$$
(\delta(y_{0}), \delta(y_{1}), \delta(y_{2}), .....) \, \in \,  A^{\Sscript{\mathbb{N}}}
$$
Namely, we know that any such $\delta$ satisfies the following equalities:
\begin{equation}\label{Eq:deltas}
\delta(y_{1}^{-1})=-\delta(y_{1}), \quad \delta(y_{i}^{j})=0, \text{ for every } i , j\geq 2, \; \text{ and }\; \delta(p(y_{0}))= \delta(y_{0}) \frac{\partial p(x_{0})}{\partial x_{0}}.
\end{equation}
The last equality gives us the anchor map. Now, for the bracket we need to involve the comultiplication of equation \eqref{Eq:DH} and the formula of equation \eqref{Eq:bracket}. For lower cases, that is, for $n=1,2$, one uses directly these formulae. As for $n\geq 3$, one should observe, using equations \eqref{Eq:deltas}, that when applying  the rule \eqref{Eq:bracket} to the comultiplication \eqref{Eq:DH}, the only terms which survive in the sum are the summands corresponding the following $n$-tuples
$$
(n,0,\ldots, 0),\quad (n-i,0, \ldots,0,\underset{i\mathrm{-th}}{\underbrace{1}},0,\ldots, 0),\quad \text{for}\;2\leq i\leq n,
$$
which give the summands claimed in the bracket $\big[  \fk{a}, \fk{b} \big]_{n}$.
\end{proof}

The $\Cc$-algebra $\cH$ is in fact a differential  algebra, whose differential is given by:
\begin{equation}\label{Eq:DiffH}
\begin{gathered}
\xymatrix@R=0pt{  \cH \ar@{->}^-{\bdelta}[rr]  & &  \cH  }
\\ \bdelta(x)\,=\, 1,\quad \bdelta(y)\,=\, y_1, \quad \bdelta(y_n)\,=\,y_{n+1}, \; \text{ for } n \geq 1.
\end{gathered}
\end{equation}
Thus, we have
$$
\bdelta\,\,=\,\, \displaystyle \frac{\partial}{\partial x}  \,+\, \sum_{i=0}^{\infty} y_{i+1} \frac{\partial}{\partial y_i}.
$$

A \emph{Malgrange's Hopf algebroid over $\Cc$  with base $A$} is a Hopf algebroid of the form $(A,\cH/\cI)$, where $\cI$ is a Hopf ideal which is also a differential ideal (i.e., $\bdelta(\cI) \subseteq \cI$). For instance, the ideal $\cI=\langle y_{n}\rangle_{n\geq 2}$, is clearly a differential ideal and $\cH/\cI \cong \Cc[x, y, z^{\pm 1}]$, which is a Hopf algebroid with base $A$ and grouplike elements $z^{\pm 1}$.  It can be identified with the polynomial algebra $(A\tensor{\Cc}A)[z^{\pm 1}]$, whose presheaf of groupoids is the induced groupoid of the multiplicative group by the affine line (see \cite{ElKaoutit:2015} for this general construction).

The following corollary is immediate.
\begin{corollary}\label{coro:subLR}
Let $(A, \cH/\cI)$ be a Malgrange Hopf algebroid with base $A$. Then the Lie-Rinehart algebra $\lL(\cH/\cI)$ is a sub-Lie-Rinehart algebra of $\lL(\cH)$. Precisely, an element $\delta \in \lL(\cH)$ belongs to $\lL(\cH/\cI)$, if and only if $\delta(\cI)=0$.
\end{corollary}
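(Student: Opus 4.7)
The plan is to apply the functoriality of $\lL$ (Proposition \ref{prop:L}) to the canonical projection and then identify the image explicitly. First, the assumption that $\cI$ is a Hopf ideal means precisely that the canonical projection $\pi\colon \cH\to \cH/\cI$ descends to a morphism of commutative Hopf algebroids $(\id{A},\pi)\colon (A,\cH)\to (A,\cH/\cI)$ (the source and target maps of $\cH/\cI$ being $\pi\circ s$ and $\pi\circ t$, which coincide with $s$ and $t$ since we are keeping $A$ fixed as the base).

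Next, by Proposition \ref{prop:L}, applying the contravariant differentiation functor yields a morphism of Lie-Rinehart algebras
\[
\lL(\pi)\colon \lL(\cH/\cI)\longrightarrow \lL(\cH),\qquad \Big(\bar{\delta}\longmapsto \bar{\delta}\circ\pi\Big).
\]
I would then verify that $\lL(\pi)$ is injective: if $\bar{\delta}\circ\pi = 0$, then $\bar{\delta}$ vanishes on the image of $\pi$, but $\pi$ is surjective, hence $\bar{\delta}=0$. This identifies $\lL(\cH/\cI)$ with its image inside $\lL(\cH)$ as a sub-Lie-Rinehart algebra.

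Finally, to identify the image, I would observe that a derivation $\delta\in \lL(\cH) = \Derk{s}{\cH}{\Aep}$ factors through $\pi$ (that is, equals $\bar{\delta}\circ \pi$ for some $\bar{\delta}\colon \cH/\cI\to A$) if and only if $\delta(\cI)=0$. The forward implication is clear; conversely, if $\delta$ vanishes on $\cI$, it factors uniquely through a $\Bbbk$-linear map $\bar{\delta}\colon \cH/\cI\to A$, and the relations $\bar{\delta}\circ s = 0$ and the Leibniz rule for $\bar{\delta}$ are inherited from those of $\delta$ (since $s$ and the multiplication of $\cH/\cI$ are induced by those of $\cH$), together with $\bar{\delta}\circ\bar{t} = \delta\circ t = \omega(\delta)$. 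Hence $\bar{\delta}\in \lL(\cH/\cI)$, and this yields the desired characterisation
\[
\lL(\cH/\cI) \;\cong\; \big\{\delta\in\lL(\cH)\mid \delta(\cI)=0\big\}.
\]
There is no real obstacle here; the only point to be careful about is that the differential condition $\bdelta(\cI)\subseteq \cI$ plays no role in this statement (it is relevant only to ensure that $\cH/\cI$ inherits a differential algebra structure), while for the present corollary only the property of being a Hopf ideal is used.
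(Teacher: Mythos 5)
Your proof is correct: the paper simply declares this corollary ``immediate'' after Proposition \ref{prop:H1}, and your argument supplies exactly the expected details --- functoriality of $\lL$ applied to the projection $\pi\colon\cH\to\cH/\cI$, injectivity of $\lL(\pi)$ from surjectivity of $\pi$, and the identification of the image via the factorization criterion $\delta(\cI)=0$. Your closing observation is also accurate: only the Hopf-ideal property of $\cI$ is used here, while the differential condition $\bdelta(\cI)\subseteq\cI$ enters the definition of a Malgrange Hopf algebroid for other purposes.
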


For instance, by Proposition \ref{prop:H1}, we have that $\lL(\cH/\cI)=A\times A$, where $\cI=\langle y_{n}\rangle_{n\geq 2}$, is the Lie-Rinehart algebra with anchor
$
(a_{0}, a_{1}) \mapsto (  p \mapsto a_{0}\partial p )
$
and the bracket is given by
$$
[(a_{0},a_{1}), (b_{0},b_{1})]\, \, = \, \, \big(a_{0}\partial b_{0} - b_{0}\partial a_{0}, \,  a_{0}\partial b_{1} - b_{0}\partial a_{1}\big).
$$

\begin{remark}\label{rem:limit}
The Hopf algebroid $(A,\cH)$ is the a direct limit of the Hopf algebroids $(A,\cH_{\Sscript{r}})$, $r \in \mathbb{N}$, where $\cH_{\Sscript{r}}$ is the subalgebra of $\cH$ generated up to the variable $y_{r}$, that is, we have $\cH=\varinjlim{\cH_{\Sscript{r}}}$. Applying the differentiation functor $\lL$, we obtain a projective limit of Lie-Rinehart algebras $\lL(\cH)=\varprojlim{\lL(\cH_{\Sscript{r}})} $.
\end{remark}

In the remainder of this subsection we will relate the Lie-Rinehart algebra of $(A,\cH)$ and the Lie-Rinehart algebra of the (polynomial) global sections of the Lie-groupoid attached to the varieties associated to the pair of algebras $(A,\cH)$. To this end,
consider the invertible jet groupoid attached to $(A, \cH)$. This, by definition \cite{Malgrange:2001}, is the Lie groupoid $(\jJ_{*}(\affLine{\Cc}) , \affLine{\Cc} )$, where $\affLine{\Cc}$ is the complex affine line and $ \jJ_{*}(\affLine{\Cc})  \subseteq \affLine{\Cc} \times (\affLine{\Cc})^{\mathbb{N}}$ is defined by the points $(x_{0}, y_{0}, y_{1}, \cdots, y_{n}, \cdots) \in \affLine{\Cc} \times (\affLine{\Cc})^{\mathbb{N}}$ with $y_{1}\neq 0$.  In other words,  this groupoid is the character groupoid of the Hopf algebroid $(A, \cH)$, see \cite{ElKaoutit:2015} for this definition.  Denote by $\cE$ the Lie algebroid of this Lie groupoid (see Appendix \ref{ssec:LA-LG}) below).
Then,  one can show that there is a morphism $\lL(\cH) \to \Gamma(\cE)$  of Lie-Rinehart algebras, where $\Gamma(\cE)$ is the $A$-module of global sections of the Lie algebroid $\cE$. This claim will be achieved in the forthcoming steps.

First let us denote by
$$
\xymatrix@C=65pt{ \cH(\Cc)={\rm CAlg}_{\Sscript{\Cc}}(\cH, \Cc) \, \simeq \, \jJ_{*}(\affLine{\Cc})\; \;\ar@<1ex>@{->}|(.5){\scriptstyle{s^{*}}}[r] \ar@<-1ex>@{->}|(.5){\scriptstyle{t^{*}}}[r] & \ar@{->}|(.4){ \scriptstyle{\varepsilon^{*}}}[l]  \; \; A(\Cc)={\rm CAlg}_{\Sscript{\Cc}}(A,\Cc)\, \simeq \, \affLine{\Cc}, }
$$
the structure maps of this groupoid, where  the source and the target are, respectively, the first and second projections, and the identity map coincides with the map $x \mapsto (x, x, 1, 0, \cdots)$, see \cite{Malgrange:2001}. Here we are considering $\cH(\Cc)$ and $A(\Cc)$ as algebraic varieties whose ring of polynomial functions coincide with $\cH$ and $A$, respectively. In this way the elements of $\cH$ and $A$ are considered as polynomial functions from $\cH(\Cc)$  and $A(\Cc)$ to $\Cc$, respectively.

We know that the fibers of $\cE$ are of the form $\ker{T_{x} s^{*}}$, for $x \in \affLine{\Cc}$.  Specifically, given  a point $x \in \affLine{\Cc}$, we identify it with the associated algebra map  $\bara{x}: A \to \Cc$ sending $X\mapsto x$. In this way, the notation $\Cc_{\Sscript{{x}}}$ stands for $\Cc$ considered as an extension algebra of $A$  via $\bara{x}$, and the identity arrow of the object $x$ is $\varepsilon^{*}(x)=\bara{x} \varepsilon : \cH \to \Cc$. The same notations will be employed for $\jJ_{*}(\affLine{\Cc})$.
Now, for any point $x \in \affLine{\Cc}$, a derivation $d$  in the vector space $\ker{T_{x} s^{*}}$, is nothing but an element $d \in  {\rm Der}_{\Sscript{\Cc}}(\cH, \Cc_{\Sscript{\varepsilon*(x)}})$ such that $ ds=0$. Therefore, we have the following  identifications of vector spaces:
$$
\ker{T_{x} s^{*}}\, =\, \{ d \in {\rm Der}_{\Sscript{\Cc}}(\cH, \Cc_{\Sscript{\varepsilon*(x)}}) |\, d s=0 \} \, \,  = \, \,
{\rm Der}_{\Sscript{\Cc}}^{\Sscript{s}}(\cH, \Cc_{\Sscript{{x}\varepsilon}})\,=\, \xX(\Cc)_{\Sscript{x}}, \; \text{ for every } x \in \affLine{\Cc},
$$
where the $\xX(\Cc)_{\Sscript{x}}$'s are the fibers of the presheaf of equation \eqref{Eq:xX} at the base field $\Cc$. This gives us the identification of vector bundles $\cE= \xX(\Cc)$.

On the other hand, any (polynomial) section of the vector bundle $\xX(\Cc)$ can be extended ``uniquely'', as follows, to a (polynomial) section of the vector bundle $\cup_{\Sscript{g \, \in \, \cH(\Cc)}} {\rm Der}^{\Sscript{s}}_{\Sscript{\Cc}}(\cH, \Cc_{\Sscript{g}})$. This extension is the same as the one given in Proposition \ref{prop:OXY} of the Appendix. Take a section $\{\delta_{x}\}_{\Sscript{x\, \in \, \affLine{\Cc}}}$ of $\xX(\Cc)$\footnote{Here, we are assuming that, for every $u \in \cH$,   the function $x \mapsto \delta_{x}(u)$ is polynomial, where  $x \in \affLine{\Cc}$, or  equivalently, each of  the functions $x \mapsto \delta_{x}(x_{0}), \,  \delta_{x}(y_{0}),\,  \delta_{x}(y_{1}),\,  \cdots,\,  \delta_{x}(y_{n}),\, \cdots$,
 is polynomial.}, we set
$$
\td{\delta}_{g}: \cH \longrightarrow \Cc_{g}, \quad \Big(  u \longmapsto g(u_{1}) \, \delta_{gt}(u_{2})\Big), \quad \text{ for every } g \in \cH(\Cc).
$$
These are called  \emph{left invariant sections tangent to the fiber of $s$}.
For a fixed polynomial function $u \in \cH$, we have a polynomial function $\td{\delta}_{-}(u) :\cH(\Cc) \to \Cc$ sending $g \mapsto \td{\delta}_{g}(u)$, which we identify with its image in $\cH$. This function satisfies the following equalities\footnote{For the sake of clearness, $a\delta:x\mapsto x(a)\delta_x$ and $u\widetilde{\delta}_{-}(v):g\mapsto g(u)\widetilde{\delta}_{g}(v)$.}:
\begin{equation}\label{Eq:barril}
\td{\delta}_{-}(s(a))\,\,=\,\, 0,\qquad  \td{(a \delta)}_{-}(u)\,\, =\,\, t(a) \td{\delta}_{-}(u),\qquad  {x}\varepsilon\big( \td{\delta}_{-}(u)\big)\,\, =\,\, \delta_{x}(u),
\end{equation}
for every $a \in A$, $u \in \cH$ and $x \in  \affLine{\Cc}$. Furthermore, there is a derivation of $\cH$, defined by $u \mapsto \td{\delta}_{-}(u)$. Namely, for every point $g \in \cH(\Cc)$ and two polynomial functions $u, v \in \cH$,  we have
\begin{eqnarray*}
\td{\delta}_{-}(uv)(g) &=& \td{\delta}_{g}(uv)\, \, =\, \, g(u_{1}v_{1})\delta_{gt}(u_{2}v_{2}) \\ &=&  g(u_{1})g(v_{1})\Big( gt\varepsilon(u_{2})\delta_{gt}(v_{2}) + \delta_{gt}(u_{2})gt\varepsilon(v_{2}) \Big)\\ &=&  g(u_{1})g(v_{1})\, gt\varepsilon(u_{2}) \delta_{gt}(v_{2}) + g(u_{1})g(v_{1})\,\delta_{gt}(u_{2})gt\varepsilon(v_{2})  \\ &=&  g(u)\, g(v_{1})\delta_{gt}(v_{2}) + g(u_{1})\delta_{gt}(u_{2})\,g(v)  \\ &=&  g(u)\, \td{\delta}_{-}(v)(g) +  \td{\delta}_{-}(u)(g) \, g(v)  \\ &=&  u(g)\, \td{\delta}_{-}(v)(g) +  \td{\delta}_{-}(u)(g) \, v(g)
\end{eqnarray*}
Therefore, we have
\begin{equation}\label{Eq:calor}
\td{\delta}_{-}(uv) = u \td{\delta}_{-}(v) + \td{\delta}_{-}(u) v.
\end{equation}

Next, we describe the anchor map and the bracket of $\Gamma(\cE)$. Given a section $\delta \in \Gamma(\cE)$, its  anchor  at a given polynomial $a \in A$,    is defined as the polynomial function $\omega(\delta)(a): \affLine{\Cc} \to \Cc$ sending $x \mapsto \delta_{x} (t(a))$. As for the bracket, taking two sections $\delta, \gamma$, we set the section
$x \mapsto [\delta, \gamma]_{x}$, defined by
$$
[\delta, \gamma]_{x}: \cH \longrightarrow 	\Cc_{x \varepsilon}, \quad \Big(  u \longmapsto \big(\delta_{x}(\, \td{\gamma}_{-}(u)\, ) - \gamma_{x}(\, \td{\delta}_{-}(u)\, ) \big) \Big).
$$
Take $u, v \in \cH$, we compute
\begin{eqnarray*}
[\delta, \gamma]_{x}(uv) &=& \delta_{x}(\, \td{\gamma}_{-}(uv)\, ) - \gamma_{x}(\, \td{\delta}_{-}(uv)\, ) \\
&\overset{\eqref{Eq:calor}}{=}& \delta_{x}\big(u \td{\gamma}_{-}(v)+  \td{\gamma}_{-}(u)v\big) - \gamma_{x}\big(u\td{\delta}_{-}(v) + \td{\delta}_{-}(u)v \big) \\
&=& x\varepsilon(u)\delta_{x}(\, \td{\gamma}_{-}(v)\, )+\delta_{x}(u) x\varepsilon(\td{\gamma}_{-}(v)) + x\varepsilon(\td{\gamma}_{-}(u))\delta_{x}(v) +  \delta_{x}(\, \td{\gamma}_{-}(u)\, )x\varepsilon(v) \\
&\,\, & -x\varepsilon(u) \gamma_{x}(\, \td{\delta}_{-}(v)\, ) - \gamma_{x}(u)x\varepsilon(\td{\delta}_{-}(v))- x\varepsilon(\td{\delta}_{-}(u))\gamma_{x}(v) - \gamma_{x}(\, \td{\delta}_{x}(u)\,) x\varepsilon(v) \\ &\overset{\eqref{Eq:barril}}{=}& x\varepsilon(u)[\delta, \gamma]_{x}(v) + [\delta, \gamma]_{x}(u) x\varepsilon(v).
\end{eqnarray*}
Thus   $[\delta, \gamma]_{x} \in {\rm Der}_{\Sscript{\Cc}}^{\Sscript{s}}(\cH, \Cc_{\Sscript{{x}\varepsilon}})$. It is not hard now to check that this bracket endows $\Gamma(\cE)$ with a structure of Lie algebra  and it is compatible with the anchor map, that is, satisfies equation \eqref{Eq:Aseti}. This completes the Lie-Rinehart algebra structure of $\Gamma(\xX(\Cc)) \, =\,  \Gamma(\cE)$.

The desired morphism of Lie-Rinehart algebras $\lL(\cH) \to \Gamma(\cE)$, is now deduced as follows. Using the isomorphism of Proposition \ref{prop:Gamma} in conjunction with the canonical map
$$
 \Gamma(\xX) \longrightarrow \Gamma\big(\xX(\Cc)\big), \qquad \Big( \tau \longmapsto \tau_{\Cc}  \Big)
$$
of Lie-Rinehart algebras, we obtain  a morphism $\lL(\cH) \cong  \Gamma(\xX) \longrightarrow \Gamma\big(\xX(\Cc)\big) = \Gamma(\cE)$ of Lie-Rinehart algebras.

\begin{remark}\label{rem:staralgebrica}
Given $(A,\cH)$ as above we already observed that the fibers of $\cE$ are of the form $\ker{T_{x} s^{*}}={\rm Der}_{\Sscript{\Cc}}^{\Sscript{s}}(\cH, \Cc_{\Sscript{{x}\varepsilon}})$, for $x \in \affLine{\Cc}$. As explained in Remark \ref{rem:deradjoint}, we can consider in the category of augmented algebras the following cokernel
\[
\xymatrix @C=15pt{
A \ar[r]^{s} & \cH \ar[r]^-{\pi_{\Sscript{(x)}}} & \cH_{\Sscript{(x)}} \ar[r] & \Cc,
}
\]
where $A$ has augmentation $x$, while $\cH$ has augmentation $x\circ \varepsilon$. Note that, by construction, $\cH_{\Sscript{(x)}}$ is $\cH$ quotient by the ideal $\langle s(a)-x(a)1_\cH\mid a\in A\rangle$. By the remark quoted above we get an isomorphism of vector spaces
$$
{\rm Der}_{\Sscript{\Cc}}^{\Sscript{s}}(\cH, \Cc_{\Sscript{{x}\varepsilon}}) \,\cong\, \Der{\K}{\cH_{\Sscript{(x)}}}{\Cc_{\varepsilon_x}},$$
where $\varepsilon_x:\cH_{\Sscript{(x)}}\to \Cc$ is the unique algebra map such that $\varepsilon_x\circ \pi_{\Sscript{(x)}}= x\circ \varepsilon$.
Since we know that
\[
{\rm CAlg}_{\Sscript{\Cc}}(\cH_{\Sscript{(x)}},\Cc) \cong \Big \{g\in \cH(\Cc)\mid g(s(a)-x(a)1_\cH)=0, \,\forall \,a\in A\Big\} = \Big\{g\in \cH(\Cc)\mid s^*(g) = g\circ s = x\Big\} = \cH(\Cc)_x,
\]
then $\cH_{\Sscript{(x)}}\cong \K_x\tensor{A}{_s\cH}$ is the coordinate algebra of the subvariety $\cH(\Cc)_x$ known as the left star of the point $x$ in the groupoid $\cH(\mathbb{C})$. Furthermore, the morphism  of Hopf algebroids $\pi_{\Sscript{x}}: \cH \to \Hx$, where $(\mathbb{C}, \Hx)$ is, as in subsection \ref{ssec:isotropy},  the isotropy Hopf algebra of $\cH$ at the point $x$, factors throughout the morphism $\pi_{\Sscript{(x)}}$, leading to a morphism of augmented $\mathbb{C}$-algebras $\cH_{\Sscript{(x)}} \to \Hx$\footnote{This algebra maps is nothing but the canonical surjective map: $\mathbb{C}[ x_0,y_0, y_1,y_2,\cdots, \frac{1}{y_1}]/\langle x_0-x \rangle  \to \mathbb{C}[  X, y_1,y_2,\cdots, \frac{1}{y_1}]/\langle X-x\rangle$.}. Applying the derivations functor ${\rm Der}_{\Sscript{\mathbb{C}}}(-, \mathbb{C})$ to this latter morphism gives rise to the canonical injection of Lie algebras
$$
\LHx \stackrel{\eqref{eq:nablaLX}}{=} \Big\{ \delta \in {\rm Der}^{\Sscript{s}}_{\Sscript{\mathbb{C}}}(\cH, \mathbb{C}_{\Sscript{x \varepsilon}}) |\,\, \delta \circ t\,=\,0  \Big\} \hookrightarrow  {\rm Der}_{\Sscript{\Cc}}^{\Sscript{s}}(\cH, \Cc_{\Sscript{{x}\varepsilon}}).
$$
Notice that all these observations are valid for any Hopf algebroid $(A,\cH)$ over $\Bbbk$ such that $A(\Bbbk)\neq \emptyset$.
\end{remark}

%%%%%%%%%%%%%%%%%%%%%%%%%%%%%%%%%%%%%%

\appendix

%%%%%%%%%%%%%%%%%%%%%%%%%%
\section{The functorial approach,  the units of the adjunctions and Lie groupoids.}\label{sec:Functors}

In this section we provide an alternative construction of the differential functor $\lL$ constructed in \S\ref{ssec:lL}. This is done by mimicking the differential calculus on affine group schemes \cite[II \S 4]{DemazureGabriel} parallel to the construction of a Lie algebroid from a Lie groupoid. Moreover, we provide an alternative (direct) construction of the unit of the adjunction in Theorem \ref{thm:A}. Finally, we revisit also the construction of the Lie algebroid of a Lie groupoid under an algebraic point of view.

The following remark will be used all along the appendices.

\begin{remark}\label{rem:deradjoint}
Recall that the category $\Aug$ of augmented algebras has as objects pairs $(A,\varepsilon)$ where $A$ is an algebra and $\varepsilon:A\to \K$ is a distinguished algebra map, called \emph{augmentation}, and as morphism algebra maps preserving the augmentation. Analogously,  the category of coaugmented coalgebras $\Coaug$ has as objects pairs $(C,g)$ where $C$ is a coalgebra and $g$ is a distinguished group-like element in $C$ and as morphism coalgebra maps preserving the group-likes. The duality $\xymatrix@C=15pt{ \rdual{(-)}: \Coalgk  \ar@<0.4ex>[r]  &  \ar@<0.4ex>[l]    \Algk^{\mathsf{op}}  : \rcirc{(-)} }$ induces a duality between $\Coaug$ and $\left(\Aug\right)^{\mathsf{op}}$, namely $(C,g)^* = (C^*,g^*)$, where $g^*:C^*\to \K$ is the evaluation at $g$, and $\rcirc{(A,\varepsilon)}=(\rcirc{A},\varepsilon)$.
In addition, we have an adjunction between the category of vector spaces $\veck$ and $\Coaug$ given by the functor $\cP:\Coaug\to \veck$ sending every $(C,g)$ to $\cP(C,g):=\{c\in C\mid \Delta(c)=c\otimes g + g\otimes c\}$ and its left adjoint sending $V$ to $(\K\oplus V,1_{\K})$, where $\Delta(v)=v\otimes 1_\K+ 1_\K\otimes v$ for every $v\in V$.

Note that composing the right adjoints we get
$
\cP\left(\rcirc{(A,\varepsilon)}\right) = \cP(\rcirc{A},\varepsilon) = \Der{\K}{A}{\K_{\varepsilon}}.
$
As a consequence, the functor $\left(\Aug\right)^{\mathsf{op}}\to \veck$ sending every $(A,\varepsilon)$ to $\Der{\K}{A}{\K_{\varepsilon}}$ is a right adjoint. In particular, it preserves kernels once observed that $\Aug$ has $(\K,\id{\K})$ as zero object.
By the existence of this zero object, given a morphism of augmented $\K$-algebras  $s: A_0\to A_1$
we can consider in $\Aug$ the cokernel
\[
\xymatrix @C=15pt{
A_0 \ar[r]^{s} & A_1 \ar[r]^-{\pi} & A_2 \ar[r] & \K,
}
\]
which is defined as the coequalizer of the pair $(s,u_1\circ\varepsilon_0)$ in the category of algebras with the induced augmentation. Here we denoted by $\varepsilon_i:A_i\to\K$ the augmentations and by $u_i:\K\to A_i$ the units. By the foregoing, we get the following kernel of vector spaces
\[
\xymatrix @C=15pt{
0 \ar[r] & \Der{\K}{A_2}{\K_{\varepsilon_2}} \ar[r]^-{\pi^*} & \Der{\K}{A_1}{\K_{\varepsilon_1}} \ar[r]^-{s^*} & \Der{\K}{A_0}{\K_{\varepsilon_0}}.
}
\]
Summing up, $\pi^*$ induces an isomorphism
\begin{equation}\label{eq:derivations}
\Der{\K}{A_2}{\K_{\varepsilon_2}} \cong \ker{s^*}=\{\delta_1\in \Der{\K}{A_1}{\K_{\varepsilon_1}} \mid \delta_1\circ s=0\} = {\rm Der}_{\Sscript{\K}}^{\Sscript{s}}(A_1, \K_{\varepsilon_1}).
\end{equation}
\end{remark}

\subsection{The functorial approach to the differential functor}\label{ssec:FA}
Let us introduce  some useful notation. Given two algebras $T$ and $R$ we denote by $T(R):=\Calg({T},{R})$ the set of all algebra maps from $T$ to $R$, and by $\Calg$ the category of all commutative algebras.  To any commutative Hopf  algebroid $(A,\cH)$ one associates the presheaf of groupoids $\hH: \Calg \to \Grpd$ assigning to an algebra $R \in \Calg$ the groupoid
$$
\xymatrix@C=45pt{ \hHa(R):=\cH(R) \ar@<1ex>@{->}|(.4){\scriptstyle{s}}[r] \ar@<-1ex>@{->}|(.4){\scriptstyle{t}}[r] & \ar@{->}|(.4){ \scriptstyle{\iota}}[l]  A(R) :=\hHo(R) }
$$
whose structure is given as follows: For any $g \in \cH(R), x\in A(R)$, we have  $s(g)=g s$, $t(g)=gt$, $\iota_{x}=x\varepsilon$, $g^{-1}=g \cS$,  and if $gs=g't$ for some other $g' \in \cH(R)$, then $g.g': \cH \to R$ sends $u \mapsto g'(u_{1})g(u_{2})$.

Let us define the following functor:
\begin{equation}\label{Eq:xX}
{\xX: \Calg \longrightarrow \Sets}, \quad \left(  R \longmapsto \biguplus_{x \, \in \, A(R)} \Derk{s}{\cH}{\Rxe} \right),
\end{equation}
where $\biguplus$ denotes the disjoint union of sets.
For each $R \in \Calg$, $\xX(R)$ can be seen as a bundle (in the sense of \cite[Definition 1.1, chapter 2]{Husemoller}) of $\cH$-modules over $\hHo(R)$  with  canonical projection $\Pir: \xX(R) \to A(R)$ sending $\delta\in\Derk{s}{\cH}{\Rxe}$ to $x$. Now, $\xX$ is a functor as for any morphism $f :R \to T$, the map $\xX(f):\xX(R) \to \xX(T)$ is fiberwise defined by composition with $f$. This makes $\pi: \xX \to \hHo$ a natural transformation.

Following  \cite{DemazureGabriel}, let us consider the trivial extension algebra $R[\hbar]$ of a given algebra $R$, that is, $\hbar^{2}=0$ together with the canonical algebra injection $\fk{i}: R \to R[\hbar]$, $r \mapsto (r,0)$.  Denote by $p: \R[\hbar] \to R$ the algebra projection to the first component and by $p':R[\hbar] \to R$ the $R$-linear  projection to the second component. Then we have a morphism of groupoids $\hH(p): \hH(R[\hbar]) \to \hH(R)$. For a fixed $x \in A(R)$, we set
$$
\dD_{\Sscript{x}}(R):= \Big\{  \gamma \in \cH(R[\hbar])|\; \;  p' \gamma s=0,\; p\gamma=x \varepsilon \Big\}.
$$
Clearly, any arrow $\gamma \in \dD_{\Sscript{x}}(R)$ belongs to the kernel of $\hH(p)$, \ie $\left\{\gamma\in \hH_1(R[\hbar])\mid \hH_1(p)(\gamma)\in \iota\left(\hH_0(R)\right)\right\}$. Furthermore, if we denote by $\td{\gamma}:=p' \gamma$, then $\td{\gamma}$ becomes a $x\varepsilon$-derivation, in the sense that
$$
\td{\gamma}(uv)=x\varepsilon(u)\td{\gamma}(v) + \td{\gamma}(u)x\varepsilon(v),
$$
for every $u, v \in \cH$.
Each of the fibers $\dD_{\Sscript{x}}(R)$ is as follows a $\Bbbk$-vector space:
$$
\lambda \gamma :=( x \varepsilon, \lambda \td{\gamma}), \quad \gamma + \gamma' :=( x \varepsilon, \td{\gamma}+ \td{\gamma'}), \quad \text{ for every } \lambda \in \Bbbk, \text{ and } \gamma , \gamma' \in \dD_{\Sscript{x}},
$$
where the notation is the obvious one for diagonal morphisms.  We have then constructed a functor
\begin{equation}\label{Eq:dD}
{\dD: \Calg \longrightarrow \Sets}, \quad \left(  R \longmapsto \biguplus_{x \, \in \, A(R)} \dD_{\Sscript{x}}(R) \right),
\end{equation}
where for any morphism $f :R \to T$, the map $\dD(f):\dD(R) \to \dD(T)$ is fiberwise defined by composition with $(f,f)$. The functor $\dD$ is naturally isomorphic to $\xX$. Namely, the isomorphism is fiberwise given by
$$
\dD_{\Sscript{x}}(R)  \longrightarrow \Derk{s}{\cH}{\Rxe}, \; \Big( \gamma \longmapsto \td{\gamma} \Big); \qquad  \Derk{s}{\cH}{\Rxe}  \longrightarrow \dD_{\Sscript{x}}(R), \; \Big( \delta \longmapsto (x\varepsilon, \delta) \Big).
$$
Under this isomorphism, the elements of $\xX(R)$, for a given algebra $R$, can be seen as arrows in the groupoid $\hH(R[\hbar])$, although, contrary to the classical situation,  they only form a subcategory and not necessarily a subgroupoid.
Let us show that the set $\xX^{\Sscript{\ell}}(R)$ of loops in the category $\xX(R)$ is a groupoid-set in the following sense (for the definition of groupoid-set,  see \eg \cite{ElKaoutit:2015}).

An element $\delta \in \xX(R)$ belongs to $\xX^{\Sscript{\ell}}(R)$ provided that it satisfies also the equation $ \delta t=0$. Thus, $\delta$ is a $x\varepsilon$-derivation which kills both source an target and we can write
\begin{equation}\label{Eq:xXl}
{\xX^{\Sscript{\ell}}: \Calg \longrightarrow \Sets}, \quad \left(  R \longmapsto \biguplus_{x \, \in \, A(R)} \Derk{s,t}{\cH}{\Rxe} \right).
\end{equation}
It is easily checked that $\xX^{\Sscript{\ell}}$ is a functor. What we are claiming is that $\xX^{\Sscript{\ell}}$ with the structure map given by the restriction of $\pi$ is actually an $\hH$-set, in the sense of presheaves of groupoids. Taken the natural transformations $\pi : \xX^{\Sscript{\ell}} \to \hHo$ and $t: \hHa \to \hHo$, consider the fiber product
$$
\xX^{\Sscript{\ell}} \, \due{\times}{\Sscript{\pi}}{\Sscript{t}} \,  \hHa : {\Calg \longrightarrow \Sets}, \quad \Big(  R \longrightarrow  \xX^{\Sscript{\ell}}(R) \, \due{\times}{\Sscript{\Pir}} {\Sscript{t}}  \, \hHa(R) \,=\, \Big\{ (\delta, g) \, \in \, \xX^{\Sscript{\ell}}(R) \times \hHa(R) |\, \, gt=\Pir(\delta)\Big\}  \Big).
$$
Given an element $(\delta, g) \in  \xX^{\Sscript{\ell}}(R)\, \due{\times}{\Sscript{\Pir}}{\Sscript{t}}\,  \hHa(R)$, we define the \emph{conjugation action} by
\begin{equation}\label{Eq:conj}
 \delta \, .\, g: \cH \longrightarrow R_{\Sscript{gs\varepsilon}},\quad \Big(  u \longmapsto g(\uo)\delta(\udos) g(\cS(\utres)) \Big).
\end{equation}
Notice that this map is well-defined as
$$
\delta(s(a)ut(b)) = x(a)\delta(u)x(b)\;\;  \text{ and } \;\; g(ut(a))=g(u)g(t(a)) = g(u)\Pir(\delta)(a)=g(u)x(a)
$$
for every $a,b\in A, u\in \cH$, where $\delta\in \Derk{s}{\cH}{\Rxe}$. The following is the desired claim.

\begin{lemma}\label{lema:consejo}
For every algebra $R \in \Calg$, the  pair $(\xX^{\Sscript{\ell}}(R), \Pir)$ is a right $\hH(R)$-set with action given by conjugation as in \eqref{Eq:conj}. Furthermore, this is a functorial action, that is,  $(\xX^{\Sscript{\ell}}, \pi)$ is a right $\hH$-functor.
\end{lemma}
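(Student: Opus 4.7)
The proof reduces to three independent verifications: first, that for every composable pair $(\delta,g)$ with $gt=\Pir(\delta)$ the formula \eqref{Eq:conj} produces a genuine element of $\xX^{\ell}(R)$ with $\Pir(\delta\cdot g)=gs$; second, that the assignment $(\delta,g)\mapsto \delta\cdot g$ satisfies the unit and associativity axioms of a right groupoid action of $\hH(R)$ on $(\xX^{\ell}(R),\Pir)$; and third, that everything is natural in $R$.

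For the first point I would start by observing that $\delta\cdot g$ is $\K$-linear and well defined in $R$ because $\delta$ is valued in $R_{x\varepsilon}$ with $x=gt$, while $g(\cS(u_3))$ contributes a factor which fits the $\cH$-bimodule structure of $R_{x\varepsilon}$. The vanishing on $s(A)$ and $t(A)$ is straightforward: using $\Delta(s(a))=s(a)\tensor{A}1_{\cH}$ and $\Delta(t(a))=1_{\cH}\tensor{A}t(a)$ together with $\delta(1_{\cH})=0=\delta\circ t=\delta\circ s$ kills the expression in both cases. To check the Leibniz rule with respect to the augmentation $gs\varepsilon$, I would expand $(\delta\cdot g)(uv)$ using that $\Delta$ is an algebra map, apply the Leibniz rule for $\delta$ with augmentation $gt\varepsilon$, and regroup the terms using the commutativity of $R$ and $\cH$. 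The extra factors $g(u_1)\,gt\varepsilon(u_2)\,g(\cS(u_3))$ that appear collapse to $gs\varepsilon(u)$ by the counit identity $u_1t(\varepsilon(u_2))=u$ (reindexed via coassociativity), the antipode axiom $u_1\cS(u_2)=s\varepsilon(u)$, and the fact that $g$ is an algebra map. The verification that $\Pir(\delta\cdot g)=gs$ is then read off from the fact that the target algebra is $R_{gs\varepsilon}$.

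For the second point, the unit axiom $\delta\cdot (x\varepsilon)=\delta$ (with $x=\Pir(\delta)$) follows by substituting $g=x\varepsilon$ in \eqref{Eq:conj}, using $\varepsilon\circ\cS=\varepsilon$ and the double counit identity $s(\varepsilon(u_1))\,u_2\,t(\varepsilon(u_3))=u$ (combined with the $A$-bilinearity of $\delta$ inherited from $\delta\circ s=0=\delta\circ t$). For associativity, assume $g,g'\in\hH_1(R)$ with $gs=g't$ so that the composition $g.g'$ of Section 2 is defined, and that $\Pir(\delta)=gt=(g.g')t$. Expanding both $((\delta\cdot g)\cdot g')(u)$ and $(\delta\cdot (g.g'))(u)$ by coassociativity yields five-fold sums in $u$; the first gives $g'(u_1)g(u_2)\delta(u_3)g(\cS(u_4))g'(\cS(u_5))$ directly, while the second uses the anti-cocommutativity of $\cS$ (Remark \ref{rem:Santicocomm}(2)) to turn $\cS(u_3)_{1}\tensor{A}\cS(u_3)_{2}$ into $\cS(u_5)\tensor{A}\cS(u_4)$ and gives the same monomial up to the order of the two trailing factors. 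These coincide since $R$ is commutative.

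Naturality is essentially automatic: given a morphism of commutative algebras $f:R\to T$, every operation appearing in \eqref{Eq:conj}, namely the algebra structure of $T$, the coproduct of $\cH$, and the antipode, commutes with post-composition by $f$; so $\xX^{\ell}(f)(\delta\cdot g)=\xX^{\ell}(f)(\delta)\cdot\hHa(f)(g)$ and the compatibility $\Pir$ between the two $\hH$-sets is preserved. The main technical obstacle is the Leibniz computation in the first step, because one has to track both the left and right $A$-module structures on $R_{x\varepsilon}$ and $R_{gs\varepsilon}$ and collapse several occurrences of source, target, counit and antipode; nevertheless, the commutativity of $\cH$ and $R$ makes all reorderings legitimate and the axioms of a commutative Hopf algebroid provide precisely the identities needed to close the computation.
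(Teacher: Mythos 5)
Your proposal is correct and follows the same route as the paper, which simply declares the first claim ``straightforward''/``clear'' and only spells out the naturality diagram; you supply exactly the verifications being elided (well-definedness over $\tensor{A}$, the Leibniz rule via the counit and antipode identities, the unit axiom via $\varepsilon\circ\cS=\varepsilon$, and associativity via the anti-cocommutativity of $\cS$ plus commutativity of $R$), and each of these checks goes through as you describe. No gaps.
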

\begin{proof}
It is straightforward to show that $\delta\, . \,g$ belongs to $\xX(R)$ with projection $gs \in A(R)$, where $x=\Pir(\delta)=gt$. The rest of the first claim is clear.

Now let $f:R\to S$ be an algebra map. If $\delta \in \xX^{\Sscript{\ell}}(R)$ with $\Pir(\delta)=x$, then clearly $\Pis(\xX(f)(\delta))=fx$. On the other hand, the following diagram
$$
\xymatrix@C=50pt@R=25pt{  \xX^{\Sscript{\ell}}(R)\, \due{\times}{\Sscript{\Pir}}{\Sscript{t}}\,  \hHa(R) \ar@{->}^{}[rr]  \ar@{->}|-{\Sscript{\xX^{\Sscript{\ell}}(f)\, \due{\times}{\Sscript{\Pir}}{\Sscript{t}}\,  \hHa(f)}}[d] & &  \xX^{\Sscript{\ell}}(R)  \ar@{->}|-{\xX^{\Sscript{\ell}}(f)}[d] \\ \xX^{\Sscript{\ell}}(S)\, \due{\times}{\Sscript{\Pir}}{\Sscript{t}}\,  \hHa(S) \ar@{->}^{}[rr] & &  \xX^{\Sscript{\ell}}(S) }
$$
commutes, which means that $\xX^{\Sscript{\ell}}(f)$ is a right $\hH$-equivariant map. This shows that the natural transformation $ \xX^{\Sscript{\ell}}\, \due{\times}{\Sscript{\pi}}{\Sscript{t}}\,  \hHa  \to \xX^{\Sscript{\ell}}$ defines effectively a right $\hH$-action on $\xX^{\Sscript{\ell}}$.
\end{proof}

Viewing $\xX$ as a bundle over $\hHo$, one can define its module of sections as follows
\begin{equation}\label{Eq:OmegaX}
\Gamma(\xX)\, \, =\,\, \Big\{ \tau \in \Nat{\hHo, \xX}  \mid \pi \circ \tau =id  \Big\}.
\end{equation}
This is a vector space, whose operations are defined fiberwise.

On the other hand, for any algebra $R \in \Calg$, we may consider the following  bundle
\begin{equation}\label{Eq:yY}
\xymatrix{ \yY(R)\,=\, \underset{g \, \in \, \hHa(R)}{\biguplus} \Derk{s}{\cH}{\Rg} \ar@{->}^-{\Pir}[rr] & & \hHa(R).}
\end{equation}
When $R$ runs in $\Calg$, $\yY$  gives a functor, and one can consider as before its vector space of sections $\Gamma(\yY)$.

\begin{proposition}\label{prop:OXY}
Let $\Gamma(\xX)$ and $\Gamma(\yY)$ be as above. Then we have the following properties:
\begin{enumerate}[label=({\alph*}),ref=\emph{(\alph*)}]
\item\label{item:a} For any algebra map  $f:R \to S$ in $\Calg$, any object $x \in \hHo(R)$ and any $\tau \in \Gamma(\xX)$, we have:
\begin{equation}\label{Eq:a}
x \circ \taua(\ida) \,\, =\,\, \taur(x) , \qquad f \circ \taur(x) \,\,=\,\, \taus(fx).
\end{equation}
In particular,  we have
\begin{equation}\label{Eq:ap}
\varepsilon \circ \tauh(t)\,\,=\,\, \taua(\ida) \quad \text{and} \quad x\varepsilon\Big( \uo \tauh(t)(\uoo) \Big)\,\,=\,\, \taur(x)(u),
\end{equation}
for every $x \in\hHo(R)$ and $u \in \cH$.
\item\label{item:b} Both $\Gamma(\xX)$ and $\Gamma(\yY)$ admit a structure of $A$-module given as follows:
\begin{equation}\label{Eq:AT}
(a\,. \tau)_{\Sscript{R}}(x)\,\,=\,\, x(a) \, . \taur(x),\qquad (a\,. \alpha)_{\Sscript{R}}(g)\,\,=\,\, gt(a)\, .\alphar(g),
\end{equation}
for $R$ in $\Calg$, $x \in \hHo (R)$, $g \in \hHa (R)$ and for every $\tau \in \Gamma(\xX)$, $\alpha \in \Gamma(\yY)$ and  $a \in A$.
\item\label{item:c} The following map
$$
\xymatrix@R=0pt{  \Gamma(\xX) \ar@{->}^{\Sigma}[rr]  & & \Gamma(\yY) \\ \tau \ar@{|->}[rr] & &  {\left[ \arraycolsep=2pt\begin{array}{ccrcll} \Sigtr &: & \hHa(R) & \longrightarrow & \xX(R) & \\ && g & \longmapsto & \Sigtr(g):& H \to \Rg \\ &&&&&u  \mapsto  g(\uo)\taur(gt)(\uoo) \end{array} \right],   }}
$$
where  $R \in \Calg$ and $g\in\hHa(R)$, is a monomorphism of $A$-modules. Thus, any section of $\xX$ extends uniquely to a section of $\yY$.
\end{enumerate}
\end{proposition}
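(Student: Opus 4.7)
My plan is to prove each of the three items in turn, with (a) feeding into (c).

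For part \ref{item:a}, I will derive each identity purely from the naturality square of $\tau\in\Gamma(\xX)$ combined with the observation that $\xX(f)$ acts on fibers by post-composition with $f$. Explicitly, applying naturality to the algebra maps $x\colon A\to R$ and $f\colon R\to S$ (regarded as morphisms in $\Calg$) yields the two main identities, once one notes that $\hHo(x)(\ida)=x\circ\ida=x$ and $\hHo(f)(x)=fx$. For the ``in particular'' part, I will apply naturality to $\varepsilon\colon\cH\to A$ (giving $\varepsilon\circ\tauh(t)=\taua(\ida)$ since $\hHo(\varepsilon)(t)=\varepsilon\circ t=\ida$) and to $x\varepsilon\colon\cH\to R$ (giving $x\varepsilon\circ\tauh(t)=\taur(x)$ since $\hHo(x\varepsilon)(t)=x$). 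The last equality $x\varepsilon(\uo\tauh(t)(\uoo))=\taur(x)(u)$ will then follow by combining this with the counit identity $s(\varepsilon(\uo))\uoo=u$ together with the fact that $\taur(x)\circ s=0$ and $\taur(x)$ is a derivation, which together imply $x\varepsilon(\uo)\taur(x)(\uoo)=\taur(x)(s\varepsilon(\uo)\uoo)=\taur(x)(u)$.

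For part \ref{item:b}, once the scalars $x(a)$ and $gt(a)$ have been interpreted inside $R$, $a\,.\,\tau$ and $a\,.\,\alpha$ clearly land in the appropriate fibers (using that $\Derk{s}{\cH}{R_{x\varepsilon}}$ and $\Derk{s}{\cH}{R_g}$ are right $\cH$-modules whose action restricts to scalar multiplication by $R$ via $x\varepsilon$ respectively $g$). Naturality of $a\,.\,\tau$ is a short computation: $\xX(f)\bigl((a\,.\,\tau)_R(x)\bigr)=f\circ\bigl(x(a)\taur(x)\bigr)=(fx)(a)\taus(fx)=(a\,.\,\tau)_S\bigl(\hHo(f)(x)\bigr)$, and the same recipe works on the $\yY$-side. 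The axioms $1\,.\,\tau=\tau$, $(ab)\,.\,\tau=a\,.\,(b\,.\,\tau)$ and additivity are then immediate.

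For part \ref{item:c}, the work splits into four checks. First, well-definedness: I verify that $\Sigtr(g)\in\Derk{s}{\cH}{R_g}$ by computing $\Sigtr(g)(s(a))=0$ (using $\Delta(s(a))=s(a)\tensor{A}1$ and $\taur(gt)\circ s=0$) and by checking the derivation identity $\Sigtr(g)(uv)=\Sigtr(g)(u)g(v)+g(u)\Sigtr(g)(v)$: expanding $\Delta(uv)=\uo\vo\tensor{A}\uoo\voo$, using that $\taur(gt)$ is a derivation with target algebra structure induced by $gt\varepsilon$, and absorbing the factors $g(\vo)gt\varepsilon(\voo)=g(v)$ and $g(\uo)gt\varepsilon(\uoo)=g(u)$ via the counit property of $\Delta$. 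Second, naturality of $\Sigtr$ in $R$ follows from the naturality of $\tau$ together with $\yY(f)\bigl(\Sigtr(g)\bigr)=f\circ\Sigtr(g)$ and $\taus(fgt)=f\circ\taur(gt)$. Third, $A$-linearity is immediate since $(a\,.\,\tau)_R(gt)(\uoo)=gt(a)\taur(gt)(\uoo)$ commutes past $g(\uo)\in R$. Finally, for injectivity, I specialise $g=x\varepsilon$ for $x\in A(R)\subseteq \cH(R)$ and observe that the argument from \ref{item:a} yields $\Sigtr(x\varepsilon)(u)=x\varepsilon(\uo)\taur(x)(\uoo)=\taur(x)(u)$; hence $\Sigma(\tau)=0$ forces $\taur(x)=0$ for every $R$ and every $x$, i.e.\ $\tau=0$. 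The only mildly delicate step is the derivation identity for $\Sigtr(g)$, where one must carefully track which $\cH$-module structure on $R$ is operative (via $g$ on the $\yY$-side versus via $gt\varepsilon$ on the $\xX$-side) to see that the two derivation laws fit together correctly through $\Delta$.
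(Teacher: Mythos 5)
Your proof is correct and follows essentially the same route as the paper: part \ref{item:a} from naturality of $\tau$ applied to the algebra maps $x$, $f$, $\varepsilon$ and $x\varepsilon$; part \ref{item:b} by direct verification; and part \ref{item:c} by checking the derivation law for $\Sigtr(g)$ and proving injectivity by specialising $g=x\varepsilon$ and using counitality together with $\taur(x)\circ s=0$, which is exactly the paper's argument. You merely spell out the steps the paper dismisses as ``straightforward'' or ``easily checked'', and you do so correctly.
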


\begin{proof}
Part \ref{item:a} follows from naturality of $\tau$. Part \ref{item:b} is straightforward. As for part \ref{item:c}, let us first check that $\Sigma$ is a well-defined map. Take $\tau \in \Gamax$, $g \in \hHa(R)$ and set $x=gt$. By using the fact that $\taur(x)$ is a derivation, one easily checks that $\Sigtr(g)\in \Derk{s}{H}{\Rg}$.
Assume we are given $\tau, \tau' \in \Gamax$ such that $\Sigma(\tau)=\Sigma(\tau')$. Then, for every $g \in \hHa(R)$, we have that
$$
g(\uo)\taur(gt)(\uoo)\,\,=\,\, g(\uo)\ptaur(gt)(\uoo)
$$
for every $u \in\cH$. Now, take an arbitrary $x'\in \hHo(R)$ and set $g=x'\varepsilon$. Hence, for every $u \in \cH$, we obtain
\begin{multline*}
x'\varepsilon(\uo)\taur(x')(\uoo)= x'\varepsilon(\uo)\ptaur(x')(\uoo) \, \Rightarrow \, \taur(x')(s\varepsilon(\uo)\uoo)= \ptaur(x')(s\varepsilon(\uo)\uoo) \Rightarrow \, \ptaur(x')= \taur(x').
\end{multline*}
Therefore $\tau=\tau'$ and $\Sigma$ is injective. The fact that $\Sigma$ is $A$-linear is immediate and this finishes the proof.
\end{proof}

\begin{proposition}\label{prop:Gamma}
Let $(A,\cH)$ be a Hopf algebroid with associated presheaf $\hH$ and consider the  bundle $(\xX, \pi)$ as given in \eqref{Eq:xX}.
Then we have a bijection
$$
\nabla: \Gamax \longrightarrow \Derk{s}{\cH}{\Aep},\quad \Big( \tau \longmapsto \taua(\ida) \Big).
$$
In particular, the $A$-module of global sections $\Gamma(\xX)$ admits a unique structure of Lie-Rinehart algebra in such a way that $\nabla$ becomes an isomorphism of Lie-Rinehart algebras. Explicitly, for any $R \in \Calg$ the bracket $[\tau,\tau']_{\Sscript{R}}: \hHo(R) \to \xX(R)$ and the anchor $\omega'$ are respectively given by
$$
\xymatrix@R=0pt{  \cH \ar@{->}[r]^-{[\tau,\tau']_{\Sscript{R}} (x)} &  \Rxe \\ u \ar@{|->}[r] & \taur(x)\Big( \uo\, \ptauh(t)(\uoo) \Big) -  \ptaur(x)\Big( \uo\, \tauh(t)(\uoo) \Big) }
\qquad
\xymatrix@R=0pt{  \Gamax \ar@{->}^-{\omega'}[rr] & & \Ders{\K}{A} \\  \tau \ar@{|->}[rr]  & & \taua(\ida)\circ t  }
$$
\end{proposition}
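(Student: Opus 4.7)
The plan is to prove the bijectivity of $\nabla$ first, then transport the Lie-Rinehart structure on $\Derk{s}{\cH}{\Aep}$ from Proposition \ref{prop:LR} across $\nabla$, and finally match the resulting abstract bracket and anchor with the explicit formulas displayed.

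For well-definedness, note $\nabla(\tau)\in \Derk{s}{\cH}{A_{\id_A\circ\varepsilon}}=\Derk{s}{\cH}{\Aep}$. Injectivity is immediate from the first identity in \eqref{Eq:a}, which forces $\tau_R(x)=x\circ \tau_A(\id_A)$ for every $R$ and $x\in A(R)$; hence $\tau$ is completely reconstructed from $\nabla(\tau)$. For surjectivity, given $\delta \in \Derk{s}{\cH}{\Aep}$ I would set $\tau_R(x):=x\circ\delta$ for every $R \in \Calg$ and $x\in A(R)$. A short verification shows $\tau_R(x)\in \Derk{s}{\cH}{R_{x\varepsilon}}$ (it kills $s$ because $\delta$ does, and it satisfies the derivation rule with values in $R_{x\varepsilon}$ since $\delta$ has values in $\Aep$ and $x$ is an algebra map), while naturality in $R$ follows from the second identity in \eqref{Eq:a}. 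By construction $\tau_A(\id_A)=\delta$, so $\nabla$ is onto. Moreover, for $a\in A$, $\nabla(a\cdot\tau)=(a\cdot\tau)_A(\id_A)=\id_A(a)\,\tau_A(\id_A)=a\,\delta$, so $\nabla$ is $A$-linear with respect to the structures \eqref{Eq:AT} and \eqref{Eq:LR}.

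Once $\nabla$ is established as an $A$-module isomorphism, there is a unique Lie-Rinehart structure on $\Gamma(\xX)$ making $\nabla$ an isomorphism of Lie-Rinehart algebras. To verify the formulas it is convenient to use the explicit form of $\nabla^{-1}$ obtained in the surjectivity step: $\tau_R(x)(u)=x(\delta(u))$ whenever $\delta=\nabla(\tau)$. For the anchor this gives $\omega'(\tau)(a)=\omega(\delta)(a)=\delta(t(a))=\tau_A(\id_A)(t(a))$, matching the displayed expression. For the bracket I would first observe the key identity
\begin{equation*}
\tau'_\cH(t)(u)=t\bigl(\delta'(u)\bigr) \qquad \text{for every } u\in\cH,
\end{equation*}
which is nothing but the formula $\tau'_R(x)=x\circ\delta'$ specialised to $R=\cH$ and $x=t\in A(\cH)$. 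Using this, one computes
\begin{equation*}
\tau_R(x)\bigl(u_1\,\tau'_\cH(t)(u_2)\bigr)=\tau_R(x)\bigl(u_1\,t(\delta'(u_2))\bigr)=x\bigl(\delta(u_1\,t(\delta'(u_2)))\bigr),
\end{equation*}
and symmetrically for the term with $\tau$ and $\tau'$ interchanged. Subtracting and invoking \eqref{Eq:bracket} yields $[\tau,\tau']_R(x)(u)=x\bigl([\delta,\delta'](u)\bigr)$, which is precisely the transported bracket.

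The only delicate point is the interpretation of $\tau_\cH(t)(u_2)$: a priori it is an element of $\cH$ produced by the natural transformation $\tau$ evaluated at the algebra $\cH$ and the point $t\in A(\cH)$, whereas $t(\delta(u_2))$ is the $t$-lift of a scalar in $A$. The identification of these two expressions via the formula for $\nabla^{-1}$ is the bridge that converts the abstract bracket on $\Derk{s}{\cH}{\Aep}$ into the explicit expression in the statement, and with it in place all remaining verifications reduce to substitutions.
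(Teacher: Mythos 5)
Your proposal is correct and follows essentially the same route as the paper: the paper obtains the bijection $\nabla$ as an instance of Yoneda's Lemma applied to the representable functor $\hHo=\Calg(A,-)$ and then restricts to sections, whereas you unwind Yoneda by hand via the explicit inverse $\tau_{\Sscript{R}}(x)=x\circ\delta$; the transport of the Lie--Rinehart structure and the verification of the bracket and anchor formulas through the identity $\ptauh(t)=t\circ\delta'$ and \eqref{Eq:bracket} coincide with the paper's computation using \eqref{Eq:a}.
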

\begin{proof}
In light of Yoneda's Lemma, we have a bijection $\nabla:\Nat{\hHo,\xX}\cong \xX(A)$ sending every natural transformation $\eta\in\Nat{\hHo,\xX}$ to $\nabla(\eta):=\eta_A(\id{A})$. It turns out that this bijection restricts to $\nabla:\Gamax \cong \xX'(A)$ where $\xX'(A)=\left\{\delta\in\xX(A)\mid \pi_A(\delta)=\ida\right\}$. By definition of $\pi_A$, we have $\pi_A(\delta)=\ida$ for every $\delta\in\Derk{s}{\cH}{\Aep}$ so that $\xX'(A)=\Derk{s}{\cH}{\Aep}$. This induces on $\Gamax$ the given Lie-Rinehart algebra structure since for $\tau,\sigma\in \Gamax$, $R\in\Calg, x\in\hHo(R),a\in A$ and $u\in\cH$ we have
\begin{align*}
\Big([\tau,\sigma]_R(x)\Big)(u) & = \bigg(\nabla^{-1}\Big(\left[\nabla(\tau),\nabla(\sigma)\right]\Big)_R(x)\bigg)(u) = \bigg(\nabla^{-1}\Big(\left[\taua(\ida),\sigma_A(\ida)\right]\Big)_R(x)\bigg)(u) = \Big(x\circ [\tau_A(\id{A}),\sigma_A(\id{A})]\Big)(u) \\
 & \stackrel{\eqref{Eq:bracket}}{=} x\bigg(\tau_A(\id{A})\Big(\uo t\big(\sigma_A(\id{A})(\uoo)\big)\Big)\bigg) - x\bigg(\sigma_A(\id{A})\Big(\uo t\big(\tau_A(\id{A})(\uoo)\big)\Big)\bigg) \\
 & \stackrel{\eqref{Eq:a}}{=} \tau_R(x)\Big(\uo \sigma_{\cH}(t)(\uoo)\Big) - \sigma_R(x)\Big(\uo \tau_{\cH}(t)(\uoo)\Big), \\
\omega'(\tau)(a) & = \omega(\nabla(\tau))(a) = \omega\left(\tau_A(\id{A})\right)(a) \stackrel{\eqref{eq:LR}}{=} \tau_A(\id{A})(t(a)).
\end{align*}
This concludes the proof.
\end{proof}

\begin{remark}\label{rem:Gammaell}
By mimicking Proposition \ref{prop:Gamma}, we get a bijection $\nabla^{\ell}:\Gamma(\xX^{\Sscript{\ell}})\to \Derk{s,\, t}{\cH}{\Aep}$, induced by $\nabla$ of the same proposition, where $\Gamma(\xX^{\Sscript{\ell}})$ is the $A$-module of global sections of the bundle $\xX^{\Sscript{\ell}}$ and $\Derk{s,\, t}{\cH}{\Aep}$ is the $A$-module of $\Bbbk$-algebra derivations $\delta: \cH \to \Aep$ such that $\delta s = \delta t=0$, which  in turns is the kernel of the anchor map given in equation \eqref{eq:LR}. Consider the so-called total isotropy Hopf algebroid $\cH^{\ell}:= \cH /\langle s-t \rangle$ of $\cH$ and denote by $\pi:H\to \cH^{\ell}$ the canonical projection\footnote{Here $\langle s-t \rangle$ stands for the Hopf ideal generated by the set $\{ s(a) -t(a)\}_{\Sscript{a\,  \in \, A}}$. Moreover the Hopf $A$-algebra $\cH^{\ell}$ is considered as a Hopf algebroid with base algebra $A$ with source equal to the target.}.
Note that given a symmetric $A$-bimodule $M$ (\ie $am=ma$ for all $a\in A,m\in M$) we have an isomorphism $\Hom{A-}{H^\ell}{M} = \Hom{A-A}{H^\ell}{M} \to \Hom{A-A}{H}{M}$ given by pre-composition by $\pi$. This isomorphism induces an isomorphism $\Derk{s}{\cH^\ell}{M_{\varepsilon^\ell}}\cong\Derk{s,\, t}{\cH}{M_\varepsilon}$. As a consequence, $\Gamma(\xX^{\Sscript{\ell}})$ is isomorphic to the Lie-Rinehart algebra $\lL(\cH^{\ell})$ of $\cH^{\ell}$.  If $x\in A(\Bbbk)$, then the fiber $\xX^{\Sscript{\ell}}(\Bbbk)_x=\Derk{s,\, t}{\cH}{\K_{x\varepsilon}}$ of the bundle $\xX^{\Sscript{\ell}}(\Bbbk)$ coincides by \eqref{Eq:LHx} with the isotropy Lie algebra $\lL(\cH)_x$ of $\lL(\cH)$. On the other hand, since $\Derk{s,\, t}{\cH}{\K_{x\varepsilon}} \cong \Derk{s}{\cH^\ell}{\K_{x\varepsilon}} = \lL(\cH^\ell)_x$ we get that $\xX^{\Sscript{\ell}}(\Bbbk)_x \cong \lL(\cH^\ell)_x$.
\end{remark}

\begin{remark}
Note that the isomorphism $\nabla$ of Proposition \ref{prop:Gamma} can be adapted to get an isomorphism $\nabla':\Gamay \to \Derk{s}{\cH}{\cH}$. Via these isomorphisms, one can see that the morphism $\Sigma$ from Proposition \ref{prop:OXY} corresponds to a morphism $\Derk{s}{\cH}{\Aep}\to \Derk{s}{\cH}{\cH}$, whose corestriction to its image is $\theta'$ of Lemma \ref{lema:LR}. This makes also clear why $\Sigma$ is injective.
\end{remark}

%%%%%%%%%%%%%%%%%%%%%%%%%%%%%%%%

\subsection{Units of the adjunction between differentiation and integration}\label{ssec:UAdj}
We give here an  explicit description of the unit and  counit of the adjunction proved in \S\ref{sec:adjunctions}.

\begin{proposition}\label{prop:Theta}
Let $(A, L)$ be  a Lie-Rinehart algebra. Then there is a natural transformation
\begin{equation}\label{eq:Theta}
\Theta_{\Sscript{L}}: L \longrightarrow \Derk{s}{\VL^{\circ}}{\Aep}=\lL\iI(L),\; \Big( X \longmapsto \Big[ z \mapsto -\zeta(z)\big(\iota_{\Sscript{L}}(X)\big) \Big]  \Big)
\end{equation}
of Lie-Rinehart algebras.  Moreover, this morphism factors as follows and leads to
\begin{equation}\label{Eq:Triangle}
\begin{gathered}
\xymatrix{   L  \ar@{->}^-{\Theta_{L}}[rr]   \ar@{->}_-{\Theta'_{L}}[rrd]  &&   \Derk{s}{\VL^{\circ}}{\Aep}  \\ &&  \Derk{s}{\rsaft{\VL}}{\Aep}  \ar@{->}_-{\lL(\hat{\zeta})}[u] }
\end{gathered}
\end{equation}
a commutative diagram of Lie-Rinehart algebras, where $\Theta'_{\Sscript{L}}$ is the map which corresponds, by using the bijection of Lemma \ref{lema:dataI}, to the $A$-ring morphism $\fk{i}: \VL \to {}^{*}(\rsaft{\VL})$ defined in equation \eqref{Eq:I}.
\end{proposition}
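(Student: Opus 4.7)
The plan is to construct $\Theta'_L$ first (as it fits directly into the adjunction framework just established) and then obtain $\Theta_L$ essentially for free by composing with $\lL(\hat{\zeta})$. Concretely, I would apply Lemma \ref{lema:data} and Lemma \ref{lema:dataI} in sequence to the identity morphism $\id_{\rsaft{\VL}}$ of the commutative Hopf algebroid $(A,\rsaft{\VL})$. Lemma \ref{lema:data} translates $\id_{\rsaft{\VL}}$ into the $A$-ring map $\fk{i}:\VL\to {}^\ast(\rsaft{\VL})$ of equation \eqref{Eq:I} (this is exactly the universal map of Remark \ref{rem:I}). Lemma \ref{lema:dataI} then converts $\fk{i}$ into a morphism of Lie-Rinehart algebras $\Theta'_L:L\to \lL(\rsaft{\VL})$, sending $X\mapsto [y\mapsto -\fk{i}(\iota_L(X))(y)]=[y\mapsto -\xi(y)(\iota_L(X))]$. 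Since Lemma \ref{lema:dataI} delivers a map of Lie-Rinehart algebras automatically, no further computations are needed for $\Theta'_L$.

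Next I would verify commutativity of triangle \eqref{Eq:Triangle}. For $X\in L$ and $z\in\VL^{\circ}$, using the relation $\xi\circ\hat{\zeta}=\zeta$ from \eqref{Eq:zetahat} and the definition \eqref{eq:defL} of $\lL$ on morphisms:
\begin{equation*}
\lL(\hat{\zeta})(\Theta'_L(X))(z) \;=\; \Theta'_L(X)\bigl(\hat{\zeta}(z)\bigr) \;=\; -\xi\bigl(\hat{\zeta}(z)\bigr)\bigl(\iota_L(X)\bigr) \;=\; -\zeta(z)\bigl(\iota_L(X)\bigr) \;=\; \Theta_L(X)(z).
\end{equation*}
This identifies $\Theta_L=\lL(\hat{\zeta})\circ\Theta'_L$. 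Because $\hat{\zeta}$ is a morphism of commutative Hopf algebroids (Theorem \ref{thm:TeoremoneA}) and $\lL$ is a functor (Proposition \ref{prop:L}), $\lL(\hat{\zeta})$ is a morphism of Lie-Rinehart algebras; hence $\Theta_L$ is so as well, as a composition of two such morphisms. In particular, $\Theta_L$ is well-defined as a map into $\Derk{s}{\VL^{\circ}}{\Aep}$, is $A$-linear, preserves brackets, and is compatible with the anchors, without any further hands-on computation.

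Finally, for naturality of $\Theta$ and $\Theta'$ in $L$, I would take a morphism $f:L\to K$ of Lie-Rinehart algebras and invoke naturality of the correspondences in Lemma \ref{lema:data} and Lemma \ref{lema:dataI}, together with functoriality of $\cV_A(-)$, $\rsaft{(-)}$ and $(-)^{\circ}$. Concretely, $\cV_A(f)$ induces $\rsaft{\cV_A(f)}:\rsaft{\cV_A(K)}\to \rsaft{\cV_A(L)}$, and a straightforward unwinding shows that $\lL(\rsaft{\cV_A(f)})\circ \Theta'_L=\Theta'_K\circ f$; the analogous identity for $\Theta$ follows by applying $\lL(\hat{\zeta})$ and using the naturality of $\hat{\zeta}$ from Example \ref{exam:zetahat}.

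The only place where a genuine obstacle could appear is the translation step in Lemma \ref{lema:dataI}: one must be sure that the $A$-ring map $\fk{i}$ of \eqref{Eq:I} actually satisfies the two relations in \eqref{eq:simildag}, since those are the input to the bijection producing $\Theta'_L$. But this is precisely what Remark \ref{rem:I} records: $\fk{i}$ arises from the identity morphism of commutative Hopf algebroids via Lemma \ref{lema:data}, so the compatibilities hold by construction. Once this point is acknowledged, the rest of the proof is purely formal: $\Theta'_L$ is a morphism of Lie-Rinehart algebras by Lemma \ref{lema:dataI}, $\Theta_L$ factors through it via \eqref{Eq:zetahat}, and both are natural by the naturality of the adjunction data.
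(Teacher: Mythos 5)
Your proposal is correct and follows essentially the same route as the paper: define $\Theta'_L$ via Lemmas \ref{lema:data} and \ref{lema:dataI} applied to $\id_{\rsaft{\VL}}$ (i.e.~to $\fk{i}$), deduce that it is a morphism of Lie-Rinehart algebras for free, set $\Theta_L=\lL(\hat{\zeta})\circ\Theta'_L$, and identify the explicit formula using $\xi\circ\hat{\zeta}=\zeta$. Your additional sketch of naturality goes slightly beyond what the paper's proof spells out, but the core argument is the same.
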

\begin{proof}
By Lemma \ref{lema:dataI}, $\Theta'_{\Sscript{L}}$ is a morphism of Lie-Rinehart algebras. By Proposition \ref{prop:L}, we know that $\lL(\hat{\zeta})$ is Lie-Rinehart as well. As a consequence, $\Theta_{\Sscript{L}}:=\lL(\hat{\zeta})\circ\Theta'_{\Sscript{L}}$ is a morphism of Lie-Rinehart algebras.
It remains to check that it behaves as in \eqref{eq:Theta}. By using \ref{item:6.4e} of Lemma \ref{lema:dataI},  we know that $\Theta'_{\Sscript{L}}=\widetilde{\fk{i}}$. Therefore, for any $X \in L$, we have $\Theta_{\Sscript{L}} (X) = \lL(\hat{\zeta}) \left( \Theta'_{\Sscript{L}} (X)\right) =   \Theta'_{\Sscript{L}} (X)  \circ \hat{\zeta} = \widetilde{\fk{i}}(X) \circ \hat{\zeta}=- \fk{i}(\iota_{\Sscript{L}}(X)) \circ \hat{\zeta} $ and so, by \eqref{Eq:I}, we get $\Theta_{\Sscript{L}} (X)(z)=-\Big(\xi \circ \hat{\zeta} \Big)\big( z \big) \big(\iota_{\Sscript{L}}(X) \big) = -\zeta\big( z \big) \big(\iota_{\Sscript{L}}(X) \big)$.
\end{proof}

Now, consider $(A, \cH)$ a commutative Hopf algebroid and let $\cV_{\Sscript{A}}(\lL(\cH))$ be the universal algebroid of the Lie-Rinehart algebra $(A, \lL(\cH))$ of derivations of $\cH$. Take an object $(V, \varrho_{\Sscript{V}}) $ in the full subcategory $\Acom{\cH}$ (that is, a right $\cH$-comodule such that $V_{\Sscript{A}}$ is finitely generated and projective\footnote{Actually this assumption is not needed for the next construction.}), then we have a map
\begin{equation}\label{Eq:LV}
\begin{gathered}
\xymatrix@R=0pt@C=40pt{  \lambda^{\Sscript{V}}:  \lL(\cH)  \ar@{->}[rr] & &  \Endk{V} \\ \delta   \ar@{|->}[rr] & & \Big[ v \mapsto -  \vi \, \delta(\vii) \Big].   }
\end{gathered}
\end{equation}

\begin{proposition}\label{prop:VL}
Let $(A, \cH)$ be as above. Then the map \eqref{Eq:LV} induces a structure of right $\cV_{\Sscript{A}}(\lL(\cH))$-module on $V$. Moreover, this establishes a symmetric monoidal  functor
$$
\nabla: \Acom{\cH} \longrightarrow \Amod{\cV_{\Sscript{A}}(\lL(\cH))}, \quad \Big( (V, \varrho_{\Sscript{V}})    \longrightarrow (V, \lambda^{\Sscript{V}}) \Big)
$$
which commutes with the fibers functor, and so we obtain
$$
\rR(\nabla): \infcomatrix{\Acom{\cH}}{\Sigma}  \longrightarrow \infcomatrix{\Amod{\cV_{\Sscript{A}}(\lL(\cH))}}{\Sigma}= \cV_{\Sscript{A}}(\lL(\cH))^{\circ}.
$$
a morphism of commutative Hopf algebroids. Furthermore, there is a natural transformation
$$
\xymatrix{ \Omega_{\cH}: \, \cH \ar@{->}^-{\can{}^{-1}}[rr] & & \infcomatrix{\Acom{\cH}}{\Sigma}
\ar@{->}^-{\rR(\nabla)}[rr] & & \infcomatrix{\Amod{\cV_{\Sscript{A}}(\lL(\cH))}}{\Sigma}= \cV_{\Sscript{A}}(\lL(\cH))^{\circ} }
$$
whenever $\cH$ is a Galois Hopf algebroid.
\end{proposition}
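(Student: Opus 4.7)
The plan is to prove Proposition \ref{prop:VL} in the natural order: first establishing the $\cV_A(\lL(\cH))$-module structure via the universal property of the universal enveloping Hopf algebroid, then checking monoidality, and finally invoking the functoriality of the Tannaka reconstruction process together with the Galois assumption.

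To equip $V$ with a right $\cV_A(L)$-module structure (where $L:=\lL(\cH)$), I would use Remark \ref{rem:LUEA-mod} and build an algebra map $\Phi:\cV_A(L)\to \Endk{V}^{\mathrm{op}}$ via the universal property of $\cV_A(L)$. Define $\phi_A:A\to\Endk{V}^{\mathrm{op}}$ by $\phi_A(a)(v)=va$ (which is a bona fide algebra map into the \emph{opposite} ring because $A$ is commutative and acts on the right) and set $\phi_L:=\lambda^V$. Three compatibility conditions must then be verified. The first, $\phi_L(aX)=\phi_L(X)\cdot^{\mathrm{op}}\phi_A(a)$, is a direct consequence of the commutativity of $A$. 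The second is that $\phi_L$ is a Lie algebra map into $\Endk{V}^{\mathrm{op}}$: using the coassociativity of $\varrho_V$ one computes
\[
\lambda^V(X)\circ\lambda^V(Y)(v)=v_{(0)}\,X\bigl(v_{(1)_1}t(Y(v_{(1)_2}))\bigr),
\]
so that by the bracket formula \eqref{Eq:bracket} the commutator $[\lambda^V(X),\lambda^V(Y)]$ equals $-\lambda^V([X,Y])$, which is exactly $\lambda^V([X,Y])$ in the opposite algebra. The third condition, $\phi_L(X)\cdot^{\mathrm{op}}\phi_A(a)-\phi_A(a)\cdot^{\mathrm{op}}\phi_L(X)=\phi_A(\omega(X)(a))$, follows from right $A$-linearity of $\varrho_V$ together with the derivation property of $X$ applied to $v_{(1)}t(a)$ (which splits off a term $\varepsilon(v_{(1)})X(t(a))=\varepsilon(v_{(1)})\omega(X)(a)$).

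Functoriality is then automatic: any $\cH$-colinear $f:V\to W$ satisfies $f\circ\lambda^V(X)=\lambda^W(X)\circ f$ on generators, whence $f$ is $\cV_A(L)$-linear. For symmetric monoidality, I would check that $\lambda^{V\tensor{A}W}(X)$ agrees with the diagonal action induced by $\Delta(\iota_L(X))=\iota_L(X)\times_A 1+1\times_A\iota_L(X)$ (cf.\ \eqref{Eq:mono}). Writing out $\lambda^{V\tensor{A}W}(X)(v\tensor{A}w)=-v_{(0)}\tensor{A}w_{(0)}\,X(v_{(1)}w_{(1)})$ and using that $X\in\Derk{s}{\cH}{A_\varepsilon}$ splits $X(v_{(1)}w_{(1)})=X(v_{(1)})\varepsilon(w_{(1)})+\varepsilon(v_{(1)})X(w_{(1)})$, one obtains $\lambda^V(X)(v)\tensor{A}w+v\tensor{A}\lambda^W(X)(w)$, which is precisely the Leibniz-type diagonal action. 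Compatibility with the unit object is verified by the same token since $X$ acts on $A_\varepsilon$ as $-\omega(X)$ in both frameworks. That $\nabla$ commutes with the two forgetful functors into $\proj{A}$ is tautological, since $\nabla$ does not modify the underlying $A$-module.

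At this stage the functoriality of the Tannaka reconstruction process recalled in diagram \eqref{Eq:triangle} and formula \eqref{eq:Rmap} gives directly a morphism of commutative Hopf algebroids
\[
\rR(\nabla):\infcomatrix{\Acom{\cH}}{\Sigma}\longrightarrow \infcomatrix{\Amod{\cV_A(L)}}{\Sigma}=\cV_A(L)^{\circ}.
\]
When $\cH$ is Galois in the sense of Definition \ref{def:GaloisH}, the canonical map $\can{}:\infcomatrix{\Acom{\cH}}{\Sigma}\to\cH$ is invertible, and $\Omega_\cH:=\rR(\nabla)\circ\can{}^{-1}$ is the desired morphism. The naturality of $\Omega$ with respect to morphisms $\phi:\cH\to\cK$ of Galois Hopf algebroids over $A$ then follows from the naturality of both $\can{}$ and $\rR$ applied to the restriction functor $\phi_{\Sscript{*}}:\Acom{\cK}\to\Acom{\cH}$, which is compatible with $\nabla$ because the induced map $\lL_\phi:\lL(\cK)\to\lL(\cH)$ intertwines the actions $\lambda^{\phi_*(V)}(\lL_\phi(X))=\lambda^V(X)\circ\phi$ on generators. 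The most delicate step is the symmetric monoidality, where the Leibniz-type identity in step three rests on the precise interaction between the comultiplication of $\cH$, the derivation property of elements of $\lL(\cH)$ and the balance conditions defining $\tensor{A}$; once that is in place, the remaining verifications are essentially bookkeeping with Sweedler-type notation.
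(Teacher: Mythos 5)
Your proposal is correct and follows essentially the same route as the paper's proof: you verify the three compatibility conditions of \eqref{eq:compLRalg} for $\phi_A(a)=l_a$ and $\phi_L=\lambda^V$ valued in $\Endk{V}^{\mathrm{op}}$ (the paper phrases this as $\lambda^V$ being an anti-Lie map into $\Endk{V}$ together with $[\lambda(\delta),l_a]=l_{-\omega(\delta)(a)}$), check $\cV_A(\lL(\cH))$-linearity of colinear maps on the algebra generators, establish monoidality by the same Leibniz computation on $\Delta(v_{(1)}w_{(1)})$ and identify the unit action with $-\omega$, and then conclude via the functoriality of $\rR$ and the invertibility of $\can{}$ in the Galois case. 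The only slip is notational, in the final naturality paragraph: for a morphism $\phi:\cH\to\cK$ of corings the induced comodule functor goes $\Acom{\cH}\to\Acom{\cK}$ (corestriction along $\phi$), not the reverse, and the intertwining identity should read $\lambda^{\phi_*(V)}(X)=\lambda^{V}\bigl(\lL_{\phi}(X)\bigr)$ for $X\in\lL(\cK)$; with that correction your argument reproduces exactly the paper's pair of commutative diagrams.
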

\begin{proof}
Let us check first that $\lambda:=\lambda^{\Sscript{V}}$ is an anti-Lie algebra map. So take $v \in  V$ and $\delta, \delta' \in \lL(\cH)$. We compute on the one hand:

\begin{equation*}
\lambda([\delta, \delta'])(v) = -\vi\, [\delta, \delta'](\vii) = -\vi\, \Big( \delta\big( \vii \, t(\delta'(\vdos)) \big) - \delta'\big( \vii\, t(\delta(\vdos)) \big)   \Big),
\end{equation*}
and on the other hand,
\begin{align*}
[\lambda(\delta), \lambda(\delta')](v) &= \lambda(\delta)\big( \lambda(\delta')(v) \big) - \lambda(\delta' )\big( \lambda(\delta)(v) \big) =-\lambda(\delta)\big(\vi\,  \delta'(\vii) \big) + \lambda(\delta' )\big( \vi\,  \delta(\vii) \big) \\
&= \vi\, \Big( \delta\big( \vii \, t(\delta'(\vdos)) \big) - \delta'\big( \vii\, t(\delta(\vdos)) \big)   \Big),
\end{align*}
which implies that $\lambda([\delta, \delta'])= -[\lambda(\delta), \lambda(\delta')]$.  Let us denote by $l_{a}\in \Endk{V}$ the $A$-action on $V$ by $a$. So, for every $v \in V$,  $a \in A$ and $\delta \in \lL(\cH)$,  we have that
\begin{align*}
\Big( l_{a} \circ \lambda(\delta) - \lambda(\delta) \circ l_{a}\Big) (v) &= -a \vi \delta(\vii) + \vi
\delta(\vii t(a))  = -a \vi \delta(\vii) + \vi \delta(\vii ) a + \vi \varepsilon(\vii) \delta(t(a) ) \\
 & =  v \, \delta(t(a)) =  l_{\omega(\delta)(a)} (v),
\end{align*}
so that $\lambda(\delta) \circ l_{a} - l_{a} \circ \lambda(\delta) = l_{-\omega(\delta)(a)}$.  Summing up, $V$ is a right representation of $\lL(\cH)$ and, by the universal property of $\cV_A(\lL(\cH))$, this implies that there is an algebra map $\cV_{\Sscript{A}}(\lL(\cH)) \to \Endk{V}^{\text{op}}$ which makes of $V$ a right $\cV_A(\lL(\cH))$-module. This defines the functor $\nabla$ on the objects. On arrows this functor acts as the identity, that is, $\nabla(f)=f$ for every $\cH$-colinear map $f: V \to V'$. The fact that $f$ is $\cV_A(\lL(\cH))$-linear may be proved by mimicking the argument of the proof of the second claim in Lemma \ref{lema:Htilda}: take $B\subseteq \cV_A(\lL(\cH))$ such that $f(vb)=f(v)b$ for all $b\in B$ and show that $B=\cV_A(\lL(\cH))$.  Monoidality of $\nabla$  comes as follows:  both tensor products are modelled on $\tensor{A}$ and the subsequent computation
\begin{gather*}
\lambda^{V\tensor{A}W}(\delta)(v\tensor{A}w) = -(\vi\tensor{A}\wi)\, \delta(\vii\wii) = -(\vi\tensor{A}\wi) \, \Big(  \varepsilon(\vii)\delta(\wii) + \delta(\vii) \varepsilon(\wii) \Big)
\\ =  v\tensor{A}\lambda^{\Sscript{V}}(\delta)(w) + \lambda^{\Sscript{W}}(\delta)(v) \tensor{A} w  =  v\tensor{A} w\, \iota_{\Sscript{\lL(\cH)}}(\delta)  + v\, \iota_{\Sscript{\lL(\cH)}}(\delta) \tensor{A} w \, =\, (v\tensor{A}w) \, \Delta(\iota_{\Sscript{\lL(\cH)}}(\delta)).
\end{gather*}
shows that the action on $\nabla(V\tensor{A}W)$ coincides with the diagonal one. The identity object $A$ has the action $\lambda^{\Sscript{A}}: \lL(\cH) \to \Endk{A}^{\text{op}}$ given by
$\lambda(\delta)(a) = -\ao\, \delta(\aoo) = -\delta(t(a))$. Therefore $\lambda^{\Sscript{A}} = -\omega$, the anchor described in Proposition \ref{prop:LR}. The rest of the proof of the first statement follows from the construction performed in \S\ref{ssec:Tannaka}.

Lastly, the naturality of $\Omega$ is proved as follows. Given a morphism $\phi:\cH \to \cH'$ of Galois Hopf algebroids, then  on the one hand we have   a commutative diagram
$$
\xymatrix@R=18pt{   & \Acom{\cH'} \ar@{->}^-{\nabla'}[rrr] \ar@/_1pc/@{->}^-{\oO}[rddd]  & & & \Amod{\cV_{\Sscript{A}}(\lL(\cH'))} \ar@/^1.5pc/@{->}^-{\oO'}[llddd]  \\  \Acom{\cH} \ar@{->}^-{\nabla}[rrr] \ar@{->}^-{\phi_{*}}[ru] \ar@/_1pc/@{->}^-{\fomega}[rrdd] & & &  \Amod{\cV_{\Sscript{A}}(\lL(\cH))}  \ar@{->}^-{ \cV_A(\lL(\phi))_*}[ru] \ar@/^1pc/@{->}_-{\fomega'}[ldd] & \\ & & & &   \\ & & \proj{A} & & }
$$
which leads to a commutative diagram
$$
\xymatrix@C=60pt{    \ar@{->}^{\rR(\nabla)}[r]  \ar@{->}_-{\rR(\phi_{*})}[d]  \infcomatrix{\Acom{\cH}}{\Sigma}  &    \ar@{->}^{\iI\lL(\phi)_{*}=\rR(\cV_A(\lL(\phi))_*)}[d]   \iI\lL(\cH) \\      \infcomatrix{\Acom{\cH'}}{\Sigma}  \ar@{->}^{\rR(\nabla')}[r] &   \iI\lL(\cH'). }
$$
On the other hand we have, by definition of the functor $\rR$, a commutative diagram
$$
\xymatrix@C=60pt{    \ar@{->}^{\can{\Sscript{\cH}}}[r]  \ar@{->}_-{\rR(\phi_{*})}[d]  \infcomatrix{\Acom{\cH}}{\Sigma}  &    \ar@{->}^{\phi}[d]   \cH \\      \infcomatrix{\Acom{\cH'}}{\Sigma}  \ar@{->}^{\can{\Sscript{\cH'}}}[r] &   \cH'.}
$$
Putting together the two diagrams leads to the naturality of $\Omega$ and it finishes the proof.
\end{proof}

%%%%%%%%%%%%%

\subsection{The  Lie-Rinehart algebra of a Lie groupoid. Revisited}\label{ssec:LA-LG}
Here we provide an algebraic approach  to the construction of a Lie algebroid, or Lie-Rinehart algebra, from a given Lie groupoid. This approach unifies in fact the definition given in  \cite[\S 3.5]{Mackenzie} and the one  in \cite{Cartier:2008}. We also discuss the injectivity of the unit of the adjunction between integration and differentiation functors, see Appendix \ref{ssec:UAdj}.

We will employ the following notations. Consider a diagram of commutative $\Rr$-algebras $\xymatrix @C=10pt{ B \ar@{->}^{x}[r] & C \ar@{->}^{y}[r] & D,}$ where $\Rr$ denotes the field of real numbers. As usual, we denote
\begin{equation*}
{\rm Der}^{\Sscript{x}}_{\Sscript{\Rr}}(C, D_{y}):=\Big\{  \gamma \in \Der{\Rr}{C}{D_{y}} | \; \gamma \circ x = 0\Big\}
\end{equation*}

Given a connected smooth real manifold $\cM$,  for each point $x \in\cM$, we denote by $x$ itself the algebra map $\Cinfty{\cM} \to \Rr$ sending $p \mapsto p(x)$.  The global smooth sections of the  tangent  vector bundle $T\cM = \cup_{x\, \in \, \cM} \Der{\Rr}{\Cinfty{\cM}}{\Rr_{x}} $ of $\cM$ are identified with the $\Cinfty{\cM}$-module of derivations of $\Cinfty{\cM}$ as follows: Take a section $\delta \in \Gamma(T\cM)$, we have a derivation
$$
\Cinfty{\cM} \longrightarrow \Cinfty{\cM}, \qquad \Big(  p  \longmapsto [ x \mapsto \delta_{x}(p) ] \Big),
$$
see \cite[\S9.38]{nestruev}. Let us consider a Lie groupoid
$$
\cG: \, \, \xymatrix@C=45pt{ \Ga \ar@<1ex>@{->}|(.4){\scriptstyle{s}}[r] \ar@<-1ex>@{->}|(.4){\scriptstyle{t}}[r] & \ar@{->}|(.4){ \scriptstyle{\iota}}[l]  \Go, }
$$
where $\Ga$ is assumed to be a connected smooth real manifold and $s, t$ are surjective submersions. This leads to a diagram of (geometric \cite[Definition 3.7]{nestruev}) smooth real algebras
$$
 \xymatrix@C=45pt{ \Cinfty{\Go} \ar@<1ex>@{->}|(.4){\scriptstyle{s^{*}}}[r] \ar@<-1ex>@{->}|(.4){\scriptstyle{t^{*}}}[r] & \ar@{->}|(.4){ \scriptstyle{\iota^{*}}}[l]  \Cinfty{\Ga}. }
$$
The \emph{left star} of a point $x \in \Go$ is by definition the (sub)-manifold $\cG_{x}=\{g \in \Ga |\, s(g)=x\} $ of $\Ga$, and denote by $\tau_{x}: \cG_{x} \hookrightarrow \Ga$ the corresponding embedding. Notice that we have a disjoint union $\Ga=\uplus_{x\, \in \, \Go} \cG_{x}$.  For an object $x \in \Go$, we have the following surjective $\Rr$-linear map: $T_{x}s: T_{\iota(x)}\Ga \to T_{x}\Go$, so we can set $\cE_{x}:=\ker{T_{x}s}$ and then consider the  vector bundle  $\cE =\cup_{x\, \in \, \Go}\cE_{x}$. Each fiber $\cE_{x}$ is then identified with $\Rr$-vector space
$ {\rm Der}_{\Sscript{\Rr}}^{\Sscript{s^{*}}}(\Cinfty{\Ga}, \Rr_{\iota(x)})$, thus, $\cE_{x}\,  =\, {\rm Der}_{\Sscript{\Rr}}^{\Sscript{s^{*}}}(\Cinfty{\Ga}, \Rr_{\iota(x)})$.

There is  another vector bundle $\cF$ whose fibers at a point $x \in\Go$ is given by the $\Rr$-vector space
$$
\cF_{x}=T_{\iota(x)}(\cG_{x})=\Der{\Rr}{\Cinfty{\cG_{x}}}{\Rr_{\iota(x)}}.
$$

\begin{lemma}\label{lemma:IsoBundles}
We have an isomorphism of $\Rr$-vector spaces
\begin{equation}\label{Eq:Eta}
\eta_x:\Der{\Rr}{\Cinfty{\cG_{x}}}{\Rr_{\iota(x)}}   \to  {\rm Der}_{\Sscript{\Rr}}^{\Sscript{s^{*}}}(\Cinfty{\Ga}, \Rr_{\iota(x)}), \qquad \Big( \gamma_{x} \longmapsto \Big[  p \mapsto \gamma_{x}(p  \tau_{x})  \Big]  \Big)
\end{equation}
induced by $\tau_x^*:\Cinfty{\cG_{1}}\to\Cinfty{\cG_{x}}, p\mapsto p\tau_x$.
\end{lemma}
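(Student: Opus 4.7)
The statement is the algebraic reformulation of the familiar differential-geometric identity $T_{\iota(x)}(s^{-1}(x)) = \ker(T_{\iota(x)}s)$ for a submersion, and it is natural to derive it from the adjunction discussed in Remark \ref{rem:deradjoint}.

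First I would verify that $\eta_x$ is well-defined and $\Rr$-linear: for $\gamma_x \in \Der{\Rr}{\Cinfty{\cG_x}}{\Rr_{\iota(x)}}$, the composition $\gamma_x \circ \tau_x^{*}$ is automatically an $\Rr$-derivation of $\Cinfty{\Ga}$ with values in $\Rr_{\iota(x)}$ (because $\iota(x)^{*}\circ\tau_x^{*} = \iota(x)^{*}$). Since $s\circ\tau_x\equiv x$, one has $\tau_x^{*}\bigl(s^{*}(f)\bigr)=f(x)\cdot 1_{\Cinfty{\cG_x}}$ for every $f\in\Cinfty{\Go}$, and this constant is killed by any $\Rr$-linear derivation; hence $\eta_x(\gamma_x)$ lies in $\Derk{s^{*}}{\Cinfty{\Ga}}{\Rr_{\iota(x)}}$.

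Next I would apply Remark \ref{rem:deradjoint}: if I can identify the triple $\bigl(\Cinfty{\cG_x},\,\iota(x)^{*}|_{\cG_x},\,\tau_x^{*}\bigr)$ with the cokernel in $\Aug$ of the morphism of augmented $\Rr$-algebras $s^{*}:(\Cinfty{\Go},x^{*})\to(\Cinfty{\Ga},\iota(x)^{*})$, then the isomorphism \eqref{eq:derivations} is by construction the precomposition with $\tau_x^{*}$, i.e.\ exactly $\eta_x$, and the proof is complete. The required identification in $\Aug$ amounts to two statements: $\tau_x^{*}$ is surjective, and $\ker(\tau_x^{*})$ equals the ideal $\bigl\langle s^{*}(f)-f(x)\cdot 1 \,\big|\, f\in\Cinfty{\Go}\bigr\rangle$ of $\Cinfty{\Ga}$.

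Both facts rest on $\cG_x = s^{-1}(x)$ being a closed embedded submanifold of $\Ga$, which in turn follows from the submersion hypothesis on $s$. Surjectivity of $\tau_x^{*}$ is the standard smooth extension theorem (partition of unity, or tubular neighbourhood theorem applied to the embedding $\tau_x$); one inclusion in the description of the kernel is what we already observed in Step~1, while the converse is a local-to-global Hadamard-type argument: in coordinates adapted to $s$, in which $s$ becomes a linear projection, any germ vanishing on $\cG_x$ decomposes as $\sum_i (s^{*}(f_i)-f_i(x))\,q_i$, and a partition of unity assembles these local decompositions into a global one. The main obstacle is precisely this last geometric input — the algebraic bookkeeping is straightforward — and it is the only place where the assumption that $s$ is a surjective submersion is genuinely used.
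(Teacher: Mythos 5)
Your argument is correct in outline, but it takes a genuinely different route from the paper's. The paper disposes of the lemma in three lines by quoting standard differential geometry: since $s$ is a surjective submersion, $\cG_x=s^{-1}(x)$ is a closed embedded submanifold with global defining map $s$, so $T_{\iota(x)}\cG_x=\ker{T_{\iota(x)}s}$ (citing Lee), and the inclusion is exactly $T_{\iota(x)}\tau_x$, which under the identification of tangent vectors with point-derivations is $\eta_x$. You instead reduce the statement to the adjunction of Remark \ref{rem:deradjoint}, which requires exhibiting $\bigl(\Cinfty{\cG_x},\tau_x^{*}\bigr)$ as the cokernel of $s^{*}$ in $\Aug$; this buys a proof that stays inside the algebraic formalism the appendix has already set up (and makes transparent exactly where the submersion hypothesis enters), at the price of having to prove two nontrivial facts about smooth function algebras rather than citing one textbook result.

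One point deserves care in your Step 3. To identify $\Cinfty{\cG_x}$ with the cokernel in $\Aug$ you need $\ker{\tau_x^{*}}$ to equal the ideal generated, in the usual algebraic sense of \emph{finite} sums, by the elements $s^{*}(f)-f(x)\cdot 1$. A partition of unity glues the local Hadamard decompositions into a \emph{locally finite} sum, which does not obviously lie in that ideal; to get finitely many terms you should either route the argument through a proper embedding $\cG_0\hookrightarrow\Rr^{N}$ (so that $\cG_x$ is cut out by the $N$ global functions $s^{*}(g_j)-g_j(x)$, to which the regular-value ideal-generation theorem applies), or observe that the full ideal statement is more than you need: a point-derivation $\delta$ at $\iota(x)$ factors through $\m_{\iota(x)}/\m_{\iota(x)}^{2}$, so it suffices to check that every $q\in\ker{\tau_x^{*}}$ agrees modulo $\m_{\iota(x)}^{2}$ with a finite combination of the generators, and that is exactly the local Hadamard computation in coordinates adapted to $s$. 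With either repair your proof is complete.
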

\begin{proof}
Recall that, by hypothesis, $s:\cG_1\to\cG_0$ is a surjective submersion. In particular, in light of \cite[Corollary 5.14]{Lee} for example, $\cG_x=s^{-1}(x)$ is a closed embedded submanifold of $\cG_1$ with local (in fact, global) defining map $s$ itself. Thus, as a consequence of \cite[Proposition 5.38]{Lee}, for any $h\in\cG_x$ we have that $T_h\cG_x=\ker{T_{h}s:T_h\cG_1\to T_{s(h)}\cG_0}$. In particular, $\Der{\Rr}{\Cinfty{\cG_{x}}}{\Rr_{\iota(x)}}=T_{\iota(x)}\cG_x = \ker{T_{\iota(x)}s:T_{\iota(x)}\cG_1\to T_{s\iota(x)}\cG_0}$, where the second identification is given through the inclusion $T_{\iota(x)}\tau_x:T_{\iota(x)}\cG_x\to T_{\iota(x)}\cG_1$ induced by $\tau_x$.
\end{proof}

As a consequence we get an isomorphism of vector bundles $\eta:\cF\to\cE$ and hence an isomorphism of $\Cinfty{\Go}$-module $\etaup:=\Gamma(\eta): \Gamma(\cF) \to \Gamma(\cE)$.  There are two morphisms of $\Cinfty{\Go}$-modules:
\begin{equation}\label{Eq:Omegas-Calorrr}
\begin{gathered}
\omega^{\Sscript{\cE}}: \Gamma(\cE) \longrightarrow {\rm Der}_{\Sscript{\Rr}}(\Cinfty{\Go}), \qquad \Big(  \delta \longmapsto [ a \mapsto \delta_{-}(at) ] \Big)
\end{gathered}
\end{equation}
and  $\omega^{\Sscript{\cF}} := \omega^{\Sscript{\cE}} \circ \etaup $. Recall that, by the foregoing, we can identify $\Gamma\left(\cT\cG_0\right)$ with ${\rm Der}_{\Sscript{\Rr}}(\Cinfty{\Go})$. By means of this identification one can check that the morphism of vector bundles $Tt:\cE\to \cT\cG_0$ induced by the $\R$-linear maps $T_xt:T_{\iota(x)}\cG_1\to T_x\cG_0$ is such that $\omega^{\Sscript{\cE}} = \Gamma(Tt)$. Clearly, $\omega^{\Sscript{\cF}} = \Gamma(Tt\circ\eta)$.

Given an arrow $g \in \Ga$, we have the right multiplication action $R_{g}: \cG_{t(g)} \to \cG_{s(g)}$, $h \mapsto hg$ (this, by the Lie groupoid structure of $\cG$, is a diffeomorphism).  Now, fix an object $x \in\Go$, a function $q \in \Cinfty{\cG_{x}}$ and a global section $\gamma \in \Gamma(\cF)$, then we  have a smooth function $\flecha{\gamma}_{-}(q) \in \Cinfty{\cG_{x}}$ given by
$$
\flecha{\gamma}_{-}(q): \cG_{x} \longrightarrow \Rr, \qquad \Big( h \longmapsto \flecha{\gamma}_{h}(q) := \gamma_{t(h)}(q R_{h}) \Big)
$$
see \cite[Corollary 3.5.4]{Mackenzie}.  The derivation $\flecha{\gamma}_{-}: \Cinfty{\cG_{x}} \to  \Cinfty{\cG_{x}}$ satisfies the following equalities:
\begin{equation}\label{Eq:Ga}
\left(\flecha{a\gamma}\right)_{-} \, \, =\, \, \tau_x^*\left(t^{*}(a)\right) \flecha{\gamma}_{-}, \quad \flecha{\gamma}_{\iota{-}}\,\, =\,\, \gamma_{-}, \quad \text{ for all } a \in \Cinfty{\Go},
\end{equation}
where $(a\gamma)_x=a(x)\gamma_x$ and $(b\flecha{\gamma}_{-})_{h} = b(h)\flecha{\gamma}_{h}$ for every $x\in\Go,a\in \Cinfty{\Go},b\in\Cinfty{\cG_{x}},h\in \cG_{x}$.
In this way, for a given pair of sections $(\gamma, \gamma') \in \Gamma(\cF) \times \Gamma(\cF)$, we have the following smooth global section
$$
[\gamma, \gamma']_{x}: \Cinfty{\cG_{x}} \longrightarrow \Rr_{\iota(x)}, \qquad \Big(  q \longmapsto \gamma_{x}(\flecha{\gamma'}_{-}(q)) - \gamma'_{x}(\flecha{\gamma}_{-}(q)) \Big).
$$
Namely, since $\iota(x)\in\cG_{x}$, for two functions $p, q \in \Cinfty{\cG_{x}}$ we may compute
\begin{eqnarray*}
[\gamma, \gamma']_{x}(p q) &=&   \gamma_{x}(\flecha{\gamma'}_{-}(pq)) - \gamma'_{x}(\flecha{\gamma}_{-}(pq))
\\ &=&  \gamma_{x}\Big(p\flecha{\gamma'}_{-}(q)+ \flecha{\gamma'}_{-}(p)q\Big) - \gamma'_{x}\Big(p\flecha{\gamma}_{-}(q) + \flecha{\gamma}_{-}(p)q\Big)
\\ &=& p(\iota(x))\gamma_{x}(\flecha{\gamma'}_{-}(q))+\gamma_{x}(p)\flecha{\gamma'}_{\iota(x)}(q)  + \flecha{\gamma'}_{\iota(x)}(p)\gamma_{x}(q) + \gamma_{x}(\flecha{\gamma'}_{-}(p))q(\iota(x))
\\ &\,\, &- \gamma'_{x}(p)\flecha{\gamma}_{\iota(x)}(q) - p(\iota(x))\gamma'_{x}(\flecha{\gamma}_{-}(q)) -\gamma'_{x}(\flecha{\gamma}_{-}(p))q(\iota(x)) - \flecha{\gamma}_{\iota(x)}(p)\gamma'_{x}(q)
\\ &=& p(\iota(x))[\gamma, \gamma']_{x}(q) + [\gamma, \gamma']_{x}(p)q(\iota(x)) \\ &\,\, & +\gamma_{x}(p)\flecha{\gamma'}_{\iota(x)}(q)  + \flecha{\gamma'}_{\iota(x)}(p)\gamma_{x}(q)- \gamma'_{x}(p)\flecha{\gamma}_{\iota(x)}(q)- \flecha{\gamma}_{\iota(x)}(p)\gamma'_{x}(q)
\\ &\overset{\eqref{Eq:Ga}}{=}& p(\iota(x))[\gamma, \gamma']_{x}(q) + [\gamma, \gamma']_{x}(p)q(\iota(x)),
\end{eqnarray*}
which shows that $([\gamma, \gamma']_{\Sscript{x}})_{\Sscript{ x \, \in \, \Go}} \in \Gamma(\cF)$. Furthermore, for a given $a \in \Cinfty{\Go}$, we have
\begin{eqnarray*}
[\gamma, a\gamma']_{x}(q) &=& \gamma_{x}(\flecha{(a\gamma')}_{-}(q)) - a(x)\gamma'_{x}(\flecha{\gamma}_{-}(q))
\\ &\overset{\eqref{Eq:Ga}}{=}& \gamma_{x}\Big(\tau_x^*(t^{*}(a))\flecha{\gamma'}_{-}(q)\Big) - a(x)\gamma'_{x}(\flecha{\gamma}_{-}(q))
\\ &=& \tau_x^*(t^{*}(a))(\iota(x))\gamma_{x}(\flecha{\gamma'}_{-}(q)) + \gamma_{x}(\tau_x^*(t^{*}(a)))\flecha{\gamma'}_{\iota(x)}(q) - a(x)\gamma'_{x}(\flecha{\gamma}_{-}(q))
\\ &\overset{\eqref{Eq:Ga}}{=}& a(x)[\gamma,\gamma']_{x}(q) + \omega^{\Sscript{\cF}}(\gamma)(a)(x)\gamma'_{x}(q),
\end{eqnarray*}
for every function $q \in \Cinfty{\cG_{x}}$. Thus, for every function $a \in \Cinfty{\Go}$, we have
$$
[\gamma, a\gamma']= a [\gamma, \gamma'] + \omega^{\Sscript{\cF}}(\gamma)(a) \gamma'
$$
as an equality in $\Gamma(\cF)$. This completes the structure of the Lie algebroid $(\cF, \Go)$, and the structure of Lie-Rinehart algebra of $(\Gamma(\cF), \Cinfty{\Go})$. This Lie algebroid is know in the literature as the \emph{Lie algebroid of the Lie groupoid $\cG$}.

Now, we come back to the vector bundle $(\cE, \Go)$. We can endow it with a Lie algebroid structure via the isomorphism $\eta$. The bracket on $\Gamma(\cE)$ is given by
$$
[\delta, \delta']_{x}: \Cinfty{\Ga} \longrightarrow \Rr_{\iota(x)}, \qquad \Big( b \longmapsto \delta_{x}(\flecha{\delta'}_{-}(b))- \delta'_{x}(\flecha{\delta}_{-}(b)) \Big)
$$
and the anchor is the map $\omega^{\Sscript{\cE}}$ of equation \eqref{Eq:Omegas-Calorrr}. In fact, concerning the bracket we can compute
\begin{align*}
\eta_x([\gamma,\gamma']_x)(p) & = [\gamma,\gamma']_x(p\tau_x)= \gamma_{x}(\flecha{\gamma'}_{-}(p\tau_x)) - \gamma'_{x}(\flecha{\gamma}_{-}(p\tau_x)) \\
  & \stackrel{(*)}{=} \gamma_x(\flecha{\etaup(\gamma')}_{-}(p)\tau_x)-\gamma'_x(\flecha{\etaup(\gamma)}_{-}(p)\tau_x) \\
  & = \etaup(\gamma)_x(\flecha{\etaup(\gamma')}_{-}(p))- \etaup(\gamma')_x(\flecha{\etaup(\gamma)}_{-}(p)) \\
  & = [\etaup(\gamma),\etaup(\gamma')]_x(p)
\end{align*}
where $(*)$ follows from the equality $\flecha{\etaup(\gamma)}_{-}(p)\tau_x = \flecha{\gamma}_{-}(p\circ \tau_x)$ which descends from
\begin{align*}
\left(\flecha{\etaup(\gamma)}_{-}(p)\tau_x\right)(h) & = \flecha{\etaup(\gamma)}_{-}(p)(\tau_x(h)) = \etaup(\gamma)_{t(\tau_x(h))}(p\circ R_{\tau_x(h)}) \\
 & = \etaup(\gamma)_{t(h)}(p\circ R_{\tau_x(h)}) = \eta_{t(h)}(\gamma_{t(h)})(p\circ R_{\tau_x(h)}) \\
  & = \gamma_{t(h)}(p\circ R_{\tau_x(h)}\circ\tau_{t(h)}) = \gamma_{t(h)}(p\circ \tau_x\circ R_{h}) \\
   & = \left(\flecha{\gamma}_{-}(p\circ \tau_x)\right)(h).
\end{align*}

With these structures,  we get that $\etaup: \Gamma(\cF) \to \Gamma(\cE)$ is an isomorphism of Lie-Rinehart algebras, where $\etaup=\Gamma(\eta)$ and $\eta$ is fiberwise given by equation \eqref{Eq:Eta}.

Lastly, applying the differentiation functor of \S\ref{ssec:lL} and using the natural transformation of equation \eqref{eq:Theta} together with the commutative diagram of equation \eqref{Eq:Triangle},  we obtain a commutative diagram of Lie-Rinehart algebras over $A:=\Cinfty{\Go}$
\begin{equation}\label{Eq:diagram}
\begin{gathered}
\xymatrix @C=0pt@R=13pt{   &  {\rm Der}^{\Sscript{s}}_{\Sscript{\Rr}}\Big(\cV_{\Sscript{A}}( \Gamma(\cF) )^{\circ}, A_{\varepsilon} \Big)  \ar@{->}^-{\lL\iI(\etaup)}[rr] &       &     {\rm Der}^{\Sscript{s}}_{\Sscript{\Rr}} \Big( {\cV_{\Sscript{A}}( \Gamma(\cE) )^{\circ}}, {A_{\varepsilon}} \Big)   & \\  &        &   & & \\  \Gamma(\cF) \ar@{->}_-{\etaup}[rr] \ar@{->}^-{\Theta_{\Sscript{\Gamma(\cF)}}}[ruu]  \ar@{->}_-{\Theta'_{\Sscript{\Gamma(\cF)}}}[dd]   &     &  \Gamma(\cE) \ar@{->}^-{\Theta_{\Sscript{\Gamma(\cE)}}}[ruu]  \ar@{->}_-{\Theta'_{\Sscript{\Gamma(\cE)}}}[dd]  & & \\ & &       & & \\
{\rm Der}^{\Sscript{s}}_{\Sscript{\Rr}} \Big( {\rsaft{\cV_{\Sscript{A}}( \Gamma(\cF) )}}, {A_{\varepsilon}} \Big) \ar@/_1pc/@{->}_(0.4){\lL(\hat{\zeta})}|(0.575){\phantom{X}}[uuuur]  \ar@{->}^-{\lL\iI'(\etaup)}[rr]  &    &  {\rm Der}^{\Sscript{s}}_{\Sscript{\Rr}} \Big( {\rsaft{\cV_{\Sscript{A}}( \Gamma(\cE) )}}, {A_{\varepsilon}} \Big)   , \ar@/_1pc/@{->}_-{\lL(\hat{\zeta})}[uuuur]    & &   }
\end{gathered}
\end{equation}
whose horizontal arrows are isomorphisms of Lie-Rinehart algebras.

\begin{remark}\label{rem:Smooth}
In the case of Lie groups, the map $\Theta$ of the  diagram \eqref{Eq:diagram} is  injective. In fact this map is injective for any finite-dimensional Lie algebra. Namely,  taking a  finite dimensional Lie $\Bbbk$-algebra $L$,  we have, as in Proposition \ref{prop:Theta}, the map $\Theta_L: L \to \Derk{}{U_{\Sscript{\K}}(L)^{\circ}}{\Bbbk_{\Sscript{\varepsilon}}}$ given by the evaluation $X \mapsto[f \mapsto f(X)]]$, where $U_{\Sscript{\K}}(L)^{\circ}$ is the finite dual Hopf algebra of the universal enveloping algebra of $L$.
Since, in light of \cite[p. 157]{Montgomery:1993}, $U_{\Sscript{\K}}(L)^{\circ}$ is a dense in $U_{\Sscript{\K}}(L)^*$ (here the topology is the linear one), $U_{\Sscript{\K}}(L)$ is a proper algebra in the sense of \cite[page 78]{Abe}, so $\Theta_L$ is injective.  Furthermore, in light of \cite[Theorem 6.1]{Hochschild:1970}, for $\K$ an algebraically closed field of characteristic zero $\Theta_L$ is bijective if and only if $L = [L,L]$.

Now if $G$ is a compact Lie group, then $G \cong {\rm CAlg}_{\Rr}(\rR_{\Sscript{\Rr}}(G), \Rr)$, the character group of    the commutative Hopf real algebra $\rR_{\Sscript{\Rr}}(G)$ of all  representative smooth  functions on $G$. The Lie algebra $Lie(G) = \lL(\rR_{\Sscript{\Rr}}(G))= {\rm Der}_{\Sscript{\Rr}}(\rR_{\Sscript{\Rr}}(G), \Rr_{\Sscript{\varepsilon}} )$ of $G$ is then identified with the Lie algebra of the primitive elements $Lie(G) \cong \prim(\rR_{\Sscript{\Rr}}(G)^{\circ})$ \cite[\S 4, Section 3]{Abe} of the finite dual $\rR_{\Sscript{\Rr}}(G)^{\circ}$.
Denote by $\tauup: U_{\Sscript{\Rr}}(Lie(G)) \hookrightarrow \rR_{\Sscript{\Rr}}(G)^{\circ}$ the canonical monomorphism of co-commutative Hopf algebras. Then, we know \cite{Michaelis-primitive}  that the map
$$
\hat{(-)}: \rR_{\Sscript{\Rr}}(G) \longrightarrow (\rR_{\Sscript{\Rr}}(G)^{\circ} )^{*},\qquad \Big( \varrho \longmapsto [f \mapsto f(\varrho)  \Big)
$$
factors through the inclusion $ \rR_{\Sscript{\Rr}}(G)^{\circ}{}^{\circ} \, \subseteq \,  (\rR_{\Sscript{\Rr}}(G)^{\circ} )^{*}$.
Therefore, the map $\Theta_{\Sscript{Lie(G)}}$ is a split monomorphism of Lie algebras,  namely,  with splitting map $\lL(\tauup^{\circ}\hat{(-)})$.

In the case of compact Lie groupoid (i.e., $\Go$ is a compact smooth manifold and each of the isotropy Lie groups of $\cG$ is compact), it would be interesting to study the injectivity of the map $\Theta$ either in the left hand or right hand  triangle in diagram \eqref{Eq:diagram}.
\end{remark}

%%%%%%%%%%%%%%%%%%%%%%%%%%%%%%%%%%%%%%%%%%%%%%%%%%%%%%%%%%%%%%%

\section{The factorization of the anchor map of the Lie-Rinehart algebra of a split Hopf algebroid}\label{sec:A1}

In this last appendix we show how the anti-homomorphism of Lie algebras $\cL ie(\cG)(\K) \to  \Ders{\K}{\mathscr{O}_\K(\cX)}$ of \cite[II, \S4, n\textsuperscript{o}4, Proposition 4.4, page 212]{DemazureGabriel} becomes the map of equation \eqref{Eq:DG}. We also give some specific cases  of  Example  \ref{Exam:Main}.

Recall that we have a commutative Hopf algebra $H$ such that $\cG=\Calg\left(H,-\right)$ and a left $H$-comodule commutative algebra $A$ such that $\cX=\Calg\left(A,-\right)$.
The coaction $\rho:A\to H\otimes A$ induces on $\cX$ a $\cG$-operation $\Calg(H,R)\times \Calg(A,R) \to \Calg(A,R)$, in the sense  of \cite[II, \S1, n\textsuperscript{o}3, D\'efinition 3.1, page 160]{DemazureGabriel}, which is an instance of
\begin{equation*}
\mu_B:\Calg(H,R)\times \Calg(A,B) \to \Calg(A,B):\, (f,g)\mapsto \left[a\mapsto f(a_{-1})g(a_{0})\right]
\end{equation*}
for $R\in \Calg$ and every $R$-algebra $B$. Define
\[
\mho_R:\Calg(H,R)\to\mathsf{Aut}_R(\cXR):f\mapsto \mho_R(f)
 \]
 where $\mho_R(f)_B:\Calg(A,B)\to \Calg(A,B):g\mapsto \mu_B(f,g)$ and where the functor $\cXR:\mathrm{CAlg}_R\to \mathsf{Sets}$ is simply the restriction of $\cX$ to $\mathrm{CAlg}_R$. The group $\mathsf{Aut}_R\left(\cXR\right)$ is the group of natural isomorphisms in $\mathsf{Nat}(\cXR,\cXR)$.

 Define the functor $\cA ut\left(\cX\right):\Calg\to \mathsf{Sets}$ by setting $\cA ut\left(\cX\right)(R):=\mathsf{Aut}_R\left(\cX\otimes R\right)$ and for every morphism $\phi:S\to R$ set $\cA ut\left(\cX\right)(\phi):\cA ut\left(\cX\right)(S)\to\cA ut\left(\cX\right)(R)$ sending every natural transformation $(\tau_B)_{ B\in \rmod{S}}$ to the natural transformation $(\tau_B)_{B\in \rmod{R}}$. Note that for every $f\in \Calg(H,S)$ and every $g\in\Calg(A,B)$ with $B\in \rmod{R}$, for all $a\in A$ we have that
 \[
 \left(\mho_S(f)_B(g)\right)(a)= f(a_{-1})\cdot g(a_0) = \xi(f(a_{-1}))g(a_0) = \left(\mho_R(\xi\circ f)_B(g)\right)(a).
 \]
 As a consequence, $\mho_S(f)_B= \mho_R(\xi\circ f)_B$ for all $B\in \rmod{R}$, which means that $\mho_R$ is natural in $R$ and hence we can write $\mho:\cG\to \cA ut\left(\cX\right)$.

 Now, recall that for every $R$-algebra $B$ we have an isomorphism $\Calg(A,B)\cong \mathrm{CAlg}_R(A\otimes R,B)$. As a consequence, $\mathsf{Aut}_R\left(\cXR\right)\cong \mathsf{Aut}_R\big(\mathrm{CAlg}_R(A\otimes R,-)\big)$ and, in view of the Yoneda isomorphism, $\mathsf{Aut}_R\big(\mathrm{CAlg}_R(A\otimes R,-)\big)\cong \mathsf{Aut}_R(A\otimes R)^{\mathsf{op}}$. Summing up, we have a group isomorphism
 \[
 \cY_R:\mathsf{Aut}_R(\cXR)\to \mathsf{Aut}_R(A\otimes R)^{\mathsf{op}}.
 \]
The composition of $\mho_R$ with $\cY_R$ yields a natural transformation
\begin{equation}\label{eq:reprfunctor}
\Calg(H,R)\stackrel{\mho_R}{\longrightarrow} \cA ut\left(\cX\right)(R)=\mathsf{Aut}_R\left(\cX\otimes R\right)\stackrel{\cY_R}{\longrightarrow}\mathsf{Aut}_R(A\otimes R)^{\mathsf{op}}
\end{equation}
acting, from the left-most member to the right-most, as
\begin{equation*}
f\longmapsto \left[\left(a\otimes r\right)\mapsto \left(a_0\otimes f(a_{-1})r\right)\right]
\end{equation*}
(see \cite[II, \S1, n\textsuperscript{o}2, 2.7, page 153]{DemazureGabriel}).
Set $\cO_\K:=\Calg(\K[T],-):\Calg\to \mathsf{Sets}$ and $\mathscr{O}_\K(\cX):=\mathsf{Nat}\left(\cX,\cO_\K\right)$ as in
\cite[I, \S1, n\textsuperscript{o}6, 6.1, page 26]{DemazureGabriel}. In view of Yoneda Lemma again, $\mathscr{O}_\K(\cX)\cong A$. Therefore, $\Ders{\K}{\mathscr{O}_\K(\cX)}\cong \Ders{\K}{A}$.

If we write $\K(\epsilon):=\K[T]/\langle T^2\rangle$, where $\epsilon:=T+\langle T^2\rangle$, for the $\K$-algebra of dual numbers, then $\cL ie(\cG)(\K)\subseteq \Calg(H,\K(\epsilon))$ as defined in \cite[II, \S4, n\textsuperscript{o}1, 1.2, page 200]{DemazureGabriel} is the kernel of the group homomorphism $\Calg(H,\K(\epsilon))\to\Calg(H,\K)$ given by composition with $p_1:\K(\epsilon)\to \K;\, \left[\left(a+b\epsilon\right)\mapsto a\right]$, i.e.,
\begin{equation*}
\cL ie(\cG)(\K)=\Big\{f:H\to \K(\epsilon)\mid p_1\circ f = \varepsilon\Big\}.
\end{equation*}
Set $p_2:\K(\epsilon)\to \K;\, \left[\left(a+b\epsilon\right)\mapsto b\right]$. Clearly,
\begin{equation*}
\xymatrix @!0 @R=12pt @C=130pt {
\Der{\K}{H}{\K_\varepsilon} \ar@{<->}[r]^-{\cong} & \cL ie(\cG)(\K) \\
\delta \ar@{|->}[r] & \left[\left(\varepsilon+\delta\epsilon\right):x\mapsto \left(\varepsilon(x)+\delta(x)\epsilon\right)\right] \\
p_2\circ f & f \ar@{|->}[l] }
\end{equation*}
The functor $\mho:\cG\to \cA ut(\cX)$ gives  $\cL ie(\mho)(\K):\cL ie(\cG)(\K)\to \cL ie(\cA ut(\cX))(\K)$ by restriction of the morphism $\mho_{\K(\epsilon)}:\cG(\K(\epsilon))\to \cA ut(\cX)(\K(\epsilon))$. Note that
\begin{equation*}
\cY_{\K(\epsilon)}\circ \mho_{\K(\epsilon)} : \Calg\left(H,\K(\epsilon)\right) \longrightarrow \mathsf{Aut}_{\K(\epsilon)}\left(A(\epsilon)\right)^{\mathrm{op}}; \;  \Big( f\longmapsto \left[(a+b\epsilon)\mapsto \left(a_0f(a_{-1})+b_0f(b_{-1})\epsilon\right)\right] \Big).
\end{equation*}
Now, for every $\phi \in \cL ie\left(\cA ut(\cX)\right)(\K)$, $\cF\in\mathscr{O}_\K(\cX)$ and $S\in \Calg$ one may consider
\begin{equation}\label{eq:DChiPhi}
\cD_\phi^\cX(\cF)_S:=\left(\cX(S) \stackrel{\cX(i_1)}{\longrightarrow} \cX(S(\epsilon)) \stackrel{\phi_{S(\epsilon)}}{\longrightarrow} \cX(S(\epsilon)) \stackrel{\cF_{S(\epsilon)}}{\longrightarrow} \cO_{\K}(S(\epsilon)) \stackrel{\cO_{\K}(p_2)}{\longrightarrow} \cO_{\K}(S) \right)
\end{equation}
where $i_1:S\to S(\epsilon);\,\left[s\mapsto s\right]$. Here we may apply $\phi_{S(\epsilon)}$ because $\phi\in \cL ie\left(\cA ut(\cX)\right)(\K)\subseteq \cA ut(\cX)(\K(\epsilon))=\mathsf{Aut}_{\K(\epsilon)}(\cX\otimes \K(\epsilon))$ and $S(\epsilon)$ is a $\K(\epsilon)$-algebra. This defines a map
\begin{equation}\label{eq:DXphi}
\cD^\cX_\phi:\mathscr{O}_\K(\cX)\to \mathscr{O}_\K(\cX)
\end{equation}
which turns out to be a $\K$-derivation of the algebra $\mathscr{O}_\K(\cX)$ (cf.~\cite[II, \S4, n\textsuperscript{o}2, 2.4, page 203]{DemazureGabriel}). By considering the composition
\begin{equation}\label{eq:composition}
\varpi:=\left(\Der{\K}{H}{\K_\varepsilon} \stackrel{\cong}{\to} \cL ie(\cG)(\K) \stackrel{\cL ie(\mho)(\K)}{\longrightarrow} \cL ie\left(\cA ut(\cX)\right)(\K) \stackrel{\cD_{-}^{\cX}}{\longrightarrow} \Ders{\K}{\mathscr{O}_\K(\cX)} \stackrel{\cong}{\to} \Ders{\K}{A}\right)
\end{equation}
one gets the canonical morphism claimed at the beginning of this subsection. Let us compute explicitly how this composition acts on a $\delta\in \Der{\K}{H}{\K_\varepsilon}$. The first isomorphism associates to $\delta$ the map $\varepsilon+\delta\epsilon\in\cL ie(\cG)(\K)$. Set $\phi:=\cL ie(\mho)(\K)\left(\varepsilon+\delta\epsilon\right)=\mho_{\K(\epsilon)}\left(\varepsilon+\delta\epsilon\right)\in \cL ie\left(\cA ut(\cX)\right)(\K)$. Then $\cD_{-}^{\cX}$ maps $\phi$ to $\cD_{\phi}^{\cX}\in \Ders{\K}{\mathscr{O}_\K(\cX)}$. The last isomorphism in \eqref{eq:composition} sends $\cD_{\phi}^{\cX}$ to the composition
\[
\xymatrix @R=0pt{
A \ar[r]^-{\cong} & \mathscr{O}_\K(\cX) \ar[r]^-{\cD_{\phi}^{\cX}} & \mathscr{O}_\K(\cX)\ar[r]^-{\cong} & A \\
a \ar@{|->}[r] & \cF^a= \Calg\left(\mathsf{ev}_a,-\right)  \ar@{|->}[r] & \cD_{\phi}^{\cX}(\cF^a) \ar@{|->}[r] & \left(\cD_{\phi}^{\cX}(\cF^a)_A(\id{A})\right)(T).
}
\]
Here $\mathsf{ev}_a:\K[T]\to A$ is the unique algebra map sending $T$ to $a$. Now, let us compute explicitly
\begin{align*}
& \left(\cD_{\phi}^{\cX}(\cF^a)_A(\id{A})\right)(T) \stackrel{\eqref{eq:DChiPhi}}{=} \left[\left( \cO_{\K}(p_2)\circ \cF^a_{A(\epsilon)} \circ \phi_{A(\epsilon)} \circ \cX(i_1) \right)(\id{A})\right](T) \\
 & = \left[\left( \cO_{\K}(p_2)\circ \cF^a_{A(\epsilon)} \circ \phi_{A(\epsilon)} \right)(i_1)\right](T) = \left[\left( \cO_{\K}(p_2)\circ \cF^a_{A(\epsilon)}\right)\left(\mho_{\K(\epsilon)}\left(\varepsilon+\delta\epsilon\right)_{A(\epsilon)}(i_1)\right)\right](T) \\
 & = \left[\left( \cO_{\K}(p_2)\circ \cF^a_{A(\epsilon)}\right)\left(\mu_{A(\epsilon)}(\varepsilon+\delta\epsilon,i_1)\right)\right](T) = \left[\cO_{\K}(p_2) \left(\mu_{A(\epsilon)}(\varepsilon+\delta\epsilon,i_1)\circ \mathsf{ev}_a\right)\right](T) \\
 & = \left(p_2\circ \mu_{A(\epsilon)}(\varepsilon+\delta\epsilon,i_1)\circ \mathsf{ev}_a\right)(T) = p_2\left(\mu_{A(\epsilon)}(\varepsilon+\delta\epsilon,i_1)(a)\right) = p_2\left((\varepsilon+\delta\epsilon)(a_{-1})i_1(a_0)\right) \\
 & = p_2\left(\varepsilon(a_{-1})a_0+\epsilon\delta(a_{-1})a_0\right) = \delta(a_{-1})a_0
\end{align*}

Summing up, the canonical morphism is given by
\begin{equation*}
\varpi:  \Der{\K}{H}{\K_\varepsilon} \longrightarrow  \Ders{\K}{A}: \delta\longmapsto \left[a\mapsto \delta(a_{-1})a_0\right].
\end{equation*}
Thus $\varpi = \omega \circ \tau$ as in \eqref{Eq:DG}.

Now, let us give some examples of the factorization introduced in Example \ref{Exam:Main}.

\begin{example}
If $A=\K$, then $\Derk{t}{\cH}{A_{\varepsilon_{\cH}}}=\Der{\K}{H}{\K_\varepsilon}$ and $\Ders{\K}{A}=0$, whence $\omega=0=\varpi$ and $\tau$ is the identity.
\end{example}

\begin{example}
Take $A$ to be the Hopf algebra $H$ itself with comodule structure given by $\Delta$ (this would correspond to the action of $\cG$ on itself by left multiplication). In this case, $\cH = H\otimes H$ with
\begin{gather*}
\eta_{\cH}(x\otimes y) = x_1\otimes x_2y, \qquad \Delta_{\cH}(x\otimes y) = (x_1\otimes 1)\tensor{H}(x_2\otimes y), \\
\varepsilon_{\cH}(x\otimes y) = \varepsilon(x)y, \qquad \cS(x\otimes y) = S(x)y_1\otimes y_2
\end{gather*}
and $\varpi$ satisfies $\varpi(\delta):x\mapsto \delta(x_1)x_2$ for every $\delta \in \Der{\K}{H}{\K},x\in H$. Notice that the anchor map
$$
\xymatrix @R=0pt @C=10pt {\omega:\Derk{t}{\cH}{H_{\varepsilon_{\cH}}} \ar[r] & \Ders{\K}{H}, \\
\delta \ar@{|->}[r] & \big[x\mapsto \delta(x_1\otimes x_2)=\delta(x_1\otimes 1)x_2\big]
}
$$
admits an inverse, explicitly given by
$$
\xymatrix @R=0pt @C=10pt {
\omega^{-1}:\Ders{\K}{H} \ar[r] & \Derk{t}{\cH}{H_{\varepsilon_{\cH}}} \\
d \ar@{|->}[r] & \left[x\otimes y\mapsto d(x_1)S(x_2)y\right],
}
$$
whence the factorization of the morphism $\varpi$ is trivial.
\begin{invisible}
The abstract reason why $\omega$ admits an inverse is that $\eta_\cH$ is in fact invertible. Indeed, $\omega(\delta)=\delta\circ \eta_{\cH}\circ(H\otimes u_{H})$, whence it is not difficult to deduce that an inverse for $\omega$ has to be of the form $\omega^{-1}(d) = m\circ (d\otimes H)\circ \eta_{\cH}^{-1}$ (and in fact this is the case).

For the sake of clearness, one may check directly that the inverse $\omega^{-1}$ is well-defined by means of the following computations
\begin{align*}
\omega^{-1}(d)(xy\otimes zw) & = d(x_1y_1)S(x_2y_2)zw = d(x_1)y_1S(x_2)S(y_2)zw + x_1d(y_1)S(x_2)S(y_2)zw \\
 & = \big(d(x_1)S(x_2)z\big)\varepsilon_{\cH}(y\otimes w) + \varepsilon(x\otimes z)\big(d(y_1)S(y_2)w\big), \\
\omega^{-1}(d)(t(y)) & = d(1)y = 0, \\
\omega^{-1}\omega(\delta)(x\otimes y) & = \omega(\delta)(x_1)S(x_2)y = \delta(x_1\otimes 1)x_2S(x_3)y \\
 & = \delta(x\otimes 1)y = \delta(x\otimes y), \\
\omega\omega^{-1}(d)(x) & = \omega^{-1}(d)(x_1\otimes 1)x_2 = d(x_1)S(x_2)x_3 = d(x).
\end{align*}
\end{invisible}

We recall\footnote{See \cite[II, \S4, n\textsuperscript{o}4, Proposition 4.6, page 214]{DemazureGabriel} and, for example, \cite[Corollary 4.3.2]{Abe} for the left-hand analogue.} that in this case $\varpi$ induces an anti-isomorphism of Lie algebras between $\Der{\K}{H}{\K_{\varepsilon}}$ and the Lie subalgebra of $\Ders{\K}{H}$ formed by the right-invariant derivations, where a linear operator $T:H\to H$ is said to be \emph{right-invariant} if it satisfies $\Delta\circ T=(T\otimes H)\circ \Delta$ (from a geometric point of view, \eg when $H$ is the Hopf algebra of an affine algebraic group $G$, this encodes the fact that $T$ commutes with all the right-translation operators $T_g:H\to H$ given by $\left(T_g(f)\right)(h) = f(hg)$ for all $g,h\in G$. See \eg \cite[\S12.1]{Waterhouse}).

It is easy to check that for every $\delta\in \Der{\K}{H}{\K_{\varepsilon}}$, $\varpi(\delta)$ is right-invariant. Conversely, if $d\in \Ders{\K}{H}$ is right invariant then $d(x)_1\otimes d(x)_2 = d(x_1)\otimes x_2$ and hence $d(x)=\varepsilon d(x_1)x_2=\varpi(\varepsilon d)(x)$ for every $x\in H$.
\end{example}

\begin{example}
Consider the obvious action of $\mathsf{GL}_2(\C)$ on $\C^2$. This makes of the coordinate ring $A:=\C[X_1,X_2]$ of $\C^2$ a left comodule algebra over the coordinate ring $H:=\C[Z_{i,j},\mathsf{det}(Z)^{-1}]$ of $\mathsf{GL}_2(\C)$, where $\mathsf{det}(Z)=Z_{1,1}Z_{2,2}-Z_{1,2}Z_{2,1}$. Explicitly, the Hopf algebra structure on $H$ is given by
\begin{gather*}
\Delta(Z_{1,1}) = Z_{1,1}\otimes Z_{1,1} + Z_{1,2}\otimes Z_{2,1}, \hspace{5pt} \Delta(Z_{1,2}) = Z_{1,1}\otimes Z_{1,2} + Z_{1,2}\otimes Z_{2,2}, \hspace{5pt} S(Z_{1,1}) = \frac{Z_{2,2}}{\mathsf{det}(Z)}, \hspace{5pt} S(Z_{1,2}) = -\frac{Z_{1,2}}{\mathsf{det}(Z)},\\
\Delta(Z_{2,1}) = Z_{2,1}\otimes Z_{1,1} + Z_{2,2}\otimes Z_{2,1}, \hspace{5pt} \Delta(Z_{2,2}) = Z_{2,1}\otimes Z_{1,2} + Z_{2,2}\otimes Z_{2,2}, \hspace{5pt} S(Z_{2,1}) = -\frac{Z_{2,1}}{\mathsf{det}(Z)}, \hspace{5pt} S(Z_{2,2}) = \frac{Z_{1,1}}{\mathsf{det}(Z)},\\
\end{gather*}
and $\varepsilon(Z_{i,j})=\delta_{i,j}$ for every $i,j\in\{1,2\}$, while the comodule structure on $A$ is given by
$$
\rho(X_1) = Z_{1,1}\otimes X_1 + Z_{1,2}\otimes X_2, \qquad \rho(X_2) = Z_{2,1}\otimes X_1 + Z_{2,2}\otimes X_2.
$$
For every $\delta\in\Der{\C}{H}{\C_{\varepsilon}}$, the morphism $\varpi$ satisfies
\begin{equation}\label{eq:varpi}
\varpi(\delta)(X_1) = \delta(Z_{1,1})X_1 + \delta(Z_{1,2})X_2, \qquad \varpi(\delta)(X_2) = \delta(Z_{2,1})X_1 + \delta(Z_{2,2})X_2
\end{equation}
and it factors through $\tau(\delta):Z_{i,j}\otimes X_k\mapsto \delta(Z_{i,j})X_k\in\Derk{t}{H\otimes A}{A_{\varepsilon_{H\otimes A}}}$, for $k=1,2$.
\begin{invisible}
For the sake of the unaccustomed reader, we recall that the explicit expressions for $\Delta,S,\varepsilon,\rho$ have been computed via the following argument. By definition of $\Delta$, $\Delta(Z_{1,1})=\sum \alpha_i\otimes \beta_i$ if and only if $Z_{1,1}(A\cdot B)= \sum \alpha_i(A)\beta_i(B)$ for every $A,B\in\mathsf{GL}_2(\C)$. Now, if $A=(a_{i,j})$ and $B=(b_{i,j})$, then
$$\sum \alpha_i(A)\beta_i(B) = Z_{1,1}(A\cdot B) = a_{1,1}b_{1,1}+a_{1,2}b_{2,1} = Z_{1,1}(A)Z_{1,1}(B) + Z_{1,2}(A)Z_{2,1}(B)$$
and so $\Delta(Z_{1,1}) = Z_{1,1}\otimes Z_{2,2} + Z_{1,2}\otimes Z_{2,1}$ as claimed. The other are computed analogously.
\end{invisible}

Notice that from Equation \eqref{eq:varpi} we deduce that $\varpi$ is injective and that $\varpi(\delta)$ is uniquely determined by the $2 \times 2$ complex matrix $M(\delta):=\left(m_{i,j}\right)$ with $m_{i,j}=\delta(Z_{i,j})$ for all $i,j\in\{1,2\}$. Note that the latter assignment yields a bijective correspondence
$$M : \Der{\C}{H}{\C_{\varepsilon}} \to \mathrm{Mat}_{2}(\C)$$
which satisfies
$$M([\delta,\delta']) = ([\delta,\delta'](Z_{i,j})) = (\delta(Z_{i,j}))\cdot(\delta'(Z_{i,j}))-(\delta'(Z_{i,j}))\cdot(\delta(Z_{i,j})) = [M(\delta),M(\delta')].$$
Thus $M$ is the well-known identification between the Lie algebra of the algebraic group $\mathsf{GL}_2(\C)$ and the general linear algebra $\mathfrak{gl}_2(\C)=\mathrm{Mat}_2(\C)$.
\begin{invisible}
Indeed, every $\delta$ is uniquely determined by its value on the $Z_{i,j}$ and the definition of equality between polynomials does the job. Moreover, the freeness in choosing the images of $Z_{i,j}$ gives the bijective correspondence with $\mathrm{Mat}_2(\C)$.
\end{invisible}
\end{example}

%%%%%%%%%%%%%%%%%%%%%%%%%%%%

%%%%%%%%%%%%%%%%%%%%%%%%%%%%

\bigskip

\textbf{Acknowledgements}
L. El Kaoutit would like to thank the members of department of Mathematics ``Giuseppe Peano'' of University of Turin for  warm hospitality and for providing a very fruitful working ambience. His stay was supported by grant PRX16/00108.


\begin{thebibliography}{Sch1}

\bibitem[Ab]{Abe} E. Abe, \emph{Hopf algebras}. Cambridge University Press, Cambridge, 1980.



\bibitem[BM]{BrzezinskiMilitaru}
T.~Brzezi{\'n}ski, G.~Militaru, \emph{Bialgebroids, $\times_A$-bialgebras and duality}. J. Algebra \textbf{251} (2002), no. 1, 279--294.



\bibitem[BW]{BrzezinskiWisbauer}
T.~Brzezi{\'n}ski, R.~Wisbauer, \emph{Corings and comodules}. London Mathematical Society Lecture Note Series, \textbf{309}. Cambridge University Press, Cambridge, 2003.




\bibitem[Ca]{Cartier:2008}
P.~Cartier, \emph{Groupo\"ides de Lie et leurs Alg$\grave{e}$bro\"ides}. S\'eminaire Bourbaki $60^e$ ann\'ee, 2007-2008, num. 987,  165--196.


\bibitem[CG]{ChemlaGavarini}
S. Chemla, F. Gavarini, \emph{Duality functors for quantum groupoids}. J. Noncommut. Geom. \textbf{9} (2015), no. 2, 287--358.



\bibitem[CF]{Crainic/Fernandes:2003}
M.~Crainic and R.~L.~Fernandes, \emph{Integrability of Lie brackets}, Ann. of Math. \textbf{157}, 575--620, (2003).

\bibitem[Del]{Deligne:1990}
P.~Deligne, \emph{Cat{\'e}gories tannakiennes}. In \emph{The Grothendieck Festschrift} (P. Cartier et al., eds), Progr. math., 87, vol. II,
Birkh{\"a}user, Boston, MA. 1990, pp. 111--195.

\bibitem[DG]{DemazureGabriel}
M.~Demazure and P.~Gabriel, \emph{Groupes alg\'ebriques. {T}ome {I}:
{G}\'eom\'etrie alg\'ebrique, g\'en\'eralit\'es, groupes commutatifs}, Masson
\& Cie, \'Editeur, Paris; North-Holland Publishing Co., Amsterdam, 1970, Avec
un appendice {\em Corps de classes local} par Michiel Hazewinkel.




\bibitem[EL]{ElKaoutit:2015}
L.~El~Kaoutit, \emph{On geometrically transitive Hopf algebroids}. J. Pure Appl. Algebra \textbf{222} (2018), 3483--3520.

\bibitem[EG1]{ComatrixGalois}
L.~El~Kaoutit, J.~G\'omez-Torrecillas,
\emph{Comatrix corings: Galois corings, descent theory, and a structure theorem for cosemisimple corings}.  Math.  Z., \textbf{244}, 887--906 (2003).




\bibitem[EG2]{LaiachiGomez}
L.~El~Kaoutit, J.~G\'omez-Torrecillas, \emph{On the finite dual of a co-commutative Hopf algebroid. Application to linear differential matrix equations and Picard-Vessiot theory}.  Preprint, arXiv:1607.07633v3 (2016).

\bibitem[EG3]{ElKaoutit/Gomez:2004b}
L.~El~Kaoutit,  J.~G\'omez-Torrecillas, \emph{Infinite comatrix corings}. Int. Math. Res. Notices, \textbf{39}, 2017--2037 (2004).


\bibitem[EK]{Kaoutit/Kowalzig:14}
L.~El Kaoutit and N.~Kowalzig, \emph{Morita theory for Hopf algebroids, principal bibundles, and weak equivalences}. Doc. Math. \textbf{22}  (2017), 551--609.

\bibitem[ES]{LaiachiPaolo2}
L.~El~Kaoutit, P.~Saracco, \emph{Topological tensor product of bimodules, complete Hopf algebroids and convolution algebras}.  Commun. Contemp. Math.  \textbf{21} (2019), n.~6, 1-53.



\bibitem[Fer]{Fernandes}
R.~L.~Fernandes, \emph{Invariants of Lie algebroids}. Diff. Geometry and its Applications \textbf{19} (2003), 223--243.


\bibitem[Ga]{Gabriel:1962}
P.~Gabriel, \emph{Des cat\'{e}gories ab\'{e}liennes}, Bull. Soc. Math. France  \textbf{90} (1962), 323--448.



\bibitem[Grw]{Grabowski}
J.~Grabowski, \emph{Quasi-derivations and QD-algebroids}. Rep. Math. Phys. \textbf{52} (2003), no. 3, 445-451.


	

\bibitem[He]{Herz}
J.~C.~Herz, \emph{Pseudo-alg\`ebres de Lie}. C. R. Acad. Sci. Paris \textbf{236} (1953), I, pp. 1935-1937, II, pp. 2289-2291.


\bibitem[Hoc1]{Hochschild:1959}
G.~Hochschild, \emph{Algebraic Lie algebras and representative functions}.  Illinois J. Math.     Vol.\textbf{3}, Issue 4 (1959), 499--523.

\bibitem[Hoc2]{Hochschild:1970}
G.~Hochschild, \emph{Algebraic groups and Hopf algebras}.  Illinois J. Math.   Vol.\textbf{14}, Issue 1 (1970), 52--65.




\bibitem[Hov]{Hovey:2004}
M.~Hovey, \emph{Homotopy  theory  of  comodules  over  a  Hopf  algebroid},
Homotopy  theory:  relations  with  algebraic  geometry,  group  cohomology,  and  algebraic
K-theory,  Contemp. Math.,  vol.  346,  Amer.  Math.  Soc.,  Providence,  RI,  2004,  pp.  261--304.


\bibitem[Hu1]{Huebschmann-LR}
J.~Huebschmann, \emph{Lie-Rinehart algebras, Gerstenhaber algebras and Batalin-Vilkovisky algebras}. Ann. Inst. Fourier (Grenoble) \textbf{48} (1998), no. 2, 425--440.


\bibitem[Hu2]{Huebschmann:Poisson}
J.~Huebschmann, \emph{Poisson cohomology and quantization}. J. Reine Angew. Math. \textbf{408} (1990), 57--113.


\bibitem[Hm]{Husemoller}
D.~Husemoller, \emph{Fibre bundles. Third edition}. Graduate Texts in Mathematics, \textbf{20}. Springer-Verlag, New York, 1994.






\bibitem[Ka]{Kapranov:2007}
M.~Kapranov, \emph{Free Lie algebroids and the space of paths}. Sel. Math., New ser. \textbf{13} (2007), 277--319.


\bibitem[KM]{Magri}
Y.~Kosmann-Schwarzbach, E.~Magri, \emph{Poisson-Nijenhuis structures}. Ann. Inst. Henri Poincar\'e \textbf{A53} (1990), 35-81.


\bibitem[Ko]{Kowalzig}
N.~Kowalzig, \emph{Hopf Algebroids and Their Cyclic Theory}. Ph. D. Thesis Utrecht University, June 2009.



\bibitem[Le]{Lee}
J.~M.~Lee, \emph{Introduction to smooth manifolds. Second edition}. Graduate Texts in Mathematics, \textbf{218}. Springer, New York, 2013.




\bibitem[Lu]{Lu}
J.~H.~Lu, \emph{Hopf algebroids and quantum groupoids}. Internat. J. Math. \textbf{7} (1996), no. 1, 47--70.

\bibitem[Mac]{Mackenzie}
Kirill C.~H.~Mackenzie, \emph{General Theory of Lie Groupoids and Lie Algebroids}, London Math. Soc. Lecture Note Ser., vol. 213,
Cambridge Univ. Press, Cambridge, 2005.


\bibitem[Mal1]{Malgrange:2000}
B.~Malgrange, \emph{La vari\'et\'e caract\'eristique d'un syst\`eme diff\'erentiel analytique}. Ann. Inst. Fourier, tome \textbf{50}, no. 2, 491--518 (2000).

\bibitem[Mal2]{Malgrange:2001}
B.~Malgrange, \emph{Le groupo\"ide de Galois d'un feuilletage}. In Essay on geometry and related topics, Vol. 1,2. Monogr. Enseign. Math., vol. \textbf{38}. Enseignement Math., Geneva, 2001, p. 465--501.


\bibitem[Mi1]{Michaelis-PBW}
W.~Michaelis, \emph{The dual Poincar\'e-Birkhoff-Witt theorem}. Adv. in Math. \textbf{57} (1985), no. 2, 93-162.


\bibitem[Mi2]{Michaelis-primitive}
W.~Michaelis, \emph{ The primitives of the continuous linear dual of a Hopf algebra as the dual Lie algebra of a Lie coalgebra}. (1990),  vol. \textbf{110} of Contemp. Math., AMS, pp. 125--176.



\bibitem[MM]{MoerdijkLie}
I.~Moerdijk, J. Mr\v{c}un, \emph{On the universal enveloping algebra of a Lie algebroid}. Proc. Amer. Math. Soc. \textbf{138} (2010), no. 9, 3135--3145.



\bibitem[Mo]{Montgomery:1993}
S.~Montgomery, \emph{Hopf algebras and their actions on rings}. CBMS Regional Conference Series in Mathematics, \textbf{82} American Mathematical Society, Providence, RI, 1993.


\bibitem[MU]{Morikawa/Umemura:2009}
S.~Morikawa and H.~Umemura, \emph{On a general difference Galois theory II}. Ann.~Inst.~Fourier, Grenoble, \textbf{59}, no. 7, 2733--2771, (2009).



\bibitem[Ne]{nestruev}
J.~Nestruev, \emph{Smooth manifolds and observables}. Joint work of A. M. Astashov, A. B. Bocharov, S. V. Duzhin, A. B. Sossinsky, A. M. Vinogradov and M. M. Vinogradov. Translated from the 2000 Russian edition by Sossinsky, I. S. Krasil'schik and Duzhin. Graduate Texts in Mathematics, \textbf{220}. Springer-Verlag, New York, 2003.

\bibitem[Ni]{Nichols}
W. Nichols, \emph{The Kostant structure theorems for $K/k$-Hopf algebras}. J. Algebra \textbf{97} (1985), no. 2, 313-328.

\bibitem[PS]{Porst-Street} H. E. Porst, R. Street, \emph{Generalizations of the Sweedler dual.} Appl. Categ. Structures \textbf{24} (2016), no. 5, 619--647.



\bibitem[Ra]{ravenel}
D.~C.~Ravenel, \emph{Complex Cobordism and Stable Homotopy Groups of Spheres}. Pure and Applied Mathematics, \textbf{121}. Academic Press, Inc., Orlando, FL, 1986.

\bibitem[Ri]{Rin:DFOGCA}
G.~S.~Rinehart, \emph{Differential forms on general commutative algebras}.  Trans. Amer. Math. Soc. \textbf{108}, 1963, 195--222.

\bibitem[Sc]{Schau:DADOQGHA}
P.~Schauenburg, \emph{Duals and doubles of quantum groupoids ({$\times\sb   R$}-{H}opf algebras)}, New trends in Hopf algebra theory (La Falda, 1999),   Contemp. Math., vol. \textbf{267}, Amer. Math. Soc., Providence, RI, 2000,
 273--299.

\bibitem[Sch]{Schappi:2012}
D.~Sch$\ddot{\text{a}}$ppi,  \emph{A characterization of categories of coherent sheaves of certain algebraic stacks}, arXiv:1206.2764.



\bibitem[Sw]{sweedler}
M.~E.~Sweedler, \emph{Groups of simple algebras.} Inst. Hautes \'Etudes Sci. Publ. Math. No. \textbf{44} (1974), 79--189.


\bibitem[Sz1]{Szlachanyi}
K.~Szlach{\'a}nyi, \emph{The monoidal Eilenberg-Moore construction and bialgebroids}. J. Pure Appl. Algebra \textbf{182} (2003), no. 2-3, 287--315.



\bibitem[Ta1]{Takeuchi}
M.~Takeuchi, \emph{$\sqrt{\text{Morita}}$ theory--formal ring laws and monoidal equivalences of categories of bimodules.} J. Math. Soc. Japan \textbf{39} (1987), no. 2, 301--336.

\bibitem[Ta2]{Takeuchi:75}
M.~Takeuchi, \emph{On coverings and hyperalgebras of affine algebraic groups.}  Trans. Amer. Math. Soc. \textbf{211} (1975), 249--275.

\bibitem[Um]{Umemura:2009}
H.~Umemura, \emph{On the definition of the Galois groupoid}. Ast\'erisque, \textbf{323}, 441--452 (2009).

\bibitem[Wa]{Waterhouse}
W.~Waterhouse, \emph{Introduction to affine group schemes}. Graduate Texts in Mathematics, \textbf{66}. Springer-Verlag, New York-Berlin, 1979.



\end{thebibliography}
\end{document}